\newcommand{\U}{{\mathcal U}}
\newcommand{\V}{{\mathcal V}}
\newcommand{\0}{{\mathcal O}}
\newcommand{\D}{{\mathcal D}}
\newcommand{\W}{{\mathcal W}}
\newcommand{\C}{{\mathcal C}}
\newcommand{\Zj}{{\mathcal Z}_j}
\newcommand{\Ps}{{\mathcal P}}
\newcommand{\id}{{\tt id}}
\newcommand{\finishproof}{\hfill $\Box$ \vspace{5mm}}
\newcommand{\infR}[0]{\inf_{x \in \mathbb R^n}}
\newcommand{\R}[0]{\mathbb R}
\newcommand{\Hs}[0]{H}
\newcommand{\Ds}[0]{\mathcal D}
\newcommand{\N}[0]{\mathbb N}
\newcommand{\image}{\mathop{\tt Image}}
\newcommand{\hnorm}[2]{\| #1 \|_{#2}}
\newcommand{\tspace}{C_c^\infty(\R^n,\R)}
\newcommand{\tspacen}{C_0^\infty(\R^n,\R^n)}
\newcommand{\dx}[1]{\partial_{x_{#1}}}
\newcommand{\Z}[0]{\mathbb{Z}_{\geq 0}}
\newcommand{\Tn}[0]{\mathbb{T}^n}
\newtheorem{Th}{Theorem}[section]
\newtheorem{Lemma}{Lemma}[section]
\newtheorem{Prop}[Lemma]{Proposition}
\newtheorem{Coro}[Th]{Corollary}
\newtheorem{Def}{Definition}[section]
\newtheorem{Rem}{Remark}[section]
\begin{document}

\scalefont{1.06}

\title{On the regularity of the composition of diffeomorphisms}

\author{H. Inci\footnote{Supported in part by the
Swiss National Science Foundation} , T. Kappeler\footnotemark[1] , P. Topalov\footnote{Supported in
part by NSF DMS-0901443}}

\maketitle

\begin{abstract}
For $M$ a closed manifold or the Euclidean space $\R^n$ we present a detailed proof of regularity properties of the composition of $H^s$-regular diffeomorphisms of $M$ for $s > \frac{1}{2}\dim M+1$.
\end{abstract}

\section{Introduction}\label{Sec:Introduction}
In this paper we are concerned with groups of diffeomorphisms on a
smooth manifold $M$. Our interest in these groups stems from Arnold's seminal paper \cite{1} on hydrodynamics. He suggested
that the Euler equation modeling a perfect fluid on a (oriented) Riemannian
manifold $M$ can be reformulated
as the equation for geodesics on the group of volume (and orientation)
preserving diffeomorphims of $M$. In this way properties of solutions
of the Euler equation can be expressed in geometric terms -- see \cite{1}. In
the sequel, Ebin and Marsden \cite{EM}, \cite{EMF} used this approach to great success to study the initial value
problem for the Euler equation on a compact manifold, possibly with boundary.
Later it was observed that other nonlinear evolution equations such as Burgers equation \cite{3}, KdV, or
the Camassa Holm equation \cite{4}, \cite{13} can be viewed in a similar way -- see \cite{19bis}, \cite{25}, 
as well as \cite{2}, \cite{15}, and \cite{KW}. In particular, for the study of the solutions of the Camassa Holm
equation, this approach has turned out to be very useful -- see e.g. \cite{10}, \cite{misiolek}. 
In addition, following Arnold's suggestions \cite{1}, numerous papers aim at relating the stability of the flows
to the geometry of the groups of diffeomorphisms considered -- see e.g. \cite{2}.

\medskip
In various settings, the space of diffeomorphisms of a given manifold
with prescribed regularity turns out to be a (infinite dimensional)
topological group with the group operation given by the
composition  -- see e.g. \cite[p 155]{EMF} for a quite detailed historical account.
In order for such a group of diffeomorphisms
to be a Lie group, the composition and the inverse map have to
be $C^\infty$-smooth. A straightforward formal computation shows that the
differential of the left translation $L_\psi : \varphi \mapsto
\psi \circ \varphi $ of a diffeomorphism $\varphi $ by a
diffeomorphism $\psi $ in direction $h : M \rightarrow TM$ can be
formally computed to be
\[
(d_\varphi L_\psi ) (h) (x) = (d_{\varphi(x)} \psi )(h(x)), \ x \in M
\]
and hence involves a loss of derivative of $\psi $.
As a consequence, for a space of diffeomorphisms
of $M$ to be a Lie group it is necessary that they are $C^\infty $-smooth
and hence such a group cannot have the structure of a Banach
manifold, but only of a Fr\'echet manifold. It is well known that
the calculus in  Fr\'echet manifolds is quite involved as the
classical inverse function theorem does not hold, cf. e.g. \cite{14},
\cite{21}. Various aspects of Fr\'echet Lie groups of diffeomorphisms
have been investigated -- see e.g. \cite{14}, \cite{24}, \cite{27}, \cite{27bis}. In
particular, Riemann exponential maps have been studied in \cite{6},
\cite{8}, \cite{16}, \cite{16bis}.

\medskip

However, in many situations, one has to consider diffeomorphisms
of Sobolev type -- see e.g. \cite{10}, \cite{11}, \cite{EM}. In this paper we are concerned with
composition of maps in $H^s(M) \equiv H^s(M,M)$. It seems to be unknown whether, in general, the composition of two maps
in $H^s(M)$ with $s$ an integer satisfying $s > n/2$ is again in $H^s(M)$. In all known proofs
one needs that one of the maps is a diffeomorphism or, alternatively, is
$C^\infty $-smooth.

\medskip

First we consider the case where $M$ is the Euclidean space
$\R^n$, $n\ge 1$. Denote by $\mbox{Diff}_+^1(\R^n)$ the space
of orientation preserving $C^1$-diffeomorphisms of $\R^n$, i.e. the space of bijective $C^1$-maps
$\varphi:\R^n \to \R^n$ so that $\det(d_x\varphi) > 0$ for any $x \in \R^n$ and $\varphi^{-1}:\R^n \to \R^n$
is a $C^1$-map as well. For any integer $s$ with $s > n/2+1$ introduce
\[
\Ds^s(\R^n):=\{\varphi \in \mbox{Diff}_+^1(\R^n) \, | \, \varphi - \id \in \Hs^s(\R^n) \}
\]
where $H^s(\R^n)=H^s(\R^n,\R^n)$ and $H^s(\R^n,\R^d)$ is the Hilbert space
\[
H^s(\R^n,\R^d):=\{f=(f_1,\ldots,f_d) \, | \, f_i \in H^s(\R^n,\R), i=1,\ldots,d\}
\]
with $H^s$-norm $\hnorm{\cdot}{s}$ given by
\[
\hnorm{f}{s}=\big( \sum_{i=1}^d \hnorm{f_i}{s}^2\big)^{1/2} 
\]
and $H^s(\R^n,\R)$ is the Hilbert space of elements $g \in L^2(\R^n,\R)$ with the property that the distributional
derivatives $\partial^\alpha g$, $\alpha \in \mathbb Z^n_{\geq 0}$, up to order $|\alpha| \leq s$ are in 
$L^2(\R^n,\R)$. Its norm is given by
\begin{equation}\label{hsnorm}
 \hnorm{g}{s}=\big( \sum_{|\alpha|\leq s}
\int_{\R^n} |\partial^{\alpha} g|^2 dx \big)^{1/2}.
\end{equation}
Here we used multi-index notation, i.e. $\alpha=(\alpha_1,\ldots,\alpha_n)
\in \Z^n$, $|\alpha|=\sum_{i=1}^n \alpha_i$, $x=(x_1,\ldots,x_n)$, and
$\partial^\alpha \equiv \partial_x^{\alpha}=\partial_{x_1}^{\alpha_1}
\cdots \partial_{x_n}^{\alpha_n}$. As $s>n/2+1$ it follows from the
Sobolev embedding theorem that 
\[
\Ds^s(\R^n)- \id=\{ \varphi -\id \,\,|\,\, \varphi \in \Ds^s(\R^n)\}
\]
is an open subset of $H^s(\R^n)$ -- see Corollary \ref{cor_openness} below. In this way $\Ds^s(\R^n)$ becomes a
Hilbert manifold modeled on $H^s(\R^n)$.
In Section \ref{Section 2} of this paper we present a detailed proof of the following
\begin{Th}\label{thm1}
For any $r \in \Z$ and any integer $s$ with $s > n/2+1$
\begin{equation}
\mu : \Hs^{s+r}(\R^n,\R^d) \times \Ds^s(\R^n) \to \Hs^s(\R^n,\R^d), \quad  (u,\varphi) \mapsto u \circ \varphi
\end{equation}
and
\begin{equation}
{\tt inv} : \Ds^{s+r}(\R^n) \to \Ds^s(\R^n), \quad \varphi \mapsto \varphi^{-1}
\end{equation}
are $C^r$-maps.
\end{Th}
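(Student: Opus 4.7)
The plan is to prove both statements by induction on $r$, with the main load carried by the composition map. The basic analytical tools are (i) the Sobolev embedding $\Hs^s(\R^n)\hookrightarrow C^1_b(\R^n)$ (since $s>n/2+1$); (ii) the algebra property $\|fg\|_s\le C\|f\|_s\|g\|_s$ for $s>n/2$; and (iii) the change-of-variables estimate $\|f\circ\varphi\|_{L^2}\le|\det(d\varphi^{-1})|_\infty^{1/2}\|f\|_{L^2}$ for $\varphi\in\Ds^s(\R^n)$. The identity driving the induction is the formal derivative
\[
d_{(u,\varphi)}\mu(v,h)=v\circ\varphi+\bigl((\p u)\circ\varphi\bigr)\cdot h,
\]
where $\p u$ denotes the Jacobian (valued in $\R^{d\times n}$).

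\textbf{Base case $r=0$ for composition.} I would first show that $\mu:\Hs^s(\R^n,\R^d)\times\Ds^s(\R^n)\to\Hs^s(\R^n,\R^d)$ is well defined and continuous. Well-definedness rests on Fa\`a di Bruno's formula, which expresses $\p^\alpha(u\circ\varphi)$ for $|\alpha|\le s$ as a sum of $(\p^\beta u)\circ\varphi$ times polynomials in $\p^\gamma(\varphi-\id)$ with $1\le|\gamma|\le s$; combining (i)--(iii) bounds $\|u\circ\varphi\|_s$ uniformly on bounded sets. For continuity I would split $\mu(u_1,\varphi_1)-\mu(u_2,\varphi_2)=(u_1-u_2)\circ\varphi_1+(u_2\circ\varphi_1-u_2\circ\varphi_2)$, control the first term by the boundedness just obtained, and treat the second by approximating $u_2$ by a smooth function (on which the mean value theorem applies) and bounding the tail uniformly.

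\textbf{Inductive step for $\mu$.} For $r\ge 1$, I would verify the derivative formula by writing the remainder as the Taylor integral $\bigl(\int_0^1[(\p u)\circ(\varphi+th)-(\p u)\circ\varphi]\,dt\bigr)\cdot h$ and arguing it is $o(\|h\|_s)$ in $\Hs^s$, via continuity of composition (applied to $\p u\in\Hs^{s+r-1}\subseteq\Hs^s$) and the algebra property to absorb the factor $h$. The derivative then factors as $d\mu(v,h)=\mu(v,\varphi)+M\bigl(\mu(\p u,\varphi),h\bigr)$ with $M$ the bilinear pointwise multiplication; since $\p:\Hs^{s+r}\to\Hs^{s+r-1}$ is bounded linear and $\mu:\Hs^{s+(r-1)}\times\Ds^s\to\Hs^s$ is $C^{r-1}$ by the inductive hypothesis, $d\mu$ is a composition of $C^{r-1}$-maps, whence $\mu$ is $C^r$.

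\textbf{Inversion and main obstacle.} For ${\tt inv}$ I would apply the Banach-manifold implicit function theorem to $F(\varphi,\psi):=\varphi\circ\psi-\id=(\varphi-\id)\circ\psi+(\psi-\id)$, viewed as a $C^r$-map $\Ds^{s+r}(\R^n)\times\Ds^s(\R^n)\to\Hs^s(\R^n)$ by the first part. Its $\psi$-partial at $(\varphi_0,\varphi_0^{-1})$ is multiplication by the pointwise-invertible matrix field $(d\varphi_0)\circ\varphi_0^{-1}$, which has $\Hs^s$-regular entries when $r\ge 1$ and hence defines a topological isomorphism on $\Hs^s(\R^n)$; the IFT then produces the $C^r$ local solution $\psi=\varphi^{-1}$ (the $r=0$ continuity being handled separately via the identity $\varphi^{-1}-\id=-(\varphi-\id)\circ\varphi^{-1}$ and the continuity of $\mu$). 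The most delicate step throughout is the $r=0$ continuity estimate for $\mu$, since direct pointwise or mean-value estimates on the $\Hs^s$-regular map $u$ are unavailable; the resolution is the standard density-plus-uniform-boundedness argument, executed with careful bookkeeping of the Fa\`a di Bruno expansion.
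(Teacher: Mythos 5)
Your base case and your treatment of the inverse map follow the paper closely: the paper also proves the $r=0$ continuity of $\mu$ by a density-plus-change-of-variables argument (though it runs an induction on the number of derivatives using the single identity $d(f\circ\varphi)=df\circ\varphi\cdot d\varphi$ rather than the full Fa\`a di Bruno expansion), and it likewise obtains the $C^r$-regularity of ${\tt inv}$ from the implicit function theorem applied to $\mu(\varphi,\psi)=\id$. The genuine problem is in your inductive step for $\mu$. To conclude that $\mu$ is $C^r$ you must show that the \emph{operator-valued} map $(u,\varphi)\mapsto d_{(u,\varphi)}\mu\in L(H^{s+r}\times H^s;H^s)$ is $C^{r-1}$, i.e. continuous (and then differentiable) in the operator norm. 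Your factorization handles the second summand correctly: $(u,\varphi)\mapsto(\partial u)\circ\varphi$ is $C^{r-1}$ into $H^s$ by induction, and $w\mapsto(h\mapsto w\cdot h)$ is a bounded linear map $H^s\to L(H^s;H^s)$ by the algebra property. But the first summand, $(u,\varphi)\mapsto(v\mapsto v\circ\varphi)$, is \emph{not} ``the map $\mu$, which is $C^{r-1}$ by induction'': the inductive hypothesis concerns the \emph{joint} map $(v,\varphi)\mapsto v\circ\varphi$, and joint regularity of a map linear in $v$ does not yield regularity of the curried map $\varphi\mapsto R_\varphi$ in the norm of $L(H^{s+r};H^s)$. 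For each fixed $v$ you get $\|v\circ\varphi-v\circ\varphi_\bullet\|_s\to0$, which is only strong-operator convergence, not uniform over the unit ball of $v$'s; currying a jointly $C^k$ map that is linear in one slot costs a degree of differentiability in general, which would leave you one order short at every stage of the induction.

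What rescues the argument --- and what is the technical heart of the paper's proof --- is the quantitative estimate
\[
\|v\circ\varphi-v\circ\varphi_\bullet\|_s\le C\,\|v\|_{s+1}\,\|\varphi-\varphi_\bullet\|_s ,
\]
valid for $\varphi$ in a convex neighborhood of $\varphi_\bullet$ (Lemma \ref{lem_poincare}, proved by writing $v\circ\varphi-v\circ\varphi_\bullet=\int_0^1 dv\circ(\varphi_\bullet+t\,\delta\varphi)\cdot\delta\varphi\,dt$ for smooth $v$ and passing to the limit). This shows that $\varphi\mapsto R_\varphi$ is locally Lipschitz into $L(H^{s+1};H^s)$: the price is one derivative in the $v$-slot, which is available precisely because $v$ ranges in $H^{s+r}$ with $r\ge1$, rather than one order of smoothness. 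The paper packages the higher-order analogues as the continuity of the multilinear maps in Lemmas \ref{rho} and \ref{nu} and then invokes the converse of Taylor's theorem with the explicit $r$-th order expansion, instead of your ``differentiate once and induct'' scheme. Your scheme can be repaired, but only by inserting this estimate (and its analogues for the derivative maps) at each stage; as written, the claim that ``$d\mu$ is a composition of $C^{r-1}$-maps'' is where the proof breaks. A smaller issue of the same flavor: for the $r=0$ continuity of ${\tt inv}$, the identity $\varphi^{-1}-\id=-(\varphi-\id)\circ\varphi^{-1}$ together with continuity of $\mu$ is circular, since the right-hand side involves $\varphi^{-1}$ itself; the paper needs a separate inductive argument ($\partial^\alpha(\varphi^{-1}-\id)=F^{(\alpha)}\circ\varphi^{-1}$, Lemma \ref{continuous_inverse}) to show that $\varphi^{-1}\in\Ds^s$ and that ${\tt inv}$ is continuous.
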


\medskip

\begin{Rem}
To the best of our knowledge there is no proof of Theorem \ref{thm1} available in the literature.
Besides being of interest in itself we will use Theorem \ref{thm1} and its proof to show Theorem \ref{thm2}
stated below. Note that the case $r=0$ was considered in \cite{cantor}.
\end{Rem}

\begin{Rem}
The proof for the $C^r$-regularity of the inverse
map is valid in a much more general context: using that $\Ds^s(\R^n)$
is a topological group and that the composition
\[
\Ds^{s+r}(\R^n) \times \Ds^s(\R^n) \to \Ds^s(\R^n), \quad (\psi,\varphi) \mapsto
\psi \circ \varphi
\]
is $C^r$-smooth we apply the implicit function theorem to show that the
inverse map 
\[
\Ds^{s+r}(\R^n) \to \Ds^s(\R^n),\quad \varphi \mapsto \varphi^{-1}
\]
is a $C^r$-map as well.
\end{Rem}

\begin{Rem}
By considering lifts to $\R^n$ of diffeomorphisms of $\Tn=\R^n/\mathbb{Z}^n$,
the same arguments as in the proof of Theorem \ref{thm1} can be used
to show corresponding results for the group $\Ds^s(\Tn)$ of $H^s$-regular diffeomorphisms on $\Tn$.
\end{Rem}

\noindent In Section \ref{Section 3} and Section \ref{sec:diff_structure} of this paper we discuss various classes of
diffeomorphisms on a closed\footnote{i.e., a compact $C^\infty$-manifold without boundary} manifold $M$.
For any integer $s$ with $s>n/2$ the set $H^s(M)$ of Sobolev maps is defined by using coordinate
charts of $M$. More precisely, let $M$ be a closed manifold of
dimension $n$ and $N$ a $C^\infty$-manifold of dimension $d$. We say that
a continuous map $f:M \to N$ is an element in $H^s(M,N)$ if for any $x \in M$ there exists a chart $\chi:\U \to U \subseteq \R^n$ of $M$ with $x \in \U$, and a chart $\eta:\V \to V \subseteq \R^d$ of $N$ with $f(x) \in \V$, such that $f(\U) \subseteq \V$ and
\[
\eta \circ f \circ \chi^{-1}: U \to V
\]
is an element in the Sobolev space $H^s(U,\mathbb{R}^d)$. Here $H^s(U,\R^d)$ -- similarly defined as $H^s(\R^n,\R^d)$ -- is the Hilbert space of elements in $L^2(U,\R^d)$ whose distributional derivatives up to order $s$ are $L^2$-integrable. In Section \ref{Section 3} we introduce a
$C^\infty$-differentiable structure on the space $H^s(M,N)$ in terms of a specific cover
by open sets which is especially  well suited for proving regularity properties of the composition
of mappings as well as other applications presented in subsequent work.
The main property of this cover of $H^s(M,N)$ is that each of its open sets can be embedded into
a finite cartesian product of Sobolev spaces of $\Hs^s$-maps between Euclidean spaces.

It turns out that this cover makes $H^s(M,N)$ into a $C^\infty$-Hilbert
manifold -- see Section \ref{sec:diff_structure} for details.
In addition, we show in Section \ref{sec:diff_structure} that the $C^\infty$-differentiable structure for $H^s(M,N)$ defined in this way coincides with the one, introduced by Ebin and Marsden in \cite{EM}, \cite{EMF} and defined in terms of a Riemannian metric on $N$. In particular it follows
that the standard differentiable structure does not depend on the
choice of the metric. Now assume in addition that $M$ is
oriented. Then, for any linear isomorphism $A:T_xM \to T_yM$ between
the tangent spaces of $M$ at arbitrary points $x$ and $y$ of $M$, the
determinant $\det(A)$ has a well defined sign. For any integer $s$ with $s>\frac{n}{2}+1$ define
\[
{\mathcal D}^s(M):=\big\{\varphi\in \mbox{Diff}_+^1(M) \big\arrowvert
\varphi \in H^s(M,M) \big\}
\]
where $\mbox{Diff}_+^1(M)$ denotes the set of all orientation
preserving $C^1$ smooth diffeomorphisms of $M$. We will show that ${\mathcal D}^s(M)$ is open in
$H^s(M,M)$ and hence is a $C^\infty$-Hilbert manifold. Elements in $\Ds^s(M)$ are referred to as
orientation preserving $H^s$-diffeomorphisms.

\medskip

\noindent In Section \ref{Section 3} we prove the following

\begin{Th}\label{thm2} Let $M$ be a closed oriented manifold of dimension $n$, $N$ a $C^\infty$-manifold,
and $s$ an integer satisfying $s > n/2 + 1$. Then for any $r \in {\mathbb Z}_{\geq 0}$,
\begin{itemize}
\item[(i)] $\qquad \mu : H^{s + r }(M,N) \times {\mathcal D}^s(M)\rightarrow H^s(M,N),
\ (f, \varphi ) \mapsto f \circ \varphi $
\end{itemize}
and
\begin{itemize}
\item[(ii)]
$\qquad {\tt inv} : {\mathcal D}^{s + r } (M) \rightarrow {\mathcal D}^s(M), \
\varphi \mapsto \varphi ^{-1} $
\end{itemize}
are both $C^r$-maps.
\end{Th}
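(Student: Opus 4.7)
The plan is to reduce Theorem \ref{thm2} to the Euclidean result Theorem \ref{thm1} by means of the differentiable structure on $H^s(M,N)$ constructed in Section \ref{Section 3}, whose defining property is that each of its charts embeds an open set of $H^s(M,N)$ into a finite cartesian product of Euclidean Sobolev spaces $\Hs^s(\R^n,\R^d)$. Once both $\mu$ and ${\tt inv}$ are expressed in these coordinates, their $C^r$-regularity follows by combining Theorem \ref{thm1} with the smoothness of the transition maps.

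For part (i), I would fix $(f_0, \varphi_0) \in H^{s+r}(M,N) \times \Ds^s(M)$ and choose finite compatible covers: chart domains $\chi_i : \mathcal U_i \to U_i \subset \R^n$ on $M$ together with auxiliary charts $\tilde\chi_i : \mathcal V_i \to \tilde U_i \subset \R^n$ on $M$ and $\eta_i : \mathcal W_i \to W_i \subset \R^d$ on $N$ satisfying the nested inclusions
\[
\varphi_0(\overline{\mathcal U}_i) \subset \mathcal V_i, \qquad f_0(\overline{\mathcal V}_i) \subset \mathcal W_i.
\]
Since $s > n/2 + 1$ forces the Sobolev embedding $H^s \hookrightarrow C^1$, both inclusions persist on an open neighborhood of $(f_0,\varphi_0)$. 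There the local representation
\[
\eta_i \circ (f\circ\varphi) \circ \chi_i^{-1} \,=\, (\eta_i \circ f \circ \tilde\chi_i^{-1}) \circ (\tilde\chi_i \circ \varphi \circ \chi_i^{-1})
\]
identifies $\mu$ locally with a Euclidean composition. After multiplying by suitable cut-off functions supported in $U_i$ and extending by $\id$ outside a compact set, the inner factor becomes an element of $\Ds^s(\R^n)$ and the outer factor an element of $\Hs^{s+r}(\R^n,\R^d)$, so Theorem \ref{thm1} delivers the required $C^r$-regularity of the local representative. Since the charts of Section \ref{Section 3} on $H^s(M,N)$ are defined precisely via such local representations, $\mu$ inherits $C^r$-smoothness globally.

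For part (ii), I would invoke the argument outlined in the Remark after Theorem \ref{thm1}. Part (i) with $N=M$ shows that composition $\Ds^{s+r}(M)\times\Ds^s(M)\to\Ds^s(M)$ is a $C^r$-map; combined with the fact that $\Ds^s(M)$ is a topological group (the case $r=0$ of part (i), already implicit in \cite{cantor}, together with continuity of inversion), an application of the implicit function theorem to the equation $\mu(\psi,\varphi)=\id$ at $(\varphi_0^{-1},\varphi_0)$ yields ${\tt inv}$ as a $C^r$-map. The main technical obstacle is the first step: constructing covers whose shrinkings $\overline{\mathcal U}_i,\overline{\mathcal V}_i$ are compatible with both $\varphi_0$ and $f_0$, arranging the cut-off procedure so that the local Euclidean representatives genuinely belong to $\Ds^s(\R^n)$ and $\Hs^{s+r}(\R^n,\R^d)$ rather than just lying in them locally, and verifying that the passage between these Euclidean representatives and the abstract charts of Section \ref{Section 3} is itself $C^\infty$. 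Once this bookkeeping is set up, the $C^r$-regularity transfers mechanically from Theorem \ref{thm1}.
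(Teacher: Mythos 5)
Your overall strategy coincides with the paper's: localize $\mu$ in charts adapted to $(f_0,\varphi_0)$, reduce to a Euclidean composition result, and obtain ${\tt inv}$ from the implicit function theorem applied to $\mu(\varphi,\psi)=\id$ exactly as in Proposition \ref{prop2}. The genuine gap is the step where you turn the inner factor $\tilde\chi_i\circ\varphi\circ\chi_i^{-1}:U_i\to\R^n$ into an element of $\Ds^s(\R^n)$ ``by multiplying by suitable cut-off functions and extending by $\id$''. An interpolation of the form $\id+\rho\cdot(\psi-\id)$ between a local representative $\psi$ and the identity is in general neither injective nor orientation preserving on the transition region where $0<\rho<1$: here $\psi$ is a chart transition composed with $\varphi$, so $\psi-\id$ is not small, and nothing prevents the Jacobian of the interpolant from degenerating. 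Hence Theorem \ref{thm1} cannot be applied verbatim to the cut-off object. The paper avoids this by never extending $\varphi$ at all: it proves a separate local composition result, Proposition \ref{prop_composition_general}, for $H^{s+r}(\R^n,\R^d)\times\Ds^s(U,\R^n)\to H^s(U,\R^d)$ with $U$ a bounded Lipschitz domain and $\Ds^s(U,\R^n)$ the injective $H^s$-immersions of $\overline{U}$ with positive Jacobian; only the outer factor $f$ is extended, via the bounded \emph{linear} extension operator of Proposition \ref{prop_extension}, which is harmless. The proof of Proposition \ref{prop_composition_general} is a rerun of the converse-Taylor argument of Proposition \ref{prop1}, so your reduction should target that statement rather than Theorem \ref{thm1} itself. (Your construction can be salvaged by fixing the base representative $\psi_0$, cutting off only the difference $E(\psi-\psi_0)$, and invoking the openness of $\Ds^s(\R^n)$ from Corollary \ref{cor_openness}; but this still requires extending $\psi_0$ itself to a global element of $\Ds^s(\R^n)$, an isotopy-extension problem at $H^s$ regularity that your proposal does not address.)

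Two smaller points. First, the implicit function theorem only covers $r\geq 1$ in part (ii); the case $r=0$, i.e.\ continuity of ${\tt inv}$ on $\Ds^s(M)$, is asserted in your proposal but not proved --- the paper devotes Lemma \ref{Lemma:inverse_continuous} and Appendix \ref{appendix A} to it, again via a local version (Lemma \ref{continuous_inverse_charts}) of the Euclidean argument. Second, the ``bookkeeping'' you defer --- that the passage from $f\in H^s(M,N)$ to its local representatives is compatible with the differentiable structure --- is precisely the content of Proposition \ref{Th:submanifold} and the commutative diagram in the proof of Proposition \ref{Prop:composition_regularity}; it is where the left- and right-translation results of Subsection \ref{sobolev_spaces_on_open_sets} are consumed, so it should not be regarded as mechanical.
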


\begin{Rem}
Various versions of Theorem \ref{thm2} can be found in the literature,
however mostly without proofs -- see e.g. \cite{11}, \cite{EM}, \cite{El}, \cite{27}, \cite{27bis}, \cite{28}, \cite{29};
cf. also \cite{34bis}. A complete, quite involved proof of statement
$(i)$ of Theorem \ref{thm2} can be found in \cite{27bis}, Proposition 3.3 of Chapter 3 and
Theorem 2.1 of Chapter 6. Using the approach sketched above we present
an elementary proof of Theorem \ref{thm2}. In particular, our approach allows us to apply elements of the proof of Theorem \ref{thm1} to show statement $(i)$.
\end{Rem}

\begin{Rem}
Actually Theorem \ref{thm1} and Theorem \ref{thm2} continue to hold if instead of $s$ being an integer it is an arbitrary real number $s > n/2+1$. In order to keep the exposition as elementary as possible we prove Theorem \ref{thm1} and Theorem \ref{thm2} as stated in the main body of the paper and discuss the extension to the case where $s > n/2+1$ is real in Appendix \ref{appendix B}.
\end{Rem}

We finish this introduction by pointing out results on compositions of maps in function spaces different from the ones considered here and some additional literature. In the paper \cite{Llave}, de la Llave and Obaya prove a version of Theorem \ref{thm1} for H\"older continuous maps between open sets of Banach spaces. Using the paradifferential calculus of Bony, Taylor \cite{Taylor} studies the continuity of the composition of maps of low regularity between open sets in $\R^n$ -- see also \cite{Alinhac}.

\hspace{0.1cm}

{\it Acknowledgment}: We would like to thank Gerard Misiolek and Tudor Ratiu for very valuable feedback on an earlier version of this paper.

\section{Groups of diffeomorphisms on $\R^n$}\label{Section 2}

In this section we present a detailed and elementary proof of Theorem
\ref{thm1}. First we prove that the composition map $\mu$ is a $C^r$-map (Proposition
\ref{prop1}) and then, using this result, we show that the inverse map is a
$C^r$-map as well (Proposition \ref{prop2}). To simplify notation we write $\Ds^s \equiv \Ds^s(\R^n)$ and $H^s
\equiv H^s(\R^n)$. Throughout this section, $s$ denotes a nonnegative integer if not stated otherwise.

\subsection{Sobolev spaces $H^s(\R^n,\R)$}\label{subsection_sobolev_spaces}

In this subsection we discuss properties of the Sobolev spaces
$H^s(\R^n,\R)$ needed later. First let us introduce some more
notation. For any $x,y \in \R^n$ denote by $x \cdot y$ the Euclidean
inner product, $x \cdot y=\sum_{k=1}^n x_ky_k$, and by $|x|$ the
corresponding norm , $|x|=(x \cdot x)^{1/2}$. Recall that for $s \in
\Z$, $H^s(\R^n,\R)$ consists of all $L^2$-integrable functions
$f:\R^n \to \R$ with the property that the distributional derivatives
$\partial^\alpha f, \alpha \in {\mathbb Z}^n_{\geq 0}$, up to order $|\alpha|
\leq s$ are $L^2$-integrable as well. Then $H^s(\R^n,\R)$, endowed with
the norm (\ref{hsnorm}), is a Hilbert space and for any multi-index
$\alpha \in {\mathbb Z}^n_{\geq 0}$ with $|\alpha| \leq s$, the differential
operator $\partial^\alpha$ is a bounded linear map,
\[
 \partial^\alpha:H^s(\R^n,\R) \to H^{s-|\alpha|}(\R^n,\R).
\]
Alternatively, one can characterize the spaces $H^s(\R^n,\R)$ via the
Fourier transform. For any $f \in L^2(\R^n,\R) \equiv H^0(\R^n,\R)$,
denote by $\hat f$ its Fourier transform
\[
 \hat f(\xi):=(2 \pi)^{-n/2} \int_{\R^n} f(x) e^{-ix \cdot \xi} dx.
\]
Then $\hat f \in L^2(\R^n,\R)$ and $\|\hat f\|=\|f\|$, where $\|f\|
\equiv \|f\|_0$ denotes the $L^2$-norm of $f$. The formula for the
inverse Fourier transform reads
\[
 f(x) = (2 \pi)^{-n/2} \int_{\R^n} \hat f(\xi) e^{ix \cdot \xi} d\xi.
\]
When expressed in terms of the Fourier transform $\hat f$ of
$f$, the operator $\partial^\alpha, \alpha \in {\mathbb Z}^n_{\geq 0}$ is the
multiplication operator
\[
 \hat f \mapsto (i\xi)^\alpha \hat f
\]
where $\xi^\alpha=\xi_1^{\alpha_1} \cdots \xi_n^{\alpha_n}$ and one
can show that $f \in L^2(\R^n,\R)$ is an element in $H^s(\R^n,\R)$ iff
$(1+|\xi|)^s \hat f$ is in $L^2(\R^n,\R)$ and the $H^s$-norm of $f$,
$\|f\|_s=\big( \sum_{|\alpha| \leq s} \|\xi^\alpha \hat f\|^2
\big)^{1/2}$, satisfies 
\begin{equation}\label{equivalent_norm}
C_s^{-1} \|f\|_s \leq \|f\|_s^\sim \leq C_s \|f\|_s 
\end{equation}
for some constant $C_s \geq 1$ where
\begin{equation}\label{fourier_hsnorm}
 \|f\|_s^\sim := \left( \int_{\R^n} (1+|\xi|^2)^{s} |\hat f(\xi)|^2
 d\xi \right)^{1/2}.
\end{equation}
In this way the Sobolev space $H^s(\R^n,\R)$ can be defined for $s \in \R_{\geq 0}$ arbitrary.
See Appendix \ref{appendix B} for a study of these spaces.\\
Using the Fourier transform one gets the following approximation
property for functions in $H^s(\R^n,\R)$.
\begin{Lemma}\label{lemma_density}
For any $s$ in $\Z$, the subspace $C_c^\infty(\R^n,\R)$ of
$C^\infty$ functions with compact support is dense
in $H^s(\R^n,\R)$.
\end{Lemma}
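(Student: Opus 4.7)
The plan is a two-step approximation, combining spatial truncation with mollification.

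First I would carry out a \emph{truncation} step. Fix a cutoff $\chi \in C_c^\infty(\R^n,\R)$ with $\chi \equiv 1$ on the unit ball and $0 \leq \chi \leq 1$, and set $\chi_R(x) := \chi(x/R)$ for $R > 0$. The claim is that $\chi_R f \to f$ in $H^s$ as $R \to \infty$ for every $f \in H^s(\R^n,\R)$. By the Leibniz rule, for each multi-index $\alpha$ with $|\alpha|\le s$,
\[
\partial^\alpha(\chi_R f) = \sum_{\beta\le\alpha} \binom{\alpha}{\beta}\, (\partial^\beta \chi_R)(\partial^{\alpha-\beta} f).
\]
The $\beta=0$ term equals $\chi_R\,\partial^\alpha f$ and converges to $\partial^\alpha f$ in $L^2$ by dominated convergence (with majorant $|\partial^\alpha f|$). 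For $\beta\neq 0$ one has $\partial^\beta\chi_R = R^{-|\beta|}(\partial^\beta\chi)(\cdot/R)$, which is uniformly bounded in $R$, vanishes pointwise as $R\to\infty$, and is supported where $|x|\ge R$; again dominated convergence gives that these terms go to zero in $L^2$. So $\chi_R f\to f$ in $H^s$, and $\chi_R f$ has compact support.

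Next I would \emph{mollify}. Take $\rho\in C_c^\infty(\R^n,\R)$ with $\rho\ge 0$, $\operatorname{supp}\rho\subset\{|x|\le 1\}$, $\int\rho\,dx=1$, and set $\rho_\varepsilon(x):=\varepsilon^{-n}\rho(x/\varepsilon)$. For any $g\in H^s$ with compact support, $g_\varepsilon := g*\rho_\varepsilon$ is $C^\infty$ with support in the $\varepsilon$-neighbourhood of $\operatorname{supp} g$, hence $g_\varepsilon\in C_c^\infty(\R^n,\R)$. Since $\partial^\alpha(g*\rho_\varepsilon)=(\partial^\alpha g)*\rho_\varepsilon$ for $|\alpha|\le s$, and since convolution with $\rho_\varepsilon$ is an approximate identity in $L^2$, one obtains $\|g_\varepsilon - g\|_s \to 0$ as $\varepsilon\to 0$.

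Combining the two steps: given $f\in H^s$ and $\delta>0$, first choose $R$ large enough that $\|\chi_R f - f\|_s<\delta/2$; then, applying the mollification step to $g := \chi_R f$, choose $\varepsilon>0$ so that $\|(\chi_R f)*\rho_\varepsilon - \chi_R f\|_s<\delta/2$. The resulting function $(\chi_R f)*\rho_\varepsilon$ lies in $C_c^\infty(\R^n,\R)$ and is within $\delta$ of $f$ in $H^s$. The only mildly delicate point is the truncation step, where one must keep track of the $R$-dependence of $\partial^\beta\chi_R$ and supply $L^2$-majorants independent of $R$ in order to invoke dominated convergence; the mollification step is then standard. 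Note that one could alternatively proceed via the Fourier characterization \eqref{fourier_hsnorm} to first approximate $f$ by Schwartz functions and then truncate their support, but the direct mollifier-plus-cutoff argument keeps the exposition elementary and fits the integer-$s$ setting of the lemma.
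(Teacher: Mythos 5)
Your proof is correct, but it takes a genuinely different route from the paper's. The paper achieves smoothness by a \emph{frequency-space} cutoff: it multiplies $\hat f$ by $\chi(|\xi|/N)$, so that $f_N$ is automatically $C^\infty$ (and in every $H^{s'}$), and only afterwards truncates in $x$-space, controlling the error of that second step by the same Leibniz-rule estimate you use in your truncation step. You instead truncate in $x$-space first and then obtain smoothness by \emph{mollification}, i.e.\ convolution with an approximate identity $\rho_\varepsilon$, using $\partial^\alpha(g*\rho_\varepsilon)=(\partial^\alpha g)*\rho_\varepsilon$. Both arguments are complete and elementary; the shared ingredient is the Leibniz-rule control of the cutoff error, and the difference is only in how $C^\infty$ regularity is produced. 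What the paper's Fourier route buys is that the whole argument is phrased in terms of the norm \eqref{fourier_hsnorm}, so it extends verbatim to real $s\ge 0$ (as the Remark following the Lemma records); your mollification step, as written, leans on the integer-order identity $\partial^\alpha(g*\rho_\varepsilon)=(\partial^\alpha g)*\rho_\varepsilon$ and the norm \eqref{hsnorm}, so the extension to fractional $s$ would require rerunning the convolution estimate on the Fourier side (which is possible, since $\widehat{g*\rho_\varepsilon}$ is $\hat g$ times a uniformly bounded multiplier converging pointwise to a constant, but it is an extra step). What your route buys is that it avoids the Fourier transform entirely in the smoothing step and is the more standard textbook argument for integer $s$.
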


\begin{Rem}
The proof shows that Lemma \ref{lemma_density} actually holds for any $s$ real with $s \geq 0$.
\end{Rem}

\begin{proof}
In a first step we show that $C^\infty(\R^n,\R) \cap H^{s'}(\R^n,\R)$ is dense in $H^s(\R^n,\R)$ for any integer
$s' \geq s$. Let $\chi:\R \to \R$ be a decreasing $C^\infty$ function
satisfying
\[
 \chi(t)=1 \quad \forall t \leq 1 \quad \mbox{and} \quad \chi(t)=0
 \quad \forall t \geq 2.
\]
For any $f \in H^s(\R^n,\R)$ and $N \in {\mathbb Z}_{\geq 1}$ define
\[
 f_N(x) = (2\pi)^{-n/2} \int_{\R^n} \chi\big( \tfrac{|\xi|}{N} \big)
 \hat f(\xi) e^{ix \cdot \xi} d\xi.
\]
The support of $\chi\big( \tfrac{|\xi|}{N}\big) \hat f(\xi)$ is contained in the ball $\{
|\xi| \leq 2N\}$. Hence $f_N(x)$ is in $C^\infty(\R^n,\R) \cap H^{s'}(\R^n,\R)$ for any $s' \geq 0$. In addition, by the
Lebesgue convergence theorem,
\[
\lim_{N \to \infty} \int_{\R^n} (1+|\xi|)^{2s}
\big(1-\chi(\tfrac{|\xi|}{N})\big)^2 |\hat f(\xi)|^2 d\xi =0.
\]
In view of (\ref{fourier_hsnorm}), we have $f_N \to f$ in
$H^s(\R^n,\R)$. In a second step we show that $C_c^\infty(\R^n,\R)$ is dense in
$C^\infty(\R^n,\R) \cap H^{s'}(\R^n,\R)$ for any integer $s' \geq 0$. We get the desired approximation of
an arbitrary function $f \in C^\infty(\R^n,\R) \cap H^{s'}(\R^n,\R)$ by truncation in the $x$-space. For any
$N \in \mathbb Z_{\geq 1}$, let
\[
 \tilde f_N(x) = \chi\big(\tfrac{|x|}{N}\big)\cdot f(x).
\] 
The support of $\tilde f_N$ is contained in the ball $\{ |x| \leq 2N \}$ and thus
$\tilde f_N \in C_c^\infty(\R^n,\R)$. To see that $f-\tilde f_N=(1-\chi\big(\frac{|x|}{N}\big))f$ converges to $0$ in
$H^{s'}(\R^n,\R)$, note that $f(x)-\tilde f_N(x) = 0$ for any $x \in \R^n$ with $|x| \leq N$. Furthermore it is easy
to see that
\[
 \sup_{\substack{x \in \R^n \\ |\alpha| \leq s'}} \big| \partial^\alpha
\big( 1 - \chi\big(\tfrac{|x|}{N}\big)\big)\big| \leq M_{s'}
\]
for some constant $M_{s'} > 0$ independent on $N$. Hence for any $\alpha \in \mathbb Z^n_{\geq 0}$ with
$|\alpha| \leq s'$, by Leibniz' rule,
\begin{eqnarray*}
\|\partial^\alpha f - \partial^\alpha \tilde f_N\|&=& \|\partial^\alpha
\left( \big(1-\chi\big(\tfrac{|x|}{N}\big)\big)\cdot f(x)\right) \|\\
&\leq& \sum_{\beta + \gamma=\alpha} \| \partial^\beta\big(1-\chi\big(\tfrac{|x|}{N}\big)\big)
\cdot \partial^\gamma f\|.
\end{eqnarray*}
Using that  $1-\chi\big(\frac{|x|}{N}\big)=0$ for any $|x|\leq N$ we conclude that
\[
\|\partial^\beta \big(1-\chi\big(\tfrac{|x|}{N}\big)\big) \cdot \partial^\gamma f\|
\leq M_{s'} \left(\int_{|x|\geq N} |\partial^\gamma f|^2 dx\right)^{1/2}
\]
and hence, as $f \in H^{s'}(\R^n,\R)$,
\[
 \lim_{N \to \infty} \|\partial^\alpha f - \partial^\alpha \tilde f_N\| =0.
\]
\end{proof}

\noindent To state regularity properties of elements in $H^s(\R^n,\R)$, introduce
for any $r \in \Z$ the space $C^r(\R^n,\R)$ of functions $f:\R^n \to \R$
with continuous partial derivatives up to order $r$. Denote by
$\|f\|_{C^r}$ the $C^r$-norm of $f$,
\[
 \|f\|_{C^r} = \sup_{x \in \R^n} \sup_{|\alpha| \leq r}
 |\partial^\alpha f(x)|.
\]
By $C^r_b(\R^n,\R)$ we denote the Banach space of functions $f$ in
$C^r(\R^n,\R)$ with $\|f\|_{C^r} < \infty$ and by $C_0^r(\R^n,\R)$ the
subspace of functions $f$ in $C^r(\R^n,\R)$ vanishing at
infinity. These are functions in $C^r(\R^n,\R)$ with the property that
for any $\varepsilon > 0$ there exists $M \geq 1$ so that
\[
 \sup_{|\alpha| \leq r} \sup_{|x| \geq M} |\partial^\alpha f(x)| < \varepsilon.
\]
Then
\[
C_0^r (\R^n,\R) \subseteq C_b^r(\R^n,\R) \subseteq C^r(\R^n,\R).
\]
By the triangle inequality one sees that $C_0^r(\R^n,\R)$ is a closed
subspace of $C_b^r(\R^n,\R)$. The following result is often referred to
as Sobolev embedding theorem.
\begin{Prop}\label{prop_sobolev_imbedding}
For any $r \in \Z$ and any integer $s$ with $s > n/2$, the space
$H^{s+r}(\R^n,\R)$ can be embedded into $C_0^r(\R^n,\R)$. More
precisely $H^{s+r}(\R^n,\R) \subseteq C_0^r(\R^n,\R)$ and there exists $K_{s,r} \geq 1$ so that 
\[
\|f\|_{C^r} \leq K_{s,r} \|f\|_{s+r} \quad \forall f \in H^{s+r}(\R^n,\R).
\]
\end{Prop}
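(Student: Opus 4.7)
The plan is to prove the estimate first for smooth compactly supported functions using Fourier inversion and the equivalent norm (\ref{fourier_hsnorm}), and then extend to all of $H^{s+r}(\R^n,\R)$ via the density result of Lemma \ref{lemma_density}. The core of the argument is a single Cauchy--Schwarz estimate that works precisely because $s > n/2$.

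First I would take $f \in C_c^\infty(\R^n,\R)$. Then $\hat f$ belongs to the Schwartz class, Fourier inversion applies, and for any multi-index $\alpha$ with $|\alpha|\leq r$ one has
\[
\partial^\alpha f(x) \;=\; (2\pi)^{-n/2} \int_{\R^n} (i\xi)^\alpha \hat f(\xi)\, e^{ix\cdot\xi}\, d\xi.
\]
Hence $|\partial^\alpha f(x)| \leq (2\pi)^{-n/2} \int_{\R^n} |\xi|^{|\alpha|}|\hat f(\xi)|\,d\xi$. Writing
\[
|\xi|^{|\alpha|}|\hat f(\xi)| \;=\; \bigl(|\xi|^{|\alpha|}(1+|\xi|^2)^{-(s+r)/2}\bigr)\cdot\bigl((1+|\xi|^2)^{(s+r)/2}|\hat f(\xi)|\bigr)
\]
and applying Cauchy--Schwarz together with the equivalence (\ref{equivalent_norm}) yields
\[
|\partial^\alpha f(x)| \;\leq\; (2\pi)^{-n/2} \Bigl(\int_{\R^n} \frac{|\xi|^{2|\alpha|}}{(1+|\xi|^2)^{s+r}}\, d\xi\Bigr)^{1/2} C_{s+r}\, \|f\|_{s+r}.
\]
The integral converges provided $2(s+r) - 2|\alpha| > n$; since $|\alpha|\leq r$ and $s > n/2$, this holds with a bound uniform in $|\alpha|\leq r$, giving a constant $K_{s,r}\geq 1$ with $\|f\|_{C^r} \leq K_{s,r}\|f\|_{s+r}$ for all $f \in C_c^\infty(\R^n,\R)$.

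Next I would extend to arbitrary $f \in H^{s+r}(\R^n,\R)$. By Lemma \ref{lemma_density}, choose a sequence $(f_N) \subseteq C_c^\infty(\R^n,\R)$ converging to $f$ in $H^{s+r}$. By the estimate just proved, $(f_N)$ is Cauchy in $C_b^r(\R^n,\R)$, hence converges uniformly in $C^r$ to some $g \in C_b^r(\R^n,\R)$; as each $f_N$ lies in $C_c^\infty \subseteq C_0^r(\R^n,\R)$ and $C_0^r(\R^n,\R)$ is closed in $C_b^r(\R^n,\R)$, we have $g \in C_0^r(\R^n,\R)$. Since $H^{s+r}$-convergence implies $L^2$-convergence, a subsequence of $(f_N)$ converges to $f$ almost everywhere; comparing with the uniform convergence $f_N \to g$ identifies $f = g$ pointwise (up to the a.e.\ representative), so $f \in C_0^r(\R^n,\R)$. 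Passing to the limit in the inequality $\|f_N\|_{C^r}\leq K_{s,r}\|f_N\|_{s+r}$ gives the desired bound for $f$.

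There is no serious obstacle; the only point requiring a little care is the identification of the uniform limit with $f$ itself, which is handled by the standard a.e.\ argument above. The essential quantitative input is the convergence of $\int |\xi|^{2|\alpha|}(1+|\xi|^2)^{-(s+r)}d\xi$, which is exactly where the hypothesis $s > n/2$ enters.
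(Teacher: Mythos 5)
Your proposal is correct and follows essentially the same route as the paper: a Cauchy--Schwarz estimate on the Fourier inversion formula for $f \in C_c^\infty(\R^n,\R)$ (your splitting $|\xi|^{|\alpha|}(1+|\xi|^2)^{-(s+r)/2}$ versus the paper's $(1+|\xi|^2)^{-s/2}$ is only a cosmetic variant, both hinging on $s>n/2$), followed by the density argument of Lemma \ref{lemma_density} and identification of the uniform limit with $f$ almost everywhere. No gaps.
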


\begin{Rem}\label{rem_prop_sobolev_imbedding}
The proof shows that Proposition \ref{prop_sobolev_imbedding} holds for any real $s$ with $s>n/2$.
\end{Rem}

\begin{proof}
As for $s > n/2$
\[
\int_{\R^n} (1+|\xi|^2)^{-s} d\xi < \infty
\]
one gets by the Cauchy-Schwarz inequality for any $f \in C_c^\infty(\R^n,\R)$ and $\alpha \in \mathbb
Z^n_{\geq 0}$ with $|\alpha| \leq r$
\begin{eqnarray}
\nonumber
\sup_{x \in \R^n} |\partial^\alpha f(x)| \leq (2 \pi)^{-n/2}
\int_{\R^n} |\hat f(\xi)| \,\, |\xi|^\alpha d\xi\\
\nonumber
\leq \Big( \int_{\R^n} (1+|\xi|^2)^{-s} d\xi \Big)^{1/2} (2\pi)^{-n/2}
\Big( \int_{\R^n} |\hat f(\xi)|^2 (1+|\xi|^2)^{s+r}d\xi \Big)^{1/2}\\
\label{imbedding_ineq_b}
\leq K_{r,s} \|f\|_{r+s}
\end{eqnarray}
for some $K_{r,s}>0$. By Lemma \ref{lemma_density}, an arbitrary element $f \in H^{s+r}(\R^n,\R)$ can be approximated by a sequence $(f_N)_{N \geq 1}$ in $C_c^\infty(\R^n,\R)$. As $C_0^r(\R^n,\R)$ is a Banach space, it then follows from (\ref{imbedding_ineq_b}) that $(f_N)_{N \geq 1}$ is a Cauchy sequence in $C_0^r(\R^n,\R)$ which converges to some function $\tilde f$ in $C_0^r(\R^n,\R)$. In particular, for any compact subset $K \subseteq \R^n$,
\[
 \left. f_N \right|_K \to \tilde f \left. \mbox{} \right|_K \mbox{ in } L^2(K,\R). 
\]
This shows that $\tilde f \equiv f$ a.e. and hence $f \in C_0^r(\R^n,\R)$.
\end{proof}

\noindent As an application of Proposition \ref{prop_sobolev_imbedding} one gets
the following
\renewcommand{\thefootnote}{\fnsymbol{footnote}}
\begin{Coro}\label{cor_openness}
Let $s$ be an integer with $s >n/2+1$. Then the following statements hold:
\begin{itemize}
\item[(i)] For any $\varphi \in \Ds^s$, the linear operators $d_x\varphi, d_x\varphi^{-1}:\R^n \to \R^n$
are bounded uniformly in $x \in \R^n$\footnote{Here $d_x\varphi^{-1}\equiv d_x(\varphi^{-1})$ where 
$\varphi\circ\varphi^{-1}=\id$.}. In particular,
\[
 \inf_{x \in \R^n} \det d_x\varphi > 0.
\]
\item[(ii)] $\Ds^s - \id=\{\varphi - \id
\,|\, \varphi \in \Ds^s \}$ is an open subset of $H^s$. Hence the map
\[
\Ds^s \to H^s, \quad \varphi \mapsto \varphi - \id
\] 
provides a global chart for $\Ds^s$, giving $\Ds^s$ the structure of a $C^\infty$-Hilbert manifold modeled
on $H^s$. 
\item[(iii)] For any $\varphi_\bullet \in \D^s$ such that
\[
 \inf_{x \in \R^n} \det d_x\varphi_\bullet > M > 0
\]
there exist an open neighborhood $\;U_{\varphi_\bullet}$ of $\varphi_\bullet$ in $\D^s$ and $C>0$ such that for any $\varphi$ in $U_{\varphi_\bullet}$,
\[
\inf_{x \in \R^n} \det d_x\varphi \geq M \quad \mbox{and} \quad
 \sup_{x \in \R^n} \left| d_x \varphi^{-1}\right| < C.\footnote{For a linear operator $A:\R^n \to \R^n$, denote by $|A|$ its operator norm, $|A|:=\sup_{|x|=1}|Ax|$}
\]
\end{itemize}
\end{Coro}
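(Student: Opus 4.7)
The plan is to observe that, since $s > n/2 + 1$, applying Proposition \ref{prop_sobolev_imbedding} componentwise gives a bounded linear embedding $H^s \hookrightarrow C_0^1(\R^n,\R^n)$. Consequently, for every $\varphi \in \Ds^s$ the matrix field $d_x\varphi - I$ lies in $C_0^0$, and the map $\varphi \mapsto d\varphi$ is continuous from $\Ds^s \subset \id + H^s$ into $C_b^0$-valued matrix fields. Once this basic estimate is in hand, parts (i) and (iii) reduce to elementary continuity arguments, while (ii) requires one extra global ingredient.

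For (i), I will first use that $d_x\varphi - I$ is continuous and vanishes at infinity to conclude that $|d_x\varphi|$ is uniformly bounded and $\det d_x\varphi \to 1$ as $|x| \to \infty$. Hence there exists $R > 0$ with $\det d_x\varphi \geq 1/2$ outside the ball $B_R$; on the compact closure $\overline{B_R}$ the function $x \mapsto \det d_x\varphi$ is continuous and (by the assumption $\varphi \in \mathrm{Diff}^1_+$) strictly positive, so it attains a positive minimum. This gives $\inf_x \det d_x\varphi > 0$. For the bound on $d_x\varphi^{-1}$, I differentiate the relation $\varphi^{-1}\circ\varphi = \id$ to obtain $d_x\varphi^{-1} = (d_{\varphi^{-1}(x)}\varphi)^{-1}$ and then apply Cramer's rule $A^{-1} = \mathrm{adj}(A)/\det A$, together with the elementary estimate $|\mathrm{adj}(A)| \leq c_n |A|^{n-1}$, to conclude that $|d_x\varphi^{-1}|$ is uniformly bounded by $c_n \sup_y |d_y\varphi|^{n-1}/\inf_y \det d_y\varphi$.

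For (ii), I will fix $\varphi_\bullet \in \Ds^s$ and show that any $\psi$ with $\psi - \id \in H^s$ and $\|\psi - \varphi_\bullet\|_s$ small enough belongs to $\Ds^s$. The Sobolev embedding renders $\|\psi - \varphi_\bullet\|_{C^1}$ small, so by (i) applied to $\varphi_\bullet$ one has $\det d_x\psi \geq \tfrac{1}{2}\inf_x \det d_x\varphi_\bullet > 0$ uniformly in $x$; thus $\psi$ is an orientation-preserving local $C^1$-diffeomorphism. Since $\psi - \id \in C_0^0$, one has $|\psi(x)| \to \infty$ as $|x| \to \infty$, so $\psi$ is proper. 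The main obstacle of the corollary lies here: I must invoke the fact (a Hadamard-type global inversion result) that a proper $C^1$ local diffeomorphism $\R^n \to \R^n$ is a covering map, hence -- since $\R^n$ is simply connected -- a $C^1$-diffeomorphism of $\R^n$ onto itself, with $C^1$ inverse supplied by the inverse function theorem. This places $\psi$ in $\mathrm{Diff}^1_+(\R^n)$, and since $\psi - \id \in H^s$ by assumption, $\psi \in \Ds^s$. Openness of $\Ds^s - \id$ follows; the global chart $\varphi \mapsto \varphi - \id$ then furnishes the smooth Hilbert-manifold structure.

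For (iii), I will reuse the continuity of $\varphi \mapsto d\varphi$ from $\Ds^s$ to $C_b^0$ to choose $U_{\varphi_\bullet}$ so small that $\|d\varphi - d\varphi_\bullet\|_{C^0} < \delta$ throughout, with $\delta$ chosen so that $\det d_x\varphi \geq M$ is preserved and $\sup_x |d_x\varphi| \leq K$ for some fixed $K$. Applying the adjugate/Cramer estimate derived in (i) then yields $\sup_x |d_x\varphi^{-1}| \leq c_n K^{n-1}/M =: C$, which is the required uniform bound.
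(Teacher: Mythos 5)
Your proposal is correct, and parts (i) and (iii) follow essentially the same lines as the paper: Sobolev embedding into $C_0^1$, the observation that $\det d_x\varphi \to 1$ at infinity combined with positivity on compact balls, and the adjugate/Cramer bound for $d_x\varphi^{-1}$ (the paper in fact leaves (iii) to the reader, and your argument is the intended one). The genuine divergence is in part (ii), in how you establish that the perturbed map $\psi$ is globally bijective. The paper argues directly and elementarily: it writes $\psi\circ\varphi_\bullet^{-1} = \id + (f - f_{\varphi_\bullet})\circ\varphi_\bullet^{-1}$, uses the smallness of $\sup_x|d_xf - d_xf_{\varphi_\bullet}|$ together with the uniform bound on $d\varphi_\bullet^{-1}$ to get a Lipschitz-$\tfrac12$ perturbation of the identity (hence injectivity), and proves surjectivity by showing the range is open and closed via a compactness argument that exploits $f \in C_0^0$. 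You instead observe that $\psi - \id \in C_0^0$ makes $\psi$ proper and invoke the Hadamard-type global inversion theorem (a proper local $C^1$-diffeomorphism of $\R^n$ is a covering map of the simply connected target, hence a diffeomorphism). Both are valid; your route is shorter and more conceptual but imports a nontrivial topological theorem, whereas the paper's version is self-contained and quantitative, which matters for its stated aim of keeping the exposition elementary and is also reused later (the explicit Lipschitz and determinant bounds feed directly into the proof of (iii) and into Lemma \ref{continuous_composition}).
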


\begin{Rem}
The proof shows that Corollary \ref{cor_openness} holds for any real $s$ with $s > n/2+1$.
\end{Rem}

\begin{proof} $(i)$ Introduce
\[
 \mathscr C^1(\R^n) := \big\{ \varphi \in \mbox{Diff}_+^1(\R^n) \,\big|\,
   \varphi - \id \in C_0^1(\R^n) \big\}
\]
where $C_0^1(\R^n) \equiv C_0^1(\R^n,\R^n)$ is the space of $C^1$-maps $f:\R^n
\to \R^n$, vanishing together with their partial derivatives $\dx{i}f
\,\,(1 \leq i \leq n)$ at infinity. By Proposition
\ref{prop_sobolev_imbedding}, $H^s$ continuously embeds into
$C_0^1(\R^n)$ for any integer $s$ with $s > n/2+1$. In particular,
$\Ds^s \hookrightarrow \mathscr C^1(\R^n)$. We now prove that for any $\varphi \in \mathscr C^1(\R^n)$, $d\varphi$ and $d\varphi^{-1}$ are bounded on $\R^n$. Clearly, for any $\varphi \in \mathscr C^1(\R^n)$, $d\varphi$ is bounded on
$\R^n$. To show that $d\varphi^{-1}$ is bounded as well introduce
for any $f \in C_0^1(\R^n)$ the function $F(f):\R^n \to \R$ given by
\begin{eqnarray*}
 F(f)(x)&:=& \det\big( \id + d_xf \big) -1\\
&=& \det\big( (\delta_{i1} + \dx{1}f_i)_{1 \leq i \leq
  n},\ldots,(\delta_{in}+\dx{n}f_i)_{1 \leq i \leq n} \big) -1 
\end{eqnarray*}
where $f(x)=\big(f_1(x),\ldots,f_n(x)\big)$. As 
\[
 \lim_{|x| \to \infty} \dx{k}f_i(x)=0 \quad \mbox{for any} \quad 1
 \leq i,k \leq n
\]
one has 
\begin{equation}\label{vanishing_infinity}
 \lim_{|x| \to \infty} F(f)(x)=0.
\end{equation}
It is then straightforward to verify that $F$ is a continuous map,
\[
 F:C_0^1(\R^n) \to C_0^0(\R^n,\R).
\]
Choose an arbitrary element $\varphi$ in $\mathscr C^1(\R^n)$. Then
(\ref{vanishing_infinity}) implies that
\begin{equation}\label{nonzero_det}
 M_1:= \inf_{x \in \R^n} \det(d_x\varphi) > 0.
\end{equation}
As the differential of the inverse, $d_x\varphi^{-1} =
\big( d_{\varphi^{-1}(x)} \varphi \big)^{-1}$, can be computed in terms
of the cofactors of $d_{\varphi^{-1}(x)} \varphi$ and
$1/\det(d_{\varphi^{-1}(x)}\varphi)$ it follows from
(\ref{nonzero_det}) that
\begin{equation}\label{bounded_inverse}
M_2:= \sup_{x \in \R^n} |d_x\varphi^{-1}| < \infty
\end{equation}
where $|A|$ denotes the operator norm of a linear operator $A:\R^n \to
\R^n$. \\
$(ii)$ Using again that $\Ds^s$ continuously embeds into $\mathscr C^1(\R^n)$ it remains to prove that
$\mathscr C^1(\R^n)-\id$ is an open subset of $C_0^1(\R^n)$. Note that the map $F$ introduced above is continuous. 
Hence there exists a neighborhood
$U_\varphi$ of $f_\varphi:=\varphi -\id$ in $C_0^1(\R^n)$ so that
for any $f \in U_\varphi$
\begin{equation}\label{continuous_jacobian}
\sup_{x \in \R^n} |d_xf - d_xf_\varphi| \leq \frac{1}{2M_2}
\end{equation}
and
\begin{equation}\label{continuous_F}
\sup_{x \in \R^n} \big| F\big(f\big)(x) - F\big(f_\varphi\big)(x)\big|
\leq \frac{M_1}{2}
\end{equation}
with $M_1,M_2$ given as in
(\ref{nonzero_det})-(\ref{bounded_inverse}). We claim that $\id+f \in
\mathscr C^1(\R^n)$ for any $f \in U_\varphi$. As $\varphi \in \mathscr C^1(\R^n)$
was chosen arbitrarily it then would follow that $\mathscr C^1(\R^n) -\id$ is
open in $C_0^1(\R^n)$. First note that by (\ref{continuous_F}),
\[
 0 < M_1/2 \leq \det(\id + d_xf) \quad \forall x \in \R^n, \forall f \in U_\varphi.
\]
Hence $\id+f$ is a local diffeomorphism on $\R^n$ and it remains to show
that $\id +f$ is 1-1 and onto for any $f$ in $U_\varphi$. Choose $f \in
U_\varphi$ arbitrarily. To see that $\id+f$ is 1-1 it suffices to prove
that $\psi:=(\id +f) \circ \varphi^{-1}$ is 1-1. Note that
\[
 \psi = (\id +f_\varphi + f - f_\varphi) \circ \varphi^{-1} = \id +
 (f-f_\varphi)\circ \varphi^{-1}.
\]
For any $x,y \in \R^n$, one therefore has 
\[
 \psi(x)-\psi(y)=x-y + (f-f_\varphi)\circ\varphi^{-1}(x) - (f-f_\varphi)\circ\varphi^{-1}(y).
\]
By (\ref{bounded_inverse}) and (\ref{continuous_jacobian})
\begin{eqnarray*}
 |(f-f_\varphi)\circ\varphi^{-1}(x) -
 (f-f_\varphi)\circ\varphi^{-1}(y)| &\leq& \frac{1}{2M_2}
 |\varphi^{-1}(x) - \varphi^{-1}(y)|\\
&\leq& \frac{1}{2} |x-y|
\end{eqnarray*}
and thus
\[
 |(x-y)-\big(\psi(x)-\psi(y)\big)| \leq \frac{1}{2}|x-y| \quad \forall
 x,y \in \R^n
\]
which implies that $\psi$ is 1-1. To prove that $\id+f$ is onto we show
that $R_f:=\{x+f(x) \,|\, x\in \R^n \}$ is an open and closed subset of
$\R^n$. Being nonempty, one then has $R_f=\R^n$. As $\id+f$ is a local
diffeomorphism on $\R^n$, $R_f$ is open. To see that it is closed,
consider a sequence $(x_k)_{k \geq 1}$ in $\R^n$ so that
$y_k:=x_k+f(x_k)$, $k \geq 1$, converges. Denote the limit by $y$. As
$\lim_{|x| \to \infty} f(x)=0$, the sequence $\big(f(x_k)\big)_{k \geq
1}$ is bounded, hence $x_k=y_k-f(x_k)$ is a bounded sequence and
therefore admits a convergent subsequence $(x_{k_i})_{i \geq 1}$ whose
limit is denoted by $x$. Then
\begin{eqnarray*}
 y&=& \lim_{i \to \infty} x_{k_i} + \lim_{i \to \infty} f(x_{k_i})\\
&=& x +f(x)
\end{eqnarray*}
i.e. $y \in R_f$. This shows that $R_f$ is closed and finishes the proof of item $(ii)$. The proof of $(iii)$ is
straightforward and we leave it to the reader.
\end{proof}

\noindent The following properties of multiplication of functions in Sobolev spaces are well known --
see e.g. \cite{adams}.

\begin{Lemma}\label{lem_imbedding}
Let $s, s'$ be integers with $s > n/2$ and $0 \leq s' \leq s$. Then there exists $K>0$ so that
for any $f \in \Hs^s(\R^n,\R)$, $g \in \Hs^{s'}(\R^n,\R)$, the product $f \cdot g$ is in $\Hs^{s'}(\R^n,\R)$ and
\begin{equation}
\label{multiplication}
 \hnorm{f \cdot g}{s'} \leq K \hnorm{f}{s} \hnorm{g}{s'}.
\end{equation}
In particular, $\Hs^s(\R^n,\R)$ is an algebra.
\end{Lemma}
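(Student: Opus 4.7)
The plan is to expand $\partial^\alpha(fg)$ by the Leibniz rule and estimate each summand in $L^2$ by appealing to Proposition \ref{prop_sobolev_imbedding} together with H\"older's inequality. For any multi-index $\alpha$ with $|\alpha|\leq s'$ one has
$$\partial^\alpha(fg)=\sum_{\beta+\gamma=\alpha}\binom{\alpha}{\beta}\,\partial^\beta f\cdot\partial^\gamma g,$$
so it suffices to prove $\|\partial^\beta f\cdot\partial^\gamma g\|_{0}\leq C\|f\|_s\|g\|_{s'}$ for every pair $(\beta,\gamma)$ with $\beta+\gamma=\alpha$, and then sum the finitely many contributions.

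The argument splits according to which factor admits an $L^\infty$-bound via Sobolev embedding. If $|\beta|<s-n/2$, then $s-|\beta|>n/2$, so Proposition \ref{prop_sobolev_imbedding} applied to $\partial^\beta f\in H^{s-|\beta|}(\R^n,\R)$ yields $\|\partial^\beta f\|_{C^0}\leq K\|f\|_s$, and H\"older's inequality combined with $|\gamma|\leq s'$ gives
$$\|\partial^\beta f\cdot \partial^\gamma g\|_{0} \leq \|\partial^\beta f\|_{C^0}\|\partial^\gamma g\|_{0} \leq K\|f\|_s\|g\|_{s'}.$$
The same estimate holds symmetrically when $|\gamma|<s'-n/2$, by placing $\partial^\gamma g$ in $L^\infty$. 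These two cases already cover $s'=0$ (where only $\beta=\gamma=0$ occurs, and $f\in L^\infty$) and, more generally, any term in which $\min(|\beta|,|\gamma|)$ is small relative to the available regularity.

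The main obstacle is the residual regime $|\beta|\geq s-n/2$ \emph{and} $|\gamma|\geq s'-n/2$; summing against $|\beta|+|\gamma|\leq s'$ forces $s\leq n$, so such terms arise only in moderate regularity. In this regime neither factor lies directly in $L^\infty$ via Sobolev, and one must invoke the fuller Sobolev embedding $H^a(\R^n,\R)\hookrightarrow L^{2n/(n-2a)}(\R^n,\R)$ for $0\leq a<n/2$ (with the appropriate replacement when $a=n/2$ and $n$ is even); this embedding is not recorded in the main text but is classical, see \cite{adams}. Setting $a:=s-|\beta|$ and $b:=s'-|\gamma|$, the identity $a+b=s+s'-|\alpha|\geq s>n/2$ ensures the existence of a pair $p,q\in[2,\infty]$ with $1/p+1/q=1/2$, $H^a\hookrightarrow L^p$, and $H^b\hookrightarrow L^q$. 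H\"older's inequality then yields $\|\partial^\beta f\cdot\partial^\gamma g\|_{0}\leq C\|f\|_s\|g\|_{s'}$, completing the proof of (\ref{multiplication}) with a constant $K=K(s,s',n)$. The algebra property for $\Hs^s(\R^n,\R)$ is the special case $s'=s$.
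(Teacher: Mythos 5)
Your argument is correct, but it follows a genuinely different route from the paper. The paper works entirely on the Fourier side: it writes $(1+|\xi|^2)^{s'/2}\widehat{f\cdot g}$ as a weighted convolution $\hat f \ast \hat g$, splits the frequency domain into $\{|\eta|>|\xi|/2\}$ and $\{|\eta|\leq|\xi|/2\}$, and closes the estimate with Young's inequality together with $\|\hat f\|_{L^1}\leq C\|f\|_s$. You instead expand $\partial^\alpha(fg)$ by Leibniz and estimate each term $\partial^\beta f\cdot\partial^\gamma g$ in $L^2$ via the Sobolev scale $H^a\hookrightarrow L^{2n/(n-2a)}$ and H\"older; your bookkeeping ($a+b=s+s'-|\alpha|\geq s>n/2$, hence exponents $p,q$ with $1/p+1/q\leq 1/2$ exist) is sound, including the borderline case $a=n/2$. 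The trade-offs: the paper's convolution argument never differentiates anything, so it extends verbatim to real $s,s'$ — which the paper explicitly exploits (Remark \ref{rem_lem_imbedding} and Appendix \ref{appendix B}) — whereas your Leibniz-based proof is intrinsically tied to integer exponents. Conversely, your proof is arguably more elementary and localizes better, at the cost of invoking the full $L^p$-embedding scale on $\R^n$, which the main text only records for bounded Lipschitz domains (Proposition \ref{imbedding_theorems}); your citation of \cite{adams} for the $\R^n$ case is appropriate. One step you should make explicit: the Leibniz identity for the \emph{distributional} derivatives of $fg$ is not automatic when $f,g$ are merely Sobolev; it should be justified by approximating $f$ and $g$ by elements of $C_c^\infty(\R^n,\R)$ (Lemma \ref{lemma_density}), applying Leibniz to the approximants, and passing to the limit using exactly the $L^2$-bounds you establish. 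This is routine but belongs in a complete proof.
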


\begin{Rem}\label{rem_lem_imbedding}
The proof shows that Lemma \ref{lem_imbedding} remains true for any real $s$ and $s'$ with $s > n/2$ and $0 \leq s' \leq s$.
\end{Rem}

\begin{proof}
First we show that
\[
 (1+|\xi|^2)^{s'/2} \widehat{f \cdot g}(\xi)=(1+|\xi|^2)^{s'/2} (\hat f \ast \hat g)(\xi) \in L^2(\R^n,\R)
\]
where $\ast$ denotes the convolution
\[
 (\hat f \ast \hat g)(\xi)= \int_{\R^n} \hat f(\xi-\eta) \hat g(\eta)d\eta.
\]
By assumption,
\[
 \tilde f(\xi) := \hat f(\xi) \ (1+|\xi|^2)^{s/2} \quad \mbox{and} \quad  \tilde g(\xi) = \hat g(\xi) \ (1+|\xi|^2)^{s'/2} 
\]
are in $L^2(\R^n,\R)$. Note that in view of definition (\ref{fourier_hsnorm}), $\|\tilde f\|=\|f\|_s^\sim$ and $\|\tilde g\|=\|g\|_{s'}^\sim$. It is to show that
\[
 \xi \mapsto (1+|\xi|^2)^{s'/2} \int_{\R^n} \frac{|\tilde f(\xi - \eta)|}{(1+|\xi-\eta|^2)^{s/2}} \frac{|\tilde g(\eta)|}{(1+|\eta|^2)^{s'/2}} d\eta
\]
is square-integrable. We split the domain of integration into two subsets $\{|\eta|>|\xi|/2\}$ and $\{|\eta| \leq |\xi|/2 \}$. Then
\begin{eqnarray*}
&& (1+|\xi|^2)^{s'/2} \int_{|\eta| > |\xi|/2} \frac{|\tilde f(\xi - \eta)|}{(1+|\xi-\eta|^2)^{s/2}} \frac{|\tilde g(\eta)|}{(1+|\eta|^2)^{s'/2}} d\eta\\
&\leq& 2^{s'} (1+|\xi|^2)^{s'/2} \int_{|\eta| > |\xi|/2} \frac{|\tilde f(\xi - \eta)|}{(1+|\xi-\eta|^2)^{s/2}} \frac{|\tilde g(\eta)|}{(1+|\xi|^2)^{s'/2}} d\eta\\
&\leq& 2^{s'}  \int_{\R^n} \frac{|\tilde f(\xi - \eta)|}{(1+|\xi-\eta|^2)^{s/2}} |\tilde g(\eta)| d\eta\\
&=& 2^{s'} |\hat f| \ast |\tilde g| (\xi).
\end{eqnarray*}
By Young's inequality (see e.g. Theorem 1.2.1 in \cite{Marti}), 
\[
\big|\big| \, |\hat f| \ast |\tilde g| \, \big|\big| \leq \|\hat f\|_{L^1} \| \tilde g\|
\] 
and
\[
\|\hat f\|_{L^1} \leq \left( \int_{\R^n} (1+|\xi|^2)^{s} |\hat f(\xi)|^2 d\xi \right)^{1/2} 
\left( \int_{\R^n} (1+|\xi|^2)^{-s} d\xi \right)^{1/2}\,.
\]
This implies that
\[
\big|\big| \, |\hat f| \ast |\tilde g| \, \big|\big| \leq C \|f\|_s \|g\|_{s'}^\sim.
\]
Similarly, one argues for the integral over the remaining subset. Note that on the domain $\{|\eta| \leq |\xi|/2 \}$ one has
\[
 (1+|\xi-\eta|^2) \geq (1+|\eta|^2) \quad \mbox{and} \quad (1+|\xi-\eta|^2) \geq \frac{1}{4} (1+|\xi|^2)
\]
and hence
\[
 (1+|\xi-\eta|^2)^{s/2} \geq (1+|\eta|^2)^{(s-s')/2} 2^{-s'} (1+|\xi|^2)^{s'/2}
\]
Hence
\begin{eqnarray*}
&& (1+|\xi|^2)^{s'/2} \int_{|\eta| \leq |\xi|/2} \frac{|\tilde f(\xi - \eta)|}{(1+|\xi-\eta|^2)^{s/2}} \frac{|\tilde g(\eta)|}{(1+|\eta|^2)^{s'/2}} d\eta \\
& \leq &  2^{s'} \int_{|\eta| \leq |\xi|/2} |\tilde f(\xi-\eta)| \frac{|\tilde g(\eta)|}{(1+|\eta|^2)^{s/2}} d\eta 
\end{eqnarray*}
and the $L^2$-norm of the latter convolution is bounded by
\[
\|\tilde f\| \, \|\tilde g(\eta)/(1+|\eta|^2)^{s/2}\|_{L^1} \leq C\|f\|_s \|g\| \leq C \|f\|_s\|g\|_{s'}
\]
with an appropriate constant $C > 0$.
\end{proof}

\noindent The following results concern the chain rule of differentiation for functions in $H^1(\R^n,\R)$.

\begin{Lemma}\label{lemma_chain_rule}
Let $\varphi \in \emph{Diff}_+^{\;1}(\R^n)$ with $d\varphi$ and $d\varphi^{-1}$ bounded on all of $\R^n$. Then the
 following statements hold:
\begin{itemize}
\item[(i)] The right translation by $\varphi$, $f \mapsto R_\varphi(f):=f \circ \varphi$ is a bounded linear map
on $L^2(\R^n,\R)$.
\item[(ii)] For any $f \in H^1(\R^n,\R)$, the composition $f \circ \varphi$ is again in $H^1(\R^n,\R)$ and the
differential $d(f \circ \varphi)$ is given by the map $df \circ \varphi \cdot d\varphi \in L^2(\R^n,\R^n)$,
\begin{equation}\label{differential_of_composition}
d(f\circ \varphi) = (df)\circ \varphi \cdot d\varphi.
\end{equation} 
\end{itemize}
\end{Lemma}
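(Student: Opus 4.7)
The plan is to prove (i) by a direct change-of-variables argument, then use (i) together with Lemma \ref{lemma_density} (density of $C_c^\infty$ in $H^1$) to establish (ii) by approximation: the chain rule holds trivially for smooth $f$, so it suffices to check that both sides of the claimed identity pass to the limit in $L^2$.

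For (i), I would first take $f \in C_c^\infty(\R^n,\R)$ and apply the change of variables $y = \varphi(x)$, whose Jacobian determinant is $\det d_x\varphi = 1/\det(d_y\varphi^{-1})$. This gives
\[
\int_{\R^n} |f\circ\varphi(x)|^2\, dx = \int_{\R^n} |f(y)|^2 \, |\det d_y\varphi^{-1}| \, dy.
\]
Since $d\varphi^{-1}$ is bounded on $\R^n$ by hypothesis, so is $|\det d\varphi^{-1}|$, say by $C_\varphi^n$ where $C_\varphi = \sup_x |d_x\varphi^{-1}|$. Hence $\lnorm{R_\varphi f} \leq C_\varphi^{n/2}\lnorm{f}$, and $R_\varphi$ extends by density to a bounded linear map on $L^2(\R^n,\R)$. (The extension agrees with the pointwise a.e.\ composition, because Cauchy sequences in $L^2$ have a.e.\ convergent subsequences and composition with $\varphi$ preserves a.e.\ equality, using that $\varphi$ sends sets of measure zero to sets of measure zero, which follows from the boundedness of $|\det d\varphi^{-1}|$.)

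For (ii), pick $f \in H^1(\R^n,\R)$ and, by Lemma \ref{lemma_density}, a sequence $(f_k)$ in $C_c^\infty(\R^n,\R)$ with $f_k \to f$ in $H^1$. For each $f_k$ the classical chain rule gives the pointwise identity
\[
d(f_k\circ\varphi)(x) = (df_k)(\varphi(x)) \cdot d_x\varphi,
\]
and each $f_k\circ\varphi$ lies in $H^1(\R^n,\R)$. By part (i), $f_k\circ\varphi \to f\circ\varphi$ in $L^2$. For the right-hand side, each component $\dx{j}f_k$ converges to $\dx{j}f$ in $L^2$, so by (i) applied componentwise, $(\dx{j}f_k)\circ\varphi \to (\dx{j}f)\circ\varphi$ in $L^2$; multiplying by the bounded matrix entries $\dx{i}\varphi_j$ and summing preserves $L^2$-convergence. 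Therefore $(df_k)\circ\varphi \cdot d\varphi \to (df)\circ\varphi \cdot d\varphi$ in $L^2(\R^n,\R^n)$, and in particular the right-hand side of (\ref{differential_of_composition}) lies in $L^2$.

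It remains to identify this limit with the distributional derivative of $f\circ\varphi$. For any test function $\psi \in C_c^\infty(\R^n,\R)$ and any $1\leq i\leq n$, integration by parts in the smooth case yields
\[
\int_{\R^n} (f_k\circ\varphi)\,\dx{i}\psi\, dx = -\int_{\R^n} \bigl[(df_k)\circ\varphi\cdot d\varphi\bigr]_i\,\psi\, dx.
\]
Passing $k\to\infty$ using the $L^2$-convergences established above (and the fact that $\psi$ and $\dx{i}\psi$ are in $L^2$) gives the same identity with $f_k$ replaced by $f$. This shows that the weak derivative $\dx{i}(f\circ\varphi)$ coincides with the $i$-th component of $(df)\circ\varphi\cdot d\varphi$, which is already known to be in $L^2$, and hence $f\circ\varphi\in H^1(\R^n,\R)$ with the asserted formula for its differential. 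The only non-routine point is the a.e.\ identification of the $L^2$-extension of $R_\varphi$ with pointwise composition in step (i); everything else is a standard density argument.
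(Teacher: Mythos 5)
Your proposal is correct and follows essentially the same route as the paper: the classical chain rule for $C_c^\infty$ approximants, $L^2$-convergence of both sides via part (i) and the boundedness of $d\varphi$, and identification of the weak derivative by testing against $C_c^\infty$ functions. The only (immaterial) difference is in part (i), where the paper applies the transformation formula directly to the measurable function $|f\circ\varphi|^2\det(d_x\varphi)$ for general $f\in L^2$, which sidesteps the a.e.-identification issue you handle at the end of your first step.
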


\begin{proof}
$(i)$ For any $f \in L^2(\R^n,\R)$, the composition $f\circ \varphi$ is measurable. As
\[
 M_1:=\inf_{x \in \R^n} \det(d_x\varphi) = \big(\sup_{x \in \R^n} \det d_x\varphi^{-1}\big)^{-1} > 0 
\]
one obtains by the transformation formula
\begin{eqnarray*}
 \int_{\R^n} \big|f\big(\varphi(x)\big)\big|^2 dx &\leq& \frac{1}{M_1} \int_{\R^n} \big|f\big(\varphi(x)\big)\big|^2 \det(d_x\varphi) dx\\
&=& \frac{1}{M_1} \int_{\R^n} |f(x)|^2dx
\end{eqnarray*}
and thus $f \circ \varphi \in L^2(\R^n,\R)$ and the right translation $R_\varphi$ is a bounded linear map on $L^2(\R^n,\R)$.\\
$(ii)$ For any $f \in C_c^\infty(\R^n,\R)$, $f \circ \varphi \in H^1(\R^n,\R)$ and (\ref{differential_of_composition}) holds by the standard chain rule of differentiation. Furthermore for any $f \in H^1(\R^n,\R)$, $df \in L^2(\R^n,\R^n)$ and hence by $(i)$, $(df) \circ \varphi \in L^2(\R^n,\R^n)$. As $d\varphi$ is continuous and bounded by assumption it then follows that for any $1 \leq i \leq n$
\[
 \sum_{k=1}^n \big(\dx{k} f\big) \circ \varphi \cdot \dx{i}\varphi_k \in L^2(\R^n,\R)
\] 
where $\varphi_k(x)$ is the $k$'th component of $\varphi(x)$, $\varphi(x)=\big(\varphi_1(x),\ldots,\varphi_n(x)\big)$. By Lemma \ref{lemma_density}, $f$ can be approximated by $(f_N)_{N \geq 1}$ in $C_c^\infty(\R^n,\R)$. By the chain rule, for any $1 \leq i \leq n$, one has
\[
 \dx{i} (f_N \circ \varphi) = \sum_{k=1}^n (\dx{k} f_N) \circ \varphi \cdot \dx{i}\varphi_k
\]
and in view of $(i)$, in $L^2$,
\begin{equation}\label{sum_converge}
 \sum_{k=1}^n (\dx{k} f_N)\circ \varphi \cdot \dx{i}\varphi_k \underset{N \to \infty}{\longrightarrow}
 \sum_{k=1}^n (\dx{k} f)\circ \varphi \cdot \dx{i}\varphi_k.
\end{equation}
Moreover, for any test function $g \in C_c^\infty(\R^n,\R)$,
\[
- \int_{\R^n} \dx{i} g \cdot f_N \circ \varphi dx = \sum_{k=1}^n \int_{\R^n} g \cdot \big(\dx{k}f_N\big) 
\circ \varphi \cdot \dx{i}\varphi_k dx.
\]
By taking the limit $N \to \infty$ and using \eqref{sum_converge}, one sees that the distributional derivative
$\dx{i} (f\circ \varphi)$ equals $\sum_{k=1}^n (\dx{k} f)\circ \varphi \cdot \dx{i}\varphi_k$ for any
$1 \leq i \leq n$. Therefore, $f \circ \varphi \in H^1(\R^n,\R)$ and
$d(f \circ \varphi)=df \circ \varphi \cdot d\varphi$ as claimed.
\end{proof}

The next result concerns the product rule of differentiation in Sobolev spaces. To state the result, introduce for any integer $s$ with $s >n/2$ and $\varepsilon > 0$ the set
\[
 U_\varepsilon^s:=\big\{ g \in H^s(\R^n,\R) \,\big|\, \inf_{x \in \R^n}\big(1 + g(x)\big) > \varepsilon \big\}.
\]
By Proposition \ref{prop_sobolev_imbedding}, $U_\varepsilon^s$ is an open subset of $H^s(\R^n,\R)$ and so is
\[
 U^s:= \bigcup_{\varepsilon > 0} U_\varepsilon^s.
\]

\noindent Note that $U^s$ is closed under multiplication. More precisely, if $g \in U_\varepsilon^s$ and $h \in U_\delta^s$, then $g +h + gh \in U_{\varepsilon \delta}^s$. Indeed, by Lemma \ref{lem_imbedding}, $gh \in H^s(\R^n,\R)$, and hence so is $g + h + gh$. In addition, $1+g+h+gh=(1+g)(1+h)$ satisfies $\inf_{x \in \R^n} (1+g)(1+h) > \varepsilon \delta$ and thus $g + h +gh$ is in $U_{\varepsilon \delta}^s$.

\begin{Lemma}\label{lemma_division}
Let $s,s'$ be integers with $s > n/2$ and $0 \leq s' \leq s$. Then for any $\varepsilon > 0$ and $K > 0$ there exists a constant $C \equiv C(\varepsilon,K;s,s')>0$ so that for any $f \in H^{s'}(\R^n,\R)$ and $g \in U_\varepsilon^s$ with $\|g\|_s < K$, one has $f/(1+g) \in H^{s'}(\R^n,\R)$ and
\begin{equation}\label{division_ineq}
\|f/(1+g)\|_{s'} \leq C \|f\|_{s'}.
\end{equation}
Moreover, the map 
\begin{equation}\label{division_map} 
 H^{s'}(\R^n,\R) \times U^s \to H^{s'}(\R^n,\R), \quad (f,g) \mapsto f/(1+g)
\end{equation}
is continuous.
\end{Lemma}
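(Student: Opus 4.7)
The plan is to first prove the estimate (\ref{division_ineq}) by induction on $s' \in \{0, 1, \ldots, s\}$, uniformly in the parameters $(\varepsilon, K)$, and then to deduce continuity of the map (\ref{division_map}) from that estimate together with Lemma \ref{lem_imbedding}. Throughout, the hypothesis $g \in U_\varepsilon^s$ combined with Proposition \ref{prop_sobolev_imbedding} yields the pointwise lower bound $1 + g(x) > \varepsilon$ for every $x \in \R^n$.

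The base case $s' = 0$ is immediate: the pointwise inequality $|f/(1+g)| \leq |f|/\varepsilon$ gives $\|f/(1+g)\|_0 \leq \varepsilon^{-1} \|f\|_0$. For the inductive step $s' \geq 1$, I would apply the quotient rule
\[
\partial_i\!\left(\frac{f}{1+g}\right) = \frac{\partial_i f}{1+g} - \frac{f\,\partial_i g}{(1+g)^2}
\]
and rewrite $(1+g)^2 = 1 + \tilde g$ with $\tilde g := 2g + g^2 \in H^s$ (by Lemma \ref{lem_imbedding}), $\|\tilde g\|_s$ bounded by some $K_1 = K_1(K)$, and $\tilde g \in U_{\varepsilon^2}^s$. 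The induction hypothesis applied with parameters $(\varepsilon,K)$ to the first summand, and with $(\varepsilon^2, K_1)$ to the second, bounds $\|\partial_i(f/(1+g))\|_{s'-1}$ by a multiple of $\|f\|_{s'}$ plus a multiple of $\|f\,\partial_i g\|_{s'-1}$, reducing the task to controlling the latter.

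This last estimate is the main obstacle, because Lemma \ref{lem_imbedding} requires one factor to lie in $H^s$, whereas $\partial_i g$ only lies in $H^{s-1}$. My workaround is the algebraic identity
\[
f\,\partial_i g = \partial_i(fg) - g\,\partial_i f,
\]
which yields $\|f\,\partial_i g\|_{s'-1} \leq \|fg\|_{s'} + \|g\,\partial_i f\|_{s'-1}$; now $g \in H^s$ is the ``smooth'' factor in each product on the right, so Lemma \ref{lem_imbedding} delivers $\|f\,\partial_i g\|_{s'-1} \leq 2K'\|g\|_s\|f\|_{s'}$ with $K'$ the constant of that lemma. Combining this with the inductive bound on $\|f/(1+g)\|_{s'-1}$ closes the induction and produces $\|f/(1+g)\|_{s'} \leq C\|f\|_{s'}$ with $C = C(\varepsilon, K; s, s')$.

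For continuity of (\ref{division_map}) at a point $(f_0, g_0)$ with $g_0 \in U_{\varepsilon_0}^s$, Proposition \ref{prop_sobolev_imbedding} ensures that every $g$ with $\|g - g_0\|_s$ small enough still lies in $U_{\varepsilon_0/2}^s$ with $\|g\|_s$ bounded. Writing
\[
\frac{f}{1+g} - \frac{f_0}{1+g_0} = \frac{f - f_0}{1+g} - \frac{f_0(g - g_0)}{(1+g)(1+g_0)}
\]
and expressing the common denominator as $1 + \hat g$ with $\hat g := g + g_0 + g g_0 \in U_{(\varepsilon_0/2)^2}^s$, the estimate (\ref{division_ineq}) applied to each term, together with the bound $\|f_0(g - g_0)\|_{s'} \leq K'\|f_0\|_{s'}\|g - g_0\|_s$ from Lemma \ref{lem_imbedding}, controls the $H^{s'}$-norm of the difference by $C\|f - f_0\|_{s'} + C\|f_0\|_{s'}\|g - g_0\|_s$, which tends to $0$ as $(f, g) \to (f_0, g_0)$.
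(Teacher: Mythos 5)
Your proposal is correct and follows essentially the same route as the paper: induction on $s'$, the pointwise bound $1+g>\varepsilon$ for the base case, the rewriting $(1+g)^2 = 1+\tilde g$ with $\tilde g = 2g+g^2 \in U_{\varepsilon^2}^s$, and above all the identity $f\,\partial_i g = \partial_i(fg) - g\,\partial_i f$, which is exactly the device the paper uses to ensure that $g \in H^s$ is the ``smooth'' factor in every product fed to Lemma \ref{lem_imbedding}. The one step you assert without justification --- that the classical quotient rule is valid for the \emph{distributional} derivative of $f/(1+g)$ when $f \in H^{s'}$ and $g \in H^s$ are merely Sobolev --- is where the paper spends most of its effort: it approximates $f$ and $g$ by $C_c^\infty$ functions via Lemma \ref{lemma_density}, verifies convergence of each term of the (already rewritten) right-hand side in $H^{s'-1}$ using the induction hypothesis, and then identifies the limit with the weak derivative by testing against $h \in C_c^\infty$. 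You should supply that approximation argument, but it slots into your induction without changing its structure; your continuity argument, which runs the algebraic decomposition of the difference through the already-established uniform estimate, is a slightly more direct packaging of what the paper does.
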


\begin{Rem}\label{rem_lemma_division}
The proof shows that Lemma \ref{lemma_division} continues to hold for any $s$ real with $s > n/2$. The case where in addition $s'$ is real is treated in Appendix \ref{appendix B}. 
\end{Rem}

\begin{proof}
We prove the claimed statement by induction with respect to $s'$. For $s'=0$, one has for any $f$ in $L^2(\R^n,\R)$ and $g \in U_\varepsilon^s$
\[
 \left\| \frac{f}{1+g} \right\| \leq \frac{1}{\varepsilon} \|f\|.
\]
Moreover, for any $f_1,f_2 \in L^2(\R^n,\R)$, $g_1,g_2 \in U_\varepsilon^s$
\begin{eqnarray*}
\varepsilon^2 \|\tfrac{f_1}{1+g_1}-\tfrac{f_2}{1+g_2}\| &\leq& \|(f_1-f_2)+f_1(g_2-g_1)+(f_1-f_2)g_1\|\\
&\leq& \big(1+\|g_1\|_{C^0}\big) \|f_1-f_2\| +  \|f_1\| \,\,\|g_2-g_1\|_{C^0}.
\end{eqnarray*}
Hence by Proposition \ref{prop_sobolev_imbedding},
\[
 \varepsilon^2 \|\tfrac{f_1}{1+g_1} -\tfrac{f_2}{1+g_2}\| \leq \big(1 + K_{s,0}\|g_1\|_s\big)\|f_1-f_2\| +  K_{s,0} \|f_1\| \,\, \|g_2-g_1\|_s
\]
and it follows that for any $\varepsilon > 0$
\[
 L^2(\R^n,\R) \times U_\varepsilon^s \to L^2(\R^n,\R), \quad (f,g) \mapsto f/(1+g)
\]
is continuous. As $\varepsilon > 0$ was taken arbitrarily, we see that the map \eqref{division_map} is continuous as
well. Thus the claimed statements are proved in the case $s'=0$. 

Now, assuming that \eqref{division_ineq} and \eqref{division_map} hold for all $1\le s'\le k-1$, we will prove that
they hold also for $s'=k$. Take $f \in H^{s'}(\R^n,\R)$ and $g \in U_\varepsilon^s$. 
First, we will prove that $f/(1+g)\in H^{s'}(\R^n,\R)$ and
\[
\dx{i} \left(\frac{f}{1+g}\right) = \frac{\dx{i}f}{1+g} - \frac{\dx{i}(fg)-g\cdot\dx{i}f}{(1+g)^2}
\quad (1\leq i \leq n).
\]
Indeed, by Lemma \ref{lemma_density}, there exists $(f_N)_{N\geq 1}, (g_N)_{N \geq 1} \subseteq C_c^\infty(\R^n,\R)$
so that $f_N \to f$ in $H^{s'}(\R^n,\R)$ and $g_N \to g$ in $H^s(\R^n,\R)$. As $U_\varepsilon^s$ is open in
$H^s(\R^n,\R)$ we can assume that $(g_N)_{N \geq 1} \subseteq U_\varepsilon^s$. By the product rule of
differentiation, one has for any $N \geq 1$, $1 \leq i \leq n$
\begin{equation}\label{division_derivative}
\dx{i}\left( \frac{f_N}{1+g_N} \right) = \frac{\dx{i}f_N}{1+g_N} - \frac{\dx{i}(f_N g_N)-g_N\cdot\dx{i}f_N}{(1+g_N)^2}.
\end{equation}
As $\dx{i}f_N \underset{N \to \infty}{\longrightarrow} \dx{i}f$ in $H^{s'-1}(\R^n,\R)$ it follows by the induction
hypothesis that $\frac{\dx{i}f}{1+g}\in H^{s'-1}(\R^n,\R)$ and
\begin{equation}\label{convergence_first_part}
\frac{\dx{i}f_N}{1+g_N} \underset{N \to \infty}{\longrightarrow} \frac{\dx{i}f}{1+g} \quad \mbox{in}
\quad H^{s'-1}(\R^n,\R).
\end{equation}
By Lemma \ref{lem_imbedding}, $2g_N + g_N^2 \,(N \geq 1)$ and $2g+g^2$ are in $H^s(\R^n,\R)$ and
\begin{equation}\label{convergence_denominator}
2g_N + g_N^2 \underset{N \to \infty}{\longrightarrow} 2g+g^2 \quad \mbox{in} \quad H^s(\R^n,\R).
\end{equation}
As
\[
\inf_{x \in \R^n} \big( 1+g_N(x)\big)^2 > \varepsilon^2 \quad \mbox{and} \quad \inf_{x \in \R^n} 
\big(1+g(x)\big)^2 > \varepsilon^2
\]
it follows that $2g_N+g_N^2 \,(N \geq 1)$ and $2g+g^2$ are elements in $U^s_{\varepsilon^2}$.
By Lemma \ref{lem_imbedding}, $f_N \cdot g_N \,(N \geq 1), f \cdot g$ are in $H^{s'}(\R^n,\R)$ and
$f_N \cdot g_N \underset{N \to \infty}{\longrightarrow} f \cdot g$ in $H^{s'}(\R^n,\R)$. Therefore 
\begin{equation}\label{convergence_second_part_a}
\dx{i}(f_N \cdot g_N) \to \dx{i} (f \cdot g) \quad \mbox{in} \quad H^{s'-1}(\R^n,\R).
\end{equation}
Similarly, as $\dx{i}f_N \underset{N \to \infty}{\longrightarrow} \dx{i}f$ in $H^{s'-1}(\R^n,\R)$ it follows again
by Lemma \ref{lem_imbedding} that $g_N \cdot \dx{i}f_N \,(N \geq 1), g \cdot \dx{i}f$ are in $H^{s'-1}(\R^n,\R)$ and
\begin{equation}\label{convergence_second_part_b}
g_N \cdot \dx{i}f_N \underset{N \to \infty}{\longrightarrow} g \cdot \dx{i}f \quad \mbox{in} \quad H^{s'-1}(\R^n,\R).
\end{equation}
It follows from \eqref{convergence_denominator}-\eqref{convergence_second_part_b}, and the induction hypothesis that
\begin{equation}\label{convergence_fraction}
\frac{\dx{i}(f_Ng_N)-g_N\cdot\dx{i}f_n}{(1+g_N)^2} \underset{N \to \infty}{\longrightarrow} 
\frac{\dx{i}(fg)-g\cdot\dx{i}f}{(1+g)^2} 
\quad \mbox{in} \quad H^{s'-1}(\R^n,\R).
\end{equation}
In view of \eqref{convergence_first_part} and \eqref{convergence_fraction}, for any test function
$h \in C_c^\infty(\R^n,\R)$, one has for the distributional derivative of $f/(1+g) \in L^2(\R^n,\R)$,
\[
\begin{array}{ccl}
\left< \dx{i} \left(\frac{f}{1+g}\right),h \right> &=& - \int_{\R^n} \dx{i}h\cdot\frac{f}{1+g} \;dx = - 
\lim\limits_{N \to \infty} \int_{\R^n} \dx{i}h \cdot\frac{f_N}{1+g_N} \;dx \\
&=&\lim\limits_{N \to \infty} \int_{\R^n} h\cdot\left[ \frac{\dx{i}f_N}{1+g_N} - 
\frac{\dx{i}(f_Ng_N)-g_N\cdot\dx{i}f_n}{(1+g_N)^2}\right]\,dx\\
&=&\int_{\R^n} h\cdot \left(\frac{\dx{i}f}{1+g} - 
\frac{\dx{i}(fg)-g\cdot\dx{i}f}{(1+g)^2}\right)\,dx.
\end{array}
\]
This shows that for any $1 \leq i \leq n$,
\begin{equation}\label{eq:product_rule}
\dx{i}\left(\frac{f}{1+g}\right) = \frac{\dx{i}f}{1+g} - \frac{\dx{i}(fg)-g\cdot\dx{i}f}{(1+g)^2}
\in H^{s'-1}(\R^n,\R).
\end{equation}
Hence, $f/(1+g) \in H^{s'}(\R^n,\R)$. 
Let us rewrite \eqref{eq:product_rule} in the following form
\begin{equation}\label{eq:product_rule'}
\dx{i}\left(\frac{f}{1+g}\right) = \frac{\dx{i}f}{1+g} -
\frac{\frac{\dx{i}(fg)}{1+g}-\frac{g\cdot\dx{i}f}{1+g}}{1+g}.
\end{equation}
By the induction hypothesis there exists $C_1=C_1(\varepsilon,K;s,s')>0$ such that $\forall f\in H^{s'-1}(\R^n,\R)$,
\[
\|f/(1+g)\|_{s'-1}\le C_1\|f\|_{s'-1}\,.
\]
This together with \eqref{eq:product_rule'} and the triangle inequality imply \eqref{division_ineq}.
The continuity of \eqref{division_map} follows immediately from the induction hypothesis, 
Lemma \ref{lem_imbedding}, and \eqref{eq:product_rule'}.
\end{proof}

\subsection{The topological group $\Ds^s(\R^n)$}

In this subsection we show
\begin{Prop}\label{tg}
For any integer $s$ with $s>n/2+1$, $(\Ds^s,\circ)$ is a topological group.
\end{Prop}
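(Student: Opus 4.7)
The plan is to prove separately that composition and inversion are well-defined and continuous. The key technical device, used throughout, is a uniform composition estimate: for $\varphi\in\Ds^s$ and $f\in H^{s'}(\R^n)$ with $0\le s'\le s$, one has $f\circ\varphi\in H^{s'}(\R^n)$ and $\|f\circ\varphi\|_{s'}\le C\|f\|_{s'}$, with $C$ chosen uniformly over $\varphi$ in a neighborhood of any fixed $\varphi_0\in\Ds^s$. I would prove this by induction on $s'$. The base case $s'=0$ is Lemma \ref{lemma_chain_rule}(i). For the inductive step, the chain rule of Lemma \ref{lemma_chain_rule}(ii) gives
\[
\partial_i(f\circ\varphi)=(\partial_i f)\circ\varphi+\sum_{k=1}^n (\partial_k f)\circ\varphi\cdot\partial_i(\varphi-\id)_k,
\]
and Lemma \ref{lem_imbedding}, applied with multiplier $\partial_i(\varphi-\id)_k\in H^{s-1}$ (using $s-1>n/2$), bounds each summand in $H^{s'-1}$. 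Uniformity of $C$ follows from Corollary \ref{cor_openness}(iii). The splitting $\psi\circ\varphi-\id=(\psi-\id)\circ\varphi+(\varphi-\id)$ then shows $\psi\circ\varphi\in\Ds^s$.

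For continuity of composition at $(\psi_0,\varphi_0)$, I would split
\[
\psi\circ\varphi-\psi_0\circ\varphi_0=(\psi-\psi_0)\circ\varphi+\bigl[\psi_0\circ\varphi-\psi_0\circ\varphi_0\bigr].
\]
The first summand has $H^s$-norm at most $C\|\psi-\psi_0\|_s\to 0$. For the second, I would approximate $\psi_0-\id$ in $H^s$ by $g_k\in C_c^\infty(\R^n,\R^n)$ (Lemma \ref{lemma_density}). For each smooth $g_k$, a direct chain-rule calculation yields $\|g_k\circ\varphi-g_k\circ\varphi_0\|_s\to 0$ as $\varphi\to\varphi_0$ in $H^s$, since $g_k$ has bounded derivatives of all orders and the chain-rule terms depend continuously on $\varphi$ in $H^{s-1}$. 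A standard $\varepsilon/3$ argument, exploiting the uniformity of $C$, closes the continuity proof.

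For well-definedness of the inverse, Corollary \ref{cor_openness}(i) already ensures that $\varphi^{-1}$ is a $C^1$-diffeomorphism with bounded $|d\varphi^{-1}|$ and $\inf_x\det d_x\varphi^{-1}>0$; only $\varphi^{-1}-\id\in H^s$ remains. The identity $\varphi^{-1}-\id=-(\varphi-\id)\circ\varphi^{-1}$ combined with Lemma \ref{lemma_chain_rule}(i) gives the $L^2$-bound, and I would bootstrap $\varphi^{-1}-\id\in H^k$ to $H^{k+1}$ for $k=0,\dots,s-1$ using the pointwise identity $d_x\varphi^{-1}=(d_{\varphi^{-1}(x)}\varphi)^{-1}$. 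Cramer's rule together with Lemmas \ref{lem_imbedding} and \ref{lemma_division} shows that the entries of $(d\varphi)^{-1}-I$ lie in $H^{s-1}(\R^n)$; higher derivatives of $\varphi^{-1}$ are then expressed via Fa\`a di Bruno as sums of products of such entries composed with $\varphi^{-1}$ and lower-order derivatives of $\varphi^{-1}$, each bounded in $L^2$ via change of variables and the inductively established regularity. For continuity of inversion, I would reduce to continuity at $\id$ using the group structure: $\varphi_n\to\varphi_0$ gives $\varphi_n\circ\varphi_0^{-1}\to\id$ by continuity of composition, so it suffices to show $\eta\mapsto\eta^{-1}$ is continuous at $\id$; writing $\eta=\id+f$, $\eta^{-1}=\id+g$ yields the fixed-point equation $g=-f\circ(\id+g)$, and a contraction-mapping argument on a small $H^s$-ball around $0$, using the uniform composition estimate, provides continuous dependence of $g$ on $f$. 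The main obstacle I anticipate is the bootstrap for well-definedness of $\varphi^{-1}$: carefully tracking the mixed derivative-composition terms across the range of Sobolev exponents, and verifying each lies in the correct space via appropriate combinations of the multiplication, division, and change-of-variables lemmas, will be the most technically delicate step.
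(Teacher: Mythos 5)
Most of your outline tracks the paper's argument: the uniform estimate $\|f\circ\varphi\|_{s'}\le C\|f\|_{s'}$ proved by induction on $s'$ via the chain rule, the continuity of composition via density of $C_c^\infty$ and an $\varepsilon/3$ argument, and the bootstrap for $\varphi^{-1}-\id\in H^s$ based on $(d\varphi)^{-1}-\id_n\in H^{s-1}$ (Cramer's rule plus Lemmas \ref{lem_imbedding} and \ref{lemma_division}) and the representation of higher derivatives of $\varphi^{-1}$ as $F^{(\alpha)}\circ\varphi^{-1}$. These parts are sound.

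The genuine gap is in your proof of continuity of ${\tt inv}$ at the identity. The reduction to the identity via the group structure is fine, but the contraction-mapping argument for $g=-f\circ(\id+g)$ does not close. The uniform composition estimate only shows that $T_f(g):=-f\circ(\id+g)$ maps a small $H^s$-ball into a small $H^s$-ball; it says nothing about $\|T_f(g_1)-T_f(g_2)\|_s$. To get a contraction you would need a Lipschitz estimate of the form $\|f\circ(\id+g_1)-f\circ(\id+g_2)\|_s\le\theta\,\|g_1-g_2\|_s$, and the only such estimate available (Lemma \ref{lem_poincare}) reads $\|f\circ\varphi-f\circ\varphi_\bullet\|_s\le C_s\|f\|_{s+1}\|\varphi-\varphi_\bullet\|_s$: it costs one derivative of $f$. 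For $f$ merely in $H^s$ the map $\varphi\mapsto f\circ\varphi$ is continuous but not Lipschitz in the $H^s$-topology — this derivative loss is precisely the phenomenon the whole paper is organized around. Moreover, if your contraction argument worked it would prove that the fixed point $g$ depends \emph{Lipschitz}-continuously on $f$ near $0$, i.e. that ${\tt inv}$ is locally Lipschitz on $\Ds^s$; that is stronger than what holds (the paper obtains only continuity of ${\tt inv}:\Ds^s\to\Ds^s$, and $C^r$-regularity only at the cost of $r$ extra derivatives, ${\tt inv}:\Ds^{s+r}\to\Ds^s$). One can salvage a contraction in the weaker $H^{s-1}$-norm, but that only yields continuity of ${\tt inv}$ into $\Ds^{s-1}$, not into $\Ds^s$.

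The paper avoids this by proving continuity of the inverse directly from the representation $\partial^\alpha(\varphi^{-1}-\id)=F^{(\alpha)}\circ\varphi^{-1}$: since $\varphi\mapsto F^{(\alpha)}$ is continuous into $H^{s-|\alpha|}$ and the map $(F,\psi)\mapsto F\circ\psi$ is continuous at the $L^2$-level (handled by the same density-plus-Lipschitz-approximant device as for composition, together with the $L^2$-continuity of $\varphi\mapsto\varphi^{-1}$ obtained from the mean value estimate \eqref{eq:infty_close}), each derivative $\partial^\alpha(\varphi^{-1}-\id)$ depends continuously on $\varphi$ in $L^2$. You should replace the fixed-point step by an argument of this type.
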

\noindent First we show that the composition map is continuous. Actually we prove the following slightly stronger statement.

\begin{Lemma}\label{continuous_composition}
Let $s,s'$ be integers with $s>n/2+1$ and $0 \leq s' \leq s$. Then 
\[
 \mu^{s'}:H^{s'}(\R^n,\R) \times \Ds^s \to H^{s'}(\R^n,\R), \quad (f,\varphi) \mapsto f \circ \varphi
\]
is continuous. Moreover, given any $0 \leq s' \leq s, M>0$ and $C>0$ there exists a constant $C_{s'}=C_{s'}(M,C)>0$ so that for any $\varphi \in \Ds^s$ satisfying
\[
 \inf_{x \in \R^n} \det(d_x\varphi) \geq M,\qquad \|\varphi-\id\|_s \leq C
\]
and for any $f \in H^{s'}(\R^n,\R)$, one has
\begin{equation}\label{linear_estimate}
\|f \circ \varphi\|_{s'} \leq C_{s'} \|f\|_{s'}.
\end{equation}
\end{Lemma}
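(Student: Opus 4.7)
The plan is to first establish the quantitative bound \eqref{linear_estimate} by induction on $s'$, and then derive the continuity of $\mu^{s'}$ from it via a density argument.

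\emph{Estimate \eqref{linear_estimate}.} For $s'=0$, the change-of-variables formula combined with $\det d_x\varphi\ge M$ yields
\[
\|f\circ\varphi\|_0^2 = \int_{\R^n}|f(y)|^2\,\det d_y\varphi^{-1}\,dy \le M^{-1}\|f\|_0^2.
\]
For $s'\ge 1$, I would use the equivalent norm $\|g\|_{s'}^2 \sim \|g\|_0^2 + \sum_i\|\partial_i g\|_{s'-1}^2$ together with Lemma \ref{lemma_chain_rule}, writing
\[
\partial_i(f\circ\varphi) = (\partial_i f)\circ\varphi + \sum_{k=1}^n \big((\partial_k f)\circ\varphi\big)\,\partial_i(\varphi_k-x_k).
\]
The first summand is controlled by the inductive hypothesis applied to $\partial_i f$. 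Each product in the sum is bounded via Lemma \ref{lem_imbedding} (which applies since $s-1>n/2$ and $s'-1\le s-1$): the factor $\partial_i(\varphi_k-x_k)$ has $H^{s-1}$-norm $\le C$, while $\|(\partial_k f)\circ\varphi\|_{s'-1}$ is controlled by the inductive hypothesis applied to $\partial_k f$.

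\emph{Reduction of continuity.} Given $(f_n,\varphi_n)\to(f,\varphi)$ in $H^{s'}\times\Ds^s$, Corollary \ref{cor_openness}(iii) provides uniform constants $M_0>0$ and $C_0>0$ such that $\inf\det d\varphi_n\ge M_0$ and $\|\varphi_n-\id\|_s\le C_0$ for $n$ large. Splitting
\[
f_n\circ\varphi_n - f\circ\varphi = (f_n-f)\circ\varphi_n + (f\circ\varphi_n - f\circ\varphi),
\]
the first term tends to zero in $H^{s'}$ by \eqref{linear_estimate}. Approximating $f$ in $H^{s'}$ by $\tilde f\in C_c^\infty(\R^n,\R)$ (Lemma \ref{lemma_density}) and again invoking \eqref{linear_estimate}, the problem reduces to showing $\tilde f\circ\varphi_n\to\tilde f\circ\varphi$ in $H^{s'}$ for such smooth $\tilde f$.

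\emph{Main obstacle.} The substantive step is this last convergence for smooth $\tilde f$, which I would prove by a second induction on $s'$. For the base case $s'=0$, dominated convergence applies: since $H^s\hookrightarrow C^0$ (Proposition \ref{prop_sobolev_imbedding}), $\varphi_n\to\varphi$ uniformly and hence $\tilde f\circ\varphi_n\to\tilde f\circ\varphi$ pointwise; the uniform bound on $|d\varphi_n^{-1}|$ from Corollary \ref{cor_openness}(iii) together with the uniform bound $\|\varphi_n-\id\|_{L^\infty}\le K_s C_0$ confines the supports of all $\tilde f\circ\varphi_n$ and of $\tilde f\circ\varphi$ to a single compact set $K\subset\R^n$, providing the dominating function $4\|\tilde f\|_{L^\infty}^2\mathbf{1}_K$. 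For the inductive step, the chain rule gives
\[
\partial_i(\tilde f\circ\varphi_n) = (\partial_i\tilde f)\circ\varphi_n + \sum_k\big((\partial_k\tilde f)\circ\varphi_n\big)\,\partial_i(\varphi_n-\id)_k,
\]
and the analogous identity for $\varphi$. The first term converges to $(\partial_i\tilde f)\circ\varphi$ in $H^{s'-1}$ by the inductive hypothesis applied to $\partial_i\tilde f\in C_c^\infty$. Each bilinear term is handled by splitting the difference into
\[
\big((\partial_k\tilde f)\circ\varphi_n-(\partial_k\tilde f)\circ\varphi\big)\partial_i(\varphi_n-\id)_k + (\partial_k\tilde f)\circ\varphi\cdot\partial_i(\varphi_n-\varphi)_k,
\]
the first piece vanishing in $H^{s'-1}$ by the inductive hypothesis and the uniform $H^{s-1}$-bound on $\partial_i(\varphi_n-\id)_k$ via Lemma \ref{lem_imbedding}, the second piece vanishing because $\partial_i(\varphi_n-\varphi)_k\to 0$ in $H^{s-1}$ and Lemma \ref{lem_imbedding} applied once more.
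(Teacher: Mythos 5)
Your proposal is correct and follows essentially the same route as the paper's proof: induction on $s'$ driven by the chain rule $d(f\circ\varphi)=df\circ\varphi\cdot d\varphi$ together with Lemma \ref{lem_imbedding}, a change of variables for the base case, and reduction of continuity to smooth compactly supported $f$ via Lemma \ref{lemma_density} and the bound \eqref{linear_estimate}. The only cosmetic differences are that you treat $\tilde f\circ\varphi_n\to\tilde f\circ\varphi$ in $L^2$ by dominated convergence where the paper uses the Lipschitz constant of $\tilde f$, and that you split off the identity part of $d\varphi$ explicitly before applying the product estimate, which is in fact a welcome clarification of the paper's argument.
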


\begin{Rem}\label{rem_continuous_composition}
The proof shows that Lemma \ref{continuous_composition} continues to hold for any $s$ real with $s > n/2+1$. The case where in addition $s'$ is real is treated in Appendix \ref{appendix B}.
\end{Rem}

\begin{proof}
We prove the claimed statement by induction with respect to $s'$. First consider the case $s'=0$. By item $(i)$ of Corollary \ref{cor_openness} and item $(i)$ of Lemma \ref{lemma_chain_rule}, the range of $\mu^0$ is contained in $L^2(\R^n,\R)$. To show the continuity of $\mu^{0}$ at $(f_\bullet,\varphi_\bullet) \in L^2(\R^n,\R) \times \Ds^s$ write for $(f,\varphi) \in L^2(\R^n,\R) \times \Ds^s$
\[
 |f \circ \varphi - f_\bullet \circ \varphi_\bullet| \leq |f \circ \varphi - f_\bullet \circ \varphi| + |f_\bullet \circ \varphi - f_\bullet \circ \varphi_\bullet|.
\]
By Corollary \ref{cor_openness} $(iii)$ one can choose a neighborhood $U_{\varphi_\bullet}$ of $\varphi_\bullet$ in $\Ds^s$ so that for any $\varphi \in U_{\varphi_\bullet}$
\[
 \inf_{x \in \R^n} (\det d_x\varphi)  \geq M
\]
for some constant $M > 0$.
The term $|f \circ \varphi - f_\bullet \circ \varphi|$ can then be estimated by
\[
 \int_{\R^n} |f \circ \varphi - f_\bullet \circ \varphi|^2 dx \leq \frac{1}{M} \int_{\R^n} |f - f_\bullet|^2dy
\]
To estimate the term $|f_\bullet \circ \varphi - f_\bullet \circ \varphi_\bullet|$ apply Lemma \ref{lemma_density} to approximate $f_\bullet$ by $\tilde f_\bullet \in C_c^\infty(\R^n,\R)$ and use the triangle inequality
\[
|f_\bullet \circ \varphi - f_\bullet \circ \varphi_\bullet| \leq |f_\bullet \circ \varphi - \tilde f_\bullet \circ \varphi| +
|\tilde f_\bullet \circ \varphi - \tilde f_\bullet \circ \varphi_\bullet| + |\tilde f_\bullet \circ \varphi_\bullet - f_\bullet \circ \varphi_\bullet|.
\]
For any $\varphi \in U_{\varphi_\bullet}$, one has
\[
 \int_{\R^n} |f_\bullet \circ \varphi - \tilde f_\bullet \circ \varphi|^2dx \leq \frac{1}{M} \int_{\R^n} |\tilde f_\bullet - f_\bullet|^2dy
\]
and
\[
 \int_{\R^n} |\tilde f_\bullet \circ \varphi_\bullet - f_\bullet \circ \varphi_\bullet|^2dx \leq \frac{1}{M} \int_{\R^n} |\tilde f_\bullet -f_\bullet|^2dy.
\]
To estimate the term $|\tilde f_\bullet \circ \varphi - \tilde f_\bullet \circ \varphi_\bullet|$ use that $\tilde f_\bullet$ is Lipschitz on $\R^n$, i.e. $|\tilde f_\bullet(x)-\tilde f_\bullet(y)| \leq L|x-y|$ for some constant $L>0$ depending on the choice of $\tilde f_\bullet$, to get
\[
 \int_{\R^n} |\tilde f_\bullet \circ \varphi - \tilde f_\bullet \circ \varphi_\bullet|^2dx \leq L^2 \int_{\R^n} |\varphi - \varphi_\bullet|^2dx.
\]
Combining the estimates obtained so far, one gets for any $\varphi \in U_{\varphi_\bullet}$ 
\begin{eqnarray*}
  \|f \circ \varphi - f_\bullet \circ \varphi_\bullet\| &\leq& M^{-1/2} \|f - f_\bullet\| + 2M^{-1/2} \|\tilde f_\bullet - f_\bullet\|\\
&+& L \|\varphi - \varphi_\bullet\|
\end{eqnarray*}
implying the continuity of $\mu^0$ at $(f_\bullet,\varphi_\bullet)$. Now assume $1\leq s' \leq s$. For any $(f,\varphi) \in H^{s'}(\R^n,\R)\times \Ds^s$ one has by Lemma \ref{lemma_chain_rule} and Corollary \ref{cor_openness} $(i)$
\[
 d(f \circ \varphi)= df \circ \varphi \cdot d\varphi. 
\]
By the induction hypothesis $df \circ \varphi$ is an element in $H^{s'-1}(\R^n,\R^{n})$. Hence Lemma \ref{lem_imbedding} implies that $df \circ \varphi \cdot d\varphi$ is in $H^{s'-1}(\R^n,\R^{n})$ and we thus have shown that the image of $\mu^{s'}$ is contained in $H^{s'}(\R^n,\R)$. The continuity of $\mu^{s'}$ follows from the induction hypothesis, the estimate
\begin{eqnarray*}
 \|df \circ \varphi \cdot d\varphi - df_\bullet \circ \varphi_\bullet \cdot d\varphi_\bullet\|_{s'-1} &\leq& \|df \circ \varphi \cdot (d\varphi - d\varphi_\bullet)\|_{s'-1} \\
&+& \|(df \circ \varphi - d f_\bullet \circ \varphi_\bullet)\cdot d \varphi_\bullet\|_{s'-1}
\end{eqnarray*}
and Lemma \ref{lem_imbedding} on multiplication of functions in Sobolev spaces. The estimate (\ref{linear_estimate}) is obtained in a similar fashion. For $s'=0$,
\[
 \int_{\R^n} |f \circ \varphi|^2 dx \leq \frac{1}{M} \int_{\R^n} |f|^2dy.
\]
For $1 \leq s' \leq s$, we argue by induction. Let $f \in H^{s'}(\R^n,\R)$. Then by the considerations above, $d(f \circ \varphi)=df \circ \varphi \cdot d\varphi$ and $df \circ \varphi \in H^{s'-1}(\R^n,\R^{n})$. By induction, $\|df \circ \varphi\|_{s'-1} \leq C_{s'-1} \|df\|_{s'-1}$. Hence in view of Lemma \ref{lem_imbedding}, $\|d(f \circ \varphi)\|_{s'-1} \leq K C_{s'-1}\|df\|_{s'-1}$ and for appropriate $C_{s'} > 0$ one gets $\|f \circ \varphi\|_{s'}\leq C_{s'} \|f\|_{s'}$.
\end{proof}

\noindent To prove Proposition \ref{tg} it remains to show the following properties of the inverse map.

\begin{Lemma}\label{continuous_inverse}
Let $s$ be an integer with $s>n/2+1$. Then for any $\varphi \in \Ds^s$, its inverse $\varphi^{-1}$ is again in $\Ds^s$ and
\[
 {\tt inv}:\Ds^s \to \Ds^s,\quad \varphi \mapsto \varphi^{-1}
\]
is continuous.
\end{Lemma}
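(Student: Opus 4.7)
The plan is to prove the two assertions separately: that $\varphi \in \Ds^s$ implies $\varphi^{-1} \in \Ds^s$, and that the map ${\tt inv}$ is continuous on $\Ds^s$.

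For the first assertion, Corollary~\ref{cor_openness} gives at once that $\varphi^{-1}$ lies in $\mbox{Diff}_+^1(\R^n)$ with $\sup_x |d_x\varphi^{-1}| < \infty$ and $\inf_x \det d_x\varphi > 0$. Writing $\varphi = \id + f$ with $f \in H^s$, the change of variables $x = \varphi(y)$ yields
\[
\|\varphi^{-1} - \id\|_0^2 = \int_{\R^n} |f(y)|^2 \det(d_y\varphi)\, dy \leq \big(\sup_y \det d_y\varphi\big)\,\|f\|_0^2,
\]
so $\varphi^{-1} - \id \in L^2$. For the higher regularity I would use the key identity
\[
d\varphi^{-1} - I = A \circ \varphi^{-1}, \qquad A(y) := (d_y\varphi)^{-1} - I,
\]
and first observe that $A \in H^{s-1}$ componentwise: by the cofactor formula, $A = \bigl(\mathrm{adj}(d\varphi) - (\det d\varphi)\,I\bigr)/\det d\varphi$; the numerator is a matrix of polynomials in the entries of $df$ with no constant term and hence lies in the algebra $H^{s-1}$ by Lemma~\ref{lem_imbedding}, while the denominator has the form $1 + R$ with $R \in H^{s-1}$ and $\inf(1+R) > 0$, so Lemma~\ref{lemma_division} applies. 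I would then bootstrap by induction on $m = 0, 1, \ldots, s-1$, showing $A \circ \varphi^{-1} \in H^m$ and hence $\varphi^{-1} - \id \in H^{m+1}$. Once $\varphi^{-1}$ is known to lie in $\Ds^m$ for some $m > n/2 + 1$, the composition step follows directly from Lemma~\ref{continuous_composition}; the earlier iterations are handled by iterating the chain-rule identity of Lemma~\ref{lemma_chain_rule} together with the multiplicative bound of Lemma~\ref{lem_imbedding} and the change-of-variables argument used in the base case, all of which only use that $\varphi^{-1}$ is a $C^1$-diffeomorphism with bounded Jacobian.

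For the continuity claim, let $\psi \to \varphi$ in $\Ds^s$. The Lipschitz bound
\[
|\varphi^{-1}(x) - \psi^{-1}(x)| = |\psi^{-1}(\psi(\varphi^{-1}(x))) - \psi^{-1}(x)| \leq \sup_x |d_x\psi^{-1}| \cdot |(\psi - \varphi)(\varphi^{-1}(x))|,
\]
together with change of variables and the uniform bound on $\sup|d\psi^{-1}|$ supplied by Corollary~\ref{cor_openness}(iii), yields $\|\psi^{-1} - \varphi^{-1}\|_0 \to 0$. For the higher derivatives I would use the splitting
\[
A_\psi \circ \psi^{-1} - A_\varphi \circ \varphi^{-1} = (A_\psi - A_\varphi) \circ \varphi^{-1} + \bigl(A_\psi \circ \psi^{-1} - A_\psi \circ \varphi^{-1}\bigr),
\]
with $A_\varphi := (d\varphi)^{-1} - I$ (and $A_\psi$ similarly). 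The first summand tends to zero in $H^{s-1}$ by continuity of the algebraic construction $d\varphi \mapsto A_\varphi$ (Lemmas~\ref{lem_imbedding} and~\ref{lemma_division}) combined with the composition bound established in the first claim; the second tends to zero by continuity of composition in the diffeomorphism variable (Lemma~\ref{continuous_composition}), bootstrapping one derivative at a time using that $\psi^{-1} \to \varphi^{-1}$ in a lower Sobolev norm obtained at the previous stage.

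The \textbf{main obstacle} is the low-regularity phase of the bootstrap in the first claim: when $\varphi^{-1}$ is only known to lie in $H^m$ with $m \leq n/2 + 1$, Lemma~\ref{continuous_composition} cannot be invoked and composition with $\varphi^{-1}$ must be controlled by hand, using only the $C^1$-regularity from Corollary~\ref{cor_openness}. A secondary subtlety is the derivation $A \in H^{s-1}$ via Lemma~\ref{lemma_division}, which hinges on the numerator of $A$ being a polynomial in $df$ with no constant term and on the uniform positive lower bound on $\det d\varphi$.
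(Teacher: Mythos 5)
Your proposal follows essentially the same route as the paper: the identity $d(\varphi^{-1}-\id)=\big((d\varphi)^{-1}-\id_n\big)\circ\varphi^{-1}$, the cofactor formula together with Lemma \ref{lem_imbedding} and Lemma \ref{lemma_division} to place $(d\varphi)^{-1}-\id_n$ in $H^{s-1}$, an $L^2$ change of variables, and the same two-term splitting for continuity. The one point worth clarifying concerns the ``main obstacle'' you flag: the paper never needs to estimate a composition with $\varphi^{-1}$ in any Sobolev norm above $L^2$, so the low-regularity phase you worry about does not arise. The induction is organized on the order of the derivative, maintaining the representation $\partial^\alpha(\varphi^{-1}-\id)=F^{(\alpha)}\circ\varphi^{-1}$ with $F^{(\alpha)}\in H^{s-|\alpha|}$: differentiating once via Lemma \ref{lemma_chain_rule} gives $\big(dF^{(\alpha)}\cdot(d\varphi)^{-1}\big)\circ\varphi^{-1}$, the product $dF^{(\alpha)}\cdot(d\varphi)^{-1}$ is formed \emph{before} composing (so Lemma \ref{lem_imbedding} always has the factor $(d\varphi)^{-1}\in \id_n+H^{s-1}$ with $s-1>n/2$ available), and the resulting composition is only ever measured in $L^2$ via $\int|F^{(\alpha)}|^2\det(d_y\varphi)\,dy$. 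If you run your bootstrap ``showing $A\circ\varphi^{-1}\in H^m$'' in this peeled form, it is exactly the paper's argument; if instead you try to prove genuine $H^m$-composition estimates for $1\le m\le n/2+1$ by hand, you will hit products of two low-regularity factors that Lemma \ref{lem_imbedding} does not cover. A second, minor point: in the continuity step you cannot invoke Lemma \ref{continuous_composition} as a black box for the term $A_\psi\circ\psi^{-1}-A_\psi\circ\varphi^{-1}$, since that lemma requires convergence of the diffeomorphism variable in $\Ds^s$ --- which is what is being proved. The paper instead reruns the $s'=0$ argument directly: it fixes the outer function, approximates it by a compactly supported smooth (hence Lipschitz) function, and uses only the already established $L^2$-convergence $\|\psi^{-1}-\varphi^{-1}\|\to 0$ together with the uniform Jacobian bounds of Corollary \ref{cor_openness}$(iii)$; your phrase ``bootstrapping one derivative at a time'' should be made precise in this way.
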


\begin{proof}
First we prove that the inverse $\varphi^{-1}$ of an arbitrary element $\varphi$ in $\Ds^s$ is again in $\Ds^s$.
It is to show that for any multi-index $\alpha \in \mathbb Z^n_{\geq 0}$ with $|\alpha| \leq s$, one has
$\partial^\alpha(\varphi^{-1}-\id) \in L^2(\R^n)$. Clearly, for $\alpha=0$, one has
\[
 \int_{\R^n} |\varphi^{-1}-\id|^2dx = \int_{\R^n} |\id-\varphi|^2 \det(d_y\varphi) dy < \infty
\]
as $\det(d_y\varphi)$ is bounded by Corollary \ref{cor_openness}. In addition we conclude that
\[
 \Ds^s \to L^2(\R^n),\quad \varphi \mapsto \varphi^{-1} - \id
\]
is continuous. Indeed, for any $\varphi,\varphi_\bullet \in \Ds^s$, write
\[
 \varphi^{-1}(x) - \varphi_\bullet^{-1}(x) = \varphi^{-1} \circ \varphi_\bullet
\big(\varphi_\bullet^{-1}(x)\big) - \varphi^{-1} \circ \varphi \big(\varphi_\bullet^{-1}(x)\big).
\]
By Corollary \ref{cor_openness} $(iii)$, it follows that for any $x \in \R^n$,
\begin{equation}\label{eq:infty_close}
\begin{array}{ccl}
\left| \varphi^{-1}(x)- \varphi_\bullet^{-1}(x)\right| &=& 
\left| \varphi^{-1}(x)-\varphi^{-1}\big(\varphi \circ \varphi_\bullet^{-1}(x)\big)\right|\\
&\le& \sup\limits_{x \in \R^n} \left|d_x\varphi^{-1}\right| \cdot 
|x - \varphi \circ \varphi_\bullet^{-1}(x)|\\
&\le& L \left| (\varphi_\bullet - \varphi) \big(\varphi_\bullet^{-1}(x)\big)\right|
\end{array}
\end{equation}
where $L > 0$ can be chosen uniformly for $\varphi$ close to $\varphi_\bullet$. Hence
\begin{equation}\label{L2_continuity}
\int_{\R^n} |\varphi^{-1} - \varphi_\bullet^{-1}|^2 dx \leq L^2 \int_{\R^n} |\varphi - \varphi_\bullet|^2
\det(d_y\varphi_\bullet) dy
\end{equation}
and the claimed continuity follows. Now consider $\alpha \in \mathbb Z^n_{\geq 0}$ with $1 \leq |\alpha| \leq s$.
We claim that $\partial^\alpha(\varphi^{-1}-\id)$ is of the form
\begin{equation}\label{derivative_form}
\partial^\alpha (\varphi^{-1}-\id) = F^{(\alpha)} \circ \varphi^{-1}
\end{equation}
where $F^{(\alpha)}$ is a continuous map from $\Ds^s$ with values in $H^{s-|\alpha|}$.
Then $\partial^\alpha(\varphi^{-1}-\id)$ is in $L^2(\R^n)$ as
\begin{equation}\label{transformed}
\int_{\R^n} \big| \partial^\alpha( \varphi^{-1}-\id)\big|^2dx = \int_{\R^n}|F^{(\alpha)}|^2 \det(d_y\varphi)dy < \infty.
\end{equation}
To prove (\ref{derivative_form}), first note that $\varphi$ and hence $\varphi^{-1}$ are in $\mbox{Diff}_+^1(\R^n)$.
By the chain rule,
\[
d(\varphi^{-1}-\id) = (d\varphi)^{-1} \circ \varphi^{-1} - \id_n=\big((d\varphi)^{-1}-\id_n\big)\circ \varphi^{-1}
\]
where $\id_n$ is the $n \times n$ identity matrix. The expression $(d\varphi)^{-1}-\id_n$ is of the form
\[
(d\varphi)^{-1}-\id_n = \frac{1}{\det (d\varphi)} ( \Phi - \det(d\varphi) \id_n )
\]
where $\Phi(x)$ is the matrix whose entries are the cofactors of $d_x\varphi$. In particular, each entry of $\Phi(x)$
is a polynomial expression of $(\dx{i}\varphi_j)_{1\leq i,j \leq n}$. Hence by Lemma \ref{lem_imbedding} the
off-diagonal entries of $\Phi(x)$ are in $H^{s-1}(\R^n,\R)$. Furthermore, any diagonal entry of $\Phi(x)$ is an
element in $1+H^{s-1}(\R^n,\R)$ and $\det(d_x\varphi)$ is of the form $1+g$ with $g \in H^{s-1}(\R^n,\R)$ and
$\inf_{x \in \R^n}\big(1+g(x)\big) > 0$. We thus conclude that $\Phi(x)-\det(d_x\varphi) \, \id_n$ is in
$H^{s-1}(\R^n,\R^{n \times n})$ and, in turn, by Lemma \ref{lemma_division}
\begin{equation}\label{derivative_hs_minus_1}
(d\varphi)^{-1} - \id_n \in H^{s-1}(\R^n,\R^{n\times n})
\end{equation}
where $\R^{n \times n}$ denotes the space of all $n \times n$ matrices with real coefficients. In particular,
for $e_i=(0,\ldots,1,\ldots,0) \in \mathbb Z^n_{\geq 0}$ with $1 \leq i \leq n$ we have shown that
\[
\dx{i}(\varphi^{-1}-\id) = F^{(e_i)} \circ \varphi^{-1}.
\]
We point out that by Lemma \ref{lem_imbedding} and Lemma \ref{lemma_division}, $F^{(e_i)}$, when viewed as map from
$\Ds^s$ to $H^{s-1}$, is continuous. We now prove formula (\ref{derivative_form}) for any
$\alpha \in \mathbb Z_{\geq 0}^n$ with $1 \leq |\alpha| \leq s$ by induction. The result has already been established
for $|\alpha|=1$. Assume that it has already been proved for any $\beta \in \mathbb Z^n_{\geq 0}$ with $|\beta|\leq s'$
where $0 \leq s' < s$. Choose any $\alpha \in \mathbb Z^n_{\geq 0}$ with $|\alpha|=s'$. Then by induction hypothesis,
$\partial^\alpha(\varphi^{-1}-\id)=F^{(\alpha)} \circ \varphi^{-1}$ with $F^{(\alpha)} \in H^{s-|\alpha|}$. Note that
$s-|\alpha| \geq 1$. Hence by Lemma \ref{lemma_chain_rule},
\begin{eqnarray*}
d(F^{(\alpha)} \circ \varphi^{-1}) &=& dF^{(\alpha)} \circ \varphi^{-1} \cdot (d\varphi)^{-1} \circ \varphi^{-1}\\
&=& (dF^{(\alpha)} \cdot (d\varphi)^{-1}) \circ \varphi^{-1}.
\end{eqnarray*}
As $\dx{i}F^{(\alpha)} \in H^{s-|\alpha|-1}$ for any $1 \leq i \leq n$ and $(d\varphi)^{-1}-\id_n$ is in the space
$H^{s-1}(\R^n,\R^{n \times n})$ it follows by Lemma \ref{lem_imbedding} that 
\[
dF^{(\alpha)} \cdot (d\varphi)^{-1} \in H^{s-|\alpha|-1}(\R^n,\R^{n \times n}). 
\]
This shows that (\ref{derivative_form}) is valid for any $\beta \in \mathbb Z^n_{\geq 0}$ with $|\beta|=s'+1$ and
the induction step is proved. Hence formula \eqref{derivative_form} is proved and by \eqref{transformed},
we see that $\varphi^{-1} \in \D^s$ if $\varphi \in \D^s$. Note that we proved more: It follows from
\eqref{transformed} and the continuity of $F^{(\alpha)}:\D^s \to H^{s-|\alpha|}, |\alpha| \leq s$,
that the map $\D^s \to H^s(\R^n,\R^n)$
\begin{equation}\label{locally_bounded}
\varphi \mapsto \varphi^{-1}-\id
\end{equation}
is locally bounded. It remains to prove that the inverse map $\D^s \to \D^s, \varphi \mapsto \varphi^{-1}$ is
continuous. We have already seen that $\Ds^s \to L^2(\R^n)$, $\varphi \to \varphi^{-1}-\id$ is continuous. 
Now let $\alpha \in \mathbb Z_{\geq 0}^n$ with $1 \leq |\alpha| \leq s$ and $\varphi_\bullet \in \Ds^s$.
Then for any $\varphi \in \Ds^s$ 
\begin{eqnarray*}
|\partial^\alpha(\varphi^{-1}-\varphi^{-1}_\bullet)| &=& |F^{(\alpha)} \circ \varphi^{-1} - 
F^{(\alpha)}_\bullet \circ \varphi^{-1}_\bullet|\\
&\leq& |F^{(\alpha)} \circ \varphi^{-1} - F^{(\alpha)}_\bullet \circ \varphi^{-1}| + |F^{(\alpha)}_\bullet 
\circ \varphi^{-1} - F^{(\alpha)}_\bullet \circ \varphi^{-1}_\bullet|
\end{eqnarray*}
where $F^{(\alpha)}_\bullet=\left. F^{(\alpha)} \right|_{\varphi_\bullet}$. It follows from the local boundedness of
\eqref{locally_bounded}, Corollary \ref{cor_openness} $(iii)$, and Lemma \ref{continuous_composition} with $s'=0$ that
\[
 \| F^{(\alpha)} \circ \varphi^{-1} - F_\bullet^{(\alpha)} \circ \varphi^{-1}\| \leq C_0 \, 
\|F^{(\alpha)}-F_\bullet^{(\alpha)}\|
\]
where $C_0 > 0$ can be chosen uniformly for $\varphi$ near $\varphi_\bullet$. Together with the continuity of $F^{(\alpha)}$ it then follows that $\|F^{(\alpha)} \circ \varphi^{-1} - F_\bullet^{(\alpha)} \circ \varphi^{-1}\| \to 0$ as $\varphi \to \varphi_\bullet$. To analyze the term $|F_\bullet^{(\alpha)} \circ \varphi^{-1} - F_\bullet^{(\alpha)} \circ \varphi_\bullet^{-1}|$ we argue as in the proof of Lemma \ref{continuous_composition}. Using Lemma \ref{lemma_density} one sees that $\varphi_\bullet$ can be approximated by $\tilde \varphi \in \Ds^s$ with $\tilde \varphi - \id \in C_c^\infty(\R^n,\R^n)$. Then
\begin{eqnarray*}
 |F_\bullet^{(\alpha)} \circ \varphi^{-1} &-& F_\bullet^{(\alpha)} \circ \varphi_\bullet^{-1}| \leq |F_\bullet^{(\alpha)} \circ \varphi^{-1} - \tilde F^{(\alpha)} \circ \varphi^{-1}| + \\
 &+& |\tilde F^{(\alpha)} \circ \varphi^{-1} - \tilde F^{(\alpha)} \circ \varphi_\bullet^{-1}| + |\tilde F^{(\alpha)} \circ \varphi_\bullet^{-1} - F_\bullet^{(\alpha)} \circ \varphi_\bullet^{-1}|
\end{eqnarray*}
where $\tilde F^{(\alpha)} = \left. F^{(\alpha)} \right|_{\tilde \varphi}$. For $\varphi$ near $\varphi_\bullet$ one has
\[
\int_{\R^n} |F_\bullet^{(\alpha)} \circ \varphi^{-1} - \tilde F^{(\alpha)} \circ \varphi^{-1}|^2 dx \leq \int_{\R^n} |F_\bullet^{(\alpha)}- \tilde F^{(\alpha)}|^2 \det (d_y \varphi) dy
\]
and 
\[
\int_{\R^n} |F_\bullet^{(\alpha)} \circ \varphi_\bullet^{-1} - \tilde F^{(\alpha)} \circ \varphi_\bullet^{-1}|^2 dx \leq C \int_{\R^n} |F_\bullet^{(\alpha)}- \tilde F^{(\alpha)}|^2  dy
\]
where $C > 0$ satisfies $\sup_{x \in \R^n} (\det d_x \varphi) \leq C$ for $\varphi$ near $\varphi_\bullet$. To estimate the term $|\tilde F^{(\alpha)} \circ \varphi^{-1} - \tilde F^{(\alpha)} \circ \varphi_\bullet^{-1}|$ note that $\tilde F^{(\alpha)} \in C_c^\infty$. In particular, $\tilde F^{(\alpha)}$ is Lipschitz continuous, i.e.
\[
 |\tilde F^{(\alpha)}(x) - \tilde F^{(\alpha)}(y)| \leq L_1 |x - y| \quad \forall x,y \in \R^n
\]
for some constant $L_1 > 0$ depending on the choice of $\tilde \varphi$. Thus
\[
 \int_{\R^n} |\tilde F^{(\alpha)} \circ \varphi^{-1} - \tilde F^{(\alpha)} \circ \varphi_\bullet^{-1}|^2 dx \leq L_1^2 \int_{\R^n} |\varphi^{-1} - \varphi_\bullet^{-1}|^2 dx 
\]
and in view of (\ref{L2_continuity}) it then follows that $\|\tilde F^{(\alpha)} \circ \varphi^{-1} - \tilde F^{(\alpha)} \circ \varphi_\bullet^{-1}\| \to 0$ as $\varphi \to \varphi_\bullet$. Altogether we have shown that $\|F_\bullet^{(\alpha)} \circ \varphi^{-1} - F_\bullet^{(\alpha)} \circ \varphi_\bullet^{-1}\| \to 0$ as $\varphi \to \varphi_\bullet$.
\end{proof}

\begin{proof}[Proof of Proposition \ref{tg}] The claimed statement follows from Lemma \ref{continuous_composition} and Lemma \ref{continuous_inverse}.
\end{proof}

\subsection{Proof of Theorem \ref{thm1}}\label{subsection_proof_of_thm1}
As a first step we will prove the following
\begin{Prop}\label{prop1}
For any $r\in\Z$ and any integer $s$ with $s > n/2+1$
\begin{equation}
\mu : \Hs^{s+r}(\R^n,\R^d) \times \Ds^s \to \Hs^s(\R^n,\R^d), \quad  (u,\varphi) \mapsto u \circ \varphi
\end{equation}
is a $C^r$-map.
\end{Prop}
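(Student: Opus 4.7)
The proof is by induction on $r$. For $r=0$ the claim reduces to the continuity of $\mu$, which is the case $s'=s$ of Lemma \ref{continuous_composition}. For $r\ge 1$, I assume the conclusion for all integers $0\le r'<r$ and all integer $s'>n/2+1$. The natural candidate for the Fréchet differential at $(u,\varphi)\in H^{s+r}(\R^n,\R^d)\times\Ds^s$ is
\[
d\mu(u,\varphi)(v,h) \;=\; v\circ\varphi \;+\; \bigl((du)\circ\varphi\bigr)\cdot h,
\]
which defines a bounded linear operator $H^{s+r}\times H^s\to H^s$: the first summand uses $v\in H^{s+r}\subseteq H^s$ together with Lemma \ref{continuous_composition}; the second uses $du\in H^{s+r-1}\subseteq H^s$ (here $r\ge 1$ is essential), Lemma \ref{continuous_composition} for $du\circ\varphi$, and Lemma \ref{lem_imbedding} for the pointwise product with $h$.

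Fréchet differentiability is verified by the pointwise first-order Taylor expansions
\[
u\circ(\varphi+h)-u\circ\varphi = (du\circ\varphi)\cdot h + \int_0^1 \bigl[du\circ(\varphi+th)-du\circ\varphi\bigr]\,dt\cdot h
\]
and $v\circ(\varphi+h)-v\circ\varphi=\int_0^1 dv\circ(\varphi+th)\,dt\cdot h$. Lemma \ref{lem_imbedding} bounds each product in $H^s$; Lemma \ref{continuous_composition} shows $du\circ(\varphi+th)\to du\circ\varphi$ in $H^s$ as $\|h\|_s\to 0$, and the estimate $\|dv\|_s\le C\|v\|_{s+r}$ handles the second expansion, yielding a remainder of order $o(\|v\|_{s+r}+\|h\|_s)$.

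The heart of the inductive step is to show that $d\mu:H^{s+r}\times\Ds^s\to\mathcal L(H^{s+r}\times H^s,H^s)$ is $C^{r-1}$, which I do by splitting into the two summands. The second summand $(u,\varphi)\mapsto\bigl(h\mapsto(du\circ\varphi)\cdot h\bigr)$ factors as the bounded linear map $d:H^{s+r}\to H^{s+r-1}$, the composition map $H^{s+r-1}\times\Ds^s\to H^s$ (which is $C^{r-1}$ by the inductive hypothesis applied with $r'=r-1$), and the bounded bilinear multiplication $H^s\times H^s\to H^s$, so it is $C^{r-1}$. The first summand, $\varphi\mapsto R_\varphi\in\mathcal L(H^{s+r},H^s)$ with $R_\varphi(v)=v\circ\varphi$, is the main obstacle since it formally invokes the very map $\mu$ whose smoothness we are trying to establish. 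The key point is that $R_\varphi(v)$ is \emph{linear} in $v$: iterating the Taylor computation above yields the formula $D^k R(\varphi)(h_1,\ldots,h_k)(v)=(d^k v\circ\varphi)(h_1,\ldots,h_k)$, a bounded multilinear form on $H^{s+r}\times(H^s)^k$ for $k\le r$ (using $d^k v\in H^{s+r-k}\subseteq H^s$, Lemma \ref{continuous_composition}, and Lemma \ref{lem_imbedding}), and one further Taylor step produces the Lipschitz bound $C\|\varphi_1-\varphi_2\|_s$, uniform in $v$ and $h_i$ with $\|v\|_{s+r},\|h_i\|_s\le 1$. This shows $R\in C^{r-1}\bigl(\Ds^s,\mathcal L(H^{s+r},H^s)\bigr)$ (in fact $C^r$), which combined with the analysis of the second summand closes the induction.
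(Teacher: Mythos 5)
Your proof is correct, but it takes a genuinely different route from the paper's. The paper applies the converse of Taylor's theorem (Theorem \ref{thm_converse_taylor}) once, at order $r$: it writes the full $r$-th order pointwise Taylor expansion of $(u+\delta u)\circ(\varphi+\delta\varphi)$, packages the $k$-th order terms as the continuous multilinear maps $\eta_k$ (via Lemma \ref{rho} and Lemma \ref{nu}), and estimates the remainder directly, with no induction on $r$. You instead induct on $r$, compute only the first Fr\'echet differential $d\mu(u,\varphi)(v,h)=v\circ\varphi+(du\circ\varphi)\cdot h$, and show that $d\mu$ is $C^{r-1}$ by splitting it into the term $(du\circ\varphi)\cdot h$ (handled by the inductive hypothesis applied to $H^{s+r-1}\times\Ds^s\to H^s$ together with Lemma \ref{lem_imbedding}) and the right-translation term $\varphi\mapsto R_\varphi\in\mathcal L(H^{s+r},H^s)$, for which you run a Taylor expansion to order $r-1$ in $\varphi$. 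Both arguments ultimately rest on the same two estimates --- the algebra property of Lemma \ref{lem_imbedding} and the ``spend one derivative of $v$ to gain Lipschitz dependence on $\varphi$'' estimate of Lemma \ref{lem_poincare} --- and your treatment of $R_\varphi$ is essentially the content of the paper's Lemma \ref{nu}. What your version buys is a transparent display of exactly where a derivative of $u$ is lost each time $\mu$ is differentiated; what the paper's version buys is that all $r$ derivatives are produced in one stroke, with the book-keeping of the mixed derivatives in $(u,\varphi)$ handled automatically by the multinomial expansion.

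One small caveat: your parenthetical claim that $R$ is in fact of class $C^{r}$ as a map $\Ds^s\to\mathcal L(H^{s+r},H^s)$, rather than just $C^{r-1}$, is unjustified and false in general. The candidate $r$-th derivative would require $\varphi\mapsto d^rv\circ\varphi$ to be continuous uniformly over the unit ball of $v\in H^{s+r}$, i.e.\ norm-continuity of right translation on $H^s$, which fails (right translation is only strongly continuous --- this is precisely the loss-of-derivative phenomenon the proposition is about). Since your induction only uses the $C^{r-1}$ statement, this does not affect the validity of the proof, but the parenthesis should be deleted.
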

\noindent The main ingredient of the proof of Proposition \ref{prop1} is the converse to Taylor's theorem. To state it we first need to introduce some more
notation. Given arbitrary Banach spaces $Y,X_1,\ldots,X_k, k \geq 1$, we denote by $L(X_1,\ldots,X_k;Y)$ the space of
continuous $k$-linear forms on $X_1 \times \ldots \times X_k$ with values in $Y$. In case where $X_i=X$ for
any $1 \leq i \leq k$ we write $L^k(X;Y)$ instead of
$L(X,\ldots,X;Y)$ and set $L^0(X;Y)=Y$. Note that the spaces
$L(X;L^{k-1}(X;Y))$ and $L^k(X;Y)$ can be identified in a canonical
way. The subspace of $L^k(X;Y)$ of symmetric continuous $k$-linear forms is denoted by $L^k_{sym}(X;Y)$. The converse to Taylor's theorem can then be formulated as follows -- see \cite{abraham}, p.6.

\begin{Th}\label{thm_converse_taylor}
Let $U \subseteq X$ be a convex set and $F:U \to Y$, $f_k:U \to L^k_{sym}(X;Y), k=0,\ldots,r$. For any $x \in U$ and $h \in X$ so that $x+h \in U$, define $R(x,h) \in Y$ by
\[
 F(x+h)=F(x)+\sum_{k=1}^r \frac{f_k(x)(h,\ldots,h)}{k!} + R(x,h).
\] 
If for any $0 \leq k \leq r$, $f_k$ is continuous and for any $x \in U$, $\|R(x,h)\|/\|h\|^r \to 0$ as $h \to 0$ then $F$ is of class $C^r$ on $U$ and $d^kF=f_k$ for any $0 \leq k \leq r$. 
\end{Th}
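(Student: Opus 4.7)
The plan is to argue by induction on $r$. For $r = 1$ the hypothesis $F(x+h) = F(x) + f_1(x)(h) + R(x,h)$ with $\|R(x,h)\|/\|h\| \to 0$ is precisely the definition of Fréchet differentiability of $F$ at every $x \in U$ with $dF(x) = f_1(x)$; continuity of $f_1$ then upgrades this to $F \in C^1(U,Y)$ with $dF = f_1$. (The case $r = 0$ is trivial under the convention $f_0 = F$.)

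For the inductive step, assume the theorem at order $r-1$. The strategy is to reduce the order-$r$ case for $F$ to the order-$(r-1)$ case for its candidate differential $f_1 : U \to L(X;Y)$, with shifted coefficients $(f_2, \ldots, f_r)$. More precisely, I aim to establish the expansion
\[
f_1(x+h) = f_1(x) + \sum_{k=1}^{r-1}\frac{f_{k+1}(x)(h^k,\,\cdot\,)}{k!} + S(x,h),
\]
with $\|S(x,h)\|_{L(X;Y)} = o(\|h\|^{r-1})$ as $h \to 0$, and where $f_{k+1}(x)(h^k,\,\cdot\,)$ is interpreted as the element of $L(X;Y)$ sending $v \mapsto f_{k+1}(x)(h,\ldots,h,v)$. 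Once this is done, the inductive hypothesis applied to $f_1$ yields $f_1 \in C^{r-1}(U, L(X;Y))$ with $d^k f_1 = f_{k+1}$ for $0 \leq k \leq r-1$; combining with the base case ($dF = f_1$) gives $F \in C^r(U,Y)$ and $d^k F = f_k$ for $0 \leq k \leq r$, completing the induction.

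To produce the expansion of $f_1$, I would compare the order-$r$ Taylor expansion of $F(x + h + k)$ about the base point $x$ with the one about $x + h$; subtracting these and substituting the expansion of $F(x+h)$ about $x$ yields the algebraic identity
\[
\sum_{j=1}^{r} \frac{1}{j!}\bigl\{f_j(x+h)(k^j) - f_j(x)\bigl[(h+k)^j - h^j\bigr]\bigr\} = R(x, h+k) - R(x+h, k) - R(x, h).
\]
Expanding $(h+k)^j$ by the symmetric multinomial formula and collecting the terms of degree one in $k$ on the left reproduces exactly
\[
f_1(x+h)(k) - f_1(x)(k) - \sum_{\ell=1}^{r-1}\frac{f_{\ell+1}(x)(h^\ell, k)}{\ell!},
\]
which is $S(x,h)(k)$. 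Isolating this linear-in-$k$ component is the main obstacle: the plan is to substitute $k = tw$ with $\|w\| \leq 1$, divide by $t$, and let $t \to 0^+$. The higher-order-in-$k$ terms on the left vanish in this limit, while the right-hand side is controlled via the pointwise remainder hypotheses at the two base points $x$ and $x+h$. The key technical subtlety is that the estimate $R(x+h, k) = o(\|k\|^r)$ is pointwise at the base point $x+h$; to obtain the required bound $\|S(x,h)\|_{L(X;Y)} = o(\|h\|^{r-1})$ uniformly over $\|w\| \leq 1$, one must upgrade this pointwise control to a bound that is locally uniform in the base point. This upgrade uses continuity of $f_1,\ldots,f_r$ to compare the Taylor polynomials at $x$ and at $x+h$ with an error tending to zero with $h$, and is the technical heart of the argument.
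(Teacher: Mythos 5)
The paper offers no proof of Theorem \ref{thm_converse_taylor} --- it is quoted from \cite{abraham} --- so your proposal must stand on its own. Your overall strategy (induct on $r$, reduce to a shifted expansion of $f_1$ with coefficients $f_{k+1}(x)(h^k,\cdot)$, and derive it from the three-point identity obtained by expanding $F(x+h+k)$ about both $x$ and $x+h$) is exactly the standard route, and the identity you write is correct. But the step that extracts the linear-in-$k$ component is broken as described: if you set $k=tw$, divide by $t$ and let $t\to 0^+$, then $R(x+h,tw)/t\to 0$, while $\bigl(R(x,h+tw)-R(x,h)\bigr)/t$ converges to the directional derivative of $R(x,\cdot)$ at $h$ in direction $w$, which is by definition exactly $S(x,h)(w)$; the limit returns the tautology $S(x,h)(w)=S(x,h)(w)$ and yields no smallness, because pointwise smallness of $R(x,\cdot)$ controls nothing about its difference quotients. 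The correct extraction keeps $\|k\|$ comparable to $\|h\|$: evaluate the identity at $k=t_j\|h\|w$ for $r$ distinct $t_j\in(0,1]$, invert the resulting Vandermonde system, and use the polarization inequality $\|A\|\le c_i\sup_{\|w\|\le 1}\|A(w,\dots,w)\|$ for symmetric $i$-linear maps; this gives $\|S(x,h)\|\le C\|h\|^{-1}\sup_{\|k\|\le\|h\|}\|R(x,h+k)-R(x+h,k)-R(x,h)\|$.

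Once $\|k\|$ is of order $\|h\|$, the term $R(x+h,k)$ must be shown to be $o(\|h\|^r)$, and the uniformity problem you flag is genuine --- but your proposed repair is circular. ``Comparing the Taylor polynomials at $x$ and at $x+h$'' is precisely the three-point identity again, which expresses $R(x+h,k)$ back in terms of the differences $f_i(x+h)-\sum_{j\ge i}f_j(x)(h^{j-i},\cdot)/(j-i)!$ that you are trying to estimate; continuity of the $f_j$ only makes these $o(1)$, hence gives $R(x+h,k)=o(\|h\|)$ uniformly for $\|k\|\le\|h\|$, far short of $o(\|h\|^r)$ when $r\ge 2$. The argument closes only if the remainder hypothesis is strengthened to hold locally uniformly in the base point, i.e. $\sup\{\|R(y,k)\|/\|k\|^r \ :\ y \mbox{ near } x,\ 0<\|k\|\le\delta\}\to 0$ as $\delta\to 0$; that is how the converse Taylor theorem is proved in \cite{abraham}, and it is what the paper actually verifies when it applies the theorem (the bounds on $R_1,R_2$ in the proof of Proposition \ref{prop1} are manifestly locally uniform in $(u,\varphi)$). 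With the purely pointwise hypothesis as literally stated, the result is a genuinely deeper theorem even for $X=Y=\mathbb{R}$ (it is of the type of Oliver's theorem on continuous Peano derivatives), and no induction of the kind you outline establishes it.
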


\noindent To prove Proposition \ref{prop1} we first need to establish some auxiliary results.

\begin{Lemma}\label{rho}
Let $s$ be an integer with $s > n/2+1$. To shorten notation, for this lemma and its proof we write $H^s$ instead of $\Hs^s(\R^n,\R)$. Then for any $k \geq 1$, the map $\rho_k$ given by
\begin{eqnarray*}
\rho_k: \Hs^s \times \Ds^s &\to& L^k_{sym}(\Hs^s;\Hs^s) \\
(u,\varphi) & \mapsto & \left[(h_1,\ldots,h_k) \mapsto (u \circ \varphi)\cdot \prod_{i=1}^k h_i
\right]
\end{eqnarray*}
is continuous.
\end{Lemma}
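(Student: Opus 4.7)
The plan is to reduce everything to two already-established facts: the algebra property of $H^s$ (Lemma \ref{lem_imbedding}) and the continuity of the composition map (Lemma \ref{continuous_composition} with $s'=s$).

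First I would check that $\rho_k(u,\varphi)$ is well-defined as an element of $L^k_{sym}(H^s;H^s)$. Since $s>n/2+1>n/2$, Lemma \ref{lem_imbedding} provides a constant $K>0$ with $\|fg\|_s \leq K\|f\|_s\|g\|_s$ for $f,g \in H^s$. Applying this $k$ times yields
\[
\|(u\circ\varphi)\cdot h_1\cdots h_k\|_s \leq K^k\,\|u\circ\varphi\|_s \prod_{i=1}^k \|h_i\|_s,
\]
where $u\circ\varphi \in H^s$ by Lemma \ref{continuous_composition} (case $s'=s$). Symmetry in $(h_1,\ldots,h_k)$ is obvious, so $\rho_k(u,\varphi)\in L^k_{sym}(H^s;H^s)$ with operator norm at most $K^k\|u\circ\varphi\|_s$.

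Next, for continuity at a point $(u_0,\varphi_0)\in H^s\times \Ds^s$, I would note that by multilinearity
\[
\bigl(\rho_k(u,\varphi)-\rho_k(u_0,\varphi_0)\bigr)(h_1,\ldots,h_k) = (u\circ\varphi - u_0\circ\varphi_0)\cdot h_1\cdots h_k.
\]
Applying the algebra estimate to the right-hand side and taking the supremum over $\|h_i\|_s\le 1$ gives
\[
\|\rho_k(u,\varphi)-\rho_k(u_0,\varphi_0)\|_{L^k_{sym}(H^s;H^s)} \leq K^k\,\|u\circ\varphi - u_0\circ\varphi_0\|_s.
\]
The right-hand side tends to $0$ as $(u,\varphi)\to(u_0,\varphi_0)$ in $H^s\times\Ds^s$ by Lemma \ref{continuous_composition}, which proves continuity.

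I do not expect any real obstacle: the argument is just a straightforward combination of the algebra property of $H^s$ with the $s'=s$ case of the continuous composition result. The only point requiring a moment's attention is to recognize that continuity of $\rho_k$ as a map into $L^k_{sym}(H^s;H^s)$, equipped with the operator norm, follows at once once one sees that the difference factorizes as $(u\circ\varphi - u_0\circ\varphi_0)$ times the multilinear monomial $h_1\cdots h_k$, reducing the problem to controlling a single $H^s$-norm.
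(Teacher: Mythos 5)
Your proposal is correct and follows essentially the same route as the paper: both reduce the problem to the factorization $(u\circ\varphi-u_0\circ\varphi_0)\cdot h_1\cdots h_k$, bound it via the algebra property of Lemma \ref{lem_imbedding}, and conclude by the continuity of composition from Lemma \ref{continuous_composition}. The only (immaterial) difference is the power of the multiplication constant $K$ in the estimate.
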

\begin{proof}[Proof of Lemma \ref{rho}] First we note that the map $\rho_k$ is well defined. Indeed for any $(u,\varphi) \in H^s \times \Ds^s$, the function $u \circ \varphi$ is in $H^s$ by Lemma \ref{continuous_composition}. Hence by Lemma \ref{lem_imbedding}, for any $(h_i)_{1 \leq i \leq k} \subseteq H^s$ the function $u \circ \varphi \cdot \prod_{i=1}^k h_i$ is in $H^s$. It follows that $\rho_k(u,\varphi) \in L^k_{sym}(H^s;H^s)$.
To show that $\rho_k$ is continuous consider arbitrary sequences
$(\varphi_l)_{l \geq 1} \subseteq \Ds^s$ and $(u_l)_{l \geq 1} \subseteq \Hs^s$
with $\varphi_l \to \varphi$ in $\Ds^s$ and $u_l \to u$ in $\Hs^s$. By Lemma \ref{lem_imbedding}, one has for any $(h_i)_{1 \leq i \leq k} \subseteq H^s$,
\[
\hnorm{(u \circ \varphi) \cdot \prod_{i=1}^k h_i  - (u_l \circ \varphi_l) \cdot \prod_{i=1}^k h_i}{s} \leq K^{k+1}
\hnorm{u \circ \varphi - u_l \circ \varphi_l}{s} \cdot \prod_{i=1}^k \hnorm{h_i}{s}.
\]
As $ \hnorm{u \circ \varphi - u_l \circ \varphi_l}{s} \to 0$ for $l \to \infty$ by Lemma
\ref{continuous_composition}, the claimed continuity follows.
\end{proof}

\begin{Lemma}\label{lem_poincare}
Let $s$ be an integer with $s > n/2+1$. Given $\varphi_\bullet \in \Ds^s$ choose $\varepsilon > 0$ so
small that $\infR \det (d_x\varphi_\bullet) > \varepsilon$. Then there exists a
convex neighborhood $U \subseteq \Ds^s$ of $\varphi_\bullet$ and a constant $C>0$ with the
property that
\[
\infR \det (d_x \varphi) > \varepsilon \quad \mathrm{and} \quad
\hnorm{\varphi - \id}{s} < C \quad \forall \varphi \in U.
\]
Furthermore, there is a constant $C_s=C_s(\varepsilon,C)$, depending on
$\varepsilon$ and $C$ so that for any $f \in \Hs^{s+1}(\R^n,\R)$ and
$\varphi \in U$
\begin{equation}\label{poincare}
\hnorm{f \circ  \varphi - f \circ \varphi_\bullet}{s} \leq C_s \hnorm{f}{s+1} \hnorm{\varphi - \varphi_\bullet}{s}.
\end{equation}
\end{Lemma}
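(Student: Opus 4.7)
The plan is to first construct the neighborhood $U$ using Corollary \ref{cor_openness} $(iii)$. Choose $\delta > 0$ sufficiently small so that the open ball $U := \{\varphi \in \Ds^s \mid \|\varphi - \varphi_\bullet\|_s < \delta\}$ is contained in the neighborhood supplied by Corollary \ref{cor_openness} $(iii)$ and so that $\inf_{x \in \R^n} \det d_x\varphi > \varepsilon$ holds throughout $U$; then $\|\varphi - \id\|_s \leq \|\varphi_\bullet - \id\|_s + \delta =: C$ for any $\varphi \in U$. Since open balls in $H^s$ are convex, $U$ is convex as a subset of $\Ds^s - \id$. Crucially, for any $\varphi \in U$ and any $t \in [0,1]$, the affine combination $\varphi_t := \varphi_\bullet + t(\varphi - \varphi_\bullet)$ satisfies $\|\varphi_t - \varphi_\bullet\|_s = t\|\varphi - \varphi_\bullet\|_s < \delta$, so $\varphi_t \in U \subseteq \Ds^s$ and the uniform bounds $\inf_x \det d_x\varphi_t > \varepsilon$, $\|\varphi_t - \id\|_s < C$ hold for every $t \in [0,1]$.

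For the estimate \eqref{poincare} I would first treat the case $f \in C_c^\infty(\R^n,\R)$. Here the fundamental theorem of calculus gives the pointwise identity
\[
f(\varphi(x)) - f(\varphi_\bullet(x)) = \int_0^1 \sum_{i=1}^n (\dx{i}f)(\varphi_t(x)) \cdot (\varphi_i(x) - \varphi_{\bullet,i}(x))\, dt.
\]
By Lemma \ref{continuous_composition} applied to each $\dx{i}f \in H^s(\R^n,\R)$, the map $t \mapsto (\dx{i}f) \circ \varphi_t$ is continuous from $[0,1]$ to $H^s(\R^n,\R)$, and combined with Lemma \ref{lem_imbedding} the integrand is continuous as a map $[0,1] \to H^s$. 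Applying Minkowski's integral inequality in $H^s$, the multiplication estimate from Lemma \ref{lem_imbedding}, and the uniform composition bound \eqref{linear_estimate} from Lemma \ref{continuous_composition} (applicable since $\varphi_t$ satisfies the hypotheses uniformly in $t \in [0,1]$), one obtains
\[
\|f \circ \varphi - f \circ \varphi_\bullet\|_s \leq \int_0^1 \sum_{i=1}^n \|(\dx{i}f) \circ \varphi_t \cdot (\varphi_i - \varphi_{\bullet,i})\|_s\, dt \leq \widetilde{C}_s \|f\|_{s+1} \|\varphi - \varphi_\bullet\|_s
\]
for a constant $\widetilde{C}_s = \widetilde{C}_s(\varepsilon, C)$.

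To extend \eqref{poincare} from $C_c^\infty$ to arbitrary $f \in H^{s+1}(\R^n,\R)$, I would invoke Lemma \ref{lemma_density} to choose a sequence $(f_N)_{N\geq 1} \subseteq C_c^\infty(\R^n,\R)$ with $f_N \to f$ in $H^{s+1}$. For each $N$ the inequality holds, and by Lemma \ref{continuous_composition} both $f_N \circ \varphi \to f \circ \varphi$ and $f_N \circ \varphi_\bullet \to f \circ \varphi_\bullet$ in $H^s$, while $\|f_N\|_{s+1} \to \|f\|_{s+1}$. Passing to the limit yields \eqref{poincare} with the same constant $C_s := \widetilde{C}_s$.

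The main technical point to handle with care is the Minkowski integral inequality step: one needs the integrand $t \mapsto (\dx{i}f) \circ \varphi_t \cdot (\varphi_i - \varphi_{\bullet,i})$ to be continuous with values in $H^s$, so that the Bochner integral exists in $H^s$ and coincides with the pointwise integral. This continuity follows from the continuity of the composition map asserted in Lemma \ref{continuous_composition} together with the bilinearity and continuity of multiplication in $H^s$ from Lemma \ref{lem_imbedding}; once these are in hand, the rest of the argument is routine.
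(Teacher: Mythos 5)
Your proposal is correct and follows essentially the same route as the paper: construct the convex neighborhood via Corollary \ref{cor_openness} $(iii)$, reduce to $f \in C_c^\infty(\R^n,\R)$ by density (Lemma \ref{lemma_density}) and continuity of composition (Lemma \ref{continuous_composition}), apply the fundamental theorem of calculus along the segment $\varphi_\bullet + t\,\delta\varphi$, justify that the integrand is a continuous $H^s$-valued curve so the identity holds in $H^s$, and then estimate the integral using Lemma \ref{lem_imbedding} together with the uniform bound \eqref{linear_estimate}. The only cosmetic difference is that you name the step ``Minkowski's integral inequality'' where the paper simply bounds the norm of the Riemann integral by the integral of the norms.
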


\begin{proof}[Proof of Lemma \ref{lem_poincare}]
The first statement follows from Corollary \ref{cor_openness} $(iii)$. With regard to the second part note that by Lemma \ref{continuous_composition} it suffices to prove estimate (\ref{poincare}) for $f \in \tspace$ as
$\tspace$ is dense in $\Hs^{s+1}(\R^n,\R)$ by Lemma \ref{lemma_density}. Introduce $\delta \varphi(x)=\varphi(x) - \varphi_\bullet(x)$ and note that $\varphi_\bullet + t\delta \varphi$ is in $U$ for any
$0 \leq t \leq 1$ as $U$ is assumed to be convex. By Proposition \ref{prop_sobolev_imbedding}, $\varphi \in \mbox{Diff}_+^1(\R^n)$. For any $x \in \R^n$ consider the $C^1$-curve, 
\[
[0,1] \to \R^n, \quad  t \mapsto f \circ \big(\varphi_\bullet + t \delta \varphi\big)(x).
\]
Clearly, for any $x \in \R^n$,
\begin{eqnarray}
\nonumber
f \circ \varphi(x) - f \circ \varphi_\bullet(x) &=& \int_0^1 \frac{d}{dt} \left( f \circ \big(\varphi_\bullet +t \delta \varphi\big)(x)\right) \;dt \\
\label{fundamental_lemma}
&=& \int_0^1 \left( d_{(\varphi_\bullet + t \delta \varphi)(x)} f \right) \cdot \delta \varphi(x) \;dt.
\end{eqnarray}
By Lemma \ref{continuous_composition},
\[
 t \mapsto d_{\varphi_\bullet + t \delta \varphi}f \cdot \delta \varphi = df \circ (\varphi_\bullet + t \delta \varphi) \cdot \delta \varphi
\]
 is a continuous path in $\Hs^s$, hence it is Riemann integrable in $\Hs^s$ and we have that equality \eqref{fundamental_lemma} is valid in $H^s$. Hence,
\[
 \hnorm{f \circ \varphi - f \circ \varphi_\bullet}{s} \leq \int_0^1 \hnorm{
   d_{\varphi_\bullet + t \delta \varphi}f  \cdot \delta \varphi}{s} dt.
\]
Estimate (\ref{poincare}) then follows
using Lemma \ref{lem_imbedding} and Lemma \ref{continuous_composition}.
\end{proof}

\begin{Lemma}
\label{nu}
Let $s$ be an integer satisfying $s > n/2+1$. To shorten notation, for
the course of this lemma and its proof, we write again $\Hs^s$
instead of $\Hs^s(\R^n,\R)$. Then for any $k \geq 1$, the map $\nu_k$ given by
\begin{eqnarray*}
 \nu_k: \Ds^s &\to& L(H^{s+1};L^{k-1}_{sym}(H^s;H^s))\\
        \varphi & \mapsto & \left[(h,h_1,\ldots,h_{k-1}) \mapsto (h \circ \varphi)
        \cdot \prod_{i=1}^{k-1} h_i \right]
\end{eqnarray*}
is continuous.
\end{Lemma}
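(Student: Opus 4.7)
The proof will parallel that of Lemma \ref{rho}, the key difference being that here $h$ comes from $H^{s+1}$ rather than $H^s$, which is exactly the regularity needed to apply the Poincar\'e-type estimate of Lemma \ref{lem_poincare} to control $h\circ\varphi - h\circ\varphi_\bullet$ in the $H^s$-norm.

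First I would check that $\nu_k$ is well-defined. For any $\varphi\in\Ds^s$ and $h\in H^{s+1}\subseteq H^s$, Lemma \ref{continuous_composition} applied with $s'=s$ gives $h\circ\varphi\in H^s$ together with the estimate $\|h\circ\varphi\|_s\le C_s\|h\|_s\le C_s\|h\|_{s+1}$, where $C_s$ depends only on uniform bounds of $\inf_{x\in\R^n}\det(d_x\varphi)$ and $\|\varphi-\id\|_s$. Using that $H^s$ is a Banach algebra (Lemma \ref{lem_imbedding}), the product $(h\circ\varphi)\cdot\prod_{i=1}^{k-1}h_i$ belongs to $H^s$ with
\[
\bigl\|(h\circ\varphi)\cdot \textstyle\prod_{i=1}^{k-1}h_i\bigr\|_s \le K^{k-1}\|h\circ\varphi\|_s\prod_{i=1}^{k-1}\|h_i\|_s \le K^{k-1}C_s\|h\|_{s+1}\prod_{i=1}^{k-1}\|h_i\|_s,
\]
which proves that $\nu_k(\varphi)\in L(H^{s+1};L^{k-1}_{sym}(H^s;H^s))$ with operator norm $\le K^{k-1}C_s$. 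Multilinearity in $(h_1,\dots,h_{k-1})$ and symmetry are immediate, as is linearity in $h$.

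For continuity, fix $\varphi_\bullet\in\Ds^s$ and pick $\varepsilon>0$ with $\inf_{x\in\R^n}\det(d_x\varphi_\bullet)>\varepsilon$. Apply Lemma \ref{lem_poincare} to obtain a convex neighborhood $U\subseteq\Ds^s$ of $\varphi_\bullet$ and a constant $C_s>0$ such that for all $\varphi\in U$ and all $f\in H^{s+1}$,
\[
\|f\circ\varphi - f\circ\varphi_\bullet\|_s \le C_s\,\|f\|_{s+1}\,\|\varphi-\varphi_\bullet\|_s.
\]
For arbitrary $h\in H^{s+1}$ and $h_1,\dots,h_{k-1}\in H^s$, Lemma \ref{lem_imbedding} gives
\[
\bigl\|\bigl((h\circ\varphi)-(h\circ\varphi_\bullet)\bigr)\cdot\textstyle\prod_{i=1}^{k-1}h_i\bigr\|_s \le K^{k-1}\|h\circ\varphi-h\circ\varphi_\bullet\|_s\prod_{i=1}^{k-1}\|h_i\|_s,
\]
and substituting the Poincar\'e estimate yields
\[
\bigl\|\nu_k(\varphi)-\nu_k(\varphi_\bullet)\bigr\|_{L(H^{s+1};L^{k-1}_{sym}(H^s;H^s))} \le K^{k-1}C_s\,\|\varphi-\varphi_\bullet\|_s
\]
for all $\varphi\in U$. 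The right-hand side tends to $0$ as $\varphi\to\varphi_\bullet$ in $\Ds^s$, proving continuity of $\nu_k$ at $\varphi_\bullet$. Since $\varphi_\bullet\in\Ds^s$ was arbitrary, $\nu_k$ is continuous. No step here is really the hard part: all the analytic work has been absorbed into Lemma \ref{continuous_composition}, Lemma \ref{lem_imbedding}, and Lemma \ref{lem_poincare}; the only mild care is to keep track of the fact that we need the extra derivative of $h$ precisely to pay for the loss in the Poincar\'e-type estimate.
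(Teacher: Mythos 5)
Your proof is correct and follows essentially the same route as the paper: the continuity is reduced to the Banach algebra estimate of Lemma \ref{lem_imbedding} combined with the Poincar\'e-type bound $\|h\circ\varphi-h\circ\varphi_\bullet\|_s\le C_s\|h\|_{s+1}\|\varphi-\varphi_\bullet\|_s$ from Lemma \ref{lem_poincare}. The explicit well-definedness check via Lemma \ref{continuous_composition} is a harmless addition that the paper leaves implicit.
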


\begin{Rem}\label{canonical_identification}
Note that $L\left(H^{s+1};L_{sym}^{k-1}(H^s;H^s)\right)$ isometrically embeds into $L_{sym}^k(H^{s+1} \times H^s;H^s)$ in a canonical way.
\end{Rem}

\begin{proof}[Proof of Lemma \ref{nu}]
For any $h \in H^{s+1}$, $(h_i)_{1\leq i \leq k-1} \subseteq \Hs^s$ and $\varphi, \varphi_\bullet
\in \Ds^s$, we have in view of Lemma \ref{lem_imbedding},
\begin{eqnarray*}
 \hnorm{ (h \circ \varphi) \cdot \prod_{i=1}^{k-1} h_i  -  (h \circ \varphi_\bullet) \cdot \prod_{i=1}^{k-1} h_i}{s}
&\leq& K^{k-1} \hnorm{h \circ \varphi - h \circ \varphi_\bullet}{s} \cdot \prod_{i=1}^{k-1} \hnorm{h_i}{s}.
\end{eqnarray*}
By Lemma \ref{lem_poincare}, there exists $C_s>0$ so that for $\varphi$ in a sufficiently small neighborhood of $\varphi_\bullet$,
\[
 \|h \circ \varphi - h \circ \varphi_\bullet\|_s \leq C_s \|\varphi - \varphi_\bullet\|_s \; \|h\|_{s+1}.
\]
This shows the claimed continuity.
\end{proof}

\begin{proof}[Proof of Proposition \ref{prop1}]
To keep notation as simple as possible we present the proof in the case where $d=n$. The case $r=0$ is treated in Lemma \ref{continuous_composition}, hence it remains to consider the case $r \geq 1$. We want to apply the converse of Taylor's theorem with $U=H^{s+r} \times \Ds^s$, viewed as subset of $X:=H^{s+r} \times H^s$ and $Y:=H^s$. Let $u,\delta u \in \Hs^{s+r}$ and $\varphi \in \Ds^s$, $\delta \varphi
\in \Hs^s$ be given. By Proposition \ref{prop_sobolev_imbedding}, $u, \delta u \in C^r(\R^n,\R^n)$. Hence by Taylor's theorem, for any $x \in \R^n$, $u(\varphi(x)+\delta \varphi(x))$ is given by
\[
u\big(\varphi(x)\big) + \sum_{k=1}^r \sum_{|\alpha|=k}
\frac{1}{\alpha!} \Big(\partial^{\alpha} u \Big)\big(\varphi(x)\big) \cdot
\delta \varphi(x)^{\alpha} + R_1(u,\varphi,\delta \varphi)(x)
\]
where $\delta \varphi(x)^{\alpha}=\delta \varphi_1(x)^{\alpha_1} \cdots \delta
\varphi_n(x)^{\alpha_n}$ and $R_1(u,\varphi,\delta \varphi)(x)$ is defined by
\[
 \sum_{|\alpha|=r} \left\{
 \frac{r}{\alpha!} \int_0^1 (1-t)^{r-1} \Big(\big(
 \partial^{\alpha} u \big) \big( \varphi(x) + t \delta \varphi(x) \big) - \partial^{\alpha}
 u \big(\varphi(x) \big) \Big) \cdot \delta \varphi(x)^\alpha dt \right\}.
\]
Similarly, $ \delta u(\varphi(x) + \delta \varphi(x))$ is given by
\[
 \delta u\big(\varphi(x)\big) +
 \sum_{k=1}^{r-1} \sum_{|\alpha|=k} \frac{1}{\alpha!}
 \Big(\partial^{\alpha}\delta u \Big) \big( \varphi(x)\big) \cdot \delta
 \varphi(x)^{\alpha} + R_2(\delta u,\varphi,\delta \varphi)(x)
\]
with $R_2(\delta u,\varphi,\delta \varphi)(x)$ defined by
\[
 \sum_{|\alpha|=r} \left\{
 \frac{r}{\alpha!} \int_0^1 (1-t)^{r-1}
 \big(\partial^{\alpha} \delta u \big) \big( \varphi(x) + t \delta \varphi(x) \big) \cdot \delta \varphi(x)^\alpha dt \right\}.
\]
Note that for any $x \in \R^n$ the integrals appearing in the definition of the remainder terms $R_1$ and $R_2$ are
well-defined as Riemann integrals. Indeed, as $u, \delta u \in C^r(\R^n,\R^n)$ we see that for any $x \in \R^n$ 
these integrands are continuous functions of $t \in [0,1]$. By Lemma \ref{continuous_composition} (continuity of
composition) and Lemma \ref{lem_imbedding} (continuity of product), the integrands appearing in
the remainder terms $R_1$ and $R_2$ can be viewed as continuous curves in $\Hs^s$, 
parametrized by $t$ and hence are Riemann integrable in $H^s$. Hence the pointwise
integrals are functions in  $\Hs^s$. Furthermore, when viewed as $\Hs^s$-valued curves, the
integrands depend continuously on the parameters 
$(u,\varphi,\delta u,\delta \varphi) \in\Hs^{s+r}\times\Ds^s\times\Hs^{s+r}\times\Hs^s$ by
Lemma \ref{lem_imbedding} and Lemma \ref{continuous_composition}.

In the following we denote by $B_{\varepsilon}^{s+r}(u_\bullet)$ the ball in $H^{s+r}$ of radius $\varepsilon$, 
centered at $u_\bullet \in H^{s+r}$,
\[
B_{\varepsilon}^{s+r}(u_\bullet) = \{ u \in H^{s+r} \ | \ \hnorm{u-u_\bullet}{s+r} < \varepsilon \}.
\]
For $(u_\bullet,\varphi_\bullet) \in \Hs^{s+r} \times \Ds^s$, set $U_1=B_{\varepsilon}^{s+r}(u_\bullet)
\subseteq \Hs^{s+r}$ and $U_2=B_{\varepsilon}^s(\varphi_\bullet-\id) \subseteq \Hs^s$,
where we choose $\varepsilon$ small enough to ensure that $\id + U_2 \subseteq \Ds^s$.
Furthermore, define the subset $V \subseteq \Hs^{s+r} \times \Ds^s \times \Hs^{s+r} \times \Hs^s$ by
\[
V\!=\!\{(u,\varphi,\delta u,\delta \varphi) \in \Hs^{s+r} \times \Ds^s \times \Hs^{s+r} \times \Hs^s \,|
\, (u+\delta u,\varphi + \delta \varphi) \in U_1 \times (\id+U_2)\}.
\]
In view of the considerations above, we get for $(u,\varphi,\delta u, \delta \varphi) \in V$  the following
identity in $\Hs^s$
\[
 (u + \delta u) \circ (\varphi + \delta \varphi) = u \circ \varphi + \sum_{k=1}^r \frac{\eta_k(u,\varphi)}{k!} 
(\delta u,\delta \varphi)^k + R(u,\varphi,\delta u,\delta \varphi)
\]
where $(\delta u, \delta \varphi)^k$ stands for $\big( (\delta u,\delta
\varphi), \ldots, (\delta u,\delta \varphi) \big)$ and  for any $1 \leq k \leq r$, $
 \eta_k(u,\varphi)$ is an element in $L^k_{sym}(\Hs^{s+r} \times \Hs^s;\Hs^s)$, given by
\[
 \eta_k(u,\varphi) (\delta u, \delta \varphi)^k =  \sum_{|\alpha|=k} \frac{k!}{\alpha
  !} ( \partial^{\alpha} u ) \circ \varphi  \cdot \delta \varphi^{\alpha}
+  \sum_{|\alpha|=k-1} \frac{k!}{\alpha!} (\partial^{\alpha} \delta u) \circ \varphi \cdot \delta \varphi^{\alpha}.
\]
The remainder term $R(u,\varphi,\delta u,\delta \varphi)$ is given by
\[
 R(u,\varphi,\delta u,\delta \varphi) =
 R_1(u,\varphi,\delta \varphi) + R_2(\delta u,\varphi,\delta \varphi).
\]
Lemma \ref{rho} and Lemma \ref{nu} together with Remark \ref{canonical_identification} show that for any $k=1,\ldots,r$,
\[
 \eta_k:H^{s+r} \times \Ds^s \to L^k_{sym}(H^{s+r}\times H^s;H^s), \quad (u,\varphi) \mapsto \eta_k(u,\varphi)
\]
is continuous. Moreover, by Lemma \ref{continuous_composition} and Lemma \ref{lem_imbedding},
\[
 \frac{\|R_1(u,\varphi,\delta \varphi)\|_s}{(\|\delta u\|_{s+r} + \|\delta \varphi\|_s)^r} \leq \sum_{|\alpha|=r} \frac{1}{\alpha!} \sup_{0 \leq t \leq 1} \|\partial^\alpha u \circ (\varphi + t \delta \varphi)-\partial^\alpha u \circ \varphi\|_s \to 0
\]
and
\[
 \frac{\|R_2(\delta u,\varphi,\delta \varphi)\|_s}{(\|\delta u\|_{s+r} + \|\delta \varphi\|_s)^r} \leq \sum_{|\alpha|=r} \frac{1}{\alpha!} \sup_{0 \leq t \leq 1} \|(\partial^\alpha \delta u)\circ (\varphi+ t \delta \varphi) \|_s \to 0.
\]
as $\|\delta \varphi\|_s+\|\delta u\|_{s+r} \to 0$. By Theorem \ref{thm_converse_taylor}, it then follows that $\mu$ is a $C^r$ map.
\end{proof}

\noindent Proposition \ref{prop1} together with the implicit function theorem can be used to prove
the following result on the inverse map.
\begin{Prop}\label{prop2}
For any $r \in \Z$ and any integer $s$ with $s > n/2+1$
\begin{equation}
{\tt inv} : \Ds^{s+r} \to \Ds^s, \quad \varphi \mapsto \varphi^{-1}
\end{equation}
is a $C^r$-map.
\end{Prop}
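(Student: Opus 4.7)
The plan is to apply the implicit function theorem in Banach spaces, using that composition is $C^r$-smooth by Proposition \ref{prop1}. The case $r=0$ is already established in Lemma \ref{continuous_inverse}, so I will assume $r\ge 1$. Fix an arbitrary $\varphi_\bullet\in\Ds^{s+r}$ and set $\psi_\bullet:=\varphi_\bullet^{-1}$; by Lemma \ref{continuous_inverse} applied in regularity $s+r$, one has $\psi_\bullet\in\Ds^{s+r}\subseteq\Ds^s$. I then consider the map
\[
F:\Ds^{s+r}\times\Ds^s\to H^s,\qquad F(\varphi,\psi):=\varphi\circ\psi-\id=(\varphi-\id)\circ\psi+(\psi-\id),
\]
which satisfies $F(\varphi_\bullet,\psi_\bullet)=0$. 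Using the global charts $\varphi\mapsto\varphi-\id$ and $\psi\mapsto\psi-\id$, Proposition \ref{prop1} identifies the first summand with a $C^r$-map while the second summand is affine, so $F$ is a $C^r$-map between open subsets of Hilbert spaces.

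The key step will be to verify that the partial derivative $\partial_\psi F(\varphi_\bullet,\psi_\bullet)$ is a topological linear isomorphism of $H^s$. The first-order Taylor expansion computed in the proof of Proposition \ref{prop1} gives
\[
\partial_\psi F(\varphi_\bullet,\psi_\bullet)(h)=(d\varphi_\bullet\circ\psi_\bullet)\cdot h,\qquad h\in H^s.
\]
Since $\varphi_\bullet\in\Ds^{s+r}$ with $r\ge 1$, each entry of $d\varphi_\bullet-\id_n$ lies in $H^{s+r-1}\subseteq H^s$; Lemma \ref{continuous_composition} applied to each entry, together with Lemma \ref{lem_imbedding}, then show that pointwise multiplication by the matrix-valued function $d\varphi_\bullet\circ\psi_\bullet$ is a bounded linear operator on $H^s$. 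The pointwise inverse matrix at $x$ equals $(d_{\psi_\bullet(x)}\varphi_\bullet)^{-1}=d_x(\varphi_\bullet^{-1})$, and since $\varphi_\bullet^{-1}\in\Ds^{s+r}$ the entries of $d(\varphi_\bullet^{-1})-\id_n$ again lie in $H^{s+r-1}\subseteq H^s$; hence multiplication by $d(\varphi_\bullet^{-1})$ is also bounded on $H^s$. These two multiplication operators are pointwise, and therefore operator, inverses of one another.

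With this in hand, the implicit function theorem produces an open neighborhood $W$ of $\varphi_\bullet$ in $\Ds^{s+r}$ and a $C^r$-map $\Phi:W\to\Ds^s$ such that $\Phi(\varphi_\bullet)=\psi_\bullet$ and $F(\varphi,\Phi(\varphi))=0$ for all $\varphi\in W$, i.e.\ $\varphi\circ\Phi(\varphi)=\id$. Since $(\Ds^s,\circ)$ is a group by Proposition \ref{tg}, inverses are unique, so $\Phi(\varphi)=\varphi^{-1}$ on $W$; thus ${\tt inv}$ coincides with $\Phi$ on $W$ and is $C^r$ there. As $\varphi_\bullet$ was arbitrary, this yields ${\tt inv}\in C^r(\Ds^{s+r},\Ds^s)$.

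The main obstacle I anticipate is the verification of the isomorphism property of $\partial_\psi F$; this is precisely where the assumption $r\ge 1$ enters, as it provides the extra derivative needed to place the coefficient matrices $d\varphi_\bullet-\id_n$ and $d(\varphi_\bullet^{-1})-\id_n$ inside the algebra $H^s$, which acts boundedly on itself via Lemma \ref{lem_imbedding}. Without this extra derivative one would only know these coefficients to lie in $H^{s-1}$, and the formal linearization would fail to define a bounded operator on $H^s$.
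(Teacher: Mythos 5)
Your proposal is correct and follows essentially the same route as the paper: the case $r=0$ via Lemma \ref{continuous_inverse}, then for $r\ge 1$ the implicit function theorem applied to $\mu(\varphi,\psi)=\id$, with the partial derivative in $\psi$ identified as multiplication by $d\varphi_\bullet\circ\varphi_\bullet^{-1}$ and inverted by multiplication by $(d\varphi_\bullet)^{-1}\circ\varphi_\bullet^{-1}=d(\varphi_\bullet^{-1})$, using Lemmas \ref{continuous_composition} and \ref{lem_imbedding} for boundedness and the group structure for uniqueness. The only cosmetic difference is that you justify the invertibility of the linearization via $\varphi_\bullet^{-1}\in\Ds^{s+r}$ rather than via the cofactor formula, which is an equally valid shortcut.
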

\begin{proof}
The case $r=0$ has been established in Lemma \ref{continuous_inverse}. In particular we know that for any 
$\varphi \in \Ds^s$, its inverse $\varphi^{-1}$ is again in $\Ds^s$. So let $r
\geq 1$. By Proposition \ref{prop1},
\[
 \mu : \Ds^{s+r} \times \Ds^s \to \Ds^s, \quad (\varphi,\psi) \mapsto \varphi \circ \psi
\]
is a $C^r$-map. For any $\varphi \in \Ds^{s+r}$, consider the differential of $\psi \mapsto \mu(\varphi,\psi)$ at
$\psi=\varphi^{-1}$
\[
\left. d_{\psi}\mu(\varphi,\cdot) \right|_{\psi=\varphi^{-1}}: \Hs^s \to
\Hs^s , \quad \delta \psi \mapsto
d \varphi \circ \varphi^{-1} \cdot \delta \psi.
\]
As $r \geq 1$, we get that $d\varphi, d\varphi \circ \varphi^{-1}\in H^s(\R^n,\R^{n \times n})$. In fact, 
$\left. d_\psi \mu(\varphi,\cdot) \right|_{\psi=\varphi^{-1}}$ is a linear isomorphism on $H^s$ whose inverse is given by
$\delta \psi \mapsto (d\varphi)^{-1} \circ \varphi^{-1} \cdot \delta \psi$. Note that by Lemma 
\ref{continuous_composition}, $(d\varphi)^{-1} \circ \varphi^{-1} \in H^s(\R^n,\R^{n \times n})$ and by Lemma 
\ref{lem_imbedding}, $\delta \psi \mapsto (d\varphi)^{-1} \circ \varphi^{-1} \cdot \delta \psi$ is a bounded linear
map $H^s \to H^s$. Furthermore the equation
\[
\mu(\varphi,\psi)=\id
\]
has the unique solution $\psi=\varphi^{-1} \in \Ds^s$. Hence by the
implicit function theorem (see e.g. \cite{Lang}), the map ${\tt inv}:\Ds^{s+r} \to
\Ds^s$, $\varphi \mapsto \varphi^{-1}$ is $C^r$.
\end{proof}

\begin{proof}[Proof of Theorem \ref{thm1}]
Theorem \ref{thm1} follows from Proposition \ref{prop1} and Proposition \ref{prop2}.
\end{proof}

\subsection{Sobolev spaces $H^s(U,\R)$}\label{sobolev_spaces_on_open_sets}
In Section \ref{sec:diff_structure} we need a version of Proposition \ref{prop1} involving the Sobolev spaces
$H^s(U,\R)$ where $U \subseteq \R^n$ is an open nonempty subset with Lipschitz boundary. It means that locally,
the boundary $\partial U$ can be represented as the graph of a Lipschitz function -- see Definition 3.4.2 in 
\cite{Marti}\footnote{cf. \S4.5 in \cite{adams}}. Let $s\in\Z$.
By definition, $H^s(U,\R)$ is the Hilbert space of elements $f$ in $L^2(U,\R)$, having the property that
their distributional derivatives $\partial^\alpha f$ up to order $|\alpha|\leq s$ are $L^2$-integrable on $U$,
endowed with the norm $\|f\|_s$ where 
$\|f\|_s={\langle f,f \rangle}_s^{1/2}$ and ${\langle \cdot,\cdot \rangle}_s$ denotes the inner product defined for
$f,g \in H^s(U,\R)$ by
\[
{\langle f,g \rangle}_s= \sum_{|\alpha| \leq s} \int_U \partial^\alpha f(x) \partial^\alpha g(x) dx.
\]
Further we introduce $H^s(U,\R^m):=H^s(U,\R)^m$. The spaces $H^s(U,\R)$ and $H^s(\R^n,\R)$ are closely related. 
Recall that a function $f:\overline U \to \R$ is said to be $C^r$-differentiable, $r \geq 1$, if there exists an open
neighborhood $V$ of $\overline U$ in $\R^n$ and a $C^r$-function $g:V \to \R$ so that
$f=\left. g \right|_{\overline U}$. We denote by $C^r(\overline U,\R)$ the space of $C^r$-differentiable functions
$f:\overline U \to \R$ and by $C_0^r(\overline U,\R)$ the subspace of $C^r(\overline U,\R)$ consisting of functions
$f:\overline U \to \R$, vanishing at $\infty$, i.e. having the property that for any $\varepsilon >0$,
there exists $M \equiv M_\varepsilon >0$ so that 
\[
 \sup_{x \in \overline U, |x| \geq M} \sup_{|\alpha| \leq r} |\partial^\alpha f(x)| < \varepsilon.
\]
Furthermore, we denote by $C_b^r(\overline{U},\R)$ the subspace of $C^r$-differentiable functions
$f:\overline{U} \to \R$ so that $f$ and all its derivatives up to order $r$ are bounded,
\[
\sup_{x \in \overline{U}} \sup_{|\alpha| \leq r} \big| \partial^\alpha f(x)\big| < \infty.
\]
In a similar fashion one defines $C^\infty(\overline{U},\R)$, $C_0^\infty(\overline{U},\R)$, and
$C_b^\infty(\overline{U},\R)$ and corresponding spaces of vector valued functions $f:\overline{U} \to \R^m$.\\ \\
\noindent
The following result describes how $H^s(U,\R)$ and $H^s(\R^n,\R)$ are related -- see e.g. \cite{Marti}, 
Theorem 3.4.5, Theorem 5.3.1, and Theorem 6.1.1, for these well known results.

\begin{Prop}\label{prop_extension}
Assume that the open set $U \subseteq \R^n$ has a Lipschitz boundary and $s\in\Z$. Then the following statements hold.
\begin{itemize}
\item[(i)] $\big\{ \left. f \right|_U \,\big|\, f \in C_c^\infty(\R^n,\R) \big\}$ is dense in $H^s(U,\R)$.
\item[(ii)] The restriction operator, $H^s(\R^n,\R) \to H^s(U,\R)$, $f \mapsto \left. f \right|_U$, 
is continuous with norm $\leq 1$\footnote{This statement holds for any open set $U \subseteq \R^n$ with $\partial U$
not necessarily Lipschitz.}. Moreover, there is a bounded linear operator $E:H^s(U,\R) \to H^s(\R^n,\R)$, so that
$f=\left. (Ef) \right|_U$ for any $f$ in $H^s(U,\R)$. $E$ is referred to as extension operator.
\item[(iii)] For any integers $s,r \in \mathbb Z_{\geq 0}$ with $s > n/2$,
\[
H^{s+r}(U,\R) \hookrightarrow C_0^r(\overline U,\R)
\]
and the embedding is a bounded linear operator.
\end{itemize}
\end{Prop}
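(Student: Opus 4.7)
The plan is to handle item $(ii)$ first, since once the extension operator $E$ is available both $(i)$ and $(iii)$ reduce to the corresponding results on $\R^n$ that have already been established (Lemma \ref{lemma_density} and Proposition \ref{prop_sobolev_imbedding}). The restriction half of $(ii)$ is immediate from the definition of the norms: for any $f \in H^s(\R^n,\R)$ one has
\[
\|f|_U\|_s^2 = \sum_{|\alpha|\leq s} \int_U |\partial^\alpha f|^2 dx \leq \sum_{|\alpha|\leq s}\int_{\R^n}|\partial^\alpha f|^2 dx = \|f\|_s^2,
\]
and no regularity of $\partial U$ is used for this estimate. All the real work is in constructing $E$.

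For the extension operator, the plan is to build $E$ by a Stein-type total extension procedure localized to the Lipschitz boundary. First cover $\partial U$ by (locally finitely many) open sets $V_j$ in each of which, after a rigid motion, $\partial U \cap V_j$ is the graph $y_n = \psi_j(y')$ of a Lipschitz function and $U \cap V_j$ corresponds to $y_n > \psi_j(y')$. The bi-Lipschitz change of variables $(y',y_n) \mapsto (y', y_n - \psi_j(y'))$ straightens the boundary to the flat hyperplane $\{y_n = 0\}$ and reduces the problem to extending $H^s$ functions from the half-space $\{y_n > 0\}$ to all of $\R^n$. On the half-space, one defines the extension by a weighted higher-order reflection formula of the form
\[
(Ef)(y',y_n) = \int_1^\infty \rho(\lambda)\, f(y', -\lambda y_n)\, d\lambda \quad\text{for } y_n<0,
\]
where $\rho$ is chosen (via moment conditions $\int_1^\infty \lambda^k \rho(\lambda)\,d\lambda = (-1)^k$ for $0\leq k\leq s$, with sufficient decay) so that $\partial^\alpha Ef$ matches $\partial^\alpha f$ across $\{y_n=0\}$ up to order $s$; this yields a bounded map $H^s(\{y_n>0\}) \to H^s(\R^n)$. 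Transferring back through the change of variables uses the chain-rule bounds for Lipschitz (hence a.e.\ differentiable with bounded Jacobian) maps, and pasting the local extensions by a subordinate partition of unity $(\chi_j)$ produces the global bounded $E: H^s(U,\R) \to H^s(\R^n,\R)$. The main technical obstacle lies precisely here, in verifying that the reflection operator is bounded in the $H^s$-norm after the Lipschitz straightening; this is standard but delicate and we would simply invoke the proofs given in \cite{Marti}, Theorem 3.4.5.

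Statements $(i)$ and $(iii)$ are then easy consequences. For $(i)$, given $f \in H^s(U,\R)$, apply the extension to obtain $Ef \in H^s(\R^n,\R)$. By Lemma \ref{lemma_density} there is a sequence $(g_N)_{N\geq 1} \subseteq C_c^\infty(\R^n,\R)$ with $g_N \to Ef$ in $H^s(\R^n,\R)$; restricting and using the norm-$\leq 1$ bound from the first part of $(ii)$ gives $g_N|_U \to f$ in $H^s(U,\R)$. For $(iii)$, given $f \in H^{s+r}(U,\R)$ with $s > n/2$, Proposition \ref{prop_sobolev_imbedding} applied to $Ef \in H^{s+r}(\R^n,\R)$ yields $Ef \in C_0^r(\R^n,\R)$ with
\[
\|Ef\|_{C^r} \leq K_{s,r}\|Ef\|_{s+r} \leq K_{s,r}\|E\|\,\|f\|_{s+r}.
\]
Restricting $Ef$ to $\overline U$ and noting that restriction maps $C_0^r(\R^n,\R)$ boundedly into $C_0^r(\overline U,\R)$ finishes the proof.
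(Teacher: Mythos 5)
The paper offers no proof of this proposition at all: it is stated as a collection of well-known facts with a pointer to \cite{Marti} (Theorems 3.4.5, 5.3.1, and 6.1.1), so there is no internal argument to compare yours against. Your reductions are correct and are presumably what the authors had in mind: the restriction bound $\|f|_U\|_s \leq \|f\|_s$ is immediate and needs no boundary regularity, and once $E$ is in hand, $(i)$ follows by approximating $Ef$ via Lemma \ref{lemma_density} and restricting, while $(iii)$ follows by applying Proposition \ref{prop_sobolev_imbedding} to $Ef$ and restricting to $\overline U$. Those two deductions are clean and complete.

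One caution about your sketch of the extension operator itself. The route you describe --- straighten the boundary by the bi-Lipschitz map $(y',y_n) \mapsto (y', y_n - \psi_j(y'))$ and then apply a Hestenes/Seeley-type weighted reflection on the half-space --- works as stated only when the boundary is regular enough that composition with the straightening map preserves $H^s$. For a merely Lipschitz $\psi_j$ this fails for $s \geq 2$: the chain rule brings in derivatives of $\psi_j$, which are only $L^\infty$, and second derivatives of $\psi_j$ need not exist even distributionally as functions, so $f \mapsto f \circ \Phi_j$ is bounded on $H^1$ but not on $H^s$ for $s \geq 2$. This is precisely why the Calder\'on and Stein extension theorems for Lipschitz domains avoid straightening altogether (Stein's construction reflects along rays adapted to a regularized distance function directly in the original coordinates). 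Since you explicitly defer the boundedness of the extension to \cite{Marti}, Theorem 3.4.5 --- the same source the paper cites --- your proposal does not ultimately rest on the flawed step, but the intermediate sketch as written would not go through for general Lipschitz $U$ and higher $s$; you should either restrict the sketch to $s \leq 1$, assume a $C^{s}$ boundary, or describe the Stein construction instead.
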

\noindent The following result is needed for the proof of Lemma \ref{continuous_multilinear} below. As usual,
we denote by $L^q(U,\R)$ the Banach space of $L^q$-integrable functions $f:U \to \R$. For a proof of the proposition
see e.g. Theorem 5.4 in \cite{adams}.

\begin{Prop}\label{imbedding_theorems}
Assume that the open set $U \subseteq \R^n$ has a Lipschitz boundary and let $s \in \Z$.
Then the following statements hold:
\begin{itemize}
\item[(i)] If $0 \leq s < n/2$, then for any $2 \leq q \leq \frac{2n}{n-2s}$,
\[
H^s(U,\R) \hookrightarrow L^q(U,\R)
\]
is continuous.
\item[(ii)] If $s=n/2$, then for any $2 \leq q < \infty$,
\[
H^s(U,\R) \hookrightarrow L^q(U,\R)
\]
is continuous.
\end{itemize}
\end{Prop}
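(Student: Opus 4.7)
The plan is to reduce both statements to the case $U=\R^n$ using the extension operator from Proposition \ref{prop_extension}(ii), and then to prove the embedding on $\R^n$ by Fourier-analytic means. For the reduction, given $f\in H^s(U,\R)$ one sets $\tilde f:=Ef\in H^s(\R^n,\R)$ so that $f=\tilde f|_U$, and therefore
\[
\|f\|_{L^q(U)}\leq \|\tilde f\|_{L^q(\R^n)}.
\]
If a continuous embedding $H^s(\R^n,\R)\hookrightarrow L^q(\R^n,\R)$ is known, then the boundedness of $E$ from Proposition \ref{prop_extension}(ii) immediately gives $\|f\|_{L^q(U)}\leq C\|E\|\,\|f\|_{H^s(U)}$.

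For part $(i)$ on $\R^n$, the case $q=2$ is trivial since $\|f\|_{L^2}\leq\|f\|_s$. The endpoint $q_*=2n/(n-2s)$ is the Gagliardo--Nirenberg--Sobolev inequality: for $s=1$ (and $n\ge 3$), $\|f\|_{L^{2n/(n-2)}}\leq C\|\nabla f\|_{L^2}$ is first established for $f\in C_c^\infty(\R^n,\R)$ via Hölder's inequality applied to the line integrals of $\nabla f$, and then extended by density (Lemma \ref{lemma_density}). Iterating this inequality $s$ times yields $\|f\|_{L^{q_*}}\leq C\|f\|_s$ for integer $s\geq 1$ with $s<n/2$. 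For intermediate $q\in(2,q_*)$, one writes $q=2\theta+(1-\theta)q_*$ for suitable $\theta\in(0,1)$ and interpolates by Hölder, combining the $L^2$ and $L^{q_*}$ bounds.

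For part $(ii)$ on $\R^n$, with $s=n/2$ (so $n$ must be even), fix $q\in[2,\infty)$ and set $p\in(1,2]$ with $p'=q$. The key observation is that $\hat f\in L^p(\R^n)$: splitting $|\hat f(\xi)|^p=(1+|\xi|^2)^{-np/4}\cdot\bigl[(1+|\xi|^2)^{n/4}|\hat f(\xi)|\bigr]^p$ and applying Hölder with exponents $2/(2-p)$ and $2/p$ gives
\[
\|\hat f\|_{L^p}^p\leq \Bigl(\int_{\R^n}(1+|\xi|^2)^{-\frac{np}{2(2-p)}}d\xi\Bigr)^{(2-p)/2}\cdot\bigl(\|f\|_s^\sim\bigr)^p.
\]
The first factor is finite iff $np/(2-p)>n$, i.e.\ iff $p>1$, which holds by choice of $p$. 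The Hausdorff--Young inequality then yields $\|f\|_{L^{p'}}\leq C\|\hat f\|_{L^p}\leq C'\|f\|_s$, proving the embedding into $L^q$. The case $q=2$ is again trivial.

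The main obstacle will be the endpoint Sobolev inequality in part $(i)$, which rests on the classical Gagliardo--Nirenberg--Sobolev inequality and cannot be obtained by the soft Fourier/Hölder manipulation used in part $(ii)$ (that approach breaks down precisely at $s<n/2$ endpoint because the weight $(1+|\xi|^2)^{-s}$ fails to be integrable there). Once the endpoint $L^{q_*}$ bound is granted, all the remaining cases in both $(i)$ and $(ii)$ are routine consequences of Hölder, Hausdorff--Young, and the Fourier characterization \eqref{fourier_hsnorm}; we therefore either invoke the standard proof of GNS or refer the reader to \cite{adams}.
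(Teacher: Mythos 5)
The paper does not prove this proposition at all: it is stated as a known result and the reader is referred to Theorem 5.4 of \cite{adams}. Your proposal therefore supplies content where the paper supplies a citation, and the overall architecture you choose (extend from $U$ to $\R^n$ by Proposition \ref{prop_extension}$(ii)$, prove the embedding on $\R^n$, restrict back) is exactly the standard route taken in the literature. Your treatment of part $(ii)$ is complete and correct: the H\"older splitting of $|\hat f|^p$ against the weight $(1+|\xi|^2)^{-np/4}$, the integrability condition $np/(2-p)>n\iff p>1$, and Hausdorff--Young together give a genuine, self-contained proof of the $s=n/2$ case. For part $(i)$ you correctly isolate the endpoint $q_*=2n/(n-2s)$ as the only nontrivial ingredient and you are right that it cannot be reached by the soft Cauchy--Schwarz argument of Proposition \ref{prop_sobolev_imbedding}; since you ultimately defer that endpoint to ``the standard proof of GNS or \cite{adams}'', your proof and the paper's end up resting on the same external reference for the hard step, with your version adding the (correct) reduction and interpolation scaffolding around it.

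Two small points to tighten. First, the iteration claim is imprecise as written: the inequality you state, $\|f\|_{L^{2n/(n-2)}}\leq C\|\nabla f\|_{L^2}$, does not iterate by itself; passing from $H^s$ to $L^{2n/(n-2s)}$ for $s\geq 2$ requires the full family $\|f\|_{L^{np/(n-p)}}\leq C\|\nabla f\|_{L^p}$ for $p=2n/(n-2k)$, $k<s$, i.e.\ the $L^p$-version of Gagliardo--Nirenberg--Sobolev (obtained from the $p=1$ line-integral argument applied to $|f|^\gamma$), not only the $p=2$ case. Second, the interpolation relation for intermediate exponents should read $\tfrac1q=\tfrac{\theta}{2}+\tfrac{1-\theta}{q_*}$ with $\|f\|_{L^q}\leq\|f\|_{L^2}^{\theta}\|f\|_{L^{q_*}}^{1-\theta}$; the convex combination is taken in $1/q$, not in $q$. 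Neither issue affects the validity of the overall plan.
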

\noindent Combining Proposition \ref{prop_extension} and Lemma \ref{lem_imbedding} one obtains the following
\begin{Lemma}\label{lem_imbedding_general}
Assume that the open set $U \subseteq \R^n$ has a Lipschitz boundary. Let $s,s'$ be integers with $s>n/2$ and
$0 \leq s' \leq s$. Then there exists $K >0$ so that for any $f \in H^s(U,\R)$ and $g \in H^{s'}(U,\R)$,
the product $f \cdot g$ is in $H^{s'}(U,\R)$ and
\[
 \|f \cdot g\|_{s'} \leq K \|f\|_s \|g\|_{s'}.
\]
In particular, $H^s(U,\R)$ is an algebra.
\end{Lemma}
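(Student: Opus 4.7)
The plan is to reduce the statement to its $\R^n$-analogue, Lemma \ref{lem_imbedding}, by means of the extension operator provided by Proposition \ref{prop_extension}$(ii)$. The Lipschitz boundary assumption on $U$ is used precisely to invoke that proposition.

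More concretely, I would argue as follows. Fix $f \in H^s(U,\R)$ and $g \in H^{s'}(U,\R)$. By Proposition \ref{prop_extension}$(ii)$, there is a bounded linear extension operator $E : H^s(U,\R) \to H^s(\R^n,\R)$ and a bounded linear extension operator $E : H^{s'}(U,\R) \to H^{s'}(\R^n,\R)$ (each with some operator norm, say $C_s$ and $C_{s'}$ respectively); write $\tilde f := Ef$ and $\tilde g := Eg$. Then $\tilde f \in H^s(\R^n,\R)$, $\tilde g \in H^{s'}(\R^n,\R)$, and by construction $\tilde f|_U = f$, $\tilde g|_U = g$ as elements of $L^2(U,\R)$. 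By Lemma \ref{lem_imbedding}, there exists $K_0 > 0$ (independent of $f,g$) such that the product $\tilde f \cdot \tilde g$ belongs to $H^{s'}(\R^n,\R)$ and satisfies
\[
\|\tilde f \cdot \tilde g\|_{s'} \leq K_0 \|\tilde f\|_s \|\tilde g\|_{s'} \leq K_0 C_s C_{s'} \|f\|_s \|g\|_{s'}.
\]

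Since pointwise $(\tilde f \cdot \tilde g)|_U = f \cdot g$ almost everywhere on $U$, it follows that $f \cdot g$ is the restriction to $U$ of an element of $H^{s'}(\R^n,\R)$. Invoking the second half of Proposition \ref{prop_extension}$(ii)$, namely that the restriction operator $H^{s'}(\R^n,\R) \to H^{s'}(U,\R)$ has norm at most one, we conclude that $f \cdot g \in H^{s'}(U,\R)$ with
\[
\|f \cdot g\|_{s'} \leq \|\tilde f \cdot \tilde g\|_{s'} \leq K \|f\|_s \|g\|_{s'}, \qquad K := K_0 C_s C_{s'}.
\]
Specializing to $s' = s$ then shows that $H^s(U,\R)$ is an algebra.

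There is no serious obstacle in this argument; the only non-trivial ingredient is the existence of a bounded linear extension operator on $H^{s'}(U,\R)$ and $H^s(U,\R)$, and this is exactly where the Lipschitz-boundary hypothesis is used (via Proposition \ref{prop_extension}$(ii)$). All the analytic work has been done in Lemma \ref{lem_imbedding}; once the functions live on $\R^n$ and can be estimated by Fourier methods, the local product estimate on $U$ follows by the pull-back/restrict sandwich described above.
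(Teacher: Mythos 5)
Your argument is correct and is exactly the route the paper takes: the lemma is introduced there with the words ``Combining Proposition \ref{prop_extension} and Lemma \ref{lem_imbedding} one obtains the following,'' i.e.\ extend $f$ and $g$ to $\R^n$ by the bounded extension operator, apply the product estimate of Lemma \ref{lem_imbedding}, and restrict back to $U$ using that the restriction operator has norm at most one. Nothing to add.
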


\noindent We will also need the following variant of Lemma \ref{lem_imbedding_general}.

\begin{Lemma}\label{continuous_multilinear}
Let $U \subseteq \R^n$ be a non-empty, open, bounded set with Lipschitz boundary and let $s > n/2$, $s\in\Z$.
Then for any $r \geq 2$ and any $k=(k_1,\ldots,k_r) \in \mathbb Z_{\geq 0}^r$ with $\sum_{j=1}^r k_j \leq s$,
the $r$-linear map,
\begin{equation}\label{multilinear_multiplication}
H^{s-k_1}(U,\R) \times \cdots \times H^{s-k_r}(U,\R) \to L^2(U,\R),\quad (f_1,\ldots,f_r) \mapsto f_1 \cdots f_r
\end{equation}
is well-defined and continuous.
\end{Lemma}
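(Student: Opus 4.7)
The plan is to combine Hölder's inequality with Sobolev embeddings on the bounded Lipschitz domain $U$. Set $t_j := s - k_j \in \Z$. By Hölder,
\[
\|f_1 \cdots f_r\|_{L^2(U)} \leq \prod_{j=1}^r \|f_j\|_{L^{p_j}(U)}
\]
for any $(p_j) \in [2,\infty]^r$ with $\sum_{j=1}^r 1/p_j = 1/2$, so it suffices to exhibit such exponents together with continuous embeddings $H^{t_j}(U,\R) \hookrightarrow L^{p_j}(U,\R)$. The admissible $p_j$ come from Proposition \ref{imbedding_theorems} (when $t_j \leq n/2$) and from Proposition \ref{prop_extension}(iii), which yields $H^{t_j}(U,\R) \hookrightarrow C_0^0(\overline U,\R) \subseteq L^\infty(U,\R)$ when $t_j > n/2$. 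Using that $U$ is bounded (so $L^{p^*}(U) \hookrightarrow L^p(U)$ for $2 \leq p \leq p^*$), I conclude: any $p_j \in [2,\infty]$ is admissible if $t_j > n/2$; any $p_j \in [2,\infty)$ if $t_j = n/2$; and any $p_j$ with $\tfrac{1}{p_j} \geq \tfrac{1}{2} - \tfrac{t_j}{n}$ (and $p_j \geq 2$) if $t_j < n/2$.

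The main obstacle is the feasibility check: showing that compatible $(p_j)$ always exist. Let $J := \{j : t_j < n/2\}$. For $j \in J$, $1/p_j$ is forced to be at least $\tfrac{1}{2} - \tfrac{t_j}{n}$; for $j \notin J$, $1/p_j$ is free in $[0, \tfrac{1}{2}]$ (with $1/p_j > 0$ required when $t_j = n/2$). Using $\sum_{j\in J} k_j \leq \sum_j k_j \leq s$, so $\sum_{j \in J} t_j = |J|\,s - \sum_{j \in J} k_j \geq (|J|-1)s$, together with $s > n/2$, the sum of the mandatory lower bounds satisfies
\[
\sum_{j \in J}\Bigl(\tfrac{1}{2} - \tfrac{t_j}{n}\Bigr) = \tfrac{|J|}{2} - \tfrac{1}{n}\sum_{j \in J} t_j \leq \tfrac{|J|}{2} - \tfrac{(|J|-1)s}{n} \leq \tfrac{1}{2},
\]
the last inequality being trivial for $|J| \leq 1$ and equivalent to $(|J|-1)(2s-n) \geq 0$ for $|J| \geq 2$. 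The total minimum is thus $\leq \tfrac{1}{2}$ while the total maximum $r/2 \geq 1$, so by choosing each $1/p_j$ within its admissible interval the sum can be made to equal $\tfrac{1}{2}$ exactly; any remaining deficit can be absorbed either among indices $j \notin J$ or, in the extreme case $|J| = r$, by increasing some $1/p_j$ for $j \in J$ up to $\tfrac{1}{2}$ (the capacity for this, $\sum_{j\in J} t_j/n$, easily dominates the deficit).

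Once the $(p_j)$ are fixed, well-definedness and continuity of the $r$-linear map \eqref{multilinear_multiplication} follow by composing the continuous Sobolev embeddings $H^{s-k_j}(U,\R) \hookrightarrow L^{p_j}(U,\R)$ with the continuous multilinear Hölder map $L^{p_1}(U,\R) \times \cdots \times L^{p_r}(U,\R) \to L^2(U,\R)$. I expect the feasibility arithmetic in the second paragraph to be the only non-routine step; everything else is a direct invocation of results already established in the paper.
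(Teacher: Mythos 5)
Your proof is correct and follows essentially the same route as the paper's: Sobolev embeddings $H^{s-k_j}(U,\R)\hookrightarrow L^{p_j}(U,\R)$ combined with the generalized H\"older inequality, with the feasibility of the exponents resting on exactly the same arithmetic consequence of $\sum_{j}k_j\le s$ and $s>n/2$. The only cosmetic difference is that the paper first peels off the factors with $s-k_j>n/2$ via the continuous bilinear map $C_b^0(U,\R)\times L^2(U,\R)\to L^2(U,\R)$ and then arranges $\sum_j 1/q_j\le 1/2$ followed by $L^q(U,\R)\hookrightarrow L^2(U,\R)$, whereas you keep all factors inside H\"older and aim for $\sum_j 1/p_j=1/2$ exactly.
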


\begin{proof}
First note that the map
\[
C_b^0(U,\R) \times L^2(U,\R) \to L^2(U,\R),\quad (f_1,f_2) \mapsto f_1 \cdot f_2
\]
is continuous. Combining this with Proposition \ref{prop_extension} $(iii)$, one sees that it remains to prove that the map \eqref{multilinear_multiplication} is well-defined and continuous for any $r \geq 2$ and any $k=(k_1,\ldots,k_r) \in \mathbb Z_{\geq 0}^r$ with $\sum_{j=1}^r k_j \leq s$ and 
\begin{equation}\label{reduced_case}
s-k_j-\frac{n}{2} \leq 0,\quad 1 \leq j \leq r.
\end{equation}
In what follows we assume that \eqref{reduced_case} holds. Divide the set $I:=\{ j \in \N \;|\; 1 \leq j \leq r\}$ into two subsets, $I=I_< \cup I_0$,
\[
 I_< := \{ j \in I \;|\; s-k_j - \frac{n}{2} < 0\}
\]
and
\[
 I_0 :=\{ j \in I \;|\; s-k_j-\frac{n}{2} = 0\}.
\]
By Proposition \ref{imbedding_theorems}, for any $j \in I_<$,
\begin{equation}\label{imbedding_a}
H^{s-k_j}(U,\R) \hookrightarrow L^{q_j}(U,\R),\quad q_j = \frac{2n}{n-2(s-k_j)}
\end{equation}
and for any $j \in I_0$,
\begin{equation}\label{imbedding_b}
H^{s-k_j}(U,\R) \hookrightarrow L^{q_j}(U,\R), \quad \forall q_j \geq 2.
\end{equation}
We choose $q_j$ as follows: If $I=I_0$ then choose $q_j \geq 2, j \in I$, so that
\begin{equation}\label{reciproc_exponents}
\frac{1}{q_1} + \cdots + \frac{1}{q_r} < \frac{1}{2}.
\end{equation}
If $I_< \neq \emptyset$ one has by \eqref{imbedding_a}
\[
\sum_{j \in I_<} \frac{1}{q_j} = \sum_{j \in I_<} \left(\frac{1}{2}-\frac{s-k_j}{n}\right) \leq \frac{r}{2}-\frac{rs}{n}+\frac{1}{n} \sum_{j=1}^r k_j.
\]
As by assumption, $\sum_{j=1}^r k_j \leq s$ and $s >n/2$ one gets
\[
\sum_{j \in I_<} \frac{1}{q_j} \leq \frac{1}{2}+(r-1)\frac{1}{2} - (r-1)\frac{s}{n} = \frac{1}{2} + \frac{r-1}{n}\left(\frac{n}{2}-s\right) < \frac{1}{2}.
\]
Hence by choosing for any $j \in I_0$ $q_j \geq 2$ large enough we can ensure that also in the case where $I_< \neq \emptyset$ \eqref{reciproc_exponents} holds. Altogether we have shown that there exist $q_j \geq 2, j \in I$ so that \eqref{imbedding_a},\eqref{imbedding_b}, and
\begin{equation}\label{reciproc_condition}
\frac{1}{q_1} + \cdots \frac{1}{q_r} \leq \frac{1}{2} 
\end{equation}
hold. Thus $q=\left(\frac{1}{q_1} + \cdots \frac{1}{q_r}\right)^{-1} \geq 2$.
It follows from the generalized H\"older inequality that the $r$-linear map
\[
 L^{q_1}(U,\R) \times \cdots \times L^{q_r}(U,\R) \to L^q(U,\R),\quad (f_1,\ldots,f_r) \mapsto f_1 \cdots f_r
\]
is continuous. As $U \subseteq \R^n$ is bounded and $q \geq 2$,
\[
 L^q(U,\R) \hookrightarrow L^2(U,\R)
\]
and the inclusion is continuous. Hence, the $r$-linear map
\[
 L^{q_1}(U,\R) \times \cdots \times L^{q_r}(U,\R) \to L^2(U,\R),\; (f_1,\ldots,f_r) \mapsto f_1 \cdots f_r
\]
is continuous as well. This together with the continuity of the embeddings \eqref{imbedding_a} and 
\eqref{imbedding_b} implies that the map \eqref{multilinear_multiplication} is well-defined and continuous for any 
$k \in \mathbb Z_{\geq 0}^r$ satisfying $\sum_{j=1}^r k_j \leq s$ and \eqref{reduced_case}.
\end{proof}

\noindent Let $U \subseteq \R^n$ be a bounded open set with Lipschitz boundary and $s>n/2+1$, $s\in\Z$.
Denote by $\Ds^s(U,\R^n)$ the subset of $H^s(U,\R^n)\big(\subseteq C^1(\overline U,\R^n)\big)$ consisting of orientation
preserving local diffeomorphisms  $\varphi : U\to\R^n$ that extend to bijective maps $\varphi : {\overline U}\to
\varphi(\overline U)\subseteq\R^n$ and such that 
\begin{equation}\label{eq:det>0}
\inf\limits_{x \in{\overline U}}\det(d_x\varphi)>0\,.
\end{equation}
More precisely,
\[
\Ds^s(U,\R^n):= \big\{ \varphi \in H^s(U,\R^n)\;\big|\;\varphi : {\overline U}\to\R^n\,\,\mbox{ is 1-1}\,\,\mbox{and}\,
\inf_{x \in{\overline U}} \det(d_x\varphi) > 0 \big\}.
\] 
\begin{Lemma}\label{lem:openness}
$\Ds^s(U,\R^n)$ is an open subset in $H^s(U,\R^n)$.
\end{Lemma}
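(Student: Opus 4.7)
The plan is to use the Sobolev embedding $H^s(U,\R^n) \hookrightarrow C^1(\overline U,\R^n)$ from item $(iii)$ of Proposition \ref{prop_extension} (available because $s-1 > n/2$) to turn both defining conditions of $\Ds^s(U,\R^n)$ into conditions that are open under $C^1$-perturbations on the compact set $\overline U$. Two things must be preserved in an $H^s$-neighborhood of a given $\varphi_0 \in \Ds^s(U,\R^n)$: the pointwise positivity of $\det(d_x\varphi)$ on $\overline U$, and the injectivity of $\varphi$ on $\overline U$.

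The determinant condition is easy. Setting $M := \inf_{x \in \overline U} \det(d_x\varphi_0) > 0$, the map $\varphi \mapsto \det(d\varphi)$, being a polynomial in the first partial derivatives of $\varphi$, is continuous from $H^s(U,\R^n)$ into $C^0(\overline U,\R)$. Hence on some $H^s$-neighborhood $V_1$ of $\varphi_0$ one has $\inf_{x \in \overline U} \det(d_x\varphi) > M/2 > 0$. In particular $d_x\varphi$ is everywhere invertible on $\overline U$, with a uniform bound $|(d_x\varphi)^{-1}| \leq K$ that comes from the cofactor formula.

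The core of the argument is preserving injectivity. First I would establish two quantitative estimates for $\varphi_0$ itself: $(a)$ there exist $\delta_0, c > 0$ with $|\varphi_0(x) - \varphi_0(y)| \geq c|x-y|$ for all $x,y \in \overline U$ satisfying $|x-y| \leq \delta_0$; and $(b)$ there exists $\eta > 0$ with $|\varphi_0(x) - \varphi_0(y)| \geq \eta$ for all $x,y \in \overline U$ satisfying $|x-y| \geq \delta_0$. Estimate $(b)$ follows from the fact that $\varphi_0:\overline U \to \varphi_0(\overline U)$ is a continuous bijection between compact sets, hence a homeomorphism whose inverse is uniformly continuous. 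For $(a)$ I pass to the extension $\tilde \varphi_0 := E\varphi_0 \in H^s(\R^n,\R^n) \subseteq C^1(\R^n,\R^n)$ provided by Proposition \ref{prop_extension}$(ii)$, which agrees with $\varphi_0$ on $\overline U$, so that on all of $\R^n$ the mean value identity $\tilde\varphi_0(x) - \tilde\varphi_0(y) = \bigl(\int_0^1 d_{y + t(x-y)} \tilde\varphi_0 \, dt\bigr)(x-y)$ is available. Uniform continuity of $d\tilde\varphi_0$ together with the uniform bound $|(d_x\varphi_0)^{-1}| \leq K$ on $\overline U$ lets us, for $\delta_0$ small enough, view the integrated matrix as a small perturbation of $d_x\varphi_0$, which is therefore invertible with inverse bounded by $2K$; this yields $(a)$ with $c = 1/(2K)$.

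Now for $\varphi$ in a small $H^s$-ball around $\varphi_0$ the boundedness of the extension operator gives $\|\tilde\varphi - \tilde\varphi_0\|_{C^1(\R^n)}$ small. For $x,y \in \overline U$ with $|x-y| \leq \delta_0$ the same mean value identity applied to $\tilde\varphi$, together with $(a)$ and $\|d\tilde\varphi - d\tilde\varphi_0\|_{C^0} < c/2$, yields $|\varphi(x)-\varphi(y)| \geq (c/2)|x-y| > 0$. For $|x-y| \geq \delta_0$ one has $|\varphi(x)-\varphi(y)| \geq |\varphi_0(x)-\varphi_0(y)| - 2\|\varphi - \varphi_0\|_{C^0(\overline U)} \geq \eta/2 > 0$ once $\|\varphi-\varphi_0\|_{C^0} < \eta/4$. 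Hence $\varphi$ is injective on $\overline U$, and combined with the preserved positivity of $\det(d\varphi)$ we conclude $\varphi \in \Ds^s(U,\R^n)$. The main obstacle is exactly this injectivity: since $\overline U$ need not be convex, the mean value formula is unavailable along segments that leave $\overline U$, and this is precisely circumvented by the Lipschitz-boundary extension, which allows the segments to pass through $\R^n \setminus \overline U$ while keeping $\tilde\varphi$ and $\tilde\varphi_0$ globally $C^1$.
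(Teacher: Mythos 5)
Your argument is correct, and it follows the same overall skeleton as the paper's proof: both reduce everything to $C^1(\overline U)$-data via the extension/embedding, both observe that the determinant condition is trivially open, and both split the injectivity question into pairs $x,y$ with $|x-y|$ small and pairs with $|x-y|\ge\delta$, handling the latter identically via compactness of $\{(x,y)\in\overline U\times\overline U : |x-y|\ge\delta\}$ and the resulting positive minimum of $|\varphi_0(x)-\varphi_0(y)|$. Where you genuinely diverge is the near-diagonal step. The paper proves the implication ``$|x-y|<\delta \Rightarrow \psi(x)\ne\psi(y)$'' softly: it covers $\overline U$ by neighborhoods on which the inverse function theorem gives injectivity uniformly over all small perturbations, and then runs a compactness/contradiction argument with sequences $p_j,q_j$ converging to a common point. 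You instead prove a quantitative lower Lipschitz bound $|\varphi_0(x)-\varphi_0(y)|\ge c\,|x-y|$ for $|x-y|\le\delta_0$ by applying the mean value identity to the global extension $E\varphi_0\in C^1_0(\R^n,\R^n)$ -- correctly noting that the extension is what makes the segment argument legitimate when $\overline U$ is not convex -- and then perturb this estimate using linearity and boundedness of $E$ together with the Sobolev embedding. Your route is more elementary (no contradiction argument, no appeal to the inverse function theorem for the injectivity step) and yields strictly more: an explicit uniform lower bound $|\varphi(x)-\varphi(y)|\ge \tfrac{c}{2}|x-y|$ on the whole neighborhood, which the paper's soft argument does not produce and which is occasionally useful elsewhere (e.g.\ for the kind of estimates appearing in Corollary \ref{cor_openness}$(iii)$). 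The only points worth making fully explicit in a written version are that $d(E\varphi_0)$ is uniformly continuous on $\R^n$ (it lies in $C^0_0$ by Proposition \ref{prop_sobolev_imbedding}, so this is automatic) and that $E\varphi$ agrees with $\varphi$ on all of $\overline U$, not just $U$, by continuity; neither is a gap.
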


\begin{proof}
In view of Proposition \ref{prop_sobolev_imbedding} and Proposition \ref{prop_extension} $(ii)$,
$\Ds^s(U,\R^n)$ can be continuously embedded into $C^1(\R^n,\R^n)$,
\[
\Ds^s(U,\R^n)\subseteq C^1(\R^n,\R^n)\,.
\]
Take an arbitrary element $\varphi\in\Ds^s(U,\R^n)$. For $\varepsilon>0$ denote by $B_\varepsilon$ the open 
$\varepsilon$-ball  centered at zero in $H^s(U,\R^n)$. As $\overline U$ is compact one gets from
\eqref{eq:det>0} and the inverse function theorem that there exists $\varepsilon>0$ such that for any
$f\in B_\varepsilon$, the map
\begin{equation}
\psi : {\overline U}\to\R^n,\quad\psi:=\varphi+f
\end{equation}
is a {\em local diffeomorphism}. 
Strengthening these arguments one sees that there exist $\varepsilon>0$ and $\delta>0$ such that
for any $f\in B_\varepsilon$ and $\forall\,x, y\in{\overline U}$, $x\ne y$,
\begin{equation}\label{eq:relation1}
|x-y|<\delta\quad\Longrightarrow\quad\psi(x)\ne\psi(y)\,.
\end{equation}
In fact, following the arguments of the proof of the inverse function theorem one sees that
for any $x\in{\overline U}$ there exist $\varepsilon_x>0$ and an open neighborhood $U_x$ of $x$ in $\R^n$
such that for any $f\in B_{\varepsilon_x}$ the map
\[
\psi\big|_{U_x} : U_x\to\R^n 
\]
is injective. Using the compactness of $\overline U$ we find
$x_1,...,x_n\in{\overline U}$ such that $\cup_{j=1}^nU_{x_j}\supseteq{\overline U}$.
Take, $\varepsilon:=\min\limits_{1\le j\le n}\varepsilon_{x_j}$.
Then, assuming that \eqref{eq:relation1} does {\em not} hold, we can construct two sequences 
$(p_j)_{1\le j\le n}$ and $(q_j)_{1\le j\le n}$ of points in $\overline U$ and
$(f_j)_{1\le j\le n}\subseteq B_\varepsilon$ such that
\begin{equation}\label{eq:pq}
0<|p_j-q_j|<1/j\quad\quad\mbox{and}\quad\quad \psi_j(p_j)=\psi_j(q_j)
\end{equation}
where $\psi_j:=\varphi+f_j$. By the compactness of $\overline U$, we can assume that 
there exists $p\in{\overline U}$ such that $\lim\limits_{j\to\infty}p_j=\lim\limits_{j\to\infty}q_j=p$.
Taking $j\ge 1$ sufficiently large we obtain that $p_j, q_j\in U_p$, and therefore
$\psi_j(p_j)\ne\psi_j(q_j)$. As this contradicts \eqref{eq:pq}, we see that implication
\eqref{eq:relation1} holds.

Further, we argue as follows. Consider the sets
\[
\Delta_\delta:=\{x,y\in{\overline U}\,\big|\,|x-y|<\delta\}
\]
and
\[
{\cal K}_\delta:={\overline U}\times{\overline U}\setminus\Delta_\delta\,.
\]
As ${\cal K}_\delta$ is compact and $\varphi : {\overline U}\to\R^n$ is injective,
\[
m:=\min_{(x,y)\in{\cal K}_\delta}|\varphi(x)-\varphi(y)|>0\,.
\]
This implies that $\forall\,x, y\in{\overline U}$, $x\ne y$,
\begin{equation}\label{eq:relation2}
|\varphi(x)-\varphi(y)|< m\quad\Longrightarrow\quad |x-y|<\delta\,.
\end{equation}
By taking $\varepsilon>0$ smaller if necessary, we can ensure that for any $f\in B_\varepsilon$,
\begin{equation}\label{eq:m-close}
\|\psi-\varphi\|_{C^0}<m/2\,.
\end{equation}
Finally, assume that there exists $f\in B_\varepsilon$ so that the map $\psi : {\overline U}\to\R^n$, 
$\psi=\varphi+f$, is {\em not} injective.
Then there exist $x,y\in{\overline U}$, $x\ne y$, so that
\[
\psi(x)=\psi(y)\,.
\]
This together with \eqref{eq:m-close} implies that 
\[
|\varphi(x)-\varphi(y)|<m\,.
\]
In view \eqref{eq:relation1} and \eqref{eq:relation2} we get that $\psi(x)\ne\psi(y)$.
This contradiction shows that $\psi$ is injective.
\end{proof}

\begin{Prop}\label{prop_composition_general}
Let $U$ be an open bounded subset in $\R^n$ with Lipschitz boundary. Then for any $d,r,s \in \mathbb Z_{\geq 0}$ with $s>n/2+1$
\[
 \mu:H^{s+r}(\R^n,\R^d) \times \Ds^s(U,\R^n) \to H^s(U,\R^d), \quad (f,\varphi) \mapsto f \circ \varphi
\]
is a $C^r$-map.
\end{Prop}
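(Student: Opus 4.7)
The plan is to adapt the proof of Proposition \ref{prop1} step by step, replacing the Sobolev spaces on $\R^n$ by Sobolev spaces on $U$ in the target. All three auxiliary results (continuity of composition, continuity of the multilinear forms $\rho_k,\nu_k$, and the Poincar\'e-type estimate Lemma \ref{lem_poincare}) carry over, and the converse Taylor theorem finishes the argument. Throughout, $\varphi(\overline U)$ is a compact subset of $\R^n$, which is what lets us couple the full-space regularity of $u$ to the $U$-regularity of $u\circ\varphi$.

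First I would prove the $r=0$ statement in the form of an analog of Lemma \ref{continuous_composition}: for integers $0\le s'\le s$, the map
\[
H^{s'}(\R^n,\R)\times\Ds^s(U,\R^n)\to H^{s'}(U,\R),\quad(f,\varphi)\mapsto f\circ\varphi
\]
is continuous, with a locally uniform estimate $\|f\circ\varphi\|_{s'}\le C\|f\|_{s'}$. For $s'=0$ this is the change-of-variables bound
\[
\int_U|f\circ\varphi|^2\,dx\le\frac{1}{\inf_{\overline U}\det d\varphi}\int_{\varphi(U)}|f|^2\,dy\le\frac{1}{\inf_{\overline U}\det d\varphi}\|f\|_0^2.
\]
Continuity at $(f_\bullet,\varphi_\bullet)$ is handled as in the proof of Lemma \ref{continuous_composition}: approximate $f_\bullet$ by $C_c^\infty(\R^n)$-functions (dense by Lemma \ref{lemma_density}), which are Lipschitz on compacta, and use that $\varphi\to\varphi_\bullet$ in $C^0(\overline U)$ by Proposition \ref{prop_extension}(iii). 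For $s'\ge 1$ one inducts via the chain rule $d(f\circ\varphi)=(df)\circ\varphi\cdot d\varphi$, combining the induction hypothesis for $df\in H^{s'-1}$ with Lemma \ref{lem_imbedding_general} (the algebra property on $U$).

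Next, exactly as in Lemma \ref{rho}, Lemma \ref{nu}, and Lemma \ref{lem_poincare}, I would establish the continuity of the auxiliary multilinear maps and the Poincar\'e-type estimate
\[
\|f\circ\varphi-f\circ\varphi_\bullet\|_{H^s(U)}\le C_s\|f\|_{s+1}\|\varphi-\varphi_\bullet\|_{H^s(U)}
\]
valid on a convex $H^s(U,\R^n)$-ball $U_2$ around $\varphi_\bullet$ contained in $\Ds^s(U,\R^n)$; such a convex ball exists by Lemma \ref{lem:openness}. The proof again integrates $t\mapsto (df)\circ(\varphi_\bullet+t\delta\varphi)\cdot\delta\varphi$ over $[0,1]$, noting that $\varphi_\bullet+t\delta\varphi$ stays in $\Ds^s(U,\R^n)$ by convexity of the ball, and that the integrand is a continuous $H^s(U,\R^d)$-valued curve by the composition lemma above and Lemma \ref{lem_imbedding_general}.

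Finally, I would apply the converse Taylor theorem (Theorem \ref{thm_converse_taylor}) with $X=H^{s+r}(\R^n,\R^d)\times H^s(U,\R^n)$, $Y=H^s(U,\R^d)$, and the convex neighborhood $B_\varepsilon^{s+r}(u_\bullet)\times(\varphi_\bullet+U_2)$ of $(u_\bullet,\varphi_\bullet)$. Since $u,\delta u\in H^{s+r}(\R^n,\R^d)\hookrightarrow C^r(\R^n,\R^d)$ by Proposition \ref{prop_sobolev_imbedding}, Taylor expansion pointwise in $x\in U$ yields
\[
(u+\delta u)\circ(\varphi+\delta\varphi)=u\circ\varphi+\sum_{k=1}^{r}\frac{\eta_k(u,\varphi)(\delta u,\delta\varphi)^k}{k!}+R(u,\varphi,\delta u,\delta\varphi)
\]
with exactly the same $\eta_k$ and remainder $R=R_1+R_2$ as in the proof of Proposition \ref{prop1}. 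Continuity of $\eta_k:H^{s+r}(\R^n,\R^d)\times\Ds^s(U,\R^n)\to L^k_{sym}\bigl(H^{s+r}(\R^n,\R^d)\times H^s(U,\R^n);H^s(U,\R^d)\bigr)$ follows from the adapted composition lemma applied to $\partial^\alpha u$ (which lies in $H^{s+r-|\alpha|}\subseteq H^s$ for $|\alpha|\le r$) together with Lemma \ref{lem_imbedding_general}; the smallness of $R$ in $H^s(U,\R^d)$-norm reduces, via the same manipulations, to the continuity of $\varphi\mapsto(\partial^\alpha u)\circ\varphi$ combined with uniform control of $(\delta\varphi)^\alpha$ in the $H^s(U)$-algebra.

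The hard part will be the composition lemma on $U$: in contrast to the $\R^n$ case, $\varphi$ is not controlled as a perturbation of $\id$, so one must carefully use the hypothesis $\inf_{\overline U}\det d\varphi>0$ together with compactness of $\overline U$ to get the change-of-variables bound and to ensure this infimum stays bounded away from zero on a neighborhood of $\varphi_\bullet$. Once this is in place, the remainder of the proof is a direct transcription of the proof of Proposition \ref{prop1}.
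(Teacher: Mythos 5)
Your proposal is correct and follows exactly the route the paper intends: the paper's own "proof" of Proposition \ref{prop_composition_general} is the single remark that, in view of Proposition \ref{prop_extension}, the proof of Proposition \ref{prop1} adapts easily, with the details left to the reader. Your write-up supplies precisely those details — the $U$-versions of Lemma \ref{continuous_composition}, Lemmas \ref{rho} and \ref{nu}, and Lemma \ref{lem_poincare}, followed by the converse Taylor theorem — and correctly identifies the only genuinely new point, namely replacing the perturbation-of-identity control of $\varphi$ by the compactness of $\overline U$ together with the hypothesis $\inf_{\overline U}\det d_x\varphi>0$ and the openness statement of Lemma \ref{lem:openness}.
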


\noindent In view of Proposition \ref{prop_extension}, the proof of Proposition \ref{prop1} can be easily adapted to show Proposition \ref{prop_composition_general}. We leave the details to the reader.

\begin{Coro}\label{coro_right_translation}
Under the assumption of Proposition \ref{prop_composition_general}, the right translation by an arbitrary element $\varphi \in \Ds^s(U,\R^n)$, 
\[
 R_\varphi:H^s(\R^n,\R^d) \to H^s(U,\R^d), \quad f \mapsto f \circ \varphi
\]
is a $C^\infty$-map.
\end{Coro}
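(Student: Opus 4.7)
The proof is essentially immediate once one notices that $R_\varphi$ is a \emph{linear} map in $f$. The plan is to combine this linearity with the continuity statement already supplied by Proposition \ref{prop_composition_general}.

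First, I would observe that for each fixed $\varphi \in \Ds^s(U,\R^n)$ the map $R_\varphi : H^s(\R^n,\R^d) \to H^s(U,\R^d)$ is linear in its argument, since $(af+bg) \circ \varphi = a(f\circ\varphi) + b(g\circ\varphi)$ pointwise, and hence as $L^2$-representatives. Next, I would apply Proposition \ref{prop_composition_general} in the special case $r=0$: the composition map
\[
\mu : H^s(\R^n,\R^d) \times \Ds^s(U,\R^n) \to H^s(U,\R^d), \quad (f,\varphi) \mapsto f\circ\varphi
\]
is $C^0$. Specializing the second argument to a fixed $\varphi \in \Ds^s(U,\R^n)$ gives that $f \mapsto f \circ \varphi = \mu(f,\varphi)$ is continuous from $H^s(\R^n,\R^d)$ to $H^s(U,\R^d)$.

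Finally, I would invoke the standard fact that every continuous linear map between Banach spaces is of class $C^\infty$: its Fr\'echet differential at every point equals the map itself, and all higher differentials vanish. Applying this to $R_\varphi$ concludes the proof. There is no real obstacle here; the statement is simply a packaging of the $r=0$ case of Proposition \ref{prop_composition_general} with the trivial linearity of right translation in the first variable.
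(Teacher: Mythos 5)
Your proposal is correct and is exactly the argument the paper gives: apply Proposition \ref{prop_composition_general} with $r=0$ to obtain continuity of $f \mapsto f \circ \varphi$, then use linearity of $R_\varphi$ in $f$ together with the standard fact that a continuous linear map between Banach spaces is $C^\infty$. No differences to report.
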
 

\begin{proof}
By Proposition \ref{prop_composition_general}, $R_\varphi$ is well-defined and continuous. As $R_\varphi$ is a linear operator it then follows that $R_\varphi$ is a $C^\infty$-map. 
\end{proof}

\noindent As an application of Corollary \ref{coro_right_translation} we get the following result. 

\begin{Coro}\label{th_transformation}
Let $U,\,V \subseteq \R^n$ be open and bounded sets with Lipschitz boundary and let $\varphi:U \to V$ be a
$C^\infty$-diffeomorphism with $\varphi \in C^\infty(\overline{U},\R^n)$ and
$\varphi^{-1} \in C^\infty(\overline{V},\R^n)$.  Then for any given $s \geq 0$, $s\in\Z$,
the right translation by $\varphi$,
\[
R_\varphi:H^s(V,\R) \to H^s(U,\R), \quad f \mapsto f \circ \varphi
\]
is a continuous linear isomorphism.
\end{Coro}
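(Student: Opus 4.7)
The plan is to reduce the statement to Corollary \ref{coro_right_translation} by means of an extension operator, with the main point being to check that the resulting composition is independent of the chosen extension. First, I note that since $\varphi\in C^\infty(\overline U,\R^n)$ is a bijection from $\overline U$ onto $\overline V$ with $C^\infty$ inverse, the compactness of $\overline U$ gives $\inf_{x\in\overline U}\det(d_x\varphi)>0$; in particular $\varphi\in\Ds^s(U,\R^n)$ for every integer $s\ge 0$. Pick an extension operator $E:H^s(V,\R)\to H^s(\R^n,\R)$ from Proposition \ref{prop_extension}\,$(ii)$ and define the continuous linear map
\[
T_\varphi : H^s(V,\R)\to H^s(U,\R),\qquad f\mapsto (Ef)\circ\varphi,
\]
whose continuity follows by composing the bounded extension with Corollary \ref{coro_right_translation}.

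Next I would show that $T_\varphi(f)$ does not depend on the choice of $E$, so it unambiguously defines $R_\varphi(f)=f\circ\varphi$. If $\tilde f_1,\tilde f_2\in H^s(\R^n,\R)$ both restrict to $f$ on $V$, set $h:=\tilde f_1-\tilde f_2\in H^s(\R^n,\R)$; then $h=0$ a.e.\ on $V$. Approximate $h$ in $H^s(\R^n,\R)$ by $h_N\in C_c^\infty(\R^n,\R)$ via Lemma \ref{lemma_density}. For the smooth $h_N$ the classical change-of-variables formula yields
\[
\int_U|h_N(\varphi(x))|^2\,dx \;\le\; M\int_V|h_N(y)|^2\,dy\;+\;M\int_{\R^n\setminus V}|h_N(y)|^2\,dy,
\]
where $M:=\big(\inf_{x\in\overline U}\det d_x\varphi\big)^{-1}$; since $\varphi(U)=V$, the second term in fact does not enter, giving the estimate $\|h_N\circ\varphi\|_{L^2(U)}^2\le M\|h_N\|_{L^2(V)}^2$. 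Passing to the limit using the continuity of $H^s(\R^n,\R)\to H^s(U,\R)$ from Corollary \ref{coro_right_translation} and of the restriction $H^s(\R^n,\R)\to L^2(V,\R)$ yields $\|h\circ\varphi\|_{L^2(U)}\le M^{1/2}\|h\|_{L^2(V)}=0$, so $h\circ\varphi=0$ in $H^s(U,\R)$. Thus $R_\varphi$ is well-defined and bounded, with $\|R_\varphi f\|_{H^s(U)}\le C\,\|E\|\,\|f\|_{H^s(V)}$.

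Finally, to show that $R_\varphi$ is an isomorphism, I would construct $R_{\varphi^{-1}}:H^s(U,\R)\to H^s(V,\R)$ in exactly the same fashion, using an extension operator for $H^s(U,\R)$ and the fact that $\varphi^{-1}\in C^\infty(\overline V,\R^n)$ is a smooth diffeomorphism onto $\overline U$. For any $f\in C^\infty(\overline V,\R)$ one may take $Ef$ to be any smooth extension, so $R_\varphi(f)=f\circ\varphi\in C^\infty(\overline U,\R)$ coincides with the intrinsic pointwise composition; the classical chain rule then gives $R_{\varphi^{-1}}\circ R_\varphi(f)=f$. Since $C^\infty(\overline V,\R)$ is dense in $H^s(V,\R)$ by Proposition \ref{prop_extension}\,$(i)$ and both sides are continuous, $R_{\varphi^{-1}}\circ R_\varphi=\id_{H^s(V,\R)}$; symmetrically $R_\varphi\circ R_{\varphi^{-1}}=\id_{H^s(U,\R)}$.

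The hard part is the independence-of-extension step: one must upgrade the classical change-of-variables inequality from $C_c^\infty(\R^n,\R)$ to all of $H^s(\R^n,\R)$, which here is done by approximation combined with the continuity already established in Corollary \ref{coro_right_translation}. Once this is in place, boundedness and invertibility follow routinely.
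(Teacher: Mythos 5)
Your overall route --- extend by Proposition \ref{prop_extension}$(ii)$, compose, check that the result is independent of the chosen extension, and invert by running the same construction for $\varphi^{-1}$ --- is exactly what the paper has in mind (it offers no proof beyond the phrase ``as an application of Corollary \ref{coro_right_translation}''), and your verification of well-definedness and of $R_{\varphi^{-1}}\circ R_\varphi=\id$ by density is sound. As an aside, the independence-of-extension step does not need the approximation argument: if $h\in H^s(\R^n,\R)$ vanishes a.e.\ on $V$, then $\{x\in U\,|\,h(\varphi(x))\neq 0\}$ is the image under the Lipschitz map $\varphi^{-1}$ of a null subset of $V$, hence null, so $h\circ\varphi=0$ a.e.\ on $U$ directly.

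There is, however, a genuine gap in the range of $s$. The corollary is asserted for every integer $s\geq 0$ (and is invoked later, e.g.\ in Lemma \ref{lemma_local_pt} and in the proof of Lemma \ref{lem_closed_range}, for every integer $s>n/2$), whereas Corollary \ref{coro_right_translation} is only available ``under the assumption of Proposition \ref{prop_composition_general}'', i.e.\ for integers $s>n/2+1$; likewise the set $\Ds^s(U,\R^n)$ in which you place $\varphi$ is only defined for such $s$. Your reduction therefore proves nothing for $0\leq s\leq n/2+1$ (for $n=2$ this already excludes $s=0,1,2$, and $s=2$ is precisely what Lemma \ref{lemma_local_pt} needs). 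For these $s$ one must argue directly: for $f=\tilde f\,|_V$ with $\tilde f\in C_c^\infty(\R^n,\R)$, which are dense in $H^s(V,\R)$ by Proposition \ref{prop_extension}$(i)$, the chain and Leibniz rules write $\partial^\alpha(f\circ\varphi)$ as a finite sum of terms $(\partial^\beta f)\circ\varphi\cdot\partial^{\gamma_1}\varphi_{j_1}\cdots\partial^{\gamma_r}\varphi_{j_r}$ with $|\beta|\leq|\alpha|\leq s$; the derivatives of $\varphi$ are bounded on the compact set $\overline U$, and the change of variables you already use gives $\|(\partial^\beta f)\circ\varphi\|_{L^2(U)}\leq M^{1/2}\|\partial^\beta f\|_{L^2(V)}$, whence $\|f\circ\varphi\|_{H^s(U)}\leq C\|f\|_{H^s(V)}$ with $C$ depending only on $\varphi$ and $s$. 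Density then defines the bounded operator $R_\varphi$ on all of $H^s(V,\R)$, and the identical argument for $\varphi^{-1}$ produces the inverse, exactly as in your last paragraph. With this replacement for the low-regularity range (which also subsumes the case $s>n/2+1$), your proof is complete.
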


\noindent Finally, we include the following result concerning the left translation.
Recall that for any given open subset $U \subseteq \R^n$, we denote by $C_b^\infty(\overline{U},\R^n)$ the subspace of $C^\infty(\overline{U},\R^n)$ consisting of all elements $f \in C^\infty(\overline{U},\R^n)$ so that $f$ and all its derivatives are bounded on $\overline{U}$.

\begin{Prop}\label{prop_left_translation}
Let $m,d,s \in \mathbb Z_{\geq 0}$ with $m,d \geq 1$ and $s > n/2$ and let $U$ be an open bounded subset of $\R^n$ with Lipschitz boundary. Then for any $g$ in $C_b^\infty(\R^m,\R^d)$, the left translation by $g$,
\[
 L_g: H^s(U,\R^m) \to H^s(U,\R^d),\quad f \mapsto g \circ f
\]
is a $C^\infty$-map.
\end{Prop}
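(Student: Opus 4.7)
The plan is to argue by induction on $k\in\Z$ that for every $g\in C_b^\infty(\R^m,\R^d)$ the left translation $L_g:H^s(U,\R^m)\to H^s(U,\R^d)$ is of class $C^k$. The algebraic input is Fa\`a di Bruno's formula, which expresses each weak derivative $\partial^\alpha(g\circ f)$ with $|\alpha|\le s$ as a finite sum of terms of the form
\[
\bigl((\partial^\gamma g)\circ f\bigr)\cdot\prod_{i=1}^r \partial^{\alpha^{(i)}} f_{l_i},\qquad 1\le r\le|\alpha|,\ |\alpha^{(i)}|\ge 1,\ \textstyle\sum_{i=1}^r |\alpha^{(i)}|=|\alpha|,
\]
together with $g\circ f$ itself when $\alpha=0$. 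By Proposition \ref{prop_sobolev_imbedding} and Proposition \ref{prop_extension}$(ii)$, $f\in C_b^0(\overline{U},\R^m)$, so $(\partial^\gamma g)\circ f$ is bounded; the product of $r$ derivatives of $f$ lies in $L^2(U,\R)$ by Lemma \ref{continuous_multilinear} (applied with $k_i=|\alpha^{(i)}|$, so that $\sum_i k_i\le s$), or trivially when $r\le 1$. This shows $g\circ f\in H^s(U,\R^d)$, so $L_g$ is well defined.

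For the base case $k=0$, continuity of $L_g$ follows from the same decomposition: if $f_n\to f$ in $H^s$ then by Sobolev embedding $f_n\to f$ uniformly on $\overline U$, so $(\partial^\gamma g)\circ f_n\to (\partial^\gamma g)\circ f$ in $L^\infty$ by uniform continuity of $\partial^\gamma g$ on the precompact range of the $f_n$, while Lemma \ref{continuous_multilinear} supplies $L^2$-convergence of the products of derivatives of $f_n$ to those of $f$. The standard bound $\|AB-A'B'\|_{L^2}\le \|A\|_{L^\infty}\|B-B'\|_{L^2}+\|A-A'\|_{L^\infty}\|B'\|_{L^2}$ then yields $\partial^\alpha(g\circ f_n)\to\partial^\alpha(g\circ f)$ in $L^2$ for every $|\alpha|\le s$, whence $L_g(f_n)\to L_g(f)$ in $H^s$.

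For the inductive step I assume the conclusion for $k-1$ and for every $g\in C_b^\infty$, and plan to show $L_g$ is Fr\'echet differentiable with
\[
dL_g(f)(h)=(dg\circ f)\cdot h,\qquad dg\in C_b^\infty(\R^m,\R^{d\times m}).
\]
Pointwise Taylor expansion gives $g\circ(f+h)-g\circ f-(dg\circ f)\cdot h=\int_0^1\bigl[(dg)\circ(f+th)-(dg)\circ f\bigr]\cdot h\,dt$, whose $H^s$-norm is controlled, via the algebra property of Lemma \ref{lem_imbedding_general}, by $K\|h\|_s\sup_{t\in[0,1]}\|(dg)\circ(f+th)-(dg)\circ f\|_s$. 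This is $o(\|h\|_s)$ because $\|f+th-f\|_s\le\|h\|_s$ uniformly in $t\in[0,1]$ and $L_{dg}$ is continuous by the base case applied to $dg$. Consequently $dL_g=M\circ L_{dg}$, where $M:H^s(U,\R^{d\times m})\to \mathcal L\bigl(H^s(U,\R^m),H^s(U,\R^d)\bigr)$, $A\mapsto(h\mapsto A\cdot h)$, is bounded linear by Lemma \ref{lem_imbedding_general} and hence smooth. By the inductive hypothesis applied to $dg$, $L_{dg}$ is $C^{k-1}$, so $dL_g$ is $C^{k-1}$ and $L_g$ is $C^k$. The \emph{main obstacle} is the base case: establishing well-definedness and continuity of $L_g$ through Fa\`a di Bruno combined with Lemma \ref{continuous_multilinear}; the inductive upgrade to arbitrary $k$ is then essentially formal.
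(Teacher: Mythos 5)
Your proposal is correct in substance, and its first half (well-definedness and continuity of $L_g$) coincides with the paper's argument: the same Fa\`a di Bruno decomposition of $\partial^\alpha(g\circ f)$ into terms $\bigl((\partial^\gamma g)\circ f\bigr)\cdot\prod_i\partial^{\alpha^{(i)}}f_{l_i}$, with the composed factor controlled through the Sobolev embedding and the product of derivatives through Lemma \ref{continuous_multilinear}. The one point you pass over too quickly is that this decomposition is asserted for the \emph{weak} derivatives of $g\circ f$ with $f$ merely in $H^s$; it must first be established for smooth $f$ and then transported to general $f$ by approximating $f$ by restrictions of elements of $C_c^\infty(\R^n,\R^m)$ (Proposition \ref{prop_extension}$(i)$) and passing to the limit against test functions. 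The $L^\infty$- and $L^2$-convergences you invoke for continuity are exactly what make this limit passage work, so the ingredients are already in your write-up, but the step should be stated rather than taken for granted.

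Where you genuinely diverge from the paper is in upgrading continuity to smoothness. The paper expands $g(f+\delta f)$ to order $r$ and applies the converse of Taylor's theorem (Theorem \ref{thm_converse_taylor}), checking continuity of the coefficient maps $f\mapsto\partial^\alpha g\circ f$ into spaces of symmetric multilinear forms and showing the order-$r$ remainder is $o(\|\delta f\|_s^r)$. You instead prove a single Fr\'echet differentiation, $dL_g(f)h=(dg\circ f)\cdot h$, with remainder bounded by $K\|h\|_s\sup_{t\in[0,1]}\|(dg)\circ(f+th)-(dg)\circ f\|_s=o(\|h\|_s)$ via the continuity of $L_{dg}$ and the algebra property (Lemma \ref{lem_imbedding_general}), and then bootstrap: $dL_g=M\circ L_{dg}$ with $M$ bounded linear, so $C^{k-1}$-regularity of $L_{dg}$ (induction on $k$, with $g$ replaced by $dg\in C_b^\infty(\R^m,\R^{d\times m})$) yields $C^k$-regularity of $L_g$. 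This is sound and arguably cleaner, since it needs only a first-order expansion and no converse-Taylor machinery, at the cost of rerunning the base case for each derivative of $g$ -- harmless, as $C_b^\infty$ is closed under differentiation. One small care point: the remainder integral $\int_0^1\bigl[(dg)\circ(f+th)-(dg)\circ f\bigr]\cdot h\,dt$ should be read as an $H^s$-valued Riemann integral, which is legitimate because the integrand is a continuous curve in $H^s(U,\R^d)$ by the continuity of $L_{dg}$ and of multiplication.
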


\begin{proof}
We begin by showing that $L_g$ is continuous. Note that by Proposition \ref{prop_extension}, the extension operator 
\[
 E:H^s(U,\R^m) \to H^s(\R^n,\R^m)
\]
is a bounded linear operator, $\|E\| < \infty$. By Proposition \ref{prop_sobolev_imbedding} the embedding $H^s(\R^n,\R^m) \hookrightarrow C_0^0(\R^n,\R^m)$ is continuous and for any $f \in H^s(U,\R^m)$,
\begin{equation}\label{imbedding_ineq}
\|E(f)\|_{C^0} \leq K_{s,0} \|E(f)\|_s \leq K_{s,0} \|E\| \; \|f\|_s.
\end{equation}
As $g$ is continuous and bounded, $g \circ E(f)$ is in $C_b^0(\R^n,\R^d)$ and hence $g \circ f$ in $C_b^0(U,\R^d)$. Furthermore
\[
 C_b^0(\R^n,\R^m) \to C_b^0(\R^n,\R^d),\quad h \mapsto g \circ h
\]
is continuous. More precisely, for $h_1,h_2 \in C_b^0(\R^n,\R^m)$
\begin{equation}\label{lipschitz_ineq}
\|g \circ h_1 - g \circ h_2\|_{C^0} \leq L \|h_1 - h_2\|_{C^0}
\end{equation}
where $L:=\sup_{x \in \R^m} |d_x g| < \infty$. As for any $f_1,f_2 \in H^s(U,\R^m)$,
\[
 g \circ f_1 - g \circ f_2 = \left. \left(g \circ E(f_1) - g \circ E(f_2)\right) \right|_U
\] 
it follows from the boundedness of the restriction map, \eqref{imbedding_ineq} and \eqref{lipschitz_ineq}, that
\begin{multline}\label{lipschitz_property}
\|g \circ f_1 - g \circ f_2\|_{C^0(U)} \leq \|g \circ E(f_1) - g \circ E(f_2)\|_{C^0}\\
 \leq L \, \|E(f_1)-E(f_2)\|_{C^0} \leq L K_{s,0} \|E\| \; \|f_1-f_2\|_s.
\end{multline}
In particular, $H^s(U,\R^m) \to C_b^0(U,\R^d),f \mapsto g \circ f$ is Lipschitz continuous.
Take $f \in H^s(U,\R^m)$. By Proposition \ref{prop_extension} $(i)$, there exists a sequence $(f^{(k)})_{k \geq 1}$, $f^{(k)} \in C_c^\infty(\R^n,\R^m)$, such that
\begin{equation}\label{approx_seq}
\left. f^{(k)} \right|_U \to f \quad \mbox{as } k \to \infty
\end{equation}
in $H^s(U,\R^m)$. Using the chain and the Leibniz rule we see that for any $k \geq 1$, $1 \leq i \leq d$, and any multi-index $\alpha \in \mathbb Z_{\geq 0}^n$ with $|\alpha| \leq s$, $\partial^\alpha (g_i \circ f^{(k)})$ is a linear combination of products of the form
\begin{equation}\label{product_form}
\partial^\beta g_i \circ f^{(k)} \cdot \partial^{\gamma_1}f_{j_1}^{(k)} \cdots \partial^{\gamma_r}f_{j_r}^{(k)}
\end{equation}
where $\beta \in \mathbb Z_{\geq 0}^m$ with $|\beta| \leq |\alpha|$, $r \in \mathbb Z_{\geq 0}$ with $r \leq |\alpha|$ and $\gamma_1,\ldots,\gamma_r \in \mathbb Z_{\geq 0}^n$ with $\gamma_1 + \ldots \gamma_r = \alpha$. It follows from \eqref{lipschitz_property} and \eqref{approx_seq} that for any $|\beta| \leq |\alpha|$, and for any $1 \leq i \leq d$,
\begin{equation}\label{composition_convergence}
\left. \partial^\beta g_i \circ f^{(k)} \right|_U \to \partial^\beta g_i \circ f \quad \mbox{in } C_b^0(U,\R)
\end{equation}
as $k \to \infty$. Moreover, by \eqref{approx_seq}, for any $1 \leq p \leq r$,
\begin{equation}\label{factor_convergence}
 \left. \partial^{\gamma_p} f_{j_p}^{(k)} \right|_U \to \partial^{\gamma_p}f_{j_p} \quad \mbox{in } H^{s-|\gamma_p|}(U,\R).
\end{equation}
As $\sum_{j=1}^r |\gamma_j| = |\alpha| \leq s$, we get from \eqref{composition_convergence}, \eqref{factor_convergence}, and Lemma \ref{continuous_multilinear} that
\[
 \left. \left( \partial^\beta g_i \circ f^{(k)} \cdot \partial^{\gamma_1}f_{j_1}^{(k)} \cdots \partial^{\gamma_r} f_{j_r}^{(k)}\right)\right|_U \to \partial^\beta g_i \circ f \cdot \partial^{\gamma_1}f_{j_1} \cdots \partial^{\gamma_r}f_{j_r}
\]
in $L^2(U,\R)$ as $k \to \infty$. In particular, for any test function $\varphi \in C_c^\infty(U)$,
\begin{multline*}
 \lim_{k \to \infty} \int_{\R^n} \left[ \partial^\beta g_i \circ f^{(k)} \cdot \partial^{\gamma_1}f_{j_1}^{(k)} \cdots \partial^{\gamma_r} f_{j_r}^{(k)} \right] \cdot \varphi \;dx\\
= \int_{\R^n} \left[\partial^\beta g_i \circ f \cdot \partial^{\gamma_1} f_{j_1} \cdots \partial^{\gamma_r} f_{j_r}\right] \cdot \varphi \;dx.
\end{multline*}
Furthermore, by \eqref{lipschitz_property},
\begin{eqnarray}
\nonumber
 \langle \partial^\alpha (g_i \circ f),\varphi \rangle &=& (-1)^{|\alpha|} \int_{\R^n} \big(g_i \circ f\big)(x) \partial^\alpha \varphi(x) \;dx\\
\nonumber
&=& \lim_{k \to \infty} (-1)^{|\alpha|} \int_{\R^n} \big(g_i \circ f^{(k)}\big)(x) \partial^\alpha \varphi(x)\;dx\\
\label{test_approx}
&=& \lim_{k \to \infty} \int_{\R^n} \partial^\alpha \big(g_i \circ f^{(k)}\big)(x) \varphi(x) \;dx.
\end{eqnarray}
Combining this with \eqref{product_form} and \eqref{test_approx} we see that for any $\alpha$ in $\mathbb Z_{\geq 0}^n$, $|\alpha| \leq s$, the weak derivative $\partial^\alpha(g_i \circ f)$ is in $L^2(U,\R)$. As $\partial^\alpha(g_i \circ f)$ is a linear combination of terms of the form
\[
 \partial^\beta g_i \circ f \cdot \partial^{\gamma_1}f_{j_1} \cdots \partial^{\gamma_r}f_{j_r} \in L^2(U,\R)
\]
with $\sum_{j=1}^r \gamma_j = \alpha$ it follows from \eqref{lipschitz_property} and Lemma \ref{continuous_multilinear} that the map $H^s(U,\R^m) \to L^2(U,\R)$,
\[
 f \mapsto \partial^\beta g_i \circ f \cdot \partial^{\gamma_1}f_{j_1} \cdots \partial^{\gamma_r}f_{j_r},
\]
is continuous. This shows that
\begin{equation}\label{continuity_of_composition}
H^s(U,\R^m) \to H^s(U,\R^d),\quad f \mapsto g \circ f,
\end{equation}
is continuous.  To see that $L_g$ is $C^r$-smooth for any $r \geq 1$ we again apply Theorem \ref{thm_converse_taylor}. Let $f,\delta f$ be elements in $H^s(U,\R^m)$. Expanding $g$ at $f(x)$, $x \in U$ arbitrary, up to order $r \geq 1$, one gets 
\begin{eqnarray*}
 g\big(f(x)+\delta f(x)\big) &=& g\big(f(x)\big) + \sum_{i=1}^r \sum_{|\alpha|=i} \frac{1}{\alpha!} \big(\partial^\alpha g\big) \big(f(x)\big) \cdot \delta f^\alpha(x)\\
&+& R(f,\delta f)(x) 
\end{eqnarray*}
where $\delta f^\alpha(x)= \prod_{i=1}^m \delta f_i(x)^{\alpha_i}$ and the remainder term $R(f,\delta f)$ is given by
\begin{eqnarray*}
R(f,\delta f)(x) &=& \sum_{|\alpha|=r} \frac{r}{\alpha!} \int_0^1 (1-t)^{r-1} \Big( \big(\partial^\alpha g\big)\big(f(x)+t\delta f(x)\big)\\
&-& \big(\partial^\alpha g\big) \big(f(x)\big) \Big) \delta f^\alpha(x) dt.
\end{eqnarray*}
By \eqref{continuity_of_composition}, for any $\alpha \in \mathbb Z_{\geq 0}^n$,
\begin{equation}\label{continuous_derivative_composition2}
H^s(U,\R^m) \to H^s(U,\R^d),\quad f \mapsto \partial^\alpha g \circ f
\end{equation}
is continuous. In view of Lemma \ref{lem_imbedding_general} (cf. also Lemma \ref{rho}), $\partial^\alpha g \circ f$ can be viewed as an element in $L^{|\alpha|}_{sym}\big(H^s(U,\R),H^s(U,\R^d)\big)$, defined by
\[
 (\delta h_j)_{1\leq j \leq |\alpha|} \mapsto \partial^\alpha g \circ f \cdot \prod_{j=1}^{|\alpha|} \delta h_j
\]
and the map
\[
 H^s(U,\R^m) \to L^{|\alpha|}_{sym}\big( H^s(U,\R),H^s(U,\R^d)\big), \quad f \mapsto \partial^\alpha g \circ f
\]
is continuous. Similarly one sees that $R(f,\delta f)$ is in $H^s(U,\R^d)$ and by Lemma \ref{lem_imbedding_general},
\[
 \frac{\|R(f,\delta f)\|_s}{\|\delta f\|_s^r} \leq K^{r+1} \sum_{|\alpha|=r} \frac{1}{\alpha!} \sup_{0 \leq t \leq 1} \|\partial^\alpha g \circ (f+t \delta f)-\partial^\alpha g \circ f\|_s. 
\]
By the continuity of the map (\ref{continuous_derivative_composition2}) it then follows that
\[
 \lim_{\|\delta f\|_s \to 0} \frac{\|R(f,\delta f)\|_s}{\|\delta f\|_s^r} = 0.
\]
Hence Theorem \ref{thm_converse_taylor} applies and it follows that $L_g$ is $C^r$-smooth for any $r \geq 1$.
\end{proof}

\noindent When applying Proposition \ref{prop_left_translation} we will need the following simple Lemma.

\begin{Lemma}\label{lemma_smooth_extension}
Let $U \subseteq \R^n$ be a bounded domain. If $g \in C^\infty(\overline{U},\R)$ then there exists $\tilde g \in C_c^\infty(\R^n,\R)$ such that $\left. \tilde g \right|_{\overline U} = g$.
\end{Lemma}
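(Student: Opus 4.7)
The plan is to unwind the definition of $C^\infty(\overline{U},\R)$ and then localize with a smooth cutoff. By definition, there exist an open neighborhood $V\subseteq\R^n$ of $\overline{U}$ and a function $h\in C^\infty(V,\R)$ with $g=h|_{\overline U}$. Since $U$ is bounded, $\overline U$ is compact, and hence one can find $\varepsilon>0$ so small that the open set
\[
W:=\{x\in\R^n\,\big|\,\operatorname{dist}(x,\overline U)<\varepsilon\}
\]
satisfies $\overline W\subseteq V$ and $\overline W$ is compact. (Such an $\varepsilon$ exists because $\operatorname{dist}(\cdot,\R^n\setminus V)$ is a positive continuous function on the compact set $\overline U$, hence bounded below by some positive constant.)

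Next, I would choose a standard smooth cutoff $\chi\in C_c^\infty(\R^n,\R)$ with $0\le\chi\le 1$, $\chi\equiv 1$ on $\overline U$, and $\operatorname{supp}(\chi)\subseteq W$. Such a function is obtained in the usual way by mollifying the characteristic function of a slightly larger neighborhood of $\overline U$ (whose closure is still contained in $W$) with a mollifier of sufficiently small support. Then define
\[
\tilde g(x):=
\begin{cases}
\chi(x)\,h(x), & x\in V,\\
0, & x\in\R^n\setminus V.
\end{cases}
\]
Since $\operatorname{supp}(\chi)\subseteq W\subseteq\overline W\subseteq V$, the two definitions agree on the overlap and $\tilde g$ is smooth everywhere: on $V$ it is a product of two $C^\infty$-functions, while on the open set $\R^n\setminus\overline W$ it vanishes identically. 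Moreover $\operatorname{supp}(\tilde g)\subseteq\overline W$ is compact, so $\tilde g\in C_c^\infty(\R^n,\R)$. Finally, $\chi\equiv 1$ on $\overline U$ implies $\tilde g|_{\overline U}=h|_{\overline U}=g$, which completes the proof.

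There is no real obstacle here; the only mildly delicate step is checking that the piecewise definition of $\tilde g$ yields a globally smooth function, which is taken care of by arranging $\operatorname{supp}(\chi)$ to be a compact subset of $V$.
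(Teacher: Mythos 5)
Your proof is correct. The paper states this as a ``simple Lemma'' and omits the proof entirely, so there is nothing to compare against; your argument --- unwinding the paper's definition of $C^\infty(\overline{U},\R)$ to get a smooth extension $h$ on an open neighborhood $V$ of the compact set $\overline{U}$, then multiplying by a cutoff $\chi$ with $\chi\equiv 1$ on $\overline{U}$ and $\operatorname{supp}(\chi)$ a compact subset of $V$ --- is the standard one and correctly supplies the omitted details, including the check that the extension by zero is globally smooth.
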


\noindent The following result easily follows from Proposition \ref{prop_extension} $(ii)$.

\begin{Lemma}\label{lem:cut-off}
Let $U\subseteq\R^n$ be an open subset in $\R^n$ with Lipschitz boundary and let $s>n/2$. Then for any 
$f\in H^s(U,\R^d)$ and $\varphi\in C^\infty_c(\R^n)$, $\varphi\cdot f \in H^s(U,\R^d)$.
\end{Lemma}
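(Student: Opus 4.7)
The plan is to reduce the statement to multiplication in $H^s(\R^n,\R)$ by passing through the extension operator provided by Proposition \ref{prop_extension} $(ii)$. Since $H^s(U,\R^d)=H^s(U,\R)^d$, it suffices to argue componentwise, so I would fix $f\in H^s(U,\R)$ and $\varphi\in C_c^\infty(\R^n)$ and show $\varphi\cdot f\in H^s(U,\R)$.

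First I would invoke Proposition \ref{prop_extension} $(ii)$ to obtain $Ef\in H^s(\R^n,\R)$ with $(Ef)\bigl|_U=f$. Next, noting that $\varphi\in C_c^\infty(\R^n)\subseteq H^s(\R^n,\R)$, and since $s>n/2$, I would apply Lemma \ref{lem_imbedding} (with $s'=s$) to the product $\varphi\cdot Ef$ to conclude
\[
\varphi\cdot Ef\in H^s(\R^n,\R),\qquad \|\varphi\cdot Ef\|_s\le K\|\varphi\|_s\,\|Ef\|_s.
\]
Finally I would restrict back to $U$. By the continuity of the restriction operator in Proposition \ref{prop_extension} $(ii)$, $(\varphi\cdot Ef)\bigl|_U\in H^s(U,\R)$, and since $(\varphi\cdot Ef)\bigl|_U=\varphi\bigl|_U\cdot(Ef)\bigl|_U=\varphi\cdot f$, the claim follows. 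Collecting the componentwise bounds also gives the continuity of the bilinear map $(\varphi,f)\mapsto\varphi\cdot f$, should one want it.

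There is essentially no obstacle here: the only subtlety is making sure that the product is taken in the right space before restricting, which is why the extension operator is convenient. An alternative route avoiding $E$ would be to verify the product rule for weak derivatives directly on $U$ by expanding $\partial^\alpha(\varphi\cdot f)$ via Leibniz and bounding each term by $\|\varphi\|_{C^{|\alpha|}}\|f\|_s$; this works too, but the extension argument is shorter and keeps the estimate uniform in the manner already used elsewhere in the paper.
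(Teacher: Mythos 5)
Your argument is correct and follows exactly the route the paper intends: the paper gives no written proof, merely remarking that the lemma ``easily follows from Proposition \ref{prop_extension} $(ii)$,'' and your extension--multiply--restrict chain (using that $C_c^\infty(\R^n)\subseteq H^s(\R^n,\R)$ together with the algebra property of Lemma \ref{lem_imbedding}) is the natural way to fill in those details. Nothing is missing.
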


\section{Diffeomorphisms of a closed manifold}\label{Section 3}

In this section we prove Theorem \ref{thm2}. The main results used for the proof -- in addition to the ones of
Proposition \ref{prop_composition_general}, Proposition \ref{prop_left_translation}, and Lemma \ref{lemma_smooth_extension} -- are summarized in Section \ref{subsec3:prelim} and will be proved in Section \ref{sec:diff_structure}.

\subsection{Preliminaries}\label{subsec3:prelim}

Let $M$ be a closed manifold of dimension $n$ and $N$ a manifold of dimension $d$. Further let $s$ be an integer, $s>n/2$. Recall that a continuous map $f:M \to N$ is said to be an element in $H^s(M,N)$ if for any point $x \in M$, there exist a chart $\chi:\U \to U \subseteq \R^n$ of $M$, $x \in \U$, and a chart $\eta:\V \to V \subseteq \R^d$ of $N$, $f(x) \in \V$, such that $f(\U) \subseteq \V$ and
\[
 \eta \circ f \circ \chi^{-1}:U \to V \subseteq \R^d
\]
is an element in $H^s(U,\R^d)$. Note that if $\widetilde \chi:\widetilde \U \to \widetilde U$ and $\widetilde \eta:\widetilde \V \to \widetilde V$ are two other charts such that $x \in \widetilde \U$ and $f(\widetilde \U) \subseteq \widetilde \V$, then $\widetilde \eta \circ f \circ \widetilde \chi^{-1}$ is not necessarily an element in $H^s(\widetilde U,\R^d)$. As an example consider $M=\mathbb T=\R/\mathbb Z$, $N=\R$ and let $f:(-1/2,1/2) \to \R$ be the function
\[
f(x) :=
\left\{
\begin{array}{rl}
x^{2/3},& x \in [0,1/2)\\
(-x)^{2/3},& x \in [-1/2,0)
\end{array}
\right..
\]
Extending $f$ periodically to $\R$ we get a function on $\mathbb T$ that we denote by the same letter. It is not hard to see that $f \in H^1(\mathbb T,\R)$. Now, introduce a new coordinate $y=x^2$ on the open set $(0,1/2) \subseteq \mathbb T$. Then $\tilde{f}(y) := f(x(y))=y^{1/3}$, $y \in (0,1/4)$. We have, $\tilde{f}'(y)=1/(3y^{2/3})$, and hence, $\tilde{f}' \notin L^2((0,1/4),\R)$. This shows that $\tilde{f} \notin H^1((0,1/4),\R)$. 

With this in mind we define

\begin{Def}\label{def_bdd_cover}
An open cover $(\U_i)_{i \in I}$ of $M$ by coordinate charts $\chi_i:\U_i \to U_i \subseteq \R^n$, $i \in I$, is called a \emph{cover of bounded type}, if for any $i,j \in I$ with $\U_i \cap \U_j \neq \emptyset$,
\[
\chi_j \circ \chi_i^{-1} \in C_b^\infty \big(\overline{\chi_i(\U_i \cap \U_j)},\R^n\big).
\]
\end{Def}

\begin{Def}\label{def_fine_cover}
Assume that $\U_I=(\U_i)_{i \in I}$ is a cover of $M$ and $\V_I=(\V_i)_{i \in I}$ is a collection of charts of $N$. The pair $(\U_I,\V_I)$ is said to be a \emph{fine cover} if the following conditions are satisfied:
\begin{itemize}
\item[(C1)] $I$ is finite and for any $i \in I$, $\chi_i:\U_i \to U_i \subseteq \R^n$ and $\eta_i:\V_i \to V_i \subseteq \R^d$ are coordinate charts of $M$ respectively $N$; $U_i$ and $V_i$ are bounded and have a Lipschitz boundary.
\item[(C2)] $\U_I \; [\V_I]$ is a cover of $M \; [\cup_{i \in I} \V_i]$ of bounded type.
\item[(C3)] For any $i,j \in I$, the boundaries of $\chi_i(\U_i \cap \U_j)$ and $\eta_i(\V_i \cap \V_j)$ are piecewise $C^\infty$-smooth, i.e. they are given by a finite (possibly empty) union of transversally intersecting
$C^\infty$-embedded hypersurfaces in $\R^n$ respectively $\R^d$. In particular, $\chi_i(\U_i \cap \U_j)$ and $\eta_i(\V_i \cap \V_j)$ have a Lipschitz boundary. 
\end{itemize}
\end{Def}

\noindent Fine covers $(\U_I,\V_I)$ will be used to construct a $C^\infty$-differentiable structure of $H^s(M,N)$.
To make this construction independent of any choice of metrics on $M$ and $N$, the notion of a fine cover does not involve any metric.

\begin{Def}\label{def_fine_f}
A triple $(\U_I,\V_I,f)$ consisting  of $f \in H^s(M,N)$ with $s > n/2$ and a fine cover $(\U_I,\V_I)$ is said to be
a {\rm fine cover with respect to $f$} if $f(\U_i) \Subset \V_i$ for any $i \in I$, i.e.,
$\overline{f(\U_i)}$ is compact and contained in $\V_i$.
\end{Def}

\begin{Lemma}\label{lem:fine_cover_existence}
Let $f \in H^s(M,N)$ and $s>n/2$. Then there exists a fine cover $(\U_I,\V_I)$ with respect to $f$. 
\end{Lemma}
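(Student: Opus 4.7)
The plan is to construct $(\mathcal U_I,\mathcal V_I)$ from nested coordinate balls whose radii are fine-tuned via Sard's theorem so that all pairwise intersections have transversal boundaries; the main obstacle will be condition (C3). Since $s>n/2$, Proposition \ref{prop_sobolev_imbedding} together with Proposition \ref{prop_extension}\,$(iii)$ imply $f$ is continuous, so $f(M)$ is compact in $N$. For each $y\in f(M)$ I will pick a chart $\eta_y:\mathcal W_y\to\eta_y(\mathcal W_y)\subseteq\R^d$ with $\eta_y(y)=0$ and a number $\rho_y>0$ so that $\overline{B_{2\rho_y}(0)}\subseteq\eta_y(\mathcal W_y)$. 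For each $x\in M$, continuity of $f$ allows one to pick a chart $\chi_x:\mathcal W'_x\to\chi_x(\mathcal W'_x)\subseteq\R^n$ with $\chi_x(x)=0$ such that $f(\mathcal W'_x)\subseteq\eta_{f(x)}^{-1}\bigl(B_{\rho_{f(x)}/2}(0)\bigr)$ and $\overline{B_{2R_x}(0)}\subseteq\chi_x(\mathcal W'_x)$ for some $R_x>0$. By compactness of $M$, extract a finite subcover $\{\chi_{x_i}^{-1}(B_{R_{x_i}/2}(0))\}_{i\in I}$ of $M$ and write $y_i:=f(x_i)$, $\chi_i:=\chi_{x_i}$, $\eta_i:=\eta_{y_i}$.

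The key step is to choose parameters $r_i\in[R_{x_i}/2,R_{x_i}]$ and $\rho_i\in[\rho_{y_i}/2,\rho_{y_i}]$ and then set $\mathcal U_i:=\chi_i^{-1}(B_{r_i}(0))$ and $\mathcal V_i:=\eta_i^{-1}(B_{2\rho_i}(0))$. For every pair $i\neq j$ with $\mathcal W'_{x_i}\cap\mathcal W'_{x_j}\neq\emptyset$, introduce
\[
g_{ij}:\chi_i(\mathcal W'_{x_i}\cap\mathcal W'_{x_j})\to\R^2,\qquad g_{ij}(\xi)=\bigl(|\xi|^2,\,|\Phi_{ji}(\xi)|^2\bigr),\quad\Phi_{ji}:=\chi_j\circ\chi_i^{-1},
\]
together with its analogue $h_{ij}$ built from $\eta_j\circ\eta_i^{-1}$ on the target side. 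By Sard's theorem, the critical values of each $g_{ij}$ and $h_{ij}$ form a null set in $\R^2$; since $I$ is finite we may choose the $r_i$ and $\rho_i$ in their intervals so that $(r_i^2,r_j^2)$ is a regular value of $g_{ij}$ and $((2\rho_i)^2,(2\rho_j)^2)$ is a regular value of $h_{ij}$ for every relevant pair. This guarantees that the smooth spheres $\partial B_{r_i}(0)$ and $\Phi_{ji}^{-1}(\partial B_{r_j}(0))$ intersect transversally inside $\chi_i(\mathcal W'_{x_i}\cap\mathcal W'_{x_j})$, and similarly on the target side.

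With these radii fixed, the axioms are straightforward to verify. (C1) holds because each $U_i=B_{r_i}(0)$ and $V_i=B_{2\rho_i}(0)$ is an open Euclidean ball. For (C2), since $\overline{\mathcal U_i}=\chi_i^{-1}\bigl(\overline{B_{r_i}(0)}\bigr)\Subset\mathcal W'_{x_i}$, the closure $\overline{\mathcal U_i\cap\mathcal U_j}$ is compactly contained in $\mathcal W'_{x_i}\cap\mathcal W'_{x_j}$, where $\Phi_{ji}$ is smooth, so all derivatives of $\Phi_{ji}$ are bounded on $\overline{\chi_i(\mathcal U_i\cap\mathcal U_j)}$; the same reasoning handles the $\eta$-side. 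For (C3), the boundary of $\chi_i(\mathcal U_i\cap\mathcal U_j)=B_{r_i}(0)\cap\Phi_{ji}^{-1}(B_{r_j}(0))$ lies in the union of two smooth hypersurfaces that intersect transversally by the preceding paragraph, and analogously for $\eta_i(\mathcal V_i\cap\mathcal V_j)$. Finally, $f(\mathcal U_i)\subseteq f(\mathcal W'_{x_i})\subseteq\eta_i^{-1}\bigl(\overline{B_{\rho_{y_i}/2}(0)}\bigr)\Subset\eta_i^{-1}(B_{2\rho_i}(0))=\mathcal V_i$ (using $\rho_{y_i}/2<\rho_{y_i}\leq 2\rho_i$), so $f(\mathcal U_i)\Subset\mathcal V_i$ and $(\mathcal U_I,\mathcal V_I)$ is a fine cover with respect to $f$.
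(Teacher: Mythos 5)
Your proof is correct, but it takes a genuinely different route from the paper's. The paper fixes Riemannian metrics $g_M$, $g_N$, takes all $\U_x$ to be geodesic balls of one common radius $r$ below the injectivity radius (and all $\V_y$ geodesic balls of one common radius $\rho$), and gets the transversality in (C3) from a purely geometric observation: if two geodesic spheres of equal radius $r$ meet non-transversally at a point $p$, then $p$ lies on the minimal geodesic joining the centers, so the centers are at distance exactly $2r$ and the open balls are disjoint — hence any two charts of the cover either have disjoint closures of their boundaries' intersection or meet transversally, with no choice of parameters needed. You instead work with arbitrary coordinate balls and put the radii into general position via Sard's theorem, noting that $(r_i^2,r_j^2)$ being a regular value of $\xi\mapsto(|\xi|^2,|\Phi_{ji}(\xi)|^2)$ is exactly transversality of the two spheres in the $i$-th chart. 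Your approach avoids introducing metrics, exponential maps and the injectivity radius, at the cost of the genericity argument; note that the simultaneous choice of all $r_i$ requires the (routine, Fubini-type) observation that the set of bad tuples $(r_i)_{i\in I}$ is a finite union of preimages of null sets under coordinate projections, hence null in the product of intervals — you assert this but do not spell it out. The paper's construction has the side benefit that the same geodesic-ball covers are reused later (e.g.\ in the proofs of Lemma \ref{lem:hausdorff} and Proposition \ref{Th:diff_structure}, where nested refinements $\U_k\Subset\U_{\sigma(k)}$ with controlled intersections are needed), whereas your construction would have to be rerun for those refinements. All the remaining verifications in your write-up — $\overline{\U_i\cap\U_j}\Subset\mathcal W'_{x_i}\cap\mathcal W'_{x_j}$ for the $C^\infty_b$ bounds in (C2), and $f(\U_i)\Subset\V_i$ from the containment $f(\mathcal W'_{x_i})\subseteq\eta_i^{-1}\bigl(B_{\rho_{y_i}/2}(0)\bigr)$ — are sound.
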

\begin{proof}
To construct such a fine cover choose a Riemannian metric $g_M$ on $M$, a Riemannian metric $g_N$  on $N$, and
$\rho > 0$, so that $2\rho$ is smaller than the injectivity radius of the compact subset $f(M) \subseteq N$ with
respect to the Riemannian metric $g_N$. Note that $f(M)$ is compact as $M$ is compact and $f$ is continuous.
Furthermore, $f:M \to N$ is uniformly continuous. Hence there exists $r > 0$ with $2r$ smaller than the injectivity
radius of $(M,g_M)$ so that $\mathop{\rm dist}\nolimits_{g_N}(f(x),f(x')) < \rho$ for any $x,x' \in M$ with
$\mathop{\rm dist}\nolimits_{g_M}(x,x')<r$.\footnote{Here $\mathop{\rm dist}\nolimits_{g_M}$ and
$\mathop{\rm dist}\nolimits_{g_N}$  denote the geodesic distances on $(M,g_M)$ and $(N,g_N)$ respectively.}
For any $x \in M$ define
\[
\U_x:=\exp_x(B_r) \quad \mbox{and} \quad U_x:=B_r \subseteq T_xM \cong \R^n
\]
where $B_r$ denotes the open ball in $T_x M$ of radius $r$ with respect to the inner product $g_M(x)$ and
$\exp_x:T_xM \to M$ denotes the Riemannian exponential map at $x$. The map $\chi_x:\U_x \to U_x$ is then defined to be
the restriction of the inverse of $\exp_x{}$ to $\U_x$, which is well defined as $2r$ is smaller than the injectivity 
radius. Hence $\chi_x$ is a chart of $M$. Assume that there exist points $x,x' \in M, x \neq x'$ and
$p \in \partial \U_x \cap \partial \U_{x'}$, so that the boundaries of the geodesic balls $\U_x$ and $\U_{x'}$ do not
intersect transversally at $p$. We claim that in this case $\U_x \cap \U_{x'} = \emptyset$. Indeed, as
$\mathop{\rm dist}\nolimits_{g_M}(x,p)=r$, $\mathop{\rm dist}\nolimits_{g_M}(x',p)=r$, and as $2r$ is smaller
than the injectivity radius of $(M,g_M)$ there exists a minimal geodesic connecting the points $x$ and $x'$.
In view of the assumptions that $x \neq x'$ and $\partial \U_x$ and $\partial \U_{x'}$ do not intersect
transversally in $p$ it then follows that $p$ lies on the above geodesic between $x$ and $x'$ and
$\mathop{\rm dist}\nolimits_{g_M}(x,x')=2r$, hence $\U_x \cap \U_{x'} = \emptyset$. 
Therefore, for any $x,x' \in M$, $x \neq x'$, $\partial \U_x$ and
$\partial \U_{x'}$ either do not intersect at all or intersect transversally. In a similar way we construct charts 
$\eta_{f(x)}:\V_{f(x)} \to V_{f(x)} \subseteq \R^d$, $x \in M$, where now $V_{f(x)} \subseteq T_{f(x)}N \cong \R^d$ is
the open ball of radius $\rho$ in $T_{f(x)}N$ centered at $0$ and
$\eta_{f(x)} = (\left. \exp_{f(x)} \right|_{V_{f(x)}})^{-1}$.
Here $\exp_{f(x)}$ denotes the Riemannian exponential map of $(N,g_N)$ at $f(x)$. As $M$ is compact there exist 
finitely many points $(x_i)_{i \in I} \subseteq M$ so that $\U_I = (\U_i)_{i \in I}$ with $\U_i \equiv \U_{x_i}$ 
covers $M$. By construction $\V_I=(\V_i)_{i \in I}$ with $\V_i=\V_{f(x_i)}$ is then a cover of $f(M)$ and one verifies
that $(\U_I,\V_I,f)$ is a fine cover with respect to $f$.
\end{proof}

\begin{Lemma}\label{lemma_local_pt}
Let $(\U_I,\V_I,h)$ be fine cover with respect to $h\in H^s(M,N)$. Then for any $i \in I$, the map
$h_i:=\eta_i \circ h \circ \chi_i^{-1}:U_i \to V_i \subseteq \R^d$ is in $H^s(U_i,\R^d)$.
\end{Lemma}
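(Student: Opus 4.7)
The plan is to reduce the global $H^s$-assertion for $h_i$ on $U_i$ to finitely many local $H^s$-assertions produced by the very definition of $H^s(M,N)$, and then to transport each local piece from its ad hoc witness charts to the fixed fine-cover charts $(\chi_i,\eta_i)$ using the change-of-variable and left-composition results already established.

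First, for every $x\in\overline{\U_i}$ the hypothesis $h\in H^s(M,N)$ provides a pair of smooth charts $(\tilde\chi_x,\tilde\eta_x)$ around $x$ and $h(x)$ with $h(\tilde\U_x)\subseteq\tilde\V_x$ and $\tilde h_x:=\tilde\eta_x\circ h\circ\tilde\chi_x^{-1}\in H^s(\tilde U_x,\R^d)$. By shrinking $\tilde\U_x$ (for instance to a small geodesic ball) I may assume that the closure $\overline{\tilde\U_x}$ lies in the domain of a larger smooth chart, so that the transition map $\tilde\chi_x\circ\chi_i^{-1}$ and its inverse, as well as $\eta_i\circ\tilde\eta_x^{-1}$, extend to $C_b^\infty$-maps on neighborhoods of the closures of the relevant open sets. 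Since $\overline{\U_i}$ is compact, I extract a finite subcover $\tilde\U_{x_1},\ldots,\tilde\U_{x_m}$ and fix a smooth partition of unity $(\rho_k)_{k=1}^m$ on $M$ with $\mathrm{supp}(\rho_k)\Subset\tilde\U_{x_k}$ and $\sum_k\rho_k\equiv 1$ on a neighborhood of $\overline{\U_i}$.

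Since $h_i=\sum_k(\rho_k\circ\chi_i^{-1})\,h_i$ on $U_i$ and $H^s(U_i,\R^d)$ is a vector space, it suffices to show $g_k:=(\rho_k\circ\chi_i^{-1})\,h_i\in H^s(U_i,\R^d)$ for each $k$. On the open set $\chi_i(\U_i\cap\tilde\U_{x_k})$ the identity
\[
h_i=(\eta_i\circ\tilde\eta_{x_k}^{-1})\circ\tilde h_{x_k}\circ(\tilde\chi_{x_k}\circ\chi_i^{-1})
\]
holds. Applying Corollary \ref{th_transformation} to the smooth diffeomorphism $\tilde\chi_{x_k}\circ\chi_i^{-1}$, restricted to a bounded Lipschitz open set $W_k\subseteq\chi_i(\U_i\cap\tilde\U_{x_k})$ containing the support of $\rho_k\circ\chi_i^{-1}$, shows $\tilde h_{x_k}\circ(\tilde\chi_{x_k}\circ\chi_i^{-1})\in H^s(W_k,\R^d)$. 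Next, Lemma \ref{lemma_smooth_extension} provides, on a neighborhood of the compact image of $W_k$ under $\tilde h_{x_k}\circ(\tilde\chi_{x_k}\circ\chi_i^{-1})$, an extension $\tilde g\in C_c^\infty(\R^d,\R^d)\subseteq C_b^\infty(\R^d,\R^d)$ of $\eta_i\circ\tilde\eta_{x_k}^{-1}$. Proposition \ref{prop_left_translation} then yields $h_i\big|_{W_k}\in H^s(W_k,\R^d)$, and Lemma \ref{lem:cut-off} combined with a $C_c^\infty(\R^n)$-extension of $\rho_k\circ\chi_i^{-1}$ (which is available once one extends $\chi_i$ to a slightly larger smooth chart containing $\overline{\U_i}$) delivers $g_k\in H^s(U_i,\R^d)$.

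The main obstacle is purely technical bookkeeping: one must choose the witness charts $\tilde\U_{x_k}$ and the auxiliary domains $W_k$ carefully so that all open sets are bounded with Lipschitz (in fact smooth) boundary, every transition map appearing extends to a $C_b^\infty$-map on a neighborhood of the relevant closure so that Corollary \ref{th_transformation} and Proposition \ref{prop_left_translation} genuinely apply, and the cutoffs $\rho_k\circ\chi_i^{-1}$ admit compactly supported smooth extensions to $\R^n$. Once these choices are arranged, the conclusion follows by a routine application of the previously established results.
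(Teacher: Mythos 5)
Your overall strategy --- a finite cover of $\overline{\U_i}$ by witness charts, a partition of unity, and transport via Corollary \ref{th_transformation}, Proposition \ref{prop_left_translation}, Lemma \ref{lemma_smooth_extension} and Lemma \ref{lem:cut-off} --- is the same as the paper's, but there is a genuine gap at the boundary of $\U_i$. When $\mathrm{supp}(\rho_k)\cap\overline{\U_i}$ meets $\partial\U_i$, your set $W_k$ must have closure touching $\partial U_i$, so you need the transition maps $\tilde\chi_{x_k}\circ\chi_i^{-1}$ (and its inverse, and $\eta_i\circ\tilde\eta_{x_k}^{-1}$, and the cutoff $\rho_k\circ\chi_i^{-1}$) to be controlled \emph{up to} $\partial U_i$. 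Nothing in the hypotheses gives this: shrinking $\tilde\U_{x_k}$ controls $\tilde\chi_{x_k}$ but not $\chi_i$, and your fallback --- extending $\chi_i$ to a smooth chart on a neighborhood of $\overline{\U_i}$ --- is not part of Definition \ref{def_fine_cover}. The only boundary control a fine cover provides is condition (C2), the $C_b^\infty$-compatibility of the cover charts with \emph{each other} on closures of overlaps; your proof never invokes (C2), which is a warning sign, since the example $y=x^2$ preceding Definition \ref{def_bdd_cover} shows that without some such compatibility condition the conclusion is simply false.

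The paper closes exactly this gap with a two-step argument. First, an interior step: on any $\U\Subset\W_j\cap\U_i$ with Lipschitz boundary, all transitions between the ad hoc witness charts and the fine-cover charts are automatically $C_b^\infty$ on the compact, interior closures, so the $H^s$-property is transferred from the witness charts to the \emph{fine-cover} charts $\chi_{\sigma(j)},\eta_{\sigma(j)}$ on compactly contained pieces; one may therefore assume the witness charts are restrictions of fine-cover charts. Second, the patching over $\overline{\U_i}$ by a partition of unity then only involves the transitions $\chi_{\sigma(j)}\circ\chi_i^{-1}$ and $\eta_i\circ\eta_{\sigma(j)}^{-1}$, which are $C_b^\infty$ on the closures of the overlaps precisely by (C2), even where those overlaps touch $\partial\U_i$. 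You need to insert this reduction (or some equivalent use of (C2)) before your partition-of-unity decomposition; as written, the ``technical bookkeeping'' you defer is the actual content of the lemma.
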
 

\begin{proof}
By the definition of $H^s(M,N)$ and the compactness of $M$ there exist a finite open cover $(\W_j)_{j \in J}$ of 
$M$ by coordinate charts
\[
\mu_j:\W_j \to W_j \subseteq \R^n
\]
and for any $j \in J$ an open coordinate chart $\nu_j:\Zj \to Z_j \subseteq \R^d$ of $N$ with
$h(\W_j) \Subset \Zj$ and $W_j,Z_j$ bounded so that for any $j \in J$
\[
\nu_j \circ h \circ \mu_j^{-1} \in H^s(W_j,\R^d).
\]
Without loss of generality we may assume that $I \cap J = \emptyset$. In a first step we show that for any open subset
$\U \Subset \W_j \cap \U_i$ with Lipschitz boundary $\partial \U$, the function 
$\left. \eta_i \circ h \circ \chi_i^{-1} \right|_U$ is in $H^s(U,\R^d)$. 
Here $U$ is given by $\chi_i(\U) \subseteq \R^n$. Indeed, note that as $U=\chi_i(\U) \Subset U_i$ and
$\mu_j(\U) \Subset W_j$ it follows that 
\[
\mu_j \circ \chi_i^{-1}:U \to \mu_j(\U) \mbox{ is in } C_b^\infty(\overline{U},\R^n)
\]
and
\[
\chi_i \circ \mu_j^{-1}: \mu_j(\U) \to U \mbox{ is in } C_b^\infty\big(\overline{\mu_j(\U)}, \R^n\big).
\]
Hence by Corollary \ref{th_transformation},
\[
 \left.(\nu_j \circ h \circ \mu_j^{-1}) \circ (\mu_j \circ \chi_i^{-1})\right|_U \in H^s(U,\R^d).
\]
Furthermore, one can choose $\V \subseteq N$ open so that
\[
h(\U) \Subset \V \Subset \Zj \cap \V_i.
\]
Hence $\left. \eta_i \circ \nu_j^{-1} \right|_{\nu_j(\V)}:\nu_j(\V) \to \eta_i(\V)$ is
in $C_b^\infty\big(\overline{\nu_j(\V)},\R^d\big)$.
One then can apply Proposition \ref{prop_left_translation} 
and Lemma \ref{lemma_smooth_extension} to conclude that
\[
\left. \eta_i \circ h \circ \chi_i^{-1} \right|_U = (\eta_i \circ \nu_j^{-1}) \circ (\nu_j \circ h \circ \mu_j^{-1})
\circ \left.(\mu_j \circ \chi_i^{-1})\right|_U \in H^s(U,\R^d).
\]
%---------------------------
In view of this we can assume that the cover $(\W_j)_{j\in J}$ is a refinement of $(\U_i)_{i\in I}$,
i.e., for any $j\in J$ there exists $\sigma(j)\in I$ such that
\[
\W_j\subseteq\U_{\sigma(j)}\,,
\]
that satisfies the following additional properties: for any $j\in J$, $\W_j\Subset\U_{\sigma(j)}$,
\begin{equation}\label{eq:mu_j}
\mu_j\equiv\chi_{\sigma(j)}|_{\W_j} : \W_j\to W_j\Subset U_{\sigma(j)}\subseteq\R^n
\end{equation}
\begin{equation}\label{eq:nu_j}
\nu_j\equiv\eta_{\sigma(j)} : \Zj\equiv\V_{\sigma(j)}\to Z_j\equiv V_{\sigma(j)}\subseteq\R^d
\end{equation}
and
\begin{equation}\label{eq:smooth}
\nu_j\circ h\circ\mu_j^{-1}\in H^s(W_j,\R^d)\,.
\end{equation}
Now, choose an arbitrary $i\in I$ and consider the closure $\overline{\U_i}$ of $\U_i$ in $M$.
Let
\[
J_i:=\{j\in J\;|\;\W_j\cap\overline{\U_i}\ne\emptyset\}.
\]
Then $(\W_j)_{j\in J_i}$ is a open cover of
$\overline{\U_i}$. We can choose $(\W_j)_{j\in J_i}$ so that for any $j\in J_i$,
$\chi_i(\W_j\cap\U_i)\subseteq\R^n$ has Lipschitz boundary. Let $(\varphi_j)_{j\in J}$ be a partition of unity
on $M$ subordinate to the open cover $(\W_j)_{j\in J}$. By construction,
\begin{equation}\label{eq:1}
\Big(\sum_{j\in J_i}\varphi_j\Big)\Big|_{\overline{\U_i}}\equiv 1\,.
\end{equation}
Take an arbitrary $j\in J_i$. As the cover $(\U_l)_{l\in I}$ is of bounded type,
\begin{equation}
\chi_{\sigma(j)}\circ\chi_i^{-1}\in C^\infty_b(\overline{\chi_i(\U_{\sigma(j)}\cap\U_i)},\R^n)
\end{equation}
and
\begin{equation}
\eta_i\circ\eta_{\sigma(j)}^{-1}\in C^\infty_b(\overline{\eta_{\sigma(j)}(\V_{\sigma(j)}\cap\V_i)},\R^d)\,.
\end{equation}
In view of \eqref{eq:mu_j} and \eqref{eq:nu_j}
\begin{equation}\label{eq:mu_j'}
\mu_j\circ\chi_i^{-1}|_{\chi_i(\W_j\cap\U_i)}=\chi_{\sigma(j)}\circ\chi_i^{-1}|_{\chi_i(\W_j\cap\U_i)}\in
C^\infty_b(\overline{\chi_i(\W_j\cap\U_i)},\R^n)
\end{equation}
and
\begin{equation}\label{eq:nu_j'}
\eta_i\circ\nu_j^{-1}=\eta_i\circ\eta_{\sigma(j)}^{-1}|_{\eta_{\sigma(j)}(\V_{\sigma(j)}\cap\V_i)}\in
C^\infty_b(\overline{\eta_{\sigma(j)}(\V_{\sigma(j)}\cap\V_i)},\R^d)\,.
\end{equation}
We have
\begin{equation}\label{eq:comp}
(\eta_i\circ h\circ\chi_i^{-1})|_{\chi_i(\W_j\cap\U_i)}=
(\eta_i\circ\nu_j^{-1})\circ(\nu_j\circ h\circ\mu_j^{-1})\circ(\mu_j\circ\chi_i^{-1})|_{\chi_i(\W_j\cap\U_i)}\,.
\end{equation}
Then, in view of \eqref{eq:smooth}, \eqref{eq:mu_j'}, \eqref{eq:nu_j'}, and \eqref{eq:comp}, as well as
Corollary \ref{th_transformation}, Proposition \ref{prop_left_translation}, Lemma \ref{lemma_smooth_extension},
and Lemma \ref{lem:cut-off} one concludes that
\begin{equation}\label{eq:prod}
(\varphi_j\circ\chi_i^{-1})\cdot(\eta_i\circ h\circ\chi_i^{-1})\in H^s(U_i,\R^d)
\end{equation}
where the mapping above is extended from $\chi_i(\W_j\cap\U_i)$ to the whole neighborhood $U_i$ by
zero. Finally, in view of \eqref{eq:1} we get
\[
\eta_i\circ h\circ\chi_i^{-1}=\sum_{j\in J_i}(\varphi_j\circ\chi_i^{-1})\cdot(\eta_i\circ h\circ\chi_i^{-1})
\in H^s(U_i,\R^d)\,.
\]
This completes the proof of Lemma \ref{lemma_local_pt}.
\end{proof}

\vspace{0.5cm}

For a given {\em fine cover} $(\U_I,\V_I)$, introduce the subset $\0^s \equiv \0^s(\U_I,\V_I)$ of $H^s(M,N)$
\[
 \0^s:= \big\{ h \in H^s(M,N) \; \big| \; h(\U_i) \Subset \V_i \; \forall i \in I \big\}
\]
and the map
\[
\imath \equiv \imath_{\U_I,\V_I}:\0^s \to \oplus_{i \in I} H^s(U_i,\R^d), h \mapsto (h_i)_{i \in I}
\]
where for any $i \in I$
\[
h_i :=\eta_i \circ h \circ \chi_i^{-1}:U_i \to V_i \subseteq \R^d.
\]
By Lemma \ref{lemma_local_pt}, the map $\imath$ is well-defined and we say that $h_I:=(h_i)_{i \in I}$ is the
restriction of $h$ to $U_I:=(U_i)_{i \in I}$.

\begin{Def}\label{def_submanifold}
A subset $S$ of a Hilbert space $H$ is called a $C^\infty$-submanifold of $H$ if for any $p \in S$, there exist an
open neighborhood $V$ of $p$ in $H$, open neighborhoods $W_i \subseteq H_i$ of $0$ of the Hilbert spaces $H_i$,
$i=1,2$, and a $C^\infty$-diffeomorphism $\psi:V \to W_1 \times W_2$, with $\psi(p)=(0,0)$ so that,
\[
\psi(V \cap S) = W_1 \times \{0\}.
\]
\end{Def}

The following result will be proved in Section \ref{sec:diff_structure}.

\begin{Prop}\label{Th:submanifold}
Let $(\U_I,\V_I)$ be a fine cover and $\0^s \equiv \0^s(\U_I,\V_I)$ with $s > n/2$, and
$\imath \equiv \imath_{\U_I,\V_I}$ be defined as above. Then the range $\imath(\0^s)$ of $\imath$ is a
$C^\infty$-submanifold of $\oplus_{i \in I} H^s(U_i,\R^d)$.
\end{Prop}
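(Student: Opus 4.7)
The plan is to realize $\imath(\0^s)$ locally as the zero-set of a $C^\infty$-map on the ambient Hilbert space and to apply the implicit function theorem, where the required Banach-space splittings are built by hand using a partition of unity on $M$. Fix $h^\bullet \in \0^s$ and set $(h_i^\bullet) := \imath(h^\bullet)$. For each pair $(i,j)$ with $\U_i \cap \U_j \neq \emptyset$, set $W_{ij} := \chi_j(\U_i \cap \U_j)$ (which has Lipschitz boundary by (C3)), and let $\tau_{ij} := \chi_i \circ \chi_j^{-1}|_{W_{ij}}$ and $\sigma_{ij} := \eta_j \circ \eta_i^{-1}|_{\eta_i(\V_i \cap \V_j)}$; by (C2) both transition maps are bounded $C^\infty$. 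A family $(h_I) \in \bigoplus_i H^s(U_i,\R^d)$ lies in $\imath(\0^s)$ exactly when the cocycle identities $h_j|_{W_{ij}} = \sigma_{ij} \circ h_i \circ \tau_{ij}$ hold on every non-empty overlap.

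Next, I would choose an open neighborhood $\Omega$ of $(h_i^\bullet)$ in $\bigoplus_i H^s(U_i,\R^d)$ on which the above compositions are well-defined --- namely the set of $(h_I)$ with $h_i(\overline{U_i}) \Subset V_i$ and $h_i(\overline{\chi_i(\U_i \cap \U_j)}) \Subset \eta_i(\V_i \cap \V_j)$ for every overlapping pair; openness follows from the Sobolev embedding $H^s(U_i,\R^d) \hookrightarrow C^0_b(\overline{U_i},\R^d)$ of Proposition \ref{prop_extension} (iii). On $\Omega$ I define the compatibility map
\[
\Phi : \Omega \to K := \bigoplus_{\substack{(i,j):\, \U_i \cap \U_j \neq \emptyset \\ i \neq j}} H^s(W_{ij}, \R^d), \qquad \Phi((h_I))_{ij} := h_j|_{W_{ij}} - \sigma_{ij} \circ (h_i \circ \tau_{ij}).
\]
The $C^\infty$-smoothness of $\Phi$ follows by composition: right translation by the diffeomorphism $\tau_{ij}$ is $C^\infty$ by Corollary \ref{th_transformation}; left translation by $\sigma_{ij}$, after extension to a compactly supported $C^\infty$-map via Lemma \ref{lemma_smooth_extension}, is $C^\infty$ by Proposition \ref{prop_left_translation}; restriction to $W_{ij}$ is bounded linear. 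The identity $\imath(\0^s) \cap \Omega = \Phi^{-1}(0)$ is immediate in one direction; for the other, any $(h_I) \in \Phi^{-1}(0) \cap \Omega$ glues to a continuous map $h : M \to N$ with $h|_{\U_i} = \eta_i^{-1} \circ h_i \circ \chi_i$, which lies in $H^s(M,N)$ by Lemma \ref{lemma_local_pt}.

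The main obstacle is the splitting condition at $(h_i^\bullet)$: the differential $d\Phi|_{(h_i^\bullet)}$ is generally \emph{not} surjective onto $K$ --- its image $K' \subseteq K$ is a proper closed subspace cut out by linearized triple-overlap constraints --- so one must exhibit a bounded linear right-inverse of $d\Phi|_{(h_i^\bullet)}$ on $K'$ and a closed topological complement of $T_0 := \ker d\Phi|_{(h_i^\bullet)}$ in $\bigoplus_i H^s(U_i,\R^d)$. To achieve both at once I would fix a $C^\infty$-partition of unity $(\rho_k)_{k\in I}$ on $M$ subordinate to $(\U_k)$ and construct an explicit bounded linear map $\mathcal{S} : K \to \bigoplus_i H^s(U_i,\R^d)$ by the Čech-averaging formula
\[
\mathcal{S}((w_{ij}))_j \;:=\; \sum_{k:\, \U_k \cap \U_j \neq \emptyset} (\rho_k \circ \chi_j^{-1}) \cdot \widetilde{w_{kj}},
\]
each summand being supported in $W_{kj}$ and extended by zero to an element of $H^s(U_j,\R^d)$ via Lemma \ref{lem:cut-off}. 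A direct computation using $\sum_k \rho_k \equiv 1$ together with the cocycle identities $\sigma_{ij} \circ \sigma_{ki} = \sigma_{kj}$ and the compatibility $\sigma_{ki}(h_k^\bullet \circ \tau_{kj}) = h_i^\bullet \circ \tau_{ij}$ satisfied by $h^\bullet$ then shows $\mathcal{S} \circ d\Phi|_{(h_i^\bullet)} = I - \Pi$ where $\Pi$ is a bounded linear projection onto $T_0$, and simultaneously $d\Phi|_{(h_i^\bullet)} \circ \mathcal{S}$ is a bounded projection of $K$ onto $K'$. With the Banach-space decompositions $\bigoplus_i H^s(U_i,\R^d) = T_0 \oplus \ker \Pi$ and $K = K' \oplus \ker(d\Phi \circ \mathcal{S})$ in hand, a Lyapunov--Schmidt-style argument --- solving the $K'$-component of $\Phi = 0$ for the $\ker\Pi$ variable via the standard implicit function theorem, and checking that the complementary component vanishes automatically because the pairwise cocycle identities defining $\imath(\0^s)$ are equivalent to $\Phi = 0$ --- yields a $C^\infty$-diffeomorphism of a neighborhood of $(h_i^\bullet)$ in $\bigoplus_i H^s(U_i,\R^d)$ onto a product of two open sets in Hilbert spaces sending $\Phi^{-1}(0) \cap \Omega$ to a linear slice, which is exactly the submanifold condition of Definition \ref{def_submanifold}.
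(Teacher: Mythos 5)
Your strategy --- realizing $\imath(\0^s)$ locally as the zero set of a \v{C}ech compatibility map $\Phi$ and producing the required splittings with a partition of unity --- is genuinely different from the paper's, which instead parametrizes a neighborhood of $h^\bullet$ in $\0^s$ by the Riemannian exponential map of a metric on $N$ (solving the geodesic ODE chartwise, Lemma \ref{lem_ode_solution}), shows the chart map $\imath\circ\overline{\exp}_{h^\bullet}$ has injective differential with closed range (Lemma \ref{lem_closed_range}), and then proves that its image exhausts $Q^s\cap\imath(\0^s)$ using the local injectivity of $\exp$ (Lemma \ref{global_exp}). Your linear algebra is sound: writing $A_{ij}X_i:=d\sigma_{ij}(h_i^\bullet\circ\tau_{ij})\cdot(X_i\circ\tau_{ij})$, the averaging operator $\mathcal{S}$ does satisfy $\mathcal{S}\circ d\Phi = I-\Pi$ with $\Pi(X)_j=\sum_k(\rho_k\circ\chi_j^{-1})\,A_{kj}X_k$ a bounded projection onto $T_0$; this uses the cocycle identity $A_{ij}A_{ki}=A_{kj}$ on triple overlaps and requires $\mathrm{supp}\,\rho_k\Subset\U_k$ so that the cut-off products extend by zero into $H^s(U_j,\R^d)$. (In a Hilbert space this complementability of $T_0$ comes for free from closedness, which is why the paper does not need the partition-of-unity device.)

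The genuine gap is the final step. Since $d\Phi$ at $(h_i^\bullet)$ is not surjective onto $K$, the Lyapunov--Schmidt reduction only yields one inclusion: every solution of $\Phi=0$ near $h^\bullet$ lies on the graph $\Sigma=\{h^\bullet+X_0+X_1(X_0)\}$ obtained by solving the $K'$-component, where $P':=d\Phi\circ\mathcal{S}$ is your projection onto $K'$. The reverse inclusion $\Sigma\subseteq\Phi^{-1}(0)$ --- your assertion that the complementary component ``vanishes automatically'' --- is not automatic and does not follow from the remark that the cocycle identities are equivalent to $\Phi=0$: a priori the reduced solution satisfies only $P'\Phi=0$, and the condition $\mathcal{S}\Phi(h)=0$ (i.e.\ $h_j$ equals the $\rho$-weighted average of the transported $h_k$) does not force each individual compatibility $h_j=\sigma_{ij}(h_i\circ\tau_{ij})$ to hold. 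To close the argument you would have to show either that the image of the nonlinear $\Phi$ near $0$ lies in a set on which $P'$ is injective --- which requires exploiting the nonlinear triple-overlap relations $\sigma_{kj}=\sigma_{ij}\circ\sigma_{ki}$ quantitatively, e.g.\ via a contraction estimate --- or that every small $X_0\in T_0$ is actually realized by some element of $\0^s$; but the natural way to produce such elements is precisely the exponential-map construction, i.e.\ the paper's proof. As it stands, you have shown that $\imath(\0^s)$ is locally \emph{contained in} a $C^\infty$-submanifold modeled on $T_0$, not that it locally \emph{equals} one.
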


To continue, let us recall the notion of a $C^\infty$-Hilbert manifold. Let $\mathcal M$ be a topological space.
A pair $(\U, \chi:\U \to U)$ consisting of an open subset $\U \subseteq \mathcal M$ and a homeomorphism
$\chi:\U \to U \subseteq H$ of $\U$ onto an open subset $U$ of a Hilbert space is said to be a chart of
$\mathcal M$. Occasionally, we also refer to $\U$ or to $\chi:\U\to U$ as a chart. For any $x \in \U$ we say that
 $(\U,\chi)$ is a chart at $x$. Two charts $\chi_i:\U_i \to U_i \subseteq H$ of $\mathcal M$ are said to be
\emph{compatible} if
$\chi_2 \circ \chi_1^{-1}:\chi_1(\U_1 \cap \U_2) \to \chi_2(\U_1 \cap \U_2)$ is a $C^\infty$-map between the open
sets $\chi_i(\U_1 \cap \U_2) \subseteq H$. An atlas of $\mathcal M$ is a cover $\mathcal A$ of $\mathcal M$ by 
compatible charts.
A maximal atlas of $M$ (maximality means that any chart that is compatible with the charts in $\mathcal A$ belongs
to $\mathcal A$) is said to be a $C^\infty$-differentiable structure of $\mathcal M$.
Clearly any atlas of $\mathcal M$ induces precisely one $C^\infty$-differentiable structure.
Assume that $(\U_I,\V_I)$ is a fine cover. The following result says that the $C^\infty$-differentiable structure on
the subset $\0^s \equiv \0^s(\U_I,\V_I)$ of $H^s(M,N)$ obtained by pulling back
the one of the submanifold $\imath(\0^s)$ does not depend on the choice of $(\U_I,\V_I)$.
More precisely, let $(\U_J,\V_J)$ be a fine cover. For convenience we choose the index sets $I,J$ so that
$I \cap J = \emptyset$. As above, introduce the subset ${\0}^s \equiv \0^s({\U}_J,{\V}_J)$ of $H^s(M,N)$ together
with the restriction map,
\[
{\imath} \equiv \imath_{{\U}_J,{\V}_J}: \0^s({\U}_J,{\V}_J)\to
\oplus_{j \in J} H^s({U}_j,\R^d), \quad f \mapsto ({f}_j)_{j \in J}
\]
where for any $j \in J$, ${f}_j$ is given by 
\[
 {f}_j:= {\eta}_j \circ f \circ {\chi}_j^{-1}: {U}_j \to {V}_j \subseteq \R^d.
\]
By Proposition \ref{Th:submanifold}, $\0^s({\U}_J,{\V}_J)$ admits a $C^\infty$-differentiable structure obtained by pulling back
the one of the submanifold 
\[
\imath \big( \0^s(\U_J,\V_J) \big) \subseteq \oplus_{j \in J} H^s({U}_j,\R^d).
\]
In Section \ref{sec:diff_structure} we prove the following statements:

\begin{Lemma}\label{lemma_opensubset}
Let $s$ be an integer, $s > n/2$, and let $(\U_I,\V_I)$ and $(\U_J,\V_J)$  be fine covers. 
Then $\0^s(\U_I,\V_I) \cap \0^s(\U_J,\V_J)$ is open in $\0^s(\U_I,\V_I)$. 
\end{Lemma}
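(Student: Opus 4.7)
The plan is to reduce, for each $j\in J$ separately, to showing that the condition $h(\U_j)\Subset\V_j$ is preserved under small $H^s$-perturbations of $f$ in the charts of $(\U_I,\V_I)$; intersecting the resulting finitely many open neighborhoods then gives the required open neighborhood of $f$ in $\0^s(\U_I,\V_I)$ that lies inside $\0^s(\U_J,\V_J)$. Recall that the topology on $\0^s(\U_I,\V_I)$ is pulled back via the injective map $\imath_{\U_I,\V_I}$ from $\oplus_{i\in I}H^s(U_i,\R^d)$, so local openness amounts to requiring smallness of $\|h_i-f_i\|_{H^s(U_i)}$ for finitely many indices $i\in I$.

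Fix $j\in J$. Since $M$ is compact, $K:=f(\overline{\U_j})=\overline{f(\U_j)}$ is a compact subset of $\V_j$. Using the chart $\eta_j$, choose an open set $\V$ with $K\Subset\V\Subset\V_j$ (for instance, $\V:=\eta_j^{-1}(W)$ for an open $W\subseteq V_j$ satisfying $\eta_j(K)\Subset W$ and $\overline{W}\subseteq V_j$). For each $x\in\overline{\U_j}$ select $i(x)\in I$ with $x\in\U_{i(x)}$, and by continuity of $f$ an open neighborhood $\U'_x$ of $x$ with $\overline{\U'_x}\subseteq\U_{i(x)}$ and $f(\overline{\U'_x})\subseteq\V\cap\V_{i(x)}$. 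Extract a finite subcover $\U'_{x_1},\ldots,\U'_{x_m}$ of $\overline{\U_j}$, and set $\U'_l:=\U'_{x_l}$, $i_l:=i(x_l)$.

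For each $l$, $\chi_{i_l}(\overline{\U'_l})\subseteq U_{i_l}$ is compact, and $\eta_{i_l}(f(\overline{\U'_l}))$ is a compact subset of the open set $\eta_{i_l}(\V\cap\V_{i_l})\subseteq\R^d$; choose $\epsilon_l>0$ so that the $\epsilon_l$-neighborhood of $\eta_{i_l}(f(\overline{\U'_l}))$ is still contained in $\eta_{i_l}(\V\cap\V_{i_l})$. By Proposition \ref{prop_extension}(iii) the embedding $H^s(U_{i_l},\R^d)\hookrightarrow C_0^0(\overline{U_{i_l}},\R^d)$ is continuous with some constant $C_l>0$. Setting $\delta_l:=\epsilon_l/C_l$ and
\[
\mathcal W_j:=\{h\in\0^s(\U_I,\V_I)\mid \|h_{i_l}-f_{i_l}\|_{H^s(U_{i_l})}<\delta_l\text{ for all }l=1,\ldots,m\}
\]
yields an open subset of $\0^s(\U_I,\V_I)$ containing $f$. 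For $h\in\mathcal W_j$ and every $l$, the Sobolev bound implies $\sup_{x\in\overline{\U'_l}}|\eta_{i_l}(h(x))-\eta_{i_l}(f(x))|<\epsilon_l$, so $h(\overline{\U'_l})\subseteq\V\cap\V_{i_l}\subseteq\V$; combining over $l$ gives $h(\overline{\U_j})\subseteq\V$, and the compactness of $h(\overline{\U_j})$ (continuous image of compact) yields $h(\U_j)\Subset\V_j$. Taking $\mathcal W:=\bigcap_{j\in J}\mathcal W_j$ completes the argument.

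The main (mild) obstacle is a bookkeeping one: the topology on $\0^s(\U_I,\V_I)$ measures proximity of $h$ to $f$ only in the $I$-charts, whereas the condition $h(\U_j)\Subset\V_j$ to be preserved is phrased in the $J$-charts. The bridge is the Sobolev embedding $H^s\hookrightarrow C^0$ for $s>n/2$, which converts $H^s$-smallness in each $I$-chart into uniform $C^0$-smallness of $\eta_{i_l}\circ h$ on the piece $\overline{\U'_l}$ of the finite $I$-subcover of $\overline{\U_j}$; this pointwise control then propagates to the set-theoretic inclusion $h(\overline{\U_j})\subseteq\V\Subset\V_j$ without ever needing to write $h$ directly in the $J$-chart $\eta_j$.
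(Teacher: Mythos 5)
Your proof is correct and follows essentially the same route as the paper's: use the Sobolev embedding $H^s\hookrightarrow C^0$ (Proposition \ref{prop_extension}(iii)) to convert $H^s$-smallness in the $I$-charts into uniform $C^0$-closeness of $h$ to $f$, and then exploit the compactness of $\overline{f(\U_j)}$ inside the open set $\V_j$ to conclude that $h(\U_j)\Subset\V_j$ persists. The only difference is cosmetic: the paper phrases the uniform closeness via the geodesic distance of an auxiliary Riemannian metric on $N$, whereas you work chart by chart over a finite subcover of $\overline{\U_j}$; your version spells out the details the paper leaves implicit.
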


\begin{Prop}\label{Th:diff_structure}
Let $s$ be an integer with $s>n/2$ and let $(\U_I,\V_I)$  and $({\U}_J,{\V}_J)$ be fine covers.
Then the $C^\infty$-differentiable structures on the intersection $\0^s(\U_I,\V_I) \cap \0^s({\U}_J,{\V}_J)$ induced
from $\0^s(\U_I,\V_I)$ and
$\0^s({\U}_J,{\V}_J)$ respectively, coincide. 
\end{Prop}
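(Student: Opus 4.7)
The plan is to show that the identity on $\0^s(\U_I,\V_I) \cap \0^s(\U_J,\V_J)$ is a $C^\infty$-diffeomorphism, where the source carries the structure induced from $\0^s(\U_I,\V_I)$ and the target the one induced from $\0^s(\U_J,\V_J)$. By Proposition \ref{Th:submanifold} and the very construction of the pulled-back $C^\infty$-structure, $\imath_I\equiv\imath_{\U_I,\V_I}$ is a $C^\infty$-diffeomorphism onto the submanifold $\imath_I(\0^s(\U_I,\V_I))\subseteq\oplus_{i\in I}H^s(U_i,\R^d)$, and similarly for $\imath_J$. It therefore suffices to prove that the transition $T:=\imath_J\circ\imath_I^{-1}$ is $C^\infty$-smooth on the open set $\imath_I\bigl(\0^s(\U_I,\V_I)\cap\0^s(\U_J,\V_J)\bigr)$; the corresponding statement for $T^{-1}$ follows by swapping the roles of $I$ and $J$. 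By Definition \ref{def_submanifold} it is enough to produce, near each $\imath_I(f_\star)$, an open neighborhood $\mathcal N \subseteq \oplus_{i\in I}H^s(U_i,\R^d)$ and a $C^\infty$-map $\widetilde T:\mathcal N\to\oplus_{j\in J}H^s(U_j,\R^d)$ that agrees with $T$ on $\mathcal N\cap\imath_I(\0^s(\U_I,\V_I)\cap\0^s(\U_J,\V_J))$.

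Fix $f_\star$ in the intersection and fix $j\in J$. Writing $I_j:=\{i\in I\mid\U_i\cap\U_j\ne\emptyset\}$, the collection $(\U_i\cap\U_j)_{i\in I_j}$ covers $\U_j$. Pick a smooth partition of unity $(\varphi_i)_{i\in I_j}$ on $\U_j$ with $\mathop{\mathrm{supp}}\varphi_i\Subset\U_i\cap\U_j$, chosen small enough that $f_\star(\mathop{\mathrm{supp}}\varphi_i)\Subset\V_i\cap\V_j$. Since $H^s\hookrightarrow C^0$ by Proposition \ref{prop_sobolev_imbedding} together with Proposition \ref{prop_extension} $(iii)$, this inclusion persists for all $f$ in a sufficiently small open neighborhood $\mathcal N$ of $f_\star$. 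For $f\in\mathcal N$ and $x\in U_j$ one then has the local identity
\[
f_j(x)=\sum_{i\in I_j}(\varphi_i\circ\chi_j^{-1})(x)\cdot(\eta_j\circ\eta_i^{-1})\bigl(f_i(\chi_i\circ\chi_j^{-1}(x))\bigr),
\]
with each summand compactly supported in $\chi_j(\U_i\cap\U_j)\subseteq U_j$.

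It remains to show that each summand, regarded as a map from $\oplus_{i\in I}H^s(U_i,\R^d)$ into $H^s(U_j,\R^d)$, is $C^\infty$. Fix $i\in I_j$. By condition (C3) the bounded open sets $\chi_j(\U_i\cap\U_j)$ and $\chi_i(\U_i\cap\U_j)$ have Lipschitz boundary and $\chi_i\circ\chi_j^{-1}$ is a $C^\infty$-diffeomorphism between them; applying Lemma \ref{lemma_smooth_extension} componentwise, the transition $\eta_j\circ\eta_i^{-1}$ restricted to a bounded neighborhood whose closure lies in $\eta_i(\V_i\cap\V_j)$ and contains $f_\star(\mathop{\mathrm{supp}}\varphi_i)$ in its interior extends to a function $\tilde g\in C_b^\infty(\R^d,\R^d)$ which coincides with $\eta_j\circ\eta_i^{-1}$ on the image of $f_i\circ(\chi_i\circ\chi_j^{-1})$ for every $f\in\mathcal N$. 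The summand then decomposes into: projection onto the $i$-th coordinate; restriction from $U_i$ to $\chi_i(\U_i\cap\U_j)$, continuous linear by Proposition \ref{prop_extension} $(ii)$; right-translation by $\chi_i\circ\chi_j^{-1}$, continuous linear by Corollary \ref{th_transformation}; left-translation by $\tilde g$, of class $C^\infty$ by Proposition \ref{prop_left_translation}; multiplication by the cutoff $\varphi_i\circ\chi_j^{-1}\in C_c^\infty(\R^n)$, continuous linear by Lemma \ref{lem:cut-off}; and extension-by-zero from $\chi_j(\U_i\cap\U_j)$ to $U_j$, continuous linear because the support of the multiplied function is compact inside the smaller set. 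Each step is $C^\infty$, so is their composition and the finite sum over $i\in I_j$; running $j$ over $J$ yields $\widetilde T$.

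The main obstacle, which the partition-of-unity/cutoff device is designed to navigate, is that the raw coordinate transition $\eta_j\circ\eta_i^{-1}$ is defined only on the open set $\eta_i(\V_i\cap\V_j)$ with no a priori derivative bounds, whereas Proposition \ref{prop_left_translation} requires a globally defined $C_b^\infty$-function on $\R^d$; shrinking the partition of unity so that the image of $f$ remains in a fixed compact subset of $\V_i\cap\V_j$ for all $f\in\mathcal N$, and then invoking Lemma \ref{lemma_smooth_extension}, resolves this precisely.
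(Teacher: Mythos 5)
Your overall strategy --- exhibiting an ambient $C^\infty$-extension $\widetilde T$ of the transition $\imath_J\circ\imath_I^{-1}$ by means of a partition of unity and cutoffs --- is a legitimate and more direct route than the paper's, which instead refines both covers to auxiliary fine covers $(\U_K,\V_K)$, $(\U_L,\V_L)$ and chases the commutative diagrams \eqref{figure1}--\eqref{figure3}. However, there is a genuine gap at exactly the point that refinement is designed to handle. You assert that ``by condition (C3) the bounded open sets $\chi_j(\U_i\cap\U_j)$ and $\chi_i(\U_i\cap\U_j)$ have Lipschitz boundary'', but (C3), and likewise the bounded-type condition (C2), only constrain intersections of charts belonging to the \emph{same} fine cover. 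For $i\in I$ and $j\in J$ with $I\cap J=\emptyset$ there is no a priori control: $\chi_j(\U_i\cap\U_j)$ need not have Lipschitz boundary, and $\chi_i\circ\chi_j^{-1}$ need not belong to $C_b^\infty\big(\overline{\chi_j(\U_i\cap\U_j)},\R^n\big)$. The paper flags this explicitly at the start of its proof (``Note that the boundary $\partial\chi_i(\U_i\cap\U_j)$, $i\in I$, $j\in J$, might not be Lipschitz''). Consequently Corollary \ref{th_transformation} (which requires Lipschitz boundaries and a transition map smooth up to the closure) and Proposition \ref{prop_left_translation} (which requires a Lipschitz domain, since its proof uses the extension operator and Lemma \ref{continuous_multilinear}) cannot be invoked on $\chi_j(\U_i\cap\U_j)$ as you do.

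The gap is repairable within your framework: shrink the partition of unity so that each $\mathop{\mathrm{supp}}\varphi_i$ is contained in an open set $\W_i\Subset\U_i\cap\U_j$ whose chart images have piecewise smooth, hence Lipschitz, boundary (e.g.\ small geodesic balls as in Lemma \ref{lem:fine_cover_existence}), and run the restriction / right-translation / left-translation chain on $\chi_j(\W_i)$ and $\chi_i(\W_i)$ instead; since $\overline{\chi_i(\W_i)}$ is then a compact subset of the open set $\chi_i(\U_i\cap\U_j)$, the transition $\chi_j\circ\chi_i^{-1}$ is automatically $C_b^\infty$ up to the closure. This is in substance the refinement step the paper performs, and it is also how the paper's own Lemma \ref{lemma_local_pt} deploys the very same partition-of-unity device. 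The remaining ingredients of your argument --- the gluing identity, the $C_b^\infty$-extension of $\eta_j\circ\eta_i^{-1}$ via Lemma \ref{lemma_smooth_extension}, the cutoff and extension by zero via Lemma \ref{lem:cut-off}, and the reduction of smoothness of a map between submanifolds to smoothness of an ambient extension --- are sound.
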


It follows from Lemma \ref{lem:fine_cover_existence} that the sets $\0^s(\U_I,\V_I), \0^s(\U_J,\V_J),\ldots$ 
constructed above, with $I,J,\ldots$ finite and pairwise
disjoint, form a cover $\C$ of $H^s(M,N)$. By Lemma \ref{lemma_opensubset}, the set $\mathcal T$ of subsets
$S \subseteq H^s(M,N)$, having the property that
\begin{equation}\label{topology_T}
S \cap \0^s(\U_I,\V_I) \mbox{ is open in } \0^s(\U_I,\V_I) \quad \forall \, \0^s(\U_I,\V_I) \in \C 
\end{equation}
defines a topology of $H^s(M,N)$. In particular, $\mathcal C$ is an open cover of $H^s(M,N)$ in the topology 
$\mathcal T$. Note that

\begin{Lemma}\label{lem:hausdorff}
The topology $\mathcal T$ of $H^s(M,N)$ is Hausdorff.
\end{Lemma}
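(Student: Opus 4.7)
The plan is to separate distinct $f,g\in H^s(M,N)$ by exhibiting two \emph{disjoint} elements $\0^s(\U_I,\V_I)\ni f$ and $\0^s(\tilde\U_J,\tilde\V_J)\ni g$ of the cover $\C$. Both sets are automatically open in $\mathcal T$: indeed, by Lemma \ref{lemma_opensubset}, for every other chart $\0^s(\U_{I'},\V_{I'})\in\C$ the intersection $\0^s(\U_I,\V_I)\cap\0^s(\U_{I'},\V_{I'})$ is open in $\0^s(\U_{I'},\V_{I'})$, which by the definition \eqref{topology_T} of $\mathcal T$ is precisely what it means for $\0^s(\U_I,\V_I)$ to belong to $\mathcal T$, and similarly for the $g$-chart. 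So exhibiting the two disjoint open sets finishes the Hausdorff separation.

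To construct them, I would use that every element of $H^s(M,N)$ is by definition a continuous map; hence $f\ne g$ yields some $x_0\in M$ with $f(x_0)\ne g(x_0)$. Since $N$ is Hausdorff, pick disjoint open neighborhoods $\V^f\ni f(x_0)$ and $\V^g\ni g(x_0)$, each contained in a coordinate chart of $N$. By continuity of $f$ and $g$, choose an open neighborhood $\mathcal W$ of $x_0$ in $M$ with $f(\mathcal W)\subseteq\V^f$ and $g(\mathcal W)\subseteq\V^g$. Now I would carry out the construction of Lemma \ref{lem:fine_cover_existence} twice, once with respect to $f$ and once with respect to $g$. The radii $r$ on $M$ and $\rho$ on $N$ that govern that construction may be taken arbitrarily small, so one may ensure that each of the two covers contains a distinguished source chart $\U_{i_0}$, respectively $\tilde\U_{j_0}$, centered at $x_0$ and contained in $\mathcal W$, with associated target charts satisfying $\V_{i_0}\subseteq\V^f$ and $\tilde\V_{j_0}\subseteq\V^g$ (together with $f(\U_{i_0})\Subset\V_{i_0}$, $g(\tilde\U_{j_0})\Subset\tilde\V_{j_0}$). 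The remaining centers are chosen, exactly as in Lemma \ref{lem:fine_cover_existence}, to extract finite subcovers of $M$ and to fulfill the remaining conditions (C1)--(C3) of a fine cover.

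Disjointness of the two resulting $\0^s$'s is then immediate: any $h\in\0^s(\U_I,\V_I)\cap\0^s(\tilde\U_J,\tilde\V_J)$ would satisfy $h(\U_{i_0})\Subset\V_{i_0}\subseteq\V^f$ and $h(\tilde\U_{j_0})\Subset\tilde\V_{j_0}\subseteq\V^g$, but $x_0\in\U_{i_0}\cap\tilde\U_{j_0}$ would then force $h(x_0)\in\V^f\cap\V^g=\emptyset$, a contradiction. The only point that deserves care is the flexibility of the fine-cover construction: one must check that arbitrarily small geodesic balls around a prescribed center $x_0$ can be inserted among the finitely many charts of a fine cover with respect to $f$ (resp.\ $g$) without spoiling properties (C1)--(C3). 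This is the main technical obstacle, but it is clear on inspecting the proof of Lemma \ref{lem:fine_cover_existence}: shrinking the global radii $r$ and $\rho$ and then adjoining $x_0$ to the finite set of centers produces the required cover.
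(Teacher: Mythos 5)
Your argument is correct and follows essentially the same route as the paper: both pick a point $x_0$ with $f(x_0)\ne g(x_0)$, build fine covers with respect to $f$ and $g$ whose distinguished source charts contain $x_0$ while the corresponding target charts are disjoint, and conclude that the two sets $\0^s$ are disjoint and open by Lemma \ref{lemma_opensubset}. The paper arranges the two distinguished source charts to coincide, whereas you only require both to contain $x_0$, but this is an inessential difference.
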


\begin{proof}
Take $f,g\in H^s(M,N)$ so that $f\ne g$. Then there exists $x\in M$ such that $f(x)\ne g(x)$. 
Using that $f$ and $g$ are assumed continuous one constructs, as in Lemma \ref{lem:fine_cover_existence},
a fine cover $(\U_I,\V_I)$ with respect to $f$ and a fine cover $(\U_J,\V_J)$ with respect to $g$,
$I\cap J=\emptyset$, such that there exist $i\in I$ and $j\in J$ so that
\[
x\in \U_i,\quad\quad\U_i=\U_j,\quad\quad\mbox{and}\quad\quad \V_i\cap\V_j=\emptyset.
\]
Then, $\0^s(\U_I,\V_I)\cap \0^s(\U_I,\V_I)=\emptyset$.
As by Lemma \ref{lemma_opensubset} the sets $\0^s(\U_I,\V_I)$ and $\0^s(\U_I,\V_I)$ are open in $\mathcal T$ we
see that $\mathcal T$ is Hausdorff.
\end{proof}

Combining Proposition \ref{Th:diff_structure} with Lemma
\ref{lemma_opensubset} it follows that the cover $\C$ defines a $C^\infty$-differentiable structure on $H^s(M,N)$.

\begin{Coro}\label{Coro:hilbert_mfl}
Let $M$ be a closed manifold of
dimension $n$, $N$ a $C^\infty$-manifold of dimension $d$ and $s$ an integer with $s >
n/2$. Then the cover $\C$ induces a $C^\infty$-differentiable structure
$\mathcal A^s$ on $H^s(M,N)$ so that $H^s(M,N)$ is a Hilbert manifold. 
\end{Coro}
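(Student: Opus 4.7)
The plan is to assemble the preceding lemmas into the verification of the three ingredients of a $C^\infty$-Hilbert manifold structure: that $\mathcal T$ is a Hausdorff topology on $H^s(M,N)$, that each point admits a chart into (an open subset of) a Hilbert space, and that any two such charts are $C^\infty$-compatible on their overlap.

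First, I would check that $\mathcal T$ defined by \eqref{topology_T} is indeed a topology. Stability under arbitrary unions and finite intersections is immediate from the fact that each $\0^s(\U_I,\V_I)$ carries its own natural topology (the one pulled back from the Hilbert space $\oplus_{i\in I}H^s(U_i,\R^d)$ via $\imath_{\U_I,\V_I}$), and Lemma \ref{lemma_opensubset} ensures that the resulting traces are compatible across different fine covers. By Lemma \ref{lem:fine_cover_existence}, every $f\in H^s(M,N)$ lies in some $\0^s(\U_I,\V_I)\in\C$, so $\C$ is a cover by $\mathcal T$-open sets, and by Lemma \ref{lem:hausdorff}, $\mathcal T$ is Hausdorff.

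Second, I would produce charts at each $f\in H^s(M,N)$. Pick $\0^s(\U_I,\V_I)\in\C$ containing $f$. By Proposition \ref{Th:submanifold}, the image $\imath_{\U_I,\V_I}(\0^s(\U_I,\V_I))$ is a $C^\infty$-submanifold of the Hilbert space $\mathcal H_I:=\oplus_{i\in I}H^s(U_i,\R^d)$. Consequently there is an open neighborhood $V$ of $\imath_{\U_I,\V_I}(f)$ in $\mathcal H_I$ and a $C^\infty$-diffeomorphism $\psi:V\to W_1\times W_2$ onto an open subset of a Hilbert space splitting, sending the submanifold slice to $W_1\times\{0\}$. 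Pulling back by $\imath_{\U_I,\V_I}$ gives a chart $\imath_{\U_I,\V_I}^{-1}(V)\to W_1$ at $f$ taking values in a Hilbert space, with transition maps inside a single $\0^s(\U_I,\V_I)$ being $C^\infty$ by the submanifold structure.

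Third, I would verify compatibility of charts coming from \emph{different} fine covers $(\U_I,\V_I)$ and $(\widetilde\U_J,\widetilde\V_J)$. On the overlap $\0^s(\U_I,\V_I)\cap\0^s(\widetilde\U_J,\widetilde\V_J)$, which is open in both by Lemma \ref{lemma_opensubset}, Proposition \ref{Th:diff_structure} asserts exactly that the two induced $C^\infty$-differentiable structures coincide, hence the corresponding transition maps between charts are $C^\infty$-diffeomorphisms of open sets in Hilbert spaces. Thus the chart collection obtained in the previous step constitutes a $C^\infty$-atlas, and completing it to a maximal atlas yields the desired differentiable structure $\mathcal A^s$. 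This makes $H^s(M,N)$ a $C^\infty$-Hilbert manifold. Of the four steps, the only substantive one is the compatibility across distinct fine covers, and that is precisely the content of Proposition \ref{Th:diff_structure}; everything else is bookkeeping.
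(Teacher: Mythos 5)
Your proposal is correct and follows essentially the same route as the paper, which simply cites Lemma \ref{lem:fine_cover_existence} and Lemma \ref{lemma_opensubset} to get an open cover and then invokes Proposition \ref{Th:diff_structure} for chart compatibility. Your version spells out the bookkeeping (topology, charts from Proposition \ref{Th:submanifold}, compatibility) in more detail, but the substance is identical.
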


\begin{proof}
By Lemma \ref{lem:fine_cover_existence} and Lemma \ref{lemma_opensubset}, $\C$ is an open cover of $H^s(M,N)$.
The claimed statement then follows from Proposition \ref{Th:diff_structure}.
\end{proof}

Ebin and Marsden introduced a $C^\infty$-differentiable structure of $H^s(M,N)$ in terms of a Riemannian metric
$g \equiv g_N$ of $N$ -- see \cite{EM} or \cite{EMF}. More precisely, given any $f:M \to N$ in $H^s(M,N)$
introduce the linear space
\[
 T_f H^s(M,N):=\{ X \in H^s(M,TN) \, | \, \pi_N \circ X = f \}
\] 
where $\pi_N:TN \to N$ is the canonical projection of the tangent bundle $TN$ of $N$ to the base manifold $N$. 
Elements in $T_f H^s(M,N)$ are referred to as vector fields along $f$. On the linear space $T_f H^s(M,N)$ we define
an inner product as follows. Choose a fine cover $(\U_I,\V_I)$ so that $f \in \0^s(\U_I,\V_I)$.
In particular, $\U_I$ is an open cover of $M$ by coordinate charts of $M$, 
$\chi_i:\U_i \to U_i \subseteq \R^n$ and $\V_I$ is a set of coordinate charts of $N$,
$\eta_i:\V_i \to V_i \subseteq \R^d$. The restriction of an arbitrary element $X \in T_f H^s(M,N)$ to $\U_i$ induces
a continuous map $X_i:U_i \to \R^d$,
\[
X_i(x) = \left(X^k\big(\chi^{-1}_i(x) \big)\right)_{k=1}^d,\quad x \in U_i
\]
where $X^k$ are the coordinates of $X\big(\chi_i^{-1}(x)\big)$ in the chart $V_i \subseteq \R^d$.
Using that $(\U_I,\V_I)$ is a fine cover one concludes from Lemma \ref{lemma_local_pt} and the compactness of
$X(M)\subseteq TN$ that
\[
X_i\in H^s(U_i,\R^d).
\]
The family $(X_i)_{i \in I}$ is referred to as the restriction of $X$ to
$U_I=(U_i)_{i \in I}$. For $X,Y \in T_f H^s(M,N)$, define
\begin{equation}\label{e:scalar_product}
{\langle X , Y \rangle}_s := \sum_{i \in I, |\alpha| \leq s} \int_{U_i} \langle \partial^\alpha  X_i,
\partial^\alpha Y_i \rangle dx
\end{equation}
where $\langle \cdot , \cdot \rangle$ denotes the Euclidean inner product in $\R^d$. Then 
${\langle \cdot, \cdot \rangle}_s$ is a inner product, making $T_f H^s(M,N)$ into a Hilbert space.
Another choice of $\U_I,\V_I$ will lead to a possibly different inner product, but the two Hilbert norms can be
shown to be equivalent. In this way one obtains a differential structure of $T_fH^s(M,N)$. With the help of the 
exponential maps $exp_y:T_y N \to N$, $y \in N$, defined in terms of the Riemannian metric $g$ of $N$,
Ebin and Marsden (\cite{EM}) show that $H^s(M,N)$ is a $C^\infty$-Hilbert manifold.\footnote{Note that
our arguments will give an independent proof of this fact.}
More specifically, charts on $H^s(M,N)$ are defined with the help of the exponential map
\[
\overline{\exp}: O^s \to H^s(M,N),\quad X \mapsto \left[ x \mapsto \exp_{f(x)}\big(X(x)\big)\right],
\]
where $O^s \subseteq T_f H^s(M,N)$ is a sufficiently small neighborhood of zero in $T_f H^s(M,N)$ --
see Section \ref{sec:diff_structure} for more details. We denote the $C^\infty$-differentiable structure of
$H^s(M,N)$ defined in this way by $\mathcal A_{g}^s$. In Section \ref{sec:diff_structure} we prove

\begin{Prop}\label{Th:same_diff_structure}
Let $M$ be a closed manifold of dimension $n$, $N$ a $C^\infty$-manifold endowed with a Riemannian metric $g$,
and $s$ an integer with $s>n/2$. Then 
\[
\mathcal A^s = \mathcal A_{g}^s.
\]
\end{Prop}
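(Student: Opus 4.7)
The plan is to show that the two differentiable structures have compatible charts at every point, i.e.\ for any $f\in H^s(M,N)$ the transition map between a chart of $\mathcal{A}^s$ around $f$ and a chart of $\mathcal{A}_g^s$ around $f$ is $C^\infty$. Since by Lemma \ref{lem:fine_cover_existence} the open sets $\0^s(\U_I,\V_I)$ cover $H^s(M,N)$, and since the exponential charts of Ebin--Marsden also form an atlas, this will imply $\mathcal{A}^s=\mathcal{A}_g^s$. I will fix $f\in H^s(M,N)$, choose a fine cover $(\U_I,\V_I)$ with respect to $f$ so that $f\in\0^s(\U_I,\V_I)$, and work locally on a sufficiently small neighborhood of $f$ on which the exponential chart $\overline{\exp}$ is defined and lands in $\0^s(\U_I,\V_I)$ (this is possible by continuity and the condition $f(\U_i)\Subset\V_i$).

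To set up the comparison, for each $i\in I$ I will define the smooth local exponential
\[
E_i(y,v)\;=\;\eta_i\bigl(\exp_{\eta_i^{-1}(y)}\bigl((d\eta_i)^{-1}_{\eta_i^{-1}(y)}(v)\bigr)\bigr),
\]
defined on an open neighborhood of $\overline{f_i(U_i)}\times\{0\}$ in $V_i\times\R^d$. If $X\in T_fH^s(M,N)$ is small and $X_i\in H^s(U_i,\R^d)$ denotes its representation in the fine cover (as in \eqref{e:scalar_product}), then the $i$-th component of $\imath\bigl(\overline{\exp}(X)\bigr)$ is $h_i(x)=E_i\bigl(f_i(x),X_i(x)\bigr)$. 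Since $\overline{f_i(U_i)}$ is compact and $X_i$ is small, I may replace $E_i$ by some $\tilde E_i\in C_c^\infty(\R^{2d},\R^d)$ that agrees with $E_i$ on the relevant compact region; then $h_i=L_{\tilde E_i}(f_i,X_i)$, which is $C^\infty$ in $X_i$ by Proposition \ref{prop_left_translation} composed with the affine map $X_i\mapsto(f_i,X_i)$. Since the tangent-space identification $X\mapsto(X_i)_{i\in I}$ is the very isomorphism defining the Hilbert structure on $T_fH^s(M,N)$, the map $\imath\circ\overline{\exp}$ is $C^\infty$ into $\oplus_{i\in I}H^s(U_i,\R^d)$, hence $C^\infty$ into the submanifold $\imath(\0^s)$ by Proposition \ref{Th:submanifold}.

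For the reverse direction, the key observation is that $\partial_vE_i(y,0)$ is the identity, so $(y,v)\mapsto(y,E_i(y,v))$ is a local diffeomorphism near $(y,0)$. Hence there is a $C^\infty$ function $F_i(y,w)$ with $F_i(y,E_i(y,v))=v$ defined in a neighborhood of $\overline{f_i(U_i)}$ along the diagonal, which again I extend to some $\tilde F_i\in C_c^\infty(\R^{2d},\R^d)$. Given $h\in\0^s$ close to $f$, the local representation of $X=\overline{\exp}^{-1}(h)$ is then $X_i=\tilde F_i\circ(f_i,h_i)=L_{\tilde F_i}(f_i,h_i)$, which is $C^\infty$ in $(h_i)_{i\in I}$ by a second application of Proposition \ref{prop_left_translation}. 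This gives $C^\infty$-dependence of $X$ on a point of $\imath(\0^s)$, which combined with the submanifold structure of Proposition \ref{Th:submanifold} proves the inverse transition map is $C^\infty$ as well.

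The main obstacle is conceptual rather than computational: one must be comfortable identifying the tangent-space representation used to put a Hilbert structure on $T_fH^s(M,N)$ with the first-order representation coming from the fine cover, and one must carefully arrange the neighborhoods so that the cutoff trick (needed to invoke Proposition \ref{prop_left_translation}, which requires a $C_b^\infty$ outer function) is legitimate, using that $f(\U_i)\Subset\V_i$ together with the Sobolev embedding $H^s\hookrightarrow C^0$ to guarantee that $(f_i(x),X_i(x))$ stays inside the region where $E_i$ (respectively $F_i$) coincides with its cut-off extension. Once these are in place, smoothness follows directly from Proposition \ref{prop_left_translation} and Proposition \ref{Th:submanifold}, and since $f$ was arbitrary we conclude $\mathcal{A}^s=\mathcal{A}_g^s$.
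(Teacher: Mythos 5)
Your argument is correct, but it reaches the conclusion by a genuinely different route than the paper. The paper's own proof of this proposition is essentially a two-line citation: the local coincidence of the two structures is recorded as Corollary \ref{coro_same_diff_structure}, a by-product of the proof of Proposition \ref{Th:submanifold}, and the global statement then follows from Proposition \ref{Th:diff_structure}. Inside that proof, smoothness of $\imath\circ\overline{\exp}_f$ is obtained by writing the geodesic flow as an ODE on the Hilbert space $H^s(U_i,\R^d)\times H^s(U_i,\R^d)$ (Lemma \ref{lem_ode_solution}) and invoking existence, uniqueness and $C^\infty$-dependence on initial data for Banach-space ODEs, while invertibility of the chart is obtained from the Hilbert-space inverse function theorem applied to $Y_i\mapsto\alpha_i(1;Y_i)$. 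You instead express the time-one map directly as a left composition $X_i\mapsto \tilde E_i\circ(f_i,X_i)$ with a finite-dimensional $C^\infty$ outer function and get smoothness from Proposition \ref{prop_left_translation}; for the inverse you apply the finite-dimensional inverse function theorem to $(y,v)\mapsto(y,E_i(y,v))$ and use Proposition \ref{prop_left_translation} a second time. This buys you a more elementary argument that avoids infinite-dimensional ODE theory entirely, at the cost of leaning on Proposition \ref{Th:submanifold} for the fact that $\overline{\exp}_f$ is onto a neighborhood of $f$ in $\0^s(\U_I,\V_I)$ (the paper's ODE route establishes the submanifold property and the chart property in one pass). One point you should make explicit: passing from "$(y,v)\mapsto(y,E_i(y,v))$ is a local diffeomorphism at each $(y,0)$" to the existence of a single $C^\infty$ inverse $F_i$ on a uniform neighborhood of $\overline{f_i(U_i)}\times\{0\}$ requires global injectivity on that neighborhood, not just local invertibility; this is exactly the content of the compactness argument in Lemma \ref{global_exp}, which you should cite there. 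Finally, to conclude $\mathcal A^s=\mathcal A_g^s$ globally you still need Proposition \ref{Th:diff_structure} (or an equivalent statement) to know that compatibility checked chart-by-chart against the various fine covers assembles into equality of the two maximal atlases.
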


Now let $M$ be a closed oriented $n$-dimensional manifold and let $s$ be an integer with $s>n/2+1$.
From Proposition \ref{prop_extension} and the assumption $s>n/2+1$ it follows that $H^s(M,M)$ can be continuously
 embedded into $C^1(M,M)$. As in Lemma \ref{lem:openness} one sees that
\[
\Ds^s(M):=\{ \varphi \in \mbox{Diff}_+^1(M) \;|\; \varphi \in H^s(M,M) \}
\]
is open in $H^s(M,M)$. Hence $\Ds^s(M)$ is a $C^\infty$-Hilbert manifold. 

\begin{Lemma}\label{Lemma:inverse_continuous}
Let $M$ be a closed oriented manifold of dimension $n$ and $s$ be an integer with
$s>n/2+1$. Then for any $\varphi \in \Ds^s(M)$, the inverse $\varphi^{-1}$ is
in $\Ds^s(M)$ and the map
\[
{\tt inv} : \Ds^s(M) \to \Ds^s(M),\quad \varphi \mapsto \varphi^{-1}
\]
is continuous.
\end{Lemma}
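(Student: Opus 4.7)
The plan is to lift the Euclidean result Lemma \ref{continuous_inverse} to the manifold setting via the fine covers introduced in Section \ref{subsec3:prelim}. First I would show that $\varphi^{-1}\in\Ds^s(M)$; since $\varphi\in\mbox{Diff}_+^1(M)$, the pointwise inverse $\varphi^{-1}$ is already in $\mbox{Diff}_+^1(M)$, so the only point to establish is $H^s$-regularity. Choose a fine cover $(\U_I,\V_I)$ with respect to $\varphi^{-1}$, so $\varphi^{-1}(\U_i)\Subset\V_i$ for each $i\in I$. Since $\varphi\in H^s(M,M)$, Lemma \ref{lemma_local_pt} (or a slight strengthening of its proof) shows that the local representation $\psi_i:=\chi_i\circ\varphi\circ\eta_i^{-1}$, defined on a neighborhood of $\eta_i(\overline{\varphi^{-1}(\U_i)})$ in $V_i$, belongs to $H^s$ on some intermediate bounded open set $W_i\subseteq V_i$ with Lipschitz boundary, and satisfies $\inf_{W_i}\det(d\psi_i)>0$. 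On $U_i$ one then has $\eta_i\circ\varphi^{-1}\circ\chi_i^{-1}=\psi_i^{-1}$.

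To establish $\psi_i^{-1}\in H^s(U_i,\R^n)$ I would reproduce the inductive argument from the proof of Lemma \ref{continuous_inverse} on the bounded domain. The identity $d(\psi_i^{-1})=(d\psi_i)^{-1}\circ\psi_i^{-1}$ holds pointwise, and writing $(d\psi_i)^{-1}$ in terms of cofactors of $d\psi_i$ divided by $\det(d\psi_i)$, the bounded-domain multiplication result Lemma \ref{lem_imbedding_general} together with the bounded-domain analogue of the division Lemma \ref{lemma_division} (whose proof carries over verbatim once the density statement of Proposition \ref{prop_extension}(i) is used) give $(d\psi_i)^{-1}\in H^{s-1}(W_i,\R^{n\times n})$. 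Induction on $|\alpha|$ then yields $\partial^\alpha\psi_i^{-1}=F^{(\alpha)}\circ\psi_i^{-1}$ with $F^{(\alpha)}\in H^{s-|\alpha|}(W_i)$, and the transformation formula combined with the positive lower bound on $\det(d\psi_i)$ produces the estimate $\|\partial^\alpha\psi_i^{-1}\|_{L^2(U_i)}^2\le C\|F^{(\alpha)}\|_{L^2(W_i)}^2<\infty$. Thus $\varphi^{-1}\in H^s(M,M)$, and since $H^s\hookrightarrow C^1$ locally, this places $\varphi^{-1}\in\Ds^s(M)$.

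For the continuity of ${\tt inv}$ I would replay the final portion of the proof of Lemma \ref{continuous_inverse} chart by chart. Given $\varphi_k\to\varphi$ in $\Ds^s(M)$, the embedding $H^s\hookrightarrow C^1$ available in each chart (Proposition \ref{prop_extension}(iii)) yields $C^1$-convergence of the chart representations on compact subsets and in particular uniform $C^0$-convergence $\varphi_k^{-1}\to\varphi^{-1}$ together with uniform lower bounds on $\det(d\psi_{i,k})$. The difference $\partial^\alpha(\psi_{i,k}^{-1})-\partial^\alpha(\psi_i^{-1})$ is then controlled by splitting
\begin{equation*}
F_k^{(\alpha)}\circ\psi_{i,k}^{-1}-F^{(\alpha)}\circ\psi_i^{-1}=\bigl(F_k^{(\alpha)}-F^{(\alpha)}\bigr)\circ\psi_{i,k}^{-1}+\bigl(F^{(\alpha)}\circ\psi_{i,k}^{-1}-F^{(\alpha)}\circ\psi_i^{-1}\bigr),
\end{equation*}
where the first term goes to zero by continuity of $\varphi\mapsto F^{(\alpha)}$ in $H^{s-|\alpha|}(W_i)$ (from the division lemma and Lemma \ref{lem_imbedding_general}) combined with the bounded-domain version of Lemma \ref{continuous_composition}, while the second is handled by approximating $F^{(\alpha)}$ by a smooth compactly supported function via Proposition \ref{prop_extension}(i) and using a uniform Lipschitz bound together with the $C^0$-convergence of $\psi_{i,k}^{-1}$. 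The main obstacle I expect is organizational: one must verify that on a small enough neighborhood of $\varphi$ in $\Ds^s(M)$ the chart representations $\psi_{i,k}$ are all defined on a common bounded open set $W_i$ with Lipschitz boundary and share a uniform lower bound on their Jacobians, so that all the Sobolev estimates on $W_i$ apply with constants independent of $k$. Once this is arranged using the $C^1$-continuity and the compactness of $\overline{\U_i}$, the chart-wise estimates above combine through the finite fine cover to give continuity of ${\tt inv}$ in $\Ds^s(M)$.
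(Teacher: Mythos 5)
Your proposal follows essentially the same route as the paper: the paper packages your chart-wise transfer of the Euclidean argument into an auxiliary statement (Lemma \ref{continuous_inverse_charts}, asserting that $\varphi\mapsto\varphi^{-1}|_{W}$ is continuous from $\Ds^s_{U,W}$ to $\Ds^s(W,\R^n)$ for $\overline{W}\subseteq\varphi(U)$, proved by repeating the proof of Lemma \ref{continuous_inverse} on bounded domains) and then applies it through a fine cover exactly as you describe. The only cosmetic difference is that the paper orients the charts with respect to $\varphi$ (so $\psi=\eta\circ\varphi\circ\chi^{-1}$ maps $U$ into $V$ with $\psi(U)\Subset V$ and one inverts on $W\Subset\psi(U)$), which sidesteps the small domain-of-definition issue you flag when setting up $\psi_i$ on a neighborhood of $\eta_i(\overline{\varphi^{-1}(\U_i)})$.
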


For the convenience of the reader we include a proof of Lemma
\ref{Lemma:inverse_continuous} in Appendix \ref{appendix A}.

\subsection{Proof of Theorem \ref{thm2}}\label{subsec:proof1.2}
To prove Theorem \ref{thm2}, we first need to introduce some more notation.
Let $M$ be a closed oriented manifold of dimension $n$ and $N$ a $C^\infty$-manifold of dimension $d$.
Consider open covers $\U_I:=(\U_i)_{i \in I}$ and $\V_I=(\V_i)_{i \in I}$ of $M$ where $I \subseteq \N$
is finite and a set of open subsets $\W_I:=(\W_i)_{i \in I}$ of $N$ so that for any $i \in I$, $\U_i$ and $\V_i$
are coordinate charts of $M$, $\chi_i:\U_i \to U_i \subseteq \R^n$, $\eta_i:\V_i \to V_i \subseteq \R^n$ and $\W_i$
is a coordinate chart of $N$, $\xi_i:\W_i \to W_i \subseteq \R^d$ where $U_i$ and $V_i$ are bounded, open subsets of $\R^n$ with Lipschitz boundaries. Let $U_I=(U_i)_{i \in I}$, $V_I=(V_i)_{i \in I}$, and $W_I=(W_i)_{i \in I}$.
For such data we introduce the subsets
\[ 
 \Ps^s(U_I,V_I) \subseteq \oplus_{i \in I} H^s(U_i,\R^n)
\]
and
\[
 \Ps^s(V_I,W_I) \subseteq \oplus_{i \in I} H^s(V_i,\R^d)
\]
consisting of elements $(h_i)_{i \in I}\in\oplus_{i \in I} H^s(U_i,\R^n)$ and
$(f_i)_{i \in I}\in\oplus_{i \in I} H^s(V_i,\R^d)$ respectively such that for any $i \in I$,
\begin{equation}\label{compact_image}
h_i(U_i) \Subset V_i \quad \mathrm{and} \quad f_i(V_i) \Subset W_i.
\end{equation}
Further, for any integer $s$ with $s > n/2+1$, introduce the subset $\Ds^s(U_I,V_I)$ consisting of elements $(\varphi_i)_{i \in I}$ in $\Ps^s(U_I,V_I)$ so that for any $i \in I$, $\varphi_i:{\overline U}_i \to V_i$ is 1-1 and
\[
0<\inf_{x \in{\overline U}_i}\det(d_x \varphi_i).
\]
By Proposition \ref{prop_extension}, $\Ps^s(V_I,W_I)$ is open in
$\oplus_{i \in I} H^s(V_i,\R^d)$. Moreover, one concludes from Lemma \ref{lem:openness} and
Proposition \ref{prop_extension} that $\Ds^s(U_I,V_I)$ is open in $\oplus_{i \in I} H^s(U_i,\R^n)$.
For any integers $r$, $s$ with $r \geq 0$ and $s > n/2+1$ define the map 
\begin{eqnarray*}
\tilde{\mu}_I: \Ps^{s+r}(V_I,W_I) \times \Ds^s(U_I,V_I) &\to& \Ps^s(U_I,W_I) \\
((f_i)_{i \in I},(\varphi_i)_{i \in I}) & \mapsto & (f_i \circ \varphi_i)_{i \in I} 
\end{eqnarray*}

By Proposition \ref{prop_composition_general}, $\tilde{\mu}_I$ is well--defined and has the following property.

\begin{Lemma}\label{Lemma:local_regularity}
$\tilde{\mu}_I$ is a $C^r$-map.
\end{Lemma}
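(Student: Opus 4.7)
The plan is to view $\tilde\mu_I$ as a finite product of componentwise compositions and to reduce each component to the single-domain composition map of Proposition \ref{prop_composition_general} via the extension operator of Proposition \ref{prop_extension}(ii).

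Because $I$ is finite and both the source and target of $\tilde\mu_I$ are finite direct sums of Hilbert spaces, the $C^r$-regularity of $\tilde\mu_I$ is equivalent to the $C^r$-regularity of each component map
\[
\tilde\mu_I^{(i)}: (f,\varphi)\mapsto f\circ\varphi,
\]
considered on the open subset of $H^{s+r}(V_i,\R^d)\times H^s(U_i,\R^n)$ inherited from $\Ps^{s+r}(V_I,W_I)\times\Ds^s(U_I,V_I)$. Thus I fix an index $i\in I$ and work one component at a time.

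By Proposition \ref{prop_extension}(ii), there is a bounded linear extension operator $E_i:H^{s+r}(V_i,\R^d)\to H^{s+r}(\R^n,\R^d)$; being linear and bounded, $E_i$ is $C^\infty$-smooth. The defining condition $\varphi(U_i)\Subset V_i$ for $(\varphi_j)_{j\in I}\in\Ds^s(U_I,V_I)$, together with the identity $(E_i f)|_{V_i}=f$, yields
\[
f\circ\varphi = (E_i f)\circ\varphi \quad \text{in } H^s(U_i,\R^d).
\]
Hence $\tilde\mu_I^{(i)}$ factors as
\[
(f,\varphi)\;\longmapsto\;(E_i f,\varphi)\;\longmapsto\;(E_i f)\circ\varphi,
\]
where the first arrow is $C^\infty$ and the second is the restriction of the $C^r$-map
\[
\mu:H^{s+r}(\R^n,\R^d)\times\Ds^s(U_i,\R^n)\to H^s(U_i,\R^d)
\]
furnished by Proposition \ref{prop_composition_general} (note that $\Ds^s(U_I,V_I)$ is, componentwise, contained in $\Ds^s(U_i,\R^n)$, so Proposition \ref{prop_composition_general} indeed applies). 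The composition of a $C^\infty$-map with a $C^r$-map is $C^r$, so each $\tilde\mu_I^{(i)}$ is of class $C^r$, and the product map $\tilde\mu_I$ is $C^r$ as well.

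Finally, one checks that the image lies in $\Ps^s(U_I,W_I)$: since $\varphi_i(\overline{U_i})$ is a compact subset of $V_i$ (by the embedding $H^s(U_i,\R^n)\hookrightarrow C^0(\overline{U_i},\R^n)$) and $f_i$ extends continuously to $\overline{V_i}$ by Proposition \ref{prop_extension}(iii), while $f_i(V_i)\Subset W_i$, one obtains $(f_i\circ\varphi_i)(U_i)\Subset W_i$. I do not anticipate any serious obstacle; the sole technical point is the use of the extension operator to pass from $f$ defined on the bounded set $V_i$ to a function on all of $\R^n$ where Proposition \ref{prop_composition_general} can be invoked.
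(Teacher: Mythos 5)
Your proposal is correct and follows essentially the same route as the paper, which simply asserts that the lemma follows from Proposition \ref{prop_composition_general} without writing out the details. The componentwise reduction and the use of the extension operator $E_i$ of Proposition \ref{prop_extension}(ii) to replace $f\in H^{s+r}(V_i,\R^d)$ by $E_if\in H^{s+r}(\R^n,\R^d)$ (legitimate since $\varphi_i(U_i)\Subset V_i$) is precisely the intended way to invoke that proposition, so your write-up supplies exactly the omitted details.
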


\begin{Prop}\label{Prop:composition_regularity}
Let $M$ be a closed oriented manifold of dimension $n$, $N$ a $C^\infty$-manifold of dimension $d$, and $r$, $s$
integers with
$r \geq 0$ and $s > n/2+1$. Then 
\[
 \mu: H^{s+r}(M,N) \times \Ds^s(M) \to H^s(M,N), \quad (f,\varphi) \mapsto f \circ \varphi
\]
is a $C^r$-map.
\end{Prop}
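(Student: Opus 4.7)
The plan is to reduce Proposition \ref{Prop:composition_regularity} to the local Euclidean statement Lemma \ref{Lemma:local_regularity} by expressing $\mu$ in the $C^\infty$-charts on $H^s(M,N)$ built from fine covers via Proposition \ref{Th:submanifold}. Fix an arbitrary point $(f_\bullet,\varphi_\bullet) \in H^{s+r}(M,N) \times \Ds^s(M)$; as $C^r$-regularity is a local property, it suffices to establish $C^r$-smoothness of $\mu$ on some open neighborhood of $(f_\bullet,\varphi_\bullet)$.

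The first step is to produce a pair of fine covers sharing a common middle system of charts. Applying Lemma \ref{lem:fine_cover_existence} to $f_\bullet$ yields a fine cover $(\V_J,\W_J)$ with $\V_j$ charts of $M$, $\W_j$ charts of $N$, and $f_\bullet(\V_j) \Subset \W_j$. Using the continuity of $\varphi_\bullet : M \to M$ and the compactness of $M$, I would then construct -- by choosing sufficiently small geodesic balls in $M$ in the spirit of the proof of Lemma \ref{lem:fine_cover_existence} -- a finite open cover $\U_I = (\U_i)_{i \in I}$ of $M$ by charts $\chi_i:\U_i \to U_i$ together with an assignment $\sigma : I \to J$ such that $\varphi_\bullet(\overline{\U_i}) \Subset \V_{\sigma(i)}$ for every $i \in I$. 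Relabeling $\V_i := \V_{\sigma(i)}$ and $\W_i := \W_{\sigma(i)}$, one obtains a fine cover $(\U_I,\V_I)$ with respect to $\varphi_\bullet$, while $(\V_I,\W_I)$ remains a fine cover with respect to $f_\bullet$.

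With these covers fixed, set $\mathcal F := \0^{s+r}(\V_I,\W_I) \cap H^{s+r}(M,N)$ and $\mathcal U := \0^s(\U_I,\V_I) \cap \Ds^s(M)$; these are open neighborhoods of $f_\bullet$ and $\varphi_\bullet$ in their respective manifolds. For any $(f,\varphi) \in \mathcal F \times \mathcal U$ one has $(f\circ\varphi)(\U_i) \subseteq f(\V_i) \Subset \W_i$, so $f\circ\varphi \in \0^s(\U_I,\W_I)$. Denoting by $\eta_i$ and $\xi_i$ the coordinate maps of $\V_i$ and $\W_i$, a direct computation gives
\[
(f\circ\varphi)_i = \xi_i \circ f \circ \varphi \circ \chi_i^{-1} = (\xi_i \circ f \circ \eta_i^{-1}) \circ (\eta_i \circ \varphi \circ \chi_i^{-1}) = f_i \circ \varphi_i
\]
for each $i \in I$. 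This identity says that in the submanifold charts $\imath_{\U_I,\V_I}$, $\imath_{\V_I,\W_I}$, $\imath_{\U_I,\W_I}$ furnished by Proposition \ref{Th:submanifold}, the map $\mu$ coincides with the restriction of the Euclidean composition map $\tilde\mu_I$ to the product of $C^\infty$-submanifolds $\imath_{\V_I,\W_I}(\mathcal F) \times \imath_{\U_I,\V_I}(\mathcal U)$, taking values in the submanifold $\imath_{\U_I,\W_I}(\0^s(\U_I,\W_I))$. Since $\tilde\mu_I$ is $C^r$ on the ambient direct sum by Lemma \ref{Lemma:local_regularity}, its restriction is $C^r$ as well, and hence $\mu$ is $C^r$ at $(f_\bullet,\varphi_\bullet)$.

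The main obstacle is the joint construction of Step 1: one must arrange simultaneously that $(\U_I,\V_I)$ is a fine cover with respect to $\varphi_\bullet$ and $(\V_I,\W_I)$ is a fine cover with respect to $f_\bullet$, with exactly the same middle chart system $\V_I$ and index set $I$. Once this refinement is carried out, the composition identity $(f \circ \varphi)_i = f_i \circ \varphi_i$ patches the three charts together into a single Euclidean problem, and the proposition follows from the already-established Lemma \ref{Lemma:local_regularity}.
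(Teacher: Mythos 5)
Your proposal is correct and follows essentially the same route as the paper: construct a pair of fine covers $(\U_I,\V_I)$ for $\varphi$ and $(\V_I,\W_I)$ for $f$ sharing the middle chart system, observe $(f\circ\varphi)_i=f_i\circ\varphi_i$, and conclude via the commutative diagram that $\mu$ is the restriction of the $C^r$-map $\tilde\mu_I$ of Lemma \ref{Lemma:local_regularity} to the relevant submanifolds. Your Step 1 merely makes explicit the joint cover construction that the paper compresses into the phrase ``arguing as in the proof of Lemma \ref{lem:fine_cover_existence}.''
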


\begin{proof}
Let $\varphi \in \Ds^s(M)$ and $f \in H^{s+r}(M,N)$ be arbitrary.  Arguing as in the proof of
Lemma \ref{lem:fine_cover_existence} one constructs open covers $(\U_i)_{i\in I}$ and $(\V_i)_{i\in I}$ on $M$
as well as an open cover $(\W_i)_{i\in I}$ of $f(M)$ in $N$ such that
$(\U_I,\V_I)$ is a fine cover with respect to $\varphi$ and $(\V_I,\W_I)$ is a fine cover with respect to $f$.
Denote by $\0^s(\U_I,\V_I)$ and $\0^{s+r}(\V_I,\W_I)$ the open subsets of $H^s(M,M)$ respectively $H^{s+r}(M,N)$,
introduced in Section \ref{subsec3:prelim}.
Then $\Ds^s(M) \cap \0^s(\U_I,\V_I)$ is an open neighborhood of $\varphi$ in $\Ds^s(M)$ and $\0^{s+r}(\V_I,\W_I)$ is
an open neighborhood of $f$ in $H^{s+r}(M,N)$. Furthermore, note that
\[
 \imath_{\U_I,\V_I}\big(\Ds^s(M) \cap \0^s(\U_I,\V_I)\big) \subseteq \Ds^s(U_I,V_I) 
\]
and
\[
 \imath_{\V_I,\W_I}\big(\0^{s+r}(\V_I,\W_I)\big) \subseteq \Ps^{s+r}(V_I,W_I)
\]
where $\imath_{\U_I,\V_I}$ and $\imath_{\V_I,\W_I}$ are the embeddings introduced in Section \ref{subsec3:prelim}. One has the following commutative diagram:

\[
\begin{array}{ccc}
\0^{s+r}(\V_I,\W_I) \times \left( \Ds^s(M) \cap \0^s(\U_I,\V_I)\right) &\stackrel{\mu_I}{\longrightarrow}&\0^s(\U_I,\W_I)\\ 
\Big\downarrow \rlap{\footnotesize{$\imath_{\V_I,\W_I} \times \imath_{\U_I,\V_I}$}}&&\Big\downarrow\rlap{\footnotesize{$\imath_{\U_I,\W_I}$}}\\
\mathcal P^{s+r}(V_I,W_I) \times \Ds^s(U_I,V_I)&\stackrel{\tilde \mu_I}{\longrightarrow}&\mathcal P^s(U_I,W_I)
\end{array}
\]
where $\mu_I$ is the restriction of the composition
\[
 \mu:H^{s+r}(M,N) \times \Ds^s(M) \to H^s(M,N)
\]
to $\0^{s+r}(\V_I,\W_I) \times \left(\Ds^s(M)\cap \0^s(\U_I,\V_I)\right)$. In view of Lemma \ref{Lemma:local_regularity} 
\[
 \tilde \mu_I:\Ps^{s+r}(V_I,W_I) \times \Ds^s(U_I,V_I) \to \Ps^s(U_I,W_I)
\]
is $C^r$-smooth. By the definition of the differential structure on $\0^{s+r}(\V_I,\W_I)$ and $\0^s(\U_I,\V_I)$ (see \S \ref{subsec3:prelim}) we get from the commutative diagram above that $\mu_I$ is $C^r$-smooth. As $\varphi$, $f$ are arbitrary, it follows that $\mu$ is $C^r$-smooth.
\end{proof}

Next we consider the inverse map, associating to any
$C^1$-diffeomorphism $\varphi:M \to M$ of a given closed manifold $M$ its
inverse. Following the arguments of the proof of Proposition \ref{prop2} and using Proposition \ref{Prop:composition_regularity} we obtain

\begin{Prop}\label{Prop:inverse_regularity}
For any closed oriented manifold $M$ of dimension $n$ and any integers $r$, $s$ with $r \geq 1$ and $s > n/2+1$
\[
 {\tt inv}:\Ds^{s+r}(M) \to \Ds^s(M), \quad \varphi \mapsto \varphi^{-1}
\]
is a $C^r$-map.
\end{Prop}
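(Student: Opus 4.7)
The plan is to imitate, in the manifold setting, the argument of Proposition \ref{prop2}: deduce the regularity of $\tt inv$ from the regularity of the composition map via the implicit function theorem. The base case $r=0$ is already covered by Lemma \ref{Lemma:inverse_continuous}, so assume $r\ge 1$. By Proposition \ref{Prop:composition_regularity},
\[
\mu : \Ds^{s+r}(M) \times \Ds^s(M) \to H^s(M,M), \quad (\varphi,\psi)\mapsto\varphi\circ\psi
\]
is a $C^r$-map, and since the composition of two orientation-preserving $C^1$-diffeomorphisms is again such, its image lies in the open subset $\Ds^s(M)$ of $H^s(M,M)$. So $\mu$ may be regarded as a $C^r$-map with values in $\Ds^s(M)$.

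Fix $\varphi_\bullet\in\Ds^{s+r}(M)$. By Lemma \ref{Lemma:inverse_continuous}, $\psi_\bullet:=\varphi_\bullet^{-1}\in\Ds^s(M)$, and $\mu(\varphi_\bullet,\psi_\bullet)=\id$. Next I would compute the partial differential of $\mu$ with respect to $\psi$ at the point $(\varphi_\bullet,\psi_\bullet)$. With respect to the differentiable structure of $H^s(M,M)$ described in Section \ref{subsec3:prelim}, a tangent vector at $\psi_\bullet$ (resp.\ at $\id$) is an $H^s$-vector field along $\psi_\bullet$ (resp.\ along $\id$), and the chain rule computation of the introduction gives
\[
\bigl(d_{\psi_\bullet}\mu(\varphi_\bullet,\,\cdot\,)\bigr)(X)(x)=\bigl(d_{\psi_\bullet(x)}\varphi_\bullet\bigr)\bigl(X(x)\bigr),\quad x\in M.
\]
Reading this in the charts of a fine cover $(\U_I,\V_I)$ with respect to $\psi_\bullet$, it becomes multiplication of an $H^s$-vector field by the matrix-valued function $d\varphi_\bullet\circ\varphi_\bullet^{-1}$; its entries lie in $H^s$ (either by Lemma \ref{continuous_composition} applied locally or directly from \eqref{derivative_form} in the proof of Lemma \ref{continuous_inverse}), so by Lemma \ref{lem_imbedding_general} the map is a bounded linear operator. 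Pointwise it is invertible with inverse $X(x)\mapsto d_x(\varphi_\bullet^{-1})\,X(x)$, whose coordinate expression is again multiplication by an $H^s$ matrix. Hence $d_{\psi_\bullet}\mu(\varphi_\bullet,\,\cdot\,)$ is a topological linear isomorphism between the tangent spaces $T_{\psi_\bullet}\Ds^s(M)$ and $T_{\id}\Ds^s(M)$.

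Applying the implicit function theorem on the Hilbert manifold $\Ds^{s+r}(M)\times\Ds^s(M)$ to the $C^r$ equation $\mu(\varphi,\psi)=\id$, we conclude that there is a $C^r$-map $\varphi\mapsto\psi(\varphi)$, defined near $\varphi_\bullet$, with $\mu\bigl(\varphi,\psi(\varphi)\bigr)=\id$; by uniqueness of the inverse, $\psi(\varphi)=\varphi^{-1}$, so ${\tt inv}$ is $C^r$ near $\varphi_\bullet$. As $\varphi_\bullet$ is arbitrary, this yields the proposition. The main obstacle is the invariant identification, at the level of the $C^\infty$-Hilbert manifold structure of $\Ds^s(M)$, of the partial differential $d_{\psi_\bullet}\mu(\varphi_\bullet,\,\cdot\,)$ with the multiplication operator by $d\varphi_\bullet\circ\varphi_\bullet^{-1}$, and the verification that both this operator and its pointwise inverse are bounded between the appropriate Hilbert tangent spaces; this is achieved by passing to a fine cover with respect to $\psi_\bullet$ and reducing to the local computations of Section \ref{Section 2}.
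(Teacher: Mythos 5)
Your argument is exactly the one the paper intends: it proves Proposition \ref{Prop:inverse_regularity} by the one-line reduction ``follow the proof of Proposition \ref{prop2}, using Proposition \ref{Prop:composition_regularity}'', i.e.\ the implicit function theorem applied to $\mu(\varphi,\psi)=\id$, with the partial differential identified in the charts of a fine cover as multiplication by the $H^s$ matrix $d\varphi_\bullet\circ\varphi_\bullet^{-1}$. Your elaboration of the chart-level identification and the boundedness of the operator and its pointwise inverse is correct and fills in precisely the details the paper leaves implicit.
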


\begin{proof}[Proof of Theorem \ref{thm2}]
The claimed results are established by Proposition
\ref{Prop:composition_regularity}, Lemma \ref{Lemma:inverse_continuous} and Proposition \ref{Prop:inverse_regularity}.
\end{proof}

As an immediate consequence of Proposition \ref{Prop:composition_regularity} and Lemma \ref{Lemma:inverse_continuous} we obtain the following

\begin{Coro}\label{Coro:topological_group}
For any closed oriented manifold $M$ of dimension $n$ and any integer $s > n/2+1$, $\Ds^s(M)$ is a topological group.
\end{Coro}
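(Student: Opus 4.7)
The plan is to extract the corollary almost directly from the two results that immediately precede it, plus the trivial fact that $\Ds^s(M)$ is closed under composition. The corollary asserts three things: (a) $\Ds^s(M)$ is a group under composition, (b) composition is continuous, and (c) inversion is continuous. Items (b) and (c) are essentially the $r=0$ case of Proposition \ref{Prop:composition_regularity} and the content of Lemma \ref{Lemma:inverse_continuous}, respectively, so the whole proof amounts to matching up domains and codomains carefully.

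First I would verify closure under composition. For $\varphi,\psi\in\Ds^s(M)$, the composition $\varphi\circ\psi$ is an orientation preserving $C^1$-diffeomorphism of $M$ since $\Ds^s(M)\subseteq\mathrm{Diff}_+^1(M)$ and $\mathrm{Diff}_+^1(M)$ is a group. Applying Proposition \ref{Prop:composition_regularity} with $N=M$ and $r=0$ gives $\varphi\circ\psi\in H^s(M,M)$, hence $\varphi\circ\psi\in\Ds^s(M)$. Closure under inversion is provided directly by Lemma \ref{Lemma:inverse_continuous}, which also gives that $\varphi^{-1}\in\Ds^s(M)$ whenever $\varphi\in\Ds^s(M)$. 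Associativity is automatic from the set-theoretic composition, and $\id\in\Ds^s(M)$ since $\id-\id=0\in H^s$ locally in every chart. So $(\Ds^s(M),\circ)$ is a group.

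Next I would deduce the continuity of the two group operations. Recall that $\Ds^s(M)$ is an open subset of $H^s(M,M)$ and thus inherits the subspace topology from the Hilbert manifold structure on $H^s(M,M)$. Proposition \ref{Prop:composition_regularity} with $r=0$ asserts that
\[
\mu:H^s(M,M)\times\Ds^s(M)\to H^s(M,M),\quad (f,\varphi)\mapsto f\circ\varphi
\]
is continuous. Restricting the first factor to the open subset $\Ds^s(M)\subseteq H^s(M,M)$ and noting by step one that the image of the restriction lies in $\Ds^s(M)$, we conclude that the group multiplication
\[
\Ds^s(M)\times\Ds^s(M)\to\Ds^s(M),\quad (\varphi,\psi)\mapsto\varphi\circ\psi
\]
is continuous. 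The continuity of the inverse map is precisely the content of Lemma \ref{Lemma:inverse_continuous}.

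There is no real obstacle here: the substantive analytic work has already been carried out in Proposition \ref{Prop:composition_regularity} (whose proof went through the local description and Lemma \ref{Lemma:local_regularity}) and in Lemma \ref{Lemma:inverse_continuous}. The only point to be careful about is the interplay between the topology of $\Ds^s(M)$ as an open submanifold of $H^s(M,M)$ and the codomain of $\mu$, but this is handled by the openness of $\Ds^s(M)$ in $H^s(M,M)$.
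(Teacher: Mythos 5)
Your proposal is correct and follows the same route the paper takes: the corollary is obtained directly from Proposition \ref{Prop:composition_regularity} with $r=0$ (restricted to $\Ds^s(M)\times\Ds^s(M)$, using that $\Ds^s(M)$ is open in $H^s(M,M)$) together with Lemma \ref{Lemma:inverse_continuous}. Your explicit verification of closure under composition and inversion is left implicit in the paper but is exactly what is needed.
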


\section{Differentiable structure of $H^s(M,N)$}\label{sec:diff_structure}
In Section \ref{subsec3:prelim} we outlined the construction of a $C^\infty$-differentiable structure of $H^s(M,N)$ for any integer $s$ with $s>n/2$. In this section we prove the auxiliary results stated in Subsection \ref{subsec3:prelim}, which were needed for this construction. Throughout this section we assume that $M$ is a closed manifold of dimension $n$, $s \in \Z$ with $s>n/2$, $N$ is a $C^\infty$-manifold of dimension $d$, and $g \equiv g_N$ is a $C^\infty$-Riemannian metric on $N$.

\subsection{Submanifolds}\label{subsec4:submanifolds}
The main purpose of this subsection is to prove Proposition \ref{Th:submanifold}. Let us begin by recalling the set-up. Choose a fine cover $(\U_I,\V_I)$ as defined in Subsection \ref{subsec3:prelim}. In particular, $\U_I=(\U_i)_{i \in I}$ is a finite cover of $M$ and $\V_I=(\V_i)_{i \in I}$ one of $\cup_{i \in I} \V_i$ and for any $i \in I$, $\U_i,\V_i$ are coordinate charts $\chi_i:\U_i \to U_i \subseteq \R^n$ respectively $\eta_i:\V_i \to V_i \subseteq \R^d$. Recall that $\0^s(\U_I,\V_I)$, introduced in subsection \ref{subsec3:prelim}, is given by
\begin{equation}\label{subbase}
\0^s(\U_I,\V_I) = \big\{ h \in H^s(M,N) \ \big| \ h(\U_i) \Subset \V_i \quad \forall i \in I \big\}
\end{equation}
and the map
\begin{equation}\label{submanifold_embedding}
\imath \equiv \imath_{\U_I,\V_I} : \0^s(\U_I,\V_I) \to \oplus_{i \in I} H^s(U_i,\R^d),
\end{equation}
defined by $\imath(h):=(h_i)_{i \in I}$ and $h_i=\eta_i \circ h \circ \chi_i^{-1}:U_i \to V_i \subseteq \R^d$ is 
injective. Proposition \ref{Th:submanifold} states that $\imath\big( \0^s(\U_I,\V_I)\big)$ is a submanifold of 
$\oplus_{i \in I} H^s(U_i,\R^d)$. We will prove this by showing that for any $f \in \0^s(\U_I,\V_I)$ there exists a 
neighborhood $Q^s$ of $(f_i)_{i \in I}$ in $\oplus_{i \in I} H^s(U_i,\R^d)$ so that
$Q^s \cap \imath\big(\0^s(\U_I,\V_I)\big)$ coincides with $\imath \circ \overline{\exp}_f (O^s)$ where
$\overline{\exp}_f$ is the exponential map $\overline{\exp}_f:T_fH^s(M,N) \to H^s(M,N)$ defined below
(see also the discussion of the differential structure $\mathcal A_g^s$ of $H^s(M,N)$ in Subsection
\ref{subsec3:prelim}) and $O^s$ is a (small) neighborhood of $0$ in $T_fH^s(M,N)$.
By proving that $d_0 (\imath \circ \overline{\exp}_f)$ splits (Lemma \ref{lem_closed_range} below)
we then conclude that $\imath\big(\0^s(\U_I,\V_I)\big)$ is a submanifold of
$\oplus_{i \in I} H^s(U_i,\R^d)$. Let us now look at the Hilbert space $T_fH^s(M,N)$ and the map
$\overline{\exp}_f$ in more detail. For any $y \in N$, denote by $T_yN$ the tangent space of $N$ at $y$ and by
$\exp_y$ the exponential map of the Riemannian metric $g$ on $N$. It maps a (sufficiently small) element
$v \in T_yN$ to the point in $N$ on the geodesic issuing at $y$ in direction $v$ at time $t=1$. For any $y \in N$
the exponential map $\exp_y$ is defined in a neighborhood of $0_y$ in $T_yN$. Furthermore, for any
$X \in T_fH^s(M,N)$, with $f \in H^s(M,N)$, and $x \in M$, $X(x)$ is an element in $T_{f(x)}N$,
hence if $\|X(x)\|$ is sufficiently small, $\exp_{f(x)}\big(X(x)\big)\in N$ is well defined and,
for $X$ sufficiently small, we can introduce the map
\[
\overline{\exp}_f(X):= M \to N, \quad x \mapsto \exp_{f(x)}\big(X(x)\big).
\]
Note that for $X=0$, $\overline{\exp}_f(0)=f$. To analyze the map $\overline{\exp}_f$ further let us express it
in local coordinates provided by the fine cover $(\U_I,\V_I)$. The restriction of an arbitrary element
$X \in T_fH^s(M,N)$ to $U_i$ is given by the map
\begin{equation}\label{vectorfield_restriction}
X_i:U_i \to \R^d,\quad x \mapsto X_i(x).
\end{equation}
As $X\in T_fH^s(M,N)$, $X_i$ is an element in $H^s(U_i,\R^d)$. Recall that $T_fH^s(M,N)$ is a Hilbert
space. Without loss of generality we assume that the inner product (\ref{e:scalar_product}) is defined in terms
of $\U_I$ and $\V_I$. It is then immediate that the linear map
\begin{equation}\label{tangentspace_in_coordinates}
\rho:T_fH^s(M,N) \to \oplus_{i \in I} H^s(U_i,\R^d),\quad X \mapsto (X_i)_{i \in I}
\end{equation}
is an isomorphism onto its image. For $X$ (sufficiently) close to $0$ we want to describe the restriction of 
$\overline{\exp}_f(X)$ to $U_I=(U_i)_{i \in I}$. To this end, let us express $\exp_y(v)$ for $y \in \V_i$, $i \in I$,
and $v$ sufficiently close to $0$ in $T_yN$ in local coordinates provided by $\eta_i:\V_i \to V_i$. For any small
$v \in T_yN$, $\eta_i(\exp_yv)$ is given by $\gamma_i(1;y_i,v_i)$ where $t \mapsto \gamma_i(t;y_i,v_i) \in \R^d$ is
the geodesic issuing at $y_i:=\eta_i(y)$ in direction given by the coordinate representation $v_i$ of the vector $v$.
The geodesic $\gamma_i(t;y_i,v_i)$ satisfies the ODE on $V_i$,
\begin{equation}\label{ode_eq}
\ddot \gamma_i + \Gamma(\gamma_i)(\dot \gamma_i,\dot \gamma_i) = 0
\end{equation}
with initial data
\begin{equation}\label{ode_initial_data}
\gamma_i(0;y_i,v_i)=y_i \quad \mbox{and} \quad \dot \gamma_i(0;y_i,v_i)=v_i.
\end{equation}
Here $\, \dot{} \,$ stays for $\frac{d}{dt}$ and for any $z_i \in V_i$ and $w_i=(w_i^p)_{1 \leq p \leq d} \in \R^d$,
\begin{equation}\label{christoffel_extended}
\Gamma(z_i)(w_i,w_i) = \left( \sum_{1 \leq p,q \leq d} \Gamma_{pq}^k(z_i) w_i^p w_i^q \right)_{1 \leq k \leq d}
\end{equation}
with $\Gamma_{pq}^k$ denoting the Christoffel symbols of the Riemannian metric $g$, expressed in the local coordinates of the chart $\eta_i:\V_i \to V_i$,
\begin{equation}\label{christoffel_coordinates}
\Gamma_{pq}^k = \frac{g^{kl}}{2} \big( \partial_{z_i^q} g_{pl} - \partial_{z_i^l} g_{pq} + \partial_{z_i^p} g_{lq} \big)
\end{equation}
where $g_{pl}$ are the coefficients of the metric tensor and $g^{lk} \cdot g_{km}=\delta_m^k$ where $\delta_m^k$ is the Kronecker delta. Note that $\Gamma_{pq}^k$ is a $C^\infty$-function on $V_i$. The velocity vector $v_i \in \R^d$ in (\ref{ode_initial_data}) is chosen close to zero so that the solution $\gamma_i(t;y_i,v_i)$ exists and stays in $V_i$ for any $|t|<2$. Now let us return to the map $X \mapsto \overline{\exp}_f(X)$. Its restriction to $U_i$ is given by the time one map of the flow $X_i \mapsto \alpha_i(t;X_i)$, where for any $Y_i \in H^s(U_i,\R^d)$, $\alpha_i(t;Y_i)$ solves the ODE
\begin{equation}\label{ode_first_order}
(\dot \alpha_i,\dot Z_i) = \big(Z_i, - \Gamma(\alpha_i)(Z_i,Z_i)\big)
\end{equation}
with initial data
\begin{equation}\label{ode_first_order_initial_data}
\big( \alpha_i(0;Y_i),Z_i(0;Y_i)\big) = (f_i,Y_i).
\end{equation}
As above, $f_i$ is given by $f_i=\eta_i \circ f \circ \chi_i^{-1}$ and satisfies $f_i(U_i) \Subset V_i$.

\begin{Lemma}\label{lem_ode_solution}
For any $f \in \0^s(\U_I,\V_I)$ and $i \in I$, there exists a neighborhood $O_i^s$ of $0$ in $H^s(U_i,\R^d)$ so that for any $Y_i \in O_i^s$, the initial value problem (\ref{ode_first_order})-(\ref{ode_first_order_initial_data}) has a unique $C^\infty$-solution
\[
 (-2,2) \to H^s(U_i,\R^d) \times H^s(U_i,\R^d), \quad t \mapsto \big( \alpha_i(t;Y_i),Z_i(t;Y_i)\big)
\]
satisfying 
\[
\alpha_i(t;Y_i)(U_i) \Subset V_i\,.
\]
In fact, 
\[
 (\alpha_i,Z_i) \in C^\infty\left( (-2,2)\times O_i^s, H^s(U_i,\R^d) \times H^s(U_i,\R^d)\right).
\]
\end{Lemma}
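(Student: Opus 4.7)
The plan is to rewrite the system \eqref{ode_first_order}--\eqref{ode_first_order_initial_data} as a first-order ODE $\dot W=F(W)$ on the Hilbert space $\mathcal H:=H^s(U_i,\R^d)\times H^s(U_i,\R^d)$, with $W=(\alpha_i,Z_i)$ and
\[
F(\alpha_i,Z_i):=\bigl(Z_i,\;-\Gamma(\alpha_i)(Z_i,Z_i)\bigr),
\]
and then to invoke the Banach-space version of the Picard--Lindel\"of theorem together with the standard $C^\infty$-dependence of the flow of a smooth vector field on initial data (see e.g.\ \cite{Lang}). The existence, uniqueness, and joint smoothness in $(t,Y_i)$ claimed in the lemma will then drop out of this black box, provided we can check that $F$ is smooth as a map into $\mathcal H$ and that the solution stays in the coordinate chart for $t\in(-2,2)$.

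The main technical step is showing that $F$ is, after a harmless modification, a $C^\infty$-map on all of $\mathcal H$. Since $f\in\0^s(\U_I,\V_I)$ and $\overline{U_i}$ is compact, $f_i(\overline{U_i})$ is a compact subset of $V_i$, so one can choose a bounded open set $W$ with $f_i(\overline{U_i})\subseteq W$ and $\overline W\subseteq V_i$. By Lemma \ref{lemma_smooth_extension} each Christoffel symbol $\Gamma_{pq}^k|_{\overline W}$ extends to a function $\tilde\Gamma_{pq}^k\in C_c^\infty(\R^d,\R)\subseteq C_b^\infty(\R^d,\R)$ that agrees with $\Gamma_{pq}^k$ on $\overline W$. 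Proposition \ref{prop_left_translation} then shows that the Nemytskii operator $\alpha_i\mapsto\tilde\Gamma_{pq}^k\circ\alpha_i$ is a $C^\infty$-map $H^s(U_i,\R^d)\to H^s(U_i,\R)$, while Lemma \ref{lem_imbedding_general} shows that multiplication $H^s(U_i,\R)\times H^s(U_i,\R)\times H^s(U_i,\R)\to H^s(U_i,\R)$ is a continuous trilinear (hence $C^\infty$) map. Combining these, the modified vector field $\tilde F$ obtained by replacing every $\Gamma_{pq}^k$ by $\tilde\Gamma_{pq}^k$ in the formula for $F$ is a $C^\infty$-map $\mathcal H\to\mathcal H$.

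With $\tilde F$ a smooth vector field on a Banach space, the standard ODE theorem yields, for each initial datum, a unique local solution of $\dot W=\tilde F(W)$ with flow $C^\infty$ in $(t,\text{initial datum})$. The constant curve $t\mapsto(f_i,0)$ is a global solution of $\dot W=\tilde F(W)$, so by lower semicontinuity of the maximal existence interval in Banach-space ODE theory there is a neighborhood $O_i^s$ of $0$ in $H^s(U_i,\R^d)$ such that for every $Y_i\in O_i^s$ the solution $(\alpha_i(\cdot;Y_i),Z_i(\cdot;Y_i))$ exists on a neighborhood of $[-2,2]$, is unique, and depends smoothly on $(t,Y_i)$. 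Shrinking $O_i^s$ if necessary, the Sobolev embedding $H^s(U_i,\R^d)\hookrightarrow C^0(\overline{U_i},\R^d)$ from Proposition \ref{prop_extension}(iii) together with uniform continuity of $(t,Y_i)\mapsto\alpha_i(t;Y_i)$ valued in $C^0(\overline{U_i},\R^d)$ on a compact parameter set forces $\alpha_i(t;Y_i)(\overline{U_i})\subseteq W$ for all $t\in(-2,2)$ and $Y_i\in O_i^s$. On this set $\tilde\Gamma\equiv\Gamma$, so the solution so constructed solves the original ODE \eqref{ode_first_order}--\eqref{ode_first_order_initial_data} and satisfies $\alpha_i(t;Y_i)(U_i)\Subset V_i$. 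The one genuinely delicate point in this outline is the smoothness of the Nemytskii operator $\alpha_i\mapsto\Gamma(\alpha_i)$, which rests squarely on Proposition \ref{prop_left_translation}.
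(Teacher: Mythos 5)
Your proposal is correct and follows essentially the same route as the paper: both rest on Proposition \ref{prop_left_translation} and Lemma \ref{lemma_smooth_extension} for the smoothness of $h_i\mapsto\Gamma^k_{pq}(h_i)$, the algebra property of $H^s(U_i,\R)$ for the quadratic term, the classical Banach-space ODE theorem with smooth dependence on data, and the stationary solution $(f_i,0)$ to guarantee existence on $(-2,2)$. The only (harmless) difference is that you globalize the vector field by cutting off the Christoffel symbols and then localize a posteriori, whereas the paper works directly with the vector field defined on the open set $H^s(U_i,V_i)\times H^s(U_i,\R^d)$.
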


\begin{proof}
The claimed result follows from the classical theorem for ODE's in Banach spaces on the existence, uniqueness, and 
$C^\infty$-smooth dependence on initial data of solutions (cf. e.g. \cite{Lang}). Indeed, denote by $H^s(U_i,V_i)$
the subset of the Hilbert space $H^s(U_i,\R^d)$,
\[
 H^s(U_i,V_i):=\{ h \in H^s(U_i,\R^d) \ | \ h(U_i) \Subset V_i \}.
\]
By the Sobolev embedding theorem (Proposition \ref{prop_extension} $(iii)$) $H^s(U_i,V_i)$ is open in $H^s(U_i,\R^d)$.
We claim that the vector field
\begin{eqnarray*}
H^s(U_i,V_i) \times H^s(U_i,\R^d) &\to& H^s(U_i,\R^d) \times H^s(U_i,\R^d)\\
(h_i,Y_i) &\mapsto& \big(Y_i,-\Gamma(h_i)(Y_i,Y_i) \big)
\end{eqnarray*}
is well-defined and $C^\infty$-smooth. Indeed, as $h_i \in H^s(U_i,V_i)$, one has that $h_i(U_i) \Subset V_i$,
thus the composition $\Gamma \circ h_i$ is well-defined. Furthermore, by Proposition \ref{prop_left_translation},
Lemma \ref{lemma_smooth_extension}, \eqref{christoffel_extended} and \eqref{christoffel_coordinates}
\[
H^s(U_i,V_i) \to H^s(U_i,\R), \quad h_i \mapsto \Gamma_{pq}^k(h_i)
\]
is $C^\infty$-smooth. By Lemma \ref{lem_imbedding_general}, $H^s(U_i,\R)$ is an algebra and multiplication of elements
of $H^s(U_i,\R)$ is $C^\infty$-smooth. Hence the map
\[
H^s(U_i,V_i) \times H^s(U_i,\R^d) \to H^s(U_i,\R^d),\quad (h_i,Y_i) \mapsto \Gamma(h_i)(Y_i,Y_i)
\] 
is $C^\infty$-smooth. Summarizing our considerations we have proved that the vector field
\begin{eqnarray*}
H^s(U_i,V_i) \times H^s(U_i,\R^d) &\to& H^s(U_i,\R^d) \times H^s(U_i,\R^d)\\
(h_i,Y_i) &\mapsto& \big(Y_i, -\Gamma(h_i)(Y_i,Y_i)\big)
\end{eqnarray*}
is $C^\infty$-smooth. Further note that for $Y_i \equiv 0$, $\big(\alpha_i(t,0),Z_i(t,0)\big)=(f_i,0)$ is a stationary
solution of (\ref{ode_first_order})-(\ref{ode_first_order_initial_data}). Hence by the classical local in time
existence and uniqueness theorem for solutions of ODE's in Banach spaces we conclude that there exists a 
(small) neighborhood $O_i^s$ of $0$ in $H^s(U_i,\R^d)$ so that for any $Y_i \in O_i^s$, the initial value problem
(\ref{ode_first_order})-(\ref{ode_first_order_initial_data}) has a unique solution
$\big(\alpha_i(t,Y_i),Z_i(t,Y_i)\big)$ in $C^\infty\big((-2,2),H^s(U_i,V_i) \times H^s(U_i,\R^d)\big)$. 
As the solution depends $C^\infty$-smoothly on the initial data one concludes that 
$(\alpha_i,Z_i) \in C^\infty\big((-2,2) \times \0_i^s,H^s(U_i,V_i) \times H^s(U_i,\R^d)\big)$.   
\end{proof}

\begin{Coro}\label{coro_smooth_exp}
For any $f \in \0^s(\U_I,\V_I)$, there exists a neighborhood $O^s$ of $0$ in $T_fH^s(M,N)$ so that for any 
$X \in O^s$, $\overline{\exp}_f(X)$ is in $\0^s(\U_I,\V_I)$ and the composition 
$\imath_f:= \imath \circ \overline{\exp}_f(X)$,
\[
O^s \overset{\overline{\exp}_f}{\longrightarrow} \0^s(\U_I,\V_I) \overset{\imath}{\longrightarrow} 
\oplus_{i \in I} H^s(U_i,\R^d)
\]
is $C^\infty$-smooth.
\end{Coro}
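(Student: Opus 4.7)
\medskip

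The plan is to reduce the global statement to the local ODE result of Lemma \ref{lem_ode_solution} applied chartwise. The main ingredient is the identification, in each chart of the fine cover, of the coordinate representation of $\overline{\exp}_f(X)$ with the time-one map of the geodesic ODE on $V_i$.

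First I would fix $f\in\0^s(\U_I,\V_I)$ and recall that the linear map $\rho:T_fH^s(M,N)\to\oplus_{i\in I}H^s(U_i,\R^d)$, $X\mapsto(X_i)_{i\in I}$, is an isomorphism onto its (closed) image, where $X_i$ is the coordinate representation of $X|_{\U_i}$ read off through $\chi_i$ and $\eta_i$. Let $O_i^s$ be the neighborhoods of Lemma \ref{lem_ode_solution}, and set
\[
O^s:=\rho^{-1}\Bigl(\bigoplus_{i\in I}O_i^s\Bigr),
\]
which is an open neighborhood of $0$ in $T_fH^s(M,N)$, possibly after shrinking.

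Next I would prove the chartwise identity
\[
\eta_i\circ\overline{\exp}_f(X)\circ\chi_i^{-1}=\alpha_i(1;X_i) \qquad (i\in I,\ X\in O^s).
\]
For a fixed $x\in U_i$, the curve $t\mapsto\alpha_i(t;X_i)(x)$ solves, by definition, the ODE \eqref{ode_eq} in $V_i$ with initial conditions $f_i(x)$ and $X_i(x)$. Because $\Gamma$ consists of the Christoffel symbols of $g$ in the chart $\eta_i$, this is exactly the geodesic equation satisfied by $t\mapsto\eta_i\big(\exp_{f(\chi_i^{-1}(x))}(tX(\chi_i^{-1}(x)))\big)$. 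Uniqueness of geodesics in $V_i$ gives the identity at $t=1$, pointwise in $x$, and hence as elements of $H^s(U_i,\R^d)$. This identity simultaneously shows that $\overline{\exp}_f(X)$ is well defined as a map $M\to N$ (the pointwise construction via $\exp$ on $N$ is intrinsic, so the values on chart overlaps automatically agree), that $\overline{\exp}_f(X)(\U_i)\Subset\V_i$ (from the corresponding property of $\alpha_i(1;X_i)$ stated in Lemma \ref{lem_ode_solution}), and that $\eta_i\circ\overline{\exp}_f(X)\circ\chi_i^{-1}\in H^s(U_i,\R^d)$. Together with the Sobolev embedding (Proposition \ref{prop_extension}$(iii)$) applied in each chart, which gives continuity of the glued map, this places $\overline{\exp}_f(X)$ in $\0^s(\U_I,\V_I)$.

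Finally, the composition $\imath_f=\imath\circ\overline{\exp}_f$ is described explicitly by
\[
\imath_f(X)=\bigl(\alpha_i(1;X_i)\bigr)_{i\in I}=\bigl(\mathrm{ev}_{t=1}\circ\alpha_i\circ\pi_i\circ\rho\bigr)_{i\in I}(X),
\]
where $\pi_i$ denotes projection onto the $i$-th summand and $\mathrm{ev}_{t=1}$ is the (bounded linear) evaluation $C^\infty((-2,2),H^s(U_i,\R^d))\to H^s(U_i,\R^d)$, $\beta\mapsto\beta(1)$. Since $\rho$ is linear continuous, each $\pi_i$ is linear continuous, and $(t,Y_i)\mapsto\alpha_i(t;Y_i)$ is $C^\infty$ by Lemma \ref{lem_ode_solution}, the composition $\imath_f$ is $C^\infty$-smooth on $O^s$.

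The only step requiring some care is the chartwise identification in the second paragraph; everything else is routine given Lemma \ref{lem_ode_solution}. The potential subtlety is that the ODE is solved in $H^s(U_i,\R^d)$ while $\overline{\exp}_f(X)$ is defined pointwise, so one must invoke pointwise uniqueness of the geodesic in $V_i$ to conclude the $H^s$-level equality, using that Sobolev elements have well-defined continuous representatives on $\overline{U_i}$ via Proposition \ref{prop_extension}$(iii)$.
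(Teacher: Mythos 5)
Your proof is correct and follows essentially the same route as the paper: take $O^s$ to be the intersection $\bigcap_{i\in I}\rho_i^{-1}(O_i^s)$ of preimages of the local neighborhoods from Lemma \ref{lem_ode_solution}, identify the chart representation of $\overline{\exp}_f(X)$ with the time-one map $\alpha_i(1;X_i)$ of the geodesic flow, and conclude smoothness of $\imath_f$ as the composition of the bounded linear maps $\rho_i$ with the $C^\infty$-maps $\alpha_i(1;\cdot)$. The only difference is that you spell out the pointwise-uniqueness argument for the chartwise identification, which the paper asserts as part of the setup preceding the corollary.
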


\begin{proof}
For any $i \in I$, the $i$-th component of the restriction map
\[
\rho_i:T_fH^s(M,N) \to H^s(U_i,\R^d),\quad X \mapsto X_i(x)
\]
is linear and bounded by the definition of $T_fH^s(M,N)$, hence it is $C^\infty$-smooth. As a consequence
\begin{equation}\label{eq:O}
O^s:= \bigcap_{i \in I}\,\rho_i^{-1}(O_i^s) \subseteq T_fH^s(M,N)
\end{equation}
is an open neighborhood of $0$ in $T_fH^s(M,N)$ with $O_i^s$ being the neighborhood of $0$ in $H^s(U_i,\R^d)$ of
Lemma \ref{lem_ode_solution}. The latter implies that for any $i \in I$, the composition
\[
O^s \overset{\rho_i}{\longrightarrow} H^s(U_i,\R^d) \overset{\alpha_i(1;\cdot)}{\longrightarrow} H^s(U_i,V_i)
\]
is $C^\infty$-smooth. Recall that the restriction of $\overline{\exp}_f(X)$ to $U_i$ is given by $\alpha_i(1;X_i)$. Hence $\overline{\exp}_f(X) \in \0^s(\U_I,\V_I)$ and 
\begin{equation}\label{smooth_i}
\imath_f(X)=\big(\alpha_i(1;\rho_i(X))\big)_{i \in I}
\end{equation}
showing that $\imath_f$ is $C^\infty$-smooth as $\rho_i : T_fH^s(M,N)\to H^s(U_i,\R^d)$ is a bounded linear map.
\end{proof}

Next we want to analyze the map $\imath_f$ further.

\begin{Lemma}\label{lem_closed_range}
For any $f \in \0^s(\U_I,\V_I)$, the differential $d_0\imath_f:T_fH^s(M,N) \to \oplus_{i \in I}H^s(U_i,\R^d)$ of
$\imath_f$ at $X=0$ is 1-1 and has closed range.
\end{Lemma}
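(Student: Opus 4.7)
My plan is first to show that $d_0\imath_f$ coincides with the restriction map $\rho$ of \eqref{tangentspace_in_coordinates}, by linearizing the geodesic ODE at the stationary solution, and then to deduce injectivity from the covering property of $(\U_i)_{i\in I}$ and closedness of the range by realizing the image of $\rho$ as the intersection of the kernels of finitely many continuous ``overlap'' maps.

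\textbf{Identification of the differential.} By formula \eqref{smooth_i}, $\imath_f(X)=(\alpha_i(1;\rho_i(X)))_{i\in I}$, where each $\rho_i:T_fH^s(M,N)\to H^s(U_i,\R^d)$, $X\mapsto X_i$, is bounded linear. By the chain rule $d_0\imath_f(X)=(d_0\alpha_i(1;\cdot)\,\rho_i(X))_{i\in I}$, so it suffices to show that $d_0\alpha_i(1;\cdot)=\mathrm{id}_{H^s(U_i,\R^d)}$. I would linearize \eqref{ode_first_order}--\eqref{ode_first_order_initial_data} around the stationary solution $(\alpha_i(t;0),Z_i(t;0))=(f_i,0)$. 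As $-\Gamma(\alpha_i)(Z_i,Z_i)$ is quadratic in $Z_i$, both of its partial derivatives vanish at $Z_i=0$, and the variational system for $(\delta\alpha_i,\delta Z_i)$ in the direction $Y_i\in H^s(U_i,\R^d)$ reduces to
\[
\dot{\delta\alpha_i}=\delta Z_i,\qquad \dot{\delta Z_i}=0,\qquad \delta\alpha_i(0)=0,\;\delta Z_i(0)=Y_i.
\]
Integrating gives $\delta\alpha_i(1)=Y_i$, hence $d_0\alpha_i(1;\cdot)=\mathrm{id}$ and $d_0\imath_f=\rho$.

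\textbf{Injectivity and closed range of $\rho$.} Injectivity is immediate: if $\rho(X)=0$ then $X$ vanishes on every $\U_i$, hence on all of $M$. For closedness, I would characterize $\rho(T_fH^s(M,N))$ as the set of $(Y_i)_{i\in I}\in\bigoplus_{i\in I}H^s(U_i,\R^d)$ satisfying, for every pair $(i,j)\in I\times I$ with $\U_i\cap\U_j\neq\emptyset$, the compatibility condition on $W_{ij}:=\chi_j(\U_i\cap\U_j)$:
\[
Y_j\big|_{W_{ij}}\;=\;A_{ij}\cdot\bigl(Y_i\circ\phi_{ij}\bigr),
\]
where $\phi_{ij}:=\chi_i\circ\chi_j^{-1}$ and $A_{ij}(v):=D(\eta_j\circ\eta_i^{-1})\bigl(\eta_i\circ f\circ\chi_j^{-1}(v)\bigr)$. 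The forward inclusion is the standard change-of-coordinates formula for tangent vectors along $f$; the reverse inclusion is obtained by assembling a global section $X(x):=(d_{f(x)}\eta_i^{-1})(Y_i(\chi_i(x)))$ for $x\in\U_i$, which is well-defined on overlaps by the compatibility conditions and lies in $H^s(M,TN)$ because its local representations in the natural atlas of $TN$ are precisely $(f_i,Y_i)$. It then suffices to prove that for each pair $(i,j)$ the linear map
\[
T_{ij}:\bigoplus_{k\in I}H^s(U_k,\R^d)\longrightarrow H^s(W_{ij},\R^d),\qquad (Y_k)_{k\in I}\longmapsto Y_j\big|_{W_{ij}}-A_{ij}\cdot(Y_i\circ\phi_{ij}),
\]
is continuous; indeed then $\rho(T_fH^s(M,N))=\bigcap_{(i,j)}\ker T_{ij}$ is closed. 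Continuity of $T_{ij}$ combines three ingredients: the restriction $H^s(U_j,\R^d)\to H^s(W_{ij},\R^d)$ from Proposition \ref{prop_extension}(ii); right composition with $\phi_{ij}$, which by the bounded-type hypothesis in Definitions \ref{def_bdd_cover}--\ref{def_fine_cover} extends $C^\infty$-smoothly with bounded derivatives to a neighborhood of $\overline{W_{ij}}$, via Corollary \ref{th_transformation}; and multiplication by $A_{ij}$, which lies in $H^s(W_{ij},\R^{d\times d})$ by Proposition \ref{prop_left_translation} and Lemma \ref{lemma_smooth_extension} (noting that $\eta_i\circ f\circ\chi_j^{-1}|_{W_{ij}}=(\eta_i\circ\eta_j^{-1})\circ f_j|_{W_{ij}}\in H^s(W_{ij},\R^d)$), and so acts continuously on $H^s(W_{ij},\R^d)$ by Lemma \ref{lem_imbedding_general}.

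\textbf{Main obstacle.} The delicate point is the last step, namely ensuring that the ingredients of $A_{ij}$ are defined and smooth on the correct domains. This relies crucially on the condition $f(\U_i)\Subset\V_i$ from \eqref{subbase}, which gives $f(\U_i\cap\U_j)\Subset\V_i\cap\V_j$ so that $\eta_j\circ\eta_i^{-1}$ and all its derivatives are smooth and bounded on a neighborhood of the compact set $\eta_i(\overline{f(\U_i\cap\U_j)})$, and on the bounded-type hypothesis on the fine cover, which supplies the analogous property for $\phi_{ij}$ on a neighborhood of $\overline{W_{ij}}$.
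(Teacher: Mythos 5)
Your proof is correct and follows essentially the same route as the paper: identify $d_0\imath_f$ with the restriction map $\rho$, get injectivity from the covering property, and obtain closedness of the range by writing it as the common kernel of finitely many bounded linear overlap maps built from restriction, right composition with the transition maps, and multiplication by the $H^s$ matrix $A_{ij}$. The only (harmless) difference is that you compute $d_0\alpha_i(1;\cdot)=\mathrm{id}$ by linearizing the geodesic ODE at the stationary solution, whereas the paper uses the scaling identity $\alpha_i(\lambda t;Y_i)=\alpha_i(t;\lambda Y_i)$ and differentiates in $\lambda$.
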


\begin{proof}
We claim that for any $X \in T_fH^s(M,N)$,
\[
 d_0\imath_f(X)=\big(\rho_i(X)\big)_{i \in I}
\]
where for any $x \in U_i$, $\rho_i(X)(x)=X_i(x)$ is the $i$-th component of the restriction map. Indeed, for any $\lambda \in \R$ with $|\lambda| < 1$ and $|t|<2$, any solution of the initial value problem (\ref{ode_first_order})-(\ref{ode_first_order_initial_data}) with $Y_i$ and $\lambda Y_i$ in $O_i^s$ satisfies
\begin{equation}\label{scaling_exp}
\alpha_i(\lambda t;Y_i)=\alpha_i(t;\lambda Y_i).
\end{equation}
As $\rho_i(\lambda X)=\lambda \rho_i(X)$ by the linearity of the map $\rho_i$ it then follows from \eqref{smooth_i} and \eqref{scaling_exp} that for any $X \in O^s$ with $\lambda X \in O^s$,
\[
 \imath_f(\lambda X)=\big(\alpha_i(\lambda;X_i)\big)_{i \in I}
\]
and hence
\[
 \left. \frac{d}{d\lambda} \right|_{\lambda=0} \imath_f(\lambda X) = \big(\dot \alpha_i(0;X_i)\big)_{i \in I}=(X_i)_{i \in I}.
\]
As a consequence, $d_0\imath_f(X)=\big(\rho_i(X)\big)_{i \in I}$ for any $X \in T_fH^s(M,N)$ and $d_0\imath_f$ is 1-1. It remains to show that $d_0\imath_f$ has closed range. Note that for any given $X \in O^s$ and $x \in \chi_j(\U_i \cap \U_j)$ with $i,j \in I$, the restrictions $X_i$ and $X_j$ are related by
\begin{equation}\label{closedness_relation}
 d_{f_j(x)} (\eta_i \circ \eta_j^{-1}) \cdot X_j(x) = X_i\big(\chi_i \circ \chi_j^{-1}(x)\big).
\end{equation}
Conversely, if $(Y_i)_{i \in I} \in \oplus_{i \in I} H^s(U_i,\R^d)$ satisfies the relations
\eqref{closedness_relation} for any $x \in \chi_j(\U_i \cap \U_j)$ and $i,j \in I$, there exists
$X \in T_f H^s(M,N)$ so that
\begin{equation}\label{vector_field_along_f}
\rho_i(X)=Y_i
\end{equation}
for any $i \in I$. As $s>n/2$, it then follows from Lemma \ref{lem_imbedding_general},
Corollary \ref{th_transformation}, Proposition \ref{prop_extension}(ii), as well as
Proposition \ref{prop_left_translation} and Lemma \ref{lemma_smooth_extension}, that for any $i,j \in I$,
the linear map
\begin{eqnarray*}
R_{ij}:\oplus_{i \in I} H^s(U_i,\R^d) &\to& H^s\left(\chi_j(\U_i \cap \U_j),\R^d\right),\\
(X_i)_{i \in I} &\mapsto& d_{f_j(x)} \big(\eta_i \circ \eta_j^{-1}\big)
\cdot X_j(x)-X_i\big(\chi_i \circ \chi_j^{-1}(x)\big)
\end{eqnarray*}
is bounded. Hence, the relations (\ref{closedness_relation}) define a closed linear subspace of
$\oplus_{i \in I} H^s(U_i,\R^d)$.
\end{proof}

Lemma \ref{lem_closed_range} will be used to show that $\imath\big(\0^s(\U_I,\V_I)\big)$ is a submanifold of
 $\oplus_{i \in I} H^s(U_i,\R^d)$ by applying the following corollary of the inverse function theorem.

\begin{Lemma}\label{lem_embedded_submanifold}
Let $E$ and $H$ be Hilbert spaces and let $H_1$ be a closed subspace of $H$. Furthermore let $V$ be an open
neighborhood of $0$ in $E$ and $\Phi:V \to H$ a $C^\infty$-map so that $d_0\Phi(E)=H_1$ and
$\mbox{Ker}\ d_0 \Phi = \{0\}$. Then there exist a $C^\infty$-diffeomorphism $\Psi$ of some open neighborhood of
$\Phi(0) \in H$ to an open neighborhood of $0 \in H$ and an open neighborhood $V_1 \subseteq V$ of $0$ in $E$ so that
$\left.\Psi \circ \Phi\right|_{V_1}$ is a $C^\infty$-diffeomorphism onto an open neighborhood of $0$ in $H_1$.
\end{Lemma}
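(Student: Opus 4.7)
The plan is to use the inverse function theorem after ``straightening'' the image of $\Phi$ in the direction of a closed complement of $H_1$. Since $H$ is a Hilbert space and $H_1$ is closed, we can write $H=H_1\oplus H_2$ with $H_2$ a closed complement (for instance, the orthogonal complement). The hypotheses on $d_0\Phi$ say that $d_0\Phi\colon E\to H_1$ is a bounded linear bijection between Banach spaces, hence, by the open mapping theorem, a linear topological isomorphism. This isomorphism will be used to match the ``model'' factor $E$ with $H_1$ in the end.

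First I would define
\[
F\colon V\times H_2\longrightarrow H,\qquad F(x,y):=\Phi(x)+y.
\]
Since $\Phi$ is $C^\infty$ and addition is smooth, $F$ is $C^\infty$. One has $F(0,0)=\Phi(0)$ and
\[
d_{(0,0)}F(u,v)=d_0\Phi(u)+v,\qquad (u,v)\in E\times H_2.
\]
This differential is a linear isomorphism $E\times H_2\to H$: injectivity follows since $d_0\Phi(u)\in H_1$ and $v\in H_2$ forces both summands to vanish, which together with $\ker d_0\Phi=\{0\}$ gives $u=0$; surjectivity follows by splitting $h=h_1+h_2$ according to $H=H_1\oplus H_2$ and using that $d_0\Phi\colon E\to H_1$ is onto. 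By the inverse function theorem in Banach spaces (see e.g.\ \cite{Lang}), there exist an open neighborhood $V'\times W_2\subseteq V\times H_2$ of $(0,0)$ and an open neighborhood $W\subseteq H$ of $\Phi(0)$ such that $F\colon V'\times W_2\to W$ is a $C^\infty$-diffeomorphism.

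Next I would introduce the linear topological isomorphism
\[
L\colon E\times H_2\longrightarrow H,\qquad L(x,y):=d_0\Phi(x)+y,
\]
which is well defined and bijective by the same argument as for $d_{(0,0)}F$, and define
\[
\Psi:=L\circ F^{-1}\colon W\longrightarrow L(V'\times W_2).
\]
Then $\Psi$ is a $C^\infty$-diffeomorphism of the neighborhood $W$ of $\Phi(0)$ onto an open neighborhood of $0$ in $H$. Choose an open neighborhood $V_1\subseteq V'$ of $0$ in $E$ so small that $\Phi(V_1)\subseteq W$; this is possible by continuity of $\Phi$. For any $x\in V_1$ one has $F(x,0)=\Phi(x)$, so $F^{-1}(\Phi(x))=(x,0)$, and hence
\[
\Psi\bigl(\Phi(x)\bigr)=L(x,0)=d_0\Phi(x)\in H_1.
\]
Thus $\Psi\circ\Phi|_{V_1}$ equals the restriction of the linear topological isomorphism $d_0\Phi\colon E\to H_1$ to $V_1$, which is a $C^\infty$-diffeomorphism onto the open neighborhood $d_0\Phi(V_1)$ of $0$ in $H_1$.

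There is no real obstacle: the proof is essentially a bookkeeping exercise around the inverse function theorem. The only subtle point is to recognize that the hypotheses force $d_0\Phi\colon E\to H_1$ to be a linear \emph{topological} isomorphism, which requires the open mapping theorem, and that $H_1$ admits a closed complement in $H$, which requires the Hilbert space structure; both are needed to guarantee that $d_{(0,0)}F$ is a bicontinuous bijection so that the inverse function theorem actually applies.
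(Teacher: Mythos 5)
Your proof is correct. The paper does not argue this lemma itself but simply cites Lang (Chapter I, Corollary 5.5), and your argument — splitting $H=H_1\oplus H_2$, straightening via $F(x,y)=\Phi(x)+y$, applying the inverse function theorem to $F$, and composing with the linear isomorphism $L(x,y)=d_0\Phi(x)+y$ so that $\Psi\circ\Phi|_{V_1}=d_0\Phi|_{V_1}$ — is precisely the standard proof given there, with the open mapping theorem correctly invoked to make $d_0\Phi\colon E\to H_1$ a topological isomorphism.
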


See e.g. \cite{Lang}, Chapter I, Corollary 5.5 for a proof.

\begin{proof}[Proof of Proposition \ref{Th:submanifold}]
We will show that for any $f\in\0^s(\U_i,\V_I)$ there exists an open neighborhood $Q^s$ of $\imath(f)$ in
$\oplus_{i \in I} H^s(U_i,\R^d)$ such that 
\[
\imath_f(O^s)=Q^s \cap \imath\big(\0^s(\U_I,\V_I)\big)
\]
where $O^s$ is an open neighborhood of zero in $T_fH^s(M,N)$ such that
$\imath_f(O^s)$ is a submanifold in $\oplus_{i \in I} H^s(U_i,\R^d)$.
Recall that the differential of the map $Y_i \mapsto \alpha_i(1;Y_i)$ of Lemma \ref{lem_ode_solution} at $Y_i=0$ is the identity (cf. the proof of Lemma \ref{lem_closed_range}),
\[
d_0\alpha_i(1;\cdot) = \id_{H^s(U_i,\R^d)}.
\]
It thus follows by the inverse function theorem that for any $i \in I$, there exists an open neighborhood $Q_i^s$ of $f_i$ contained in $H^s(U_i,V_i)$ such that, after shrinking $O_i^s$, if necessary 
\[
(P1)\quad\quad
\left\{
\begin{array}{l}
\alpha_i(1;\cdot):O_i^s \to Q_i^s\,\,\mbox{ia a $C^\infty$-diffeomorphism}\\
\forall\,\,Y_i\in O^s_i,\,\,\alpha_i(1;Y_i)(U_i)\Subset V_i
\end{array}
\right.
\]
By shrinking the neighborhood $O^s_i$ of zero in $H^s(U_i,\R^d)$ once more one can ensure that the open
neighborhood $O^s$ of zero in $T_fH^s(M,N)$ given by \eqref{eq:O} satisfies the following two additional
properties:
\[
\begin{array}{l}
(P2)\quad\quad\quad\, \imath_f(O^s)\,\,\mbox{is a submanifold in}\,\,\oplus_{i \in I} H^s(U_i,\R^d)\\
(P3)\quad\quad\quad\, \forall\,\xi\in O^s,\,\,g(\xi,\xi)<\varepsilon\,.
\end{array}
\]
where $\varepsilon > 0$ is chosen as in Lemma \ref{global_exp} below.
Our candidate for the open neighborhood $Q^s$ of $\imath(f)=(f_i)_{i\in I}$ in $\oplus_{i \in I}H^s(U_i,\R^d)$ is
\[
Q^s:= \oplus_{i \in I} Q_i^s.
\]
Take $h \in \0^s(\U_I,\V_I)$ with $\imath(h)=(h_i)_{i \in I} \in Q^s$. By the definition of $Q^s$ and $Q_i^s$, 
there exists $(Y_i)_{i \in I} \in \oplus_{i \in I}O_i^s$ such that for any $i \in I$, $\alpha_i(1;Y_i)=h_i$. 
We now have to show that $(Y_i)_{i \in I}$ is the restriction of a global vector field along $f$. 
In view of \eqref{closedness_relation} and \eqref{vector_field_along_f} it is to prove that for any 
$x \in \chi_j(\U_i \cap \U_j)$, $i,j \in I$, the identity (\ref{closedness_relation}) is satisfied. 
Assume the contrary. Then there exists $k,l \in I$ and $x \in \U_k \cap \U_l$ so that, 
with $x_k:=\chi_k(x)$, $x_l:=\chi_l(x)$ and $y=f(x) \in \V_k \cap \V_l$, the vectors $\xi \in T_yN$ and
$\bar\xi \in T_yN$ corresponding to $Y_k(x_k)$ and $Y_l(x_l)$ respectively do not coincide,
\begin{equation}\label{different_pushforward}
\xi \neq \bar \xi.
\end{equation}
On the other hand, by the definition of $h_k$ and $\alpha_k$
\[
h_k(x_k)=\alpha_k(1;Y_k)(x_k)=\eta_k(\exp_y \xi)
\]
and, similarly,
\[
h_l(x_l)=\alpha_l(1;Y_l)(x_l)=\eta_l(\exp_y \bar \xi).
\]
As $\imath(h)=(h_i)_{i \in I}$ it then follows that
\[
\exp_y \xi = h(x) = \exp_y \bar \xi.
\]
However, in view of the choice of $\varepsilon$ in $(P3)$ and Lemma \ref{global_exp} below,
the latter identity contradicts (\ref{different_pushforward}). Hence $(Y_i)_{i \in I}$ satisfies 
\eqref{closedness_relation} and $\imath_f(X)=(Y_i)_{i \in I}$ where $X \in O^s$ is the vector field along $f$
defined by (\ref{vector_field_along_f}).
\end{proof}

It remains to state and prove Lemma \ref{global_exp} used in the proof of Proposition \ref{Th:submanifold}. 
For any $\varepsilon > 0$ and any subset $A \subseteq N$ denote by $B_g^\varepsilon A$ the 
$\varepsilon$-ball bundle of $N$ restricted to $A$
\[
B_g^\varepsilon A = \big\{ \xi \in \cup_{y \in A} T_y N \ \big| \ g(\xi,\xi)^{1/2} < \varepsilon \big\}
\]
where $g$ is the Riemannian metric on $N$. Denote by $\pi:TN \to N$ the canonical projection. Recall that $f \in H^s(M,N)$ implies that $f$ is continuous. As $M$ is assumed to be closed, $f(M)$ is compact. By the classical ODE theorem and the compactness of $f(M)$ there exists a neighborhood $\V$ of $f(M)$ in $N$ and $\varepsilon >0$ so that
\[
\Phi:B_g^\varepsilon \V \to N \times N,\quad \xi \mapsto \big(\pi(\xi),\exp_{\pi(\xi)}\xi\big)
\]
is well-defined and $C^\infty$-smooth. 

\begin{Lemma}\label{global_exp}
For any $f \in \0^s(\U_I,\V_I)$, there exists $\varepsilon > 0$ and an open neighborhood $\V$ of $f(M)$ so that
\[
\Phi:B_g^\varepsilon \V \to \W\subseteq N\times N,\quad \xi \mapsto \big(\pi(\xi), \exp_{\pi(\xi)}\xi \big)
\]
is a $C^\infty$-diffeomorphism onto an open neighborhood $\W$ of $\{ (y,y) \,|\, y \in \V \}$ in $N \times N$.
\end{Lemma}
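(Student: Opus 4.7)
The plan is to apply the inverse function theorem to $\Phi$ at every point of the zero section above $f(M)$, then use the compactness of $f(M)$ to upgrade this local information to a uniform neighborhood on which $\Phi$ is a diffeomorphism.

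First, I would verify that at each $y \in N$, the differential $d_{0_y}\Phi$ is a linear isomorphism. Using the canonical splitting $T_{0_y}(TN) \cong T_yN \oplus T_yN$ into horizontal and vertical subspaces and the fact that $d_{0_y}\exp_y = \mathrm{id}_{T_yN}$, a short computation shows that $d_{0_y}\Phi$ has the block form $\bigl(\begin{smallmatrix} I & 0 \\ I & I\end{smallmatrix}\bigr)$, hence is invertible. Since $\Phi$ is $C^\infty$, the inverse function theorem yields, for every $y \in f(M)$, an open neighborhood $\W_y$ of $0_y$ in $TN$ on which $\Phi$ is a $C^\infty$-diffeomorphism onto an open subset of $N\times N$. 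Choosing trivializing coordinates, we may shrink $\W_y$ to a set of the form $B_g^{\varepsilon_y}\V_y$ for some open $\V_y \ni y$ and some $\varepsilon_y>0$.

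Next, since $f(M) \subseteq N$ is compact (as $M$ is closed and $f$ is continuous), it is covered by finitely many $\V_{y_1},\ldots,\V_{y_N}$. Setting $\V_0 := \bigcup_k \V_{y_k}$ and $\varepsilon_0 := \min_k \varepsilon_{y_k}$, the map $\Phi$ is a local $C^\infty$-diffeomorphism on all of $B_g^{\varepsilon_0}\V_0$. The hard step is upgrading this to global injectivity, since the inverse function theorem only gives local information and two points of $B_g^{\varepsilon_0}\V_0$ lying in different local charts could a priori be mapped to the same pair. I would argue by contradiction: suppose that no uniform $\varepsilon$ exists, so that for every $k\geq 1$ there exist distinct $\xi_k,\eta_k \in B_g^{1/k}\V_0$ with $\Phi(\xi_k)=\Phi(\eta_k)$. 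Then $\pi(\xi_k)=\pi(\eta_k)=:p_k \in \V_0$ and $\exp_{p_k}\xi_k = \exp_{p_k}\eta_k$. Shrinking $\V_0$ slightly to a relatively compact neighborhood of $f(M)$ (still contained in $\V_0$), we may extract a subsequence with $p_k \to p_\infty \in \overline{\V_0}$. For large $k$, both $\xi_k$ and $\eta_k$ lie in a single local diffeomorphism chart of $\Phi$ about $0_{p_\infty}$ (because $g(\xi_k,\xi_k)^{1/2},g(\eta_k,\eta_k)^{1/2}\to 0$ and $p_k\to p_\infty$), contradicting injectivity on that chart.

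Having obtained $\varepsilon>0$ and an open neighborhood $\V$ of $f(M)$ (for instance the relatively compact one used above) such that $\Phi$ is both a local diffeomorphism and injective on $B_g^\varepsilon\V$, it follows that $\Phi$ is a $C^\infty$-diffeomorphism of $B_g^\varepsilon\V$ onto its image $\W:=\Phi(B_g^\varepsilon\V)$. The set $\W$ is open in $N\times N$ because $\Phi$ is a local diffeomorphism, and it contains every pair $\Phi(0_y)=(y,y)$ with $y\in\V$, so $\W$ is an open neighborhood of the diagonal $\{(y,y)\mid y\in\V\}$, as required. The main obstacle in this argument is the global injectivity step; the rest is a straightforward packaging of the inverse function theorem with compactness.
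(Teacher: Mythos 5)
Your proposal is correct and follows essentially the same route as the paper: verify that $d_{0_y}\Phi$ is a linear isomorphism along the zero section, invoke the inverse function theorem together with the compactness of $f(M)$ to get a uniform ball bundle $B_g^\varepsilon\V$ on which $\Phi$ is a local diffeomorphism, and then use the fact that $\Phi(\xi)=\Phi(\eta)$ forces $\pi(\xi)=\pi(\eta)$ to reduce global injectivity to a fiberwise statement. The only (harmless) difference is in how the uniform injectivity is secured: the paper arranges that each fiberwise restriction $\Phi|_{B_g^\varepsilon\V\cap T_xN}$ is a diffeomorphism onto its image, whereas you obtain the same conclusion by a sequential compactness contradiction, which if anything spells out the uniformity more explicitly.
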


\begin{proof}
Note that for any $\xi \in TN$ of the form $0_y \in T_y N$ with $y \in f(M)$, $\Phi(0_y)=(y,y)$ and
$d_{0_y}\Phi:T_{0_y}(TN) \to T_y N \times T_y N$ is a linear isomorphism. By the inverse function theorem and
the compactness of $f(M)$ it then follows that there exist an open neighborhood $\V$ of $f(M)$,
an open neighborhood $\W$ of the diagonal $\{(y,y)\,|\, y \in \V \}$ in $N\times N$, and $\varepsilon > 0$ so that
\begin{equation}\label{eq:Phi}
\Phi: B_g^\varepsilon \V \to\W\subseteq N\times N, \quad \xi \mapsto \big(\pi(\xi),\exp_{\pi(\xi)}\xi \big)
\end{equation}
is a local diffeomorphism that is {\em onto} and that for any $x\in\V$
\[
\Phi\Big|_{B_g^\varepsilon\V\cap T_x N} : B_g^\varepsilon\V\cap T_x N\to N
\]
is a diffeomorphism onto its image. The last statement and the formula for $\Phi$ in \eqref{eq:Phi} imply that
$\Phi$ is is injective. Hence, $\Phi$ is a bijection. As it is also a local diffeomorphism,
$\Phi$ is a diffeomorphism.
\end{proof}

\begin{Rem}
Note that we did not use the Ebin-Marsden differential structure on $N^s(M,N)$. 
In consequence, our construction gives an independent proof of Ebin-Marsden's result.
\end{Rem}
\noindent As a by-product, the proof of Proposition \ref{Th:submanifold} leads to the following
\begin{Coro}\label{coro_same_diff_structure}
For any set of the form $\0^s(\U_I,\V_I)$,
\[
\mathcal A^s \cap \0^s(\U_I,\V_I) = \mathcal A_g^s \cap \0^s(\U_I,\V_I)
\]
i.e. the $C^\infty$-differentiable structure induced from $\oplus_{i \in I} H^s(U_i,\R^d)$ coincides on $\0^s(\U_I,\V_I)$ with the one of Ebin-Marsden, introduced in \cite{EM}.
\end{Coro}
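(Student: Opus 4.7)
The plan is to exhibit the Riemannian exponential map $\overline{\exp}_f$ based at an arbitrary $f\in\0^s(\U_I,\V_I)$ as a chart that is simultaneously compatible with both $\mathcal A^s$ and $\mathcal A^s_g$. By construction of the Ebin--Marsden differential structure, for each $f\in H^s(M,N)$ the map $\overline{\exp}_f:O^s\to H^s(M,N)$, restricted to a sufficiently small neighborhood $O^s\subseteq T_fH^s(M,N)$ of $0$, is an $\mathcal A^s_g$-chart at $f$, and such charts cover $\0^s(\U_I,\V_I)$. It therefore suffices to verify that $\overline{\exp}_f^{-1}$ is also an $\mathcal A^s$-chart at $f$, which reduces to proving that the transition against the global chart $\imath\equiv\imath_{\U_I,\V_I}$ of $\mathcal A^s$ is a $C^\infty$-diffeomorphism onto an $\mathcal A^s$-open neighborhood of $\imath(f)$ in $\imath\big(\0^s(\U_I,\V_I)\big)$.

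First I would recall that by Corollary \ref{coro_smooth_exp} the composition $\imath_f=\imath\circ\overline{\exp}_f:O^s\to\oplus_{i\in I}H^s(U_i,\R^d)$ is $C^\infty$-smooth, and by Lemma \ref{lem_closed_range} its differential $d_0\imath_f$ is injective with closed range. Next I would invoke the proof of Proposition \ref{Th:submanifold}: after shrinking $O^s$ if necessary, the image $\imath_f(O^s)$ coincides with $Q^s\cap\imath\big(\0^s(\U_I,\V_I)\big)$, i.e.\ with a relatively open subset of the submanifold $\imath\big(\0^s(\U_I,\V_I)\big)$. Applying Lemma \ref{lem_embedded_submanifold} then yields that $\imath_f$ is a $C^\infty$-diffeomorphism from $O^s$ onto this relatively open subset.

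Since by definition the $\mathcal A^s$-differential structure on $\0^s(\U_I,\V_I)$ is the one pulled back from the submanifold $\imath\big(\0^s(\U_I,\V_I)\big)\subseteq\oplus_{i\in I}H^s(U_i,\R^d)$ via $\imath$, the identity $\overline{\exp}_f=\imath^{-1}\circ\imath_f$ shows that $\overline{\exp}_f$ is a $C^\infty$-diffeomorphism from $O^s$ onto an $\mathcal A^s$-open neighborhood of $f$. Thus $\overline{\exp}_f^{-1}$ belongs to the maximal atlas defining $\mathcal A^s$, and as $f$ ranges over $\0^s(\U_I,\V_I)$ we obtain an atlas that is contained in both $\mathcal A^s$ and $\mathcal A^s_g$; by maximality the two restricted atlases coincide.

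The only step that is not mere bookkeeping is the identification of $\imath_f(O^s)$ as a relatively open subset of the globally defined submanifold $\imath\big(\0^s(\U_I,\V_I)\big)$ rather than just as an abstract embedded submanifold of $\oplus_{i\in I}H^s(U_i,\R^d)$ of the right tangential dimension; this is precisely the content of the compatibility argument via Lemma \ref{global_exp} in the proof of Proposition \ref{Th:submanifold}, and that is the piece doing the real work here.
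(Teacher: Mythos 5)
Your proposal is correct and follows the same route the paper intends: the Corollary is stated as a by-product of the proof of Proposition \ref{Th:submanifold}, whose content is exactly the identification $\imath_f(O^s)=Q^s\cap\imath\big(\0^s(\U_I,\V_I)\big)$ together with $\imath_f$ being a $C^\infty$-diffeomorphism onto this set, so that the Ebin--Marsden charts $\overline{\exp}_f^{-1}$ are simultaneously charts for $\mathcal A^s$. You also correctly isolate the step doing the real work, namely the surjectivity argument via Lemma \ref{global_exp}.
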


\subsection{Differentiable structure}
In this subsection we prove Proposition \ref{Th:diff_structure} and Proposition \ref{Th:same_diff_structure} as well
as Lemma \ref{lemma_opensubset}. Recall that the map 
\begin{equation}\label{e:imath2}
\imath \equiv \imath_{\U_I,\V_I} : \0^s(\U_I,\V_I)\to\oplus_{i\in I}H^s(U_i,\R^d)
\end{equation}
is injective and by Proposition \ref{Th:submanifold}, the image of $\imath$ is a $C^\infty$-submanifold
in $\oplus_{i\in I}H^s(U_i,\R^d)$. Hence, by pulling back the $C^\infty$-differentiable structure of the image of
$\imath$, we get a $C^\infty$-differentiable structure on $\0^s(\U_I,\V_I)$. First we prove Lemma 
\ref{lemma_opensubset}.

\begin{proof}[Proof of Lemma \ref{lemma_opensubset}]
Let $(\U_I,\V_I)$ and $(\U_J,\V_J)$ be fine covers. For convenience assume that the index sets $I,J$ are chosen in
such a way that $I \cap J = \emptyset$. It is to show that $\0^s(\U_I,\V_I) \cap \0^s(\U_J,\V_J)$ is open in
$\0^s(\U_I,\V_I)$. Given $h \in \0^s(\U_I,\V_I) \cap \0^s(\U_J,\V_J)$ consider its restriction 
$(h_i)_{i \in I} = \imath_{\U_I,\V_I}(h)$ in $\oplus_{i \in I} H^s(U_i,\R^d)$ and choose a Riemannian metric $g$ on 
$N$. In view of Proposition \ref{prop_extension} (iii), for any $\varepsilon > 0$, there exists an open neighborhood 
$W$ of $(h_i)_{i \in I}$ in $\oplus_{i \in I} H^s(U_i,\R^d)$ such that for any 
$(p_i)_{i \in I} \in W \cap \imath_{\U_I,\V_I}\big(\0^s(\U_I,\V_I)\big)$ and any $x \in M$
\begin{equation}\label{small_distance}
\mathop{\rm dist}\nolimits_g\big(p(x),h(x)\big) < \varepsilon
\end{equation}
where $\mathop{\rm dist}\nolimits_g$ is the geodesic distance function on $(N,g)$ and $p \in H^s(M,N)$ is the unique element of 
$\0^s(\U_I,\V_I)$ such that $\imath_{\U_I,\V_I}(p) =(p_i)_{i \in I}$. It follows from \eqref{small_distance} and the
definition of $\0^s(\U_I,\V_I)$ that the neighborhood $W$ of $(h_i)_{i \in I}$ in $\oplus_{i \in I} H^s(U_i,\R^d)$ 
can be chosen so that
\begin{equation}\label{inverse_W}
\W := \imath^{-1}_{\U_I,\V_I} \left(W \cap \imath_{\U_I,\V_I}\big(\0^s(\U_I,\V_I)\big)\right) \subseteq \0^s(\U_J,\V_J).
\end{equation}
In view of the definition of the topology on $\0^s(\U_I,\V_I)$, $\W$ is an open neighborhood of $h$ in 
$\0^s(\U_I,\V_I)$. As $h \in \0^s(\U_I,\V_I) \cap \0^s(\U_J,\V_J)$ was chosen arbitrarily, formula 
\eqref{inverse_W} implies that $\0^s(\U_I,\V_I) \cap \0^s(\U_J,\V_J)$ is open in $\0^s(\U_I,\V_I)$.
\end{proof}

Next we prove Proposition \ref{Th:diff_structure} which says that the $C^\infty$-differentiable structures of 
$\0^s(\U_I,\V_I) \cap \0^s(\U_J,\V_J)$ induced by the ones of $\0^s(\U_I,\V_I)$ and $\0^s(\U_J,\V_J)$ coincide.

\begin{proof}[Proof of Proposition \ref{Th:diff_structure}]
Let $(\U_I,\V_I)$ and $(\U_J,\V_J)$ be fine covers. For convenience we choose $I,J$ such that $I \cap J \neq \emptyset$
and assume that $\0^s_{IJ} := \0^s(\U_I,\V_I) \cap \0^s(\U_J,\V_J) \neq \emptyset$. Note that the boundary 
$\partial \chi_i(\U_i \cap \U_j), i \in I, j \in J$, might not be Lipschitz. To address this issue we 
refine the covers $(\U_I,\V_I)$ and $(\U_J,\V_J)$. For any $h \in \0^s_{IJ}$ there exist fine covers $(\U_K,\V_K)$,
$(\U_L,\V_L)$ with $I,J,K,L$ pairwise disjoint such that (i) $h \in \0^s(\U_K,\V_K) \cap \0^s(\U_L,\V_L)$, 
(ii) there exist maps  $\sigma:K \to I$ and $\tau:L \to J$ so that for any $k \in K$, $\ell \in L$
\[
\U_k \Subset  \U_{\sigma(k)},\; \V_k \Subset \V_{\sigma(k)} \quad \mbox{and} \quad \U_\ell \Subset
\U_{\tau(\ell)},\; \V_\ell \Subset \V_{\tau(\ell)},
\]
and (iii) for any $k \in K$ and $\ell \in L$, $\U_k \cap \U_\ell \subseteq M$ and $\V_k \cap \V_\ell \subseteq N$
have piecewise smooth boundary and $\U_K \cup \U_L := \{\U_k,\U_\ell\}_{k \in K,\ell \in L}$ is a cover of $M$ of
bounded type. \\
Fine covers $(\U_K,\V_K)$  and $(\U_L,\V_L)$ with properties (i)-(iii) can be constructed by choosing for 
$\U_k, \V_k \; (k \in K)$ and $\U_\ell,\V_\ell \; (\ell \in L)$ appropriate geodesic balls defined in terms of
Riemannian metrics on $M$ and $N$ respectively and arguing as in the proof of Lemma \ref{lem:fine_cover_existence}.
Moreover, we choose for any $k \in K$ the coordinate chart $\chi_k:\U_k \to U_k \subseteq \R^n$ to be the restriction
of the coordinate chart $\chi_{\sigma(k)}:\U_{\sigma(k)} \to U_{\sigma(k)} \subseteq \R^n$ to $\U_k$. 
In a similar way we choose the coordinate charts $\eta_k \; (k \in K)$ and $\chi_\ell,\eta_\ell \;(\ell \in L)$.
Let $\0^s_{KL} := \0^s(\U_K,\V_K) \cap \0^s(\U_L,\V_L)$ and $\0^s_{IJKL} := \0^s_{IJ} \cap \0^s_{KL}$ and define 
\begin{eqnarray*}
\mathcal F_I := \oplus_{i \in I} H^s(U_i,\R^d),& \mathcal F_J := \oplus_{j \in J} H^s(U_j,\R^d),\\
\mathcal F_K := \oplus_{k \in K} H^s(U_k,\R^d),& \mathcal F_L := \oplus_{\ell \in L} H^s(U_\ell,\R^d).
\end{eqnarray*}
By Lemma \ref{lemma_opensubset}, the sets $\0^s_{KL}, \0^s_{IJ}$, and $\0^s_{IJKL}$ are open sets in the topology $\mathcal T$, defined by \eqref{topology_T}. To prove Proposition \ref{Th:diff_structure} it suffices to show that the $C^\infty$-differentiable structures on $\0^s_{IJKL}$ induced from the ones of $\0^s_I$ and $\0^s_J$ coincide. For this purpose, consider the following diagram
\begin{equation}\label{figure1}
\begin{array}{c}
\begin{array}{ccccc}
\mathcal F_I & \hspace{1.3cm}& \0^s_{IJ} &\hspace{1.3cm}& \mathcal F_J
\vspace{0.2cm}\\
\rotatebox{90}{$\subseteq$} && \rotatebox{90}{$\subseteq$} && \rotatebox{90}{$\subseteq$}\\
\end{array}\\
\begin{diagram}
\imath_I(\0^s_{IJKL}) & \lTo^{\imath_I} & \0^s_{IJKL} & \rTo^{\imath_J} & \imath_J(\0^s_{IJKL})\\
\dTo^{\mathcal P_I} & \ldTo^{\imath_K} && \rdTo^{\imath_L} & \dTo_{\mathcal P_J}\\
\imath_K(\0^s_{IJKL}) && \rTo^{\mathcal R} && \imath_L(\0^s_{IJKL})
\end{diagram}\\
\begin{array}{ccccc}
\rotatebox{-90}{$\subseteq$} && && \rotatebox{-90}{$\subseteq$}
\vspace{0.2cm}\\
\mathcal F_K & \hspace{1.6cm}&  &\hspace{1.6cm}& \mathcal F_L\\
\end{array}\\
\end{array}
\end{equation}
where $\imath_I,\imath_J,\imath_K$, and $\imath_L$ denote the corresponding restrictions of $\imath_{\U_I,\V_I}$, 
$\imath_{\U_J,\V_J}$, $\imath_{\U_K,\V_K}$, and $\imath_{\U_L,\V_L}$, to $\0^s_{IJKL}$ and $\mathcal P_I,\mathcal P_J$
 are the maps
\begin{eqnarray}\label{p_ij}
\nonumber
\mathcal P_I:\imath_I(\0^s_{IJKL}) \to \imath_K(\0^s_{IJKL}),&\; (f_i)_{i \in I} &\mapsto (\left. f_{\sigma(k)} 
\right|_{U_k} )_{k \in K},\\
\mathcal P_J:\imath_J(\0^s_{IJKL}) \to \imath_L(\0^s_{IJKL}),&\; (f_j)_{j \in J} &\mapsto (\left. f_{\tau(\ell)} 
\right|_{U_\ell})_{\ell \in L}\,.
\end{eqnarray}
Finally, the map $\mathcal R:\imath_K(\0^s_{IJKL}) \to \imath_L(\0^s_{IJKL})$ is defined in such a way that the
central sub-diagram in \eqref{figure1} is commutative. Note that by the definition of the charts
$\chi_k,\eta_k \;(k \in K)$ and $\chi_\ell, \eta_\ell \; (\ell \in L)$, the left and right sub-diagrams in 
\eqref{figure1} are commutative. By Lemma \ref{lemma_R_diffeo} below the map $\mathcal R$ is a diffeomorphism. 
Proposition \ref{Th:diff_structure} then follows once we show that the maps $\mathcal P_I$ and $\mathcal P_J$
are diffeomorphisms, as in this case, $\mathcal P^{-1}_J \circ \mathcal R \circ \mathcal P_I$ is a diffeomorphism.
 Consider the map $\mathcal P_I$. As $\mathcal P_I$ is the restriction of the bounded linear map
\[
\widetilde{\mathcal P}_I:\mathcal F_I \to \mathcal F_K,\; (f_i)_{i \in I} \mapsto (\left. f_{\sigma(k)}
\right|_{U_k})_{k \in K}
\]
to the submanifold $\imath_I(\0^s_{IJKL}) \subseteq \mathcal F_I$, $\mathcal P_I$ is smooth.
Take an arbitrary element $f_I \equiv \imath_I(f) \in \imath_I(\0^s_{IJKL})$ and consider the differential of 
$\mathcal P_I$ at $f_I$,
\[
d_{f_I} \mathcal P_I:\rho_I\big(T_f H^s(M,N)\big) \to \rho_K\big(T_f H^s(M,N)\big)
\]
where $\rho_I$ is the restriction map \eqref{tangentspace_in_coordinates} corresponding to $(\U_I,\V_I)$ and $\rho_K$
is the restriction map corresponding to $(\U_K,\V_K)$. In view of the choice of the coordinate charts
$(\chi_k)_{k \in K}$, $d_{f_I} \mathcal P_I$ is given by
\begin{equation}\label{differential_P_I}
d_{f_I} \mathcal P_I:(X_i)_{i \in I} \mapsto \left( \left. X_{\sigma(k)} \right|_{U_k} \right)_{k \in K}.
\end{equation}
In particular it follows from \eqref{differential_P_I} that $d_{f_I} \mathcal P_I$ is injective and onto.
Hence, by the open mapping theorem $d_{f_I} P_I$ is a linear isomorphism.
As $f_I \in \imath_I(\0^s_{IJKL})$ is arbitrary, $\mathcal P_I:\imath_I(\0^s_{IJKL}) \to \imath_K(\0^s_{IJKL})$ is a
local diffeomorphism. As by the commutativity of the left sub-diagram of \eqref{figure1}, $\mathcal P_I$ is
a homeomorphism we get that it is a diffeomorphism. Similarly, one proves that $\mathcal P_J$ is a diffeomorphism.
\end{proof}

Next we prove Lemma \ref{lemma_R_diffeo} used in the proof of Proposition \ref{Th:diff_structure}.
Let $\mathcal R$ be the map introduced there.

\begin{Lemma}\label{lemma_R_diffeo}
$\mathcal R$ is a diffeomorphism.
\end{Lemma}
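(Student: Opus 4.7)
The plan is to show that $\mathcal{R}$ is a $C^\infty$-diffeomorphism by writing it explicitly as a finite sum of compositions of maps each of which is smooth by the results already established in the paper. Since $\mathcal{R}=\imath_L\circ\imath_K^{-1}$ by its defining commutativity, and the analogous formula holds for $\mathcal{R}^{-1}$ (with the roles of $K$ and $L$ interchanged), it suffices to prove that $\mathcal{R}$ is $C^\infty$-smooth.

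First I would exploit the additional hypotheses on the fine covers $(\U_K,\V_K)$ and $(\U_L,\V_L)$ constructed in the proof of Proposition \ref{Th:diff_structure}: namely, that $\U_K\cup\U_L$ is of bounded type and that for every $k\in K$, $\ell\in L$ the overlap $\U_k\cap\U_\ell$ has piecewise $C^\infty$-smooth (hence Lipschitz) boundary. This guarantees, for every pair $(k,\ell)$ with $\U_k\cap\U_\ell\neq\emptyset$, that the transition maps
\[
\chi_k\circ\chi_\ell^{-1}\in C_b^\infty\big(\overline{\chi_\ell(\U_k\cap\U_\ell)},\R^n\big),\qquad \eta_\ell\circ\eta_k^{-1}\in C_b^\infty\big(\overline{\eta_k(\V_k\cap\V_\ell)},\R^d\big)
\]
are defined on sets with Lipschitz boundary. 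Next, choose a smooth partition of unity $(\varphi_k)_{k\in K}$ on $M$ subordinate to $(\U_k)_{k\in K}$, and for each $\ell\in L$ set $K_\ell:=\{k\in K\,|\,\U_k\cap\U_\ell\neq\emptyset\}$. For any $(h_k)_{k\in K}=\imath_K(h)\in\imath_K(\0^s_{IJKL})$ the pointwise identity
\[
h_\ell=\sum_{k\in K_\ell}(\varphi_k\circ\chi_\ell^{-1})\cdot\big[(\eta_\ell\circ\eta_k^{-1})\circ h_k\circ(\chi_k\circ\chi_\ell^{-1})\big]
\]
holds on $\chi_\ell(\U_k\cap\U_\ell)$ for each summand, extended by zero to the rest of $U_\ell$, and summed to give $h_\ell$ on $U_\ell$.

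Now I would read off the smoothness of $\mathcal{R}$ from this formula, term by term. The right-translation $h_k\mapsto h_k\circ(\chi_k\circ\chi_\ell^{-1})$ from $H^s(U_k,\R^d)$ to $H^s\big(\chi_\ell(\U_k\cap\U_\ell),\R^d\big)$ is a bounded linear isomorphism, hence $C^\infty$, by Corollary \ref{th_transformation}. The left-translation by $\eta_\ell\circ\eta_k^{-1}$ (extended to a compactly supported $C^\infty$-map on $\R^d$ via Lemma \ref{lemma_smooth_extension}) is $C^\infty$ by Proposition \ref{prop_left_translation}. Multiplication by the fixed $C^\infty$-cutoff $\varphi_k\circ\chi_\ell^{-1}$ and subsequent extension by zero preserves $H^s(U_\ell,\R^d)$ by Lemma \ref{lem:cut-off}, and is clearly $C^\infty$ as a bounded linear operator. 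Finally, taking the finite sum over $k\in K_\ell$ is linear. Thus each component map $\imath_K(\0^s_{IJKL})\to H^s(U_\ell,\R^d)$ is $C^\infty$-smooth, and so is $\mathcal{R}$.

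Applying the symmetric argument with $K$ and $L$ exchanged gives the smoothness of $\mathcal{R}^{-1}$ and completes the proof. The only genuine technical point, which I would take care to verify, is that every domain appearing in the composition chain has Lipschitz boundary so that Corollary \ref{th_transformation} applies; this is precisely what property $(C3)$ of a fine cover together with condition $(iii)$ from the refinement in the proof of Proposition \ref{Th:diff_structure} was designed to guarantee. Everything else is a routine assembly of the smoothness results already proved for right translation, left translation, multiplication by smooth cutoffs, and bounded linear maps.
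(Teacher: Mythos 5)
Your proof is correct, but it is organized quite differently from the paper's. The paper factors $\widetilde{\mathcal R}$ (and hence $\mathcal R$) as $\mathcal R_L^{-1}\circ T\circ \mathcal R_K$ through the auxiliary spaces $\mathcal G_K=\oplus_{k,\ell}H^s(\chi_k(\U_k\cap\U_\ell),\R^d)$ and $\mathcal G_L$: the transition map $T$ carries all the analytic content (smoothness via Corollary \ref{th_transformation}, Proposition \ref{prop_left_translation}, Lemma \ref{lemma_smooth_extension}, exactly the ingredients you use), while the reassembly of the pieces $g_{k\ell}$ into the $h_\ell$ is done \emph{implicitly} by showing that the restriction maps $\mathcal R_K,\mathcal R_L$ are diffeomorphisms --- they are restrictions of bounded linear maps whose differentials are bijections between the tangent spaces, so the open mapping theorem plus the fact that they are homeomorphisms does the job (the same argument as for $\mathcal P_I$). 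You instead perform the reassembly \emph{explicitly}, via a partition of unity subordinate to $(\U_k)_{k\in K}$, which yields a single formula for $\mathcal R$ as a finite sum of compositions of smooth maps defined on an open subset of the ambient space $\mathcal F_K$; this avoids inverting any restriction map and makes the smooth ambient extension of $\mathcal R$ visible, at the cost of the cut-off-and-extend-by-zero step. On that step, note that Lemma \ref{lem:cut-off} as stated only gives the product in $H^s$ of the smaller domain $\chi_\ell(\U_k\cap\U_\ell)$; to conclude that the zero-extension lies in $H^s(U_\ell,\R^d)$ you should also invoke that $\mathrm{supp}\,\varphi_k$ is compact in $\U_k$, so the extended function vanishes identically near $\partial\bigl(\chi_\ell(\U_k\cap\U_\ell)\bigr)\cap U_\ell$ --- this is the same (lightly justified) move the paper makes at \eqref{eq:prod} in the proof of Lemma \ref{lemma_local_pt}, so you are at the paper's own level of rigor. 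Both arguments ultimately rest on the same smoothness lemmas; yours is the more constructive, the paper's the more structural.
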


\begin{proof}
Throughout the proof we use the notation introduced in the proof of Proposition \ref{Th:diff_structure} without
 further reference. Consider the following diagram
\begin{equation}\label{figure2}
\begin{diagram}
&& \0^s_{KL} &&\\
&\ldTo^{\imath_K} && \rdTo^{\imath_L}&\\
\mathcal F_K \supseteq \imath_K(\0^s_{KL}) &&\rTo^{\widetilde{\mathcal R}} && \imath_L(\0^s_{KL}) \subseteq \mathcal F_L
\end{diagram}
\end{equation}
where $\widetilde{\mathcal R}:\imath_K(\0^s_{KL}) \to \imath_L(\0^s_{KL})$ is the map defined by
$\widetilde{\mathcal R}\big(\imath_K(f)\big)=\imath_L(f)$ for any $f \in \0^s_{KL}$. Clearly, the diagram 
\eqref{figure2} is commutative and $\mathcal R$ is the restriction of $\widetilde{\mathcal R}$ to
$\imath_K(\0^s_{IJKL})$. It suffices to show that $\widetilde{\mathcal R}$ is a diffeomorphism. Note that
\[
 \0_{KL}^s = \0^s(\U_K \cap \U_L,\V_K \cap \V_L)
\]
where
\[ 
\U_K \cap \U_L = (\U_k \cap \U_\ell)_{k \in K,\ell \in L} \quad \mbox{and} \quad  \V_K \cap \V_L =
(\V_k \cap \V_\ell)_{k \in K,\ell \in L}.
\]
On $\U_K\cap \U_L$ and $\V_K\cap{\V}_L$ one can introduce
two families of coordinate charts. For any given $k \in K$ and $\ell \in L$ define

\hspace{0.1cm}

\[
\alpha_{k\ell}:=\chi_k|_{\U_k\cap{\U}_\ell} : \U_k\cap{\U}_\ell\to
\chi_k(\U_k\cap{\U}_\ell) \subseteq U_k \subseteq\R^n\,,
\]
\[
\beta_{k\ell}:=\eta_k|_{\V_k\cap{\V}_\ell} : \V_k\cap{\V}_\ell\to
\eta_k(\V_k\cap{\V}_\ell) \subseteq V_k \subseteq\R^d\,.
\]
and, alternatively,
\[
{\gamma}_{k\ell}:={\chi}_\ell|_{{\U}_k\cap \U_\ell} :
{\U}_k \cap \U_\ell\to {\chi}_\ell({\U}_k\cap \U_\ell) \subseteq
U_\ell \subseteq\R^n\,,
\]
\[
{\delta}_{k\ell}:={\eta}_\ell|_{{\V}_k\cap \V_\ell} :
{\V}_k \cap \V_\ell \to {\eta}_\ell({\V}_k\cap \V_\ell)\subseteq
V_\ell \subseteq\R^d\,.
\]

\hspace{0.1cm}

These two choices of coordinate charts lead to the two embeddings $\imath_1$ and $\imath_2$
\begin{eqnarray}\label{e:i_1}
\imath_1 : \0^s(\U_K\cap{\U}_L,\V_K\cap{\V}_L) &\to& \oplus_{k \in K, \ell \in L}H^s(\chi_k(\U_k\cap{\U_\ell}),\R^d)\\
\nonumber
f &\mapsto& (f_{k\ell})_{k \in K, \ell \in L}
\end{eqnarray}
where
\[
f_{k\ell}:=\beta_{k\ell}\circ f\circ\alpha_{k\ell}^{-1} : \chi_k(\U_k\cap{\U_\ell})\to
\eta_k(\V_k\cap{\V}_\ell)\subseteq\R^d
\]
and 
\begin{eqnarray}\label{e:i_2}
\imath_2 : \0^s(\U_K\cap{\U}_L,\V_K\cap{\V}_L) &\to& \oplus_{k \in K, \ell 
\in L}H^s({\chi}_\ell({\U}_k\cap \U_\ell),\R^d)\\
\nonumber
f &\mapsto& ({g}_{k\ell})_{k \in K, \ell \in L}
\end{eqnarray}
where
\[
{g}_{k\ell}:= \delta_{k\ell} \circ f \circ \gamma_{k\ell}^{-1} : 
\chi_\ell(\U_k \cap \U_\ell) \to \eta_\ell(\V_k \cap \V_\ell) \subseteq \R^d.
\]

\noindent Let 
\[
\mathcal G_K:=\oplus_{k \in K, \ell \in L}H^s(\chi_k(\U_k \cap \U_\ell),\R^d),
\]
\[
\mathcal G_L:=\oplus_{k \in K, \ell \in L}H^s({\chi}_\ell({\U}_k\cap \U_\ell),\R^d)
\]
and consider the following diagram
\begin{equation}\label{figure3}
\begin{diagram}
\mathcal F_K \supseteq \imath_K(\0^s_{KL}) & \lTo^{\imath_K} & \0^s_{KL} & \rTo^{\imath_L} & \imath_L(\0^s_{KL})
\subseteq \mathcal F_L \\
\dTo^{\mathcal R_K} & \ldTo^{\imath_1} && \rdTo^{\imath_2} & \dTo_{\mathcal R_L} \\
\mathcal G_K \supseteq \imath_1(\0^s_{KL}) && \rTo^T && \imath_2(\0^s_{KL}) \subseteq \mathcal G_L
\end{diagram}
\end{equation}
\noindent where $\imath_K$ is the restriction of 
\[
\imath_{\U_K,\V_K}:\0^s(\U_K,\V_K) \to \mathcal F_K
\] 
to $\0_{KL}^s \subseteq \0^s(\U_K,\V_K)$, $\imath_L$ is defined similarly, and the maps $R_K$, $R_L$, and $T$ are defined by
\begin{eqnarray*}
R_K &:& \mathcal F_K\to \mathcal G_K,\,\,\,(f_k)_{k \in K}\mapsto(f_{k\ell})_{k \in K,\ell \in L},\;\,f_{k\ell}:=f_k|_{\chi_k(\U_k\cap{\U}_\ell)}\,,\\
R_L &:& \mathcal F_L \to \mathcal G_L,\,\,\,({f}_\ell)_{\ell \in L}\mapsto({g}_{k\ell})_{k \in K,\ell \in L},\;\,
{g}_{k\ell}:={f}_\ell|_{{\chi}_\ell({\U}_k\cap \U_\ell)},\\
T &:& \mathcal G_K\to \mathcal G_L,\,(f_{k\ell})_{k \in K,\ell \in L}\mapsto({g}_{k\ell})_{k \in K,\ell \in L},\;\,
\end{eqnarray*}
with 
\[
 g_{k\ell}:=\left(\eta_\ell \circ \eta_k^{-1}\right) \circ f_{k\ell} \circ \left(\chi_k \circ \chi_\ell^{-1}\right).
\]
Note that the diagram \eqref{figure3} commutes. The arguments used to prove that $\mathcal P_I$ in \eqref{figure1} is a diffeomorphism show that $\mathcal R_K$ and $\mathcal R_L$ are diffeomorphisms. We claim that $T$ is a diffeomorphism. First note that $T$ is bijective and its inverse $T^{-1}$ is given by 
\[
 T^{-1}: \mathcal G_L \to \mathcal G_K,\quad (g_{k\ell})_{k \in K,\ell \in L} \mapsto (f_{k\ell})_{k \in K,\ell \in L}
\]
with
\[
 f_{k\ell} = \left.\left(\eta_k \circ \eta_\ell^{-1}\right) \circ g_{k\ell} \circ \left(\chi_\ell \circ
\chi_k^{-1}\right) \right|_{\chi_k(\U_k \cap \U_\ell)}.
\]
In view of the boundedness of the extension operator of Proposition \ref{prop_extension}$(ii)$ the smoothness of $T$
and $T^{-1}$ then follows from Corollary \ref{coro_right_translation}, Proposition \ref{prop_left_translation}.
and Lemma \ref{lemma_smooth_extension}. Comparing the diagrams \eqref{figure2} and \eqref{figure3} we conclude that
$\widetilde{\mathcal R} = \mathcal R_K \circ T \circ \mathcal R_L^{-1}$. Hence $\widetilde{\mathcal R}$ is a
diffeomorphism.
\end{proof}

\begin{proof}[Proof of Proposition \ref{Th:same_diff_structure}]
The claim that the $C^\infty$-differentiable structure on $H^s(M,N)$, introduced by Ebin-Marsden and the one
introduced in this paper coincide follows from Corollary \ref{coro_same_diff_structure} and Proposition
\ref{Th:diff_structure}.
\end{proof}

As a consequence of Proposition \ref{Th:same_diff_structure} we obtain the following corollary.
\begin{Coro}\label{Coro:independence}
The $C^\infty$-differentiable structure on $H^s(M,N)$ introduced in \cite{EM}, is independent of the choice of the
Riemannian metric on $N$.
\end{Coro}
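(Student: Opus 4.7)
The plan is a direct appeal to Proposition \ref{Th:same_diff_structure}. Recall that the $C^\infty$-differentiable structure $\mathcal A^s$ on $H^s(M,N)$ constructed in Subsection \ref{subsec3:prelim} is built purely from fine covers $(\U_I,\V_I)$ and from the embeddings $\imath_{\U_I,\V_I}$ into finite direct sums of Sobolev spaces of $\R^d$-valued maps on Euclidean domains; neither Definitions \ref{def_bdd_cover}--\ref{def_fine_f} nor the embeddings $\imath_{\U_I,\V_I}$ make any reference to a Riemannian metric on $N$. Although a Riemannian metric on $N$ was used as an auxiliary tool in Lemma \ref{lem:fine_cover_existence} (existence of fine covers) and in Proposition \ref{Th:submanifold} (identification of $\imath_{\U_I,\V_I}(\0^s(\U_I,\V_I))$ as a submanifold of $\oplus_{i \in I} H^s(U_i,\R^d)$), the resulting $C^\infty$-structure is obtained by pulling back the intrinsic submanifold structure of the image and therefore does not depend on that auxiliary choice; the independence from the choice of fine cover is precisely the content of Proposition \ref{Th:diff_structure}.

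Given two arbitrary $C^\infty$-Riemannian metrics $g_1$ and $g_2$ on $N$, applying Proposition \ref{Th:same_diff_structure} twice yields
\[
\mathcal A^s_{g_1} \;=\; \mathcal A^s \;=\; \mathcal A^s_{g_2}\,,
\]
so the Ebin--Marsden differentiable structure associated to $g_1$ and the one associated to $g_2$ coincide, proving the corollary. There is essentially no obstacle, as the substantive content has already been established in Proposition \ref{Th:same_diff_structure}; the corollary itself is a formal consequence of the existence of the metric-free intermediate structure $\mathcal A^s$.
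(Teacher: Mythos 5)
Your proposal is correct and follows exactly the paper's route: the corollary is stated there as an immediate consequence of Proposition \ref{Th:same_diff_structure}, obtained by comparing $\mathcal A^s_{g_1}$ and $\mathcal A^s_{g_2}$ through the metric-free intermediate structure $\mathcal A^s$. Your additional observation that the construction of $\mathcal A^s$ uses a Riemannian metric only as an auxiliary device (in Lemma \ref{lem:fine_cover_existence} and Proposition \ref{Th:submanifold}) and not in the resulting structure is the correct justification for why this two-sided application is legitimate.
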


\appendix
\section{Appendix}\label{appendix A}
In this appendix we prove Lemma \ref{Lemma:inverse_continuous}. First we need to establish an auxiliary result. 
Throughout this appendix, we will use the notation introduced in Section \ref{Section 3}. For bounded open subsets
$U,W \subseteq \R^n$ with $C^\infty$-boundaries and $s>n/2+1$, denote by $\Ds^s_{U,W}$ the following subset of
$\Ds^s(U,\R^n)$,
\[
\Ds^s_{U,W}:=\big\{ \varphi \in \Ds^s(U,\R^n) \ \big| \ \overline{W} \subseteq \varphi(U) \big\}.
\]
Arguing as in Lemma \ref{lem:openness} one can prove that $\Ds^s_{U,W}$ is an open subset of $\Ds^s(U,\R^n)$.
Moreover, following the arguments of the proof of Lemma \ref{continuous_inverse} one gets

\begin{Lemma}\label{continuous_inverse_charts}
Let $U,W$, and $s$ be as above. Then, for any $\varphi \in \Ds^s_{U,W}$, $\left. \varphi^{-1} \right|_{W} \in \Ds^s(W,\R^n)$
and the map
\[
 \Ds^s_{U,W} \to \Ds^s(W,\R^n), \quad \varphi \mapsto \left. \varphi^{-1} \right|_{W}
\]
is continuous.
\end{Lemma}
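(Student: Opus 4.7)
The plan is to follow almost verbatim the structure of the proof of Lemma \ref{continuous_inverse}, replacing the global Sobolev machinery on $\R^n$ with its bounded-domain counterparts. The first point to observe is that for any $\varphi \in \Ds^s_{U,W}$ the set $\varphi^{-1}(\overline{W})$ is a compact subset of the open set $U$; since $\Ds^s(U,\R^n)\hookrightarrow C^1(\overline U,\R^n)$ by Proposition \ref{prop_extension}(iii) and Proposition \ref{prop_sobolev_imbedding}, one can choose an open set $U'$ with $C^\infty$-boundary such that $\varphi^{-1}(\overline{W})\Subset U'\Subset U$, and the same $U'$ works uniformly for $\varphi$ in a small neighbourhood of a given $\varphi_\bullet\in\Ds^s_{U,W}$. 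All the subsequent constructions will be performed on $U'$, so that Lemma \ref{lem_imbedding_general} (algebra property on bounded Lipschitz domains), Lemma \ref{lemma_division} (in the version mentioned in Remark \ref{rem_lemma_division}), and Proposition \ref{prop_composition_general} apply.

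First I would show that $\varphi^{-1}|_W\in\Ds^s(W,\R^n)$. For $\alpha=0$ the change of variables formula gives
\[
\int_W |\varphi^{-1}(x)-x|^2\,dx = \int_{\varphi^{-1}(W)}|y-\varphi(y)|^2\det(d_y\varphi)\,dy<\infty,
\]
since $\det d\varphi$ is bounded on $\overline{U'}$ and $\varphi-\id\in H^s(U,\R^n)\subseteq L^2$. For $1\le|\alpha|\le s$ I would prove by induction on $|\alpha|$ the local analogue of formula \eqref{derivative_form},
\[
\partial^\alpha(\varphi^{-1}-\id)\big|_W = F^{(\alpha)}\circ \varphi^{-1}\big|_W,
\]
where $F^{(\alpha)}\in H^{s-|\alpha|}(U',\R^{n^{|\alpha|}})$ depends continuously on $\varphi$. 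The base case uses the chain rule $d(\varphi^{-1})=(d\varphi)^{-1}\circ\varphi^{-1}$, together with the cofactor representation
\[
(d\varphi)^{-1}-\id_n=\frac{1}{\det(d\varphi)}\bigl(\Phi-\det(d\varphi)\,\id_n\bigr),
\]
which lies in $H^{s-1}(U',\R^{n\times n})$ by Lemma \ref{lem_imbedding_general} (the entries of $\Phi$ are polynomials in the entries of $d\varphi\in H^{s-1}$) combined with Lemma \ref{lemma_division} (division by $\det d\varphi\in 1+H^{s-1}$, with $\inf_{\overline{U'}}\det d\varphi>0$). The induction step proceeds exactly as in the proof of Lemma \ref{continuous_inverse}, differentiating once more and invoking Lemma \ref{lem_imbedding_general}. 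The change of variables formula then yields
\[
\int_W|\partial^\alpha\varphi^{-1}|^2\,dx \le \int_{\varphi^{-1}(W)}|F^{(\alpha)}|^2\det(d_y\varphi)\,dy<\infty,
\]
so $\varphi^{-1}|_W\in H^s(W,\R^n)$; and since $\varphi^{-1}|_W$ is a $C^1$-diffeomorphism of $W$ onto $\varphi^{-1}(W)\subseteq U$ with positive Jacobian, it lies in $\Ds^s(W,\R^n)$.

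For the continuity claim, the $L^2$-continuity is obtained as in \eqref{eq:infty_close}-\eqref{L2_continuity}: writing $\varphi^{-1}(x)-\varphi_\bullet^{-1}(x)=\varphi^{-1}(x)-\varphi^{-1}(\varphi\circ\varphi_\bullet^{-1}(x))$ and using that $\sup|d\varphi^{-1}|$ is uniformly bounded near $\varphi_\bullet$, one gets
\[
\int_W|\varphi^{-1}-\varphi_\bullet^{-1}|^2\,dx \le L^2\int_{\varphi_\bullet^{-1}(W)}|\varphi-\varphi_\bullet|^2\det(d_y\varphi_\bullet)\,dy.
\]
For higher derivatives I would split
\[
|\partial^\alpha(\varphi^{-1}-\varphi_\bullet^{-1})|\le |F^{(\alpha)}\circ\varphi^{-1}-F^{(\alpha)}_\bullet\circ\varphi^{-1}| + |F^{(\alpha)}_\bullet\circ\varphi^{-1}-F^{(\alpha)}_\bullet\circ\varphi_\bullet^{-1}|,
\]
handle the first term via change of variables together with the continuity of $\varphi\mapsto F^{(\alpha)}$ into $H^{s-|\alpha|}(U',\R^{...})$, and control the second term by approximating $F^{(\alpha)}_\bullet$ in $L^2(U')$ by a function in $C_c^\infty(U',\R^{...})$ (using Proposition \ref{prop_extension}(i)) and exploiting its Lipschitz property together with the already established $L^2$-continuity.

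The main obstacle will be administrative rather than conceptual: one must carefully track domains of definition so that every composition $F^{(\alpha)}\circ\varphi^{-1}|_W$ and every application of Lemmas \ref{lem_imbedding_general}, \ref{lemma_division}, and Proposition \ref{prop_composition_general} takes place on a fixed intermediate Lipschitz domain $U'$ that contains $\varphi^{-1}(\overline{W})$ uniformly for $\varphi$ near $\varphi_\bullet$. Once this uniform choice of $U'$ is fixed, the arguments of Lemma \ref{continuous_inverse} transfer line by line.
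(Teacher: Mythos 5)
Your proposal is correct and follows exactly the route the paper intends: the paper's own ``proof'' of this lemma consists of the single remark that it is obtained ``following the arguments of the proof of Lemma \ref{continuous_inverse}'', and your localization to an intermediate Lipschitz domain $U'$ with $\varphi^{-1}(\overline{W})\Subset U'\Subset U$, chosen uniformly for $\varphi$ near $\varphi_\bullet$, is precisely the bookkeeping needed to make that transfer of the $\R^n$-argument legitimate. One small citation slip: Remark \ref{rem_lemma_division} concerns fractional exponents rather than bounded domains, so what you actually need is the bounded-domain analogue of Lemma \ref{lemma_division}, which follows by the same induction using Lemma \ref{lem_imbedding_general} and the density statement of Proposition \ref{prop_extension}(i).
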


\begin{proof}[Proof of Lemma \ref{Lemma:inverse_continuous}]
Let $\varphi$ be an arbitrary element in $\Ds^s(M)$. To see that its inverse $\varphi^{-1}$ is again in $\Ds^s(M)$, 
it suffices to verify that when expressed in local coordinates, the map $\varphi^{-1}$ is of Sobolev class $H^s$. 
To be more precise, let 
\[
\chi: \U \to U \subseteq \R^n \quad \mbox{and} \quad \eta:\V \to V \subseteq \R^n
\]
be coordinate charts so that $U,V$ are open, bounded subsets of $\R^n$ with $C^\infty$-boundaries and 
$\varphi(\U) \Subset \V$. 
By the construction of the fine cover in Lemma \ref{lem:fine_cover_existence} we can assume that $(\U,\V)$ is
a part of a fine cover $(\U_I,\V_I)$ with respect to $\varphi\in\D^s(M)$.
Then, by Lemma \ref{lemma_local_pt}, $\psi := \eta \circ \varphi \circ \chi^{-1}$
is in $H^s(U,\R^n)$. Choose $\W \Subset \varphi(\U)$ so that $W:=\eta(\W)$ is an open bounded subset of $\R^n$ with
$C^\infty$-boundary. By Lemma \ref{continuous_inverse_charts}, it follows that 
$\left. \psi^{-1} \right|_W:W \to \R^n$ is in $\Ds^s(W,\R^n)$. As the chart $\U,\V$ as well as $\W$ were chosen
arbitrarily, we conclude that $\varphi^{-1}$ is in $\Ds^s(M)$. 
By the construction of the fine cover in Lemma \ref{lem:fine_cover_existence} we can choose a fine cover $(\U_I,\V_I)$
with respect to $\varphi\in\D^s(M)$ and $\W_I\Subset\varphi(\U_I)$ such that $(\W_I,\U_I)$ is a fine cover with
respect to $\varphi^{-1}\in\D^s(M)$. Then, Lemma \ref{continuous_inverse_charts} implies that the map
$\Ds^s(M) \to \Ds^s(M)$, $\varphi \mapsto \varphi^{-1}$ is continuous.
\end{proof}

\section{Appendix}\label{appendix B}

In this appendix we discuss the extension of Theorem \ref{thm1} and Theorem \ref{thm2} to the case where $s$ is a real number with $s>n/2+1$.

For $s \in \R_{\geq 0}$, denote by $H^s(\R^n,\R)$ the Hilbert space
\[
 H^s(\R^n,\R):=\big\{ f \in L^2(\R^n,\R) \ \big| \ (1+|\xi|^2)^{s/2} \hat f(\xi) \in L^2(\R^n,\R) \big\}
\]
with inner product
\[
 {\langle f,g \rangle}_s^\sim=\int_{\R^n} \hat f(\xi) \overline{\hat{g}(\xi)} (1+|\xi|^2)^{s} d\xi
\]
and induced norm
\[
 \|f\|_s^\sim:= \left(\langle f,f \rangle_s^\sim \right)^{1/2}.
\]
By \eqref{equivalent_norm}, the norms $\|f\|_s^\sim$ and $\|f\|_s$ are equivalent for any integer $s \geq 0$. In the
sequel, by a slight abuse of notation, we will write $\|f\|_s$ instead of $\|f\|_s^\sim$ and 
$\langle \cdot,\cdot \rangle_s$ instead of $\langle \cdot,\cdot \rangle_s^\sim$ for any $s \in \mathbb R_{\geq 0}$. 
In a straightforward way one proves the following lemma.

\begin{Lemma}\label{lem_inductive_norm}
For any $f \in L^2(\R^n,\R)$ and $s \in \R_{\geq 1}$, $f \in H^s(\R^n,\R)$ iff for any $1 \leq i \leq n$,
the distributional derivate $\dx{i}f$ is in $H^{s-1}(\R^n,\R)$. Moreover $\|f\|+\sum_{i=1}^n \|\dx{i}f\|_{s-1}$ is a
norm on $H^s(\R^n,\R)$ which is equivalent to $\|f\|_s$.
\end{Lemma}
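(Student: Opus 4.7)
My plan is to reduce everything to a pointwise comparison of weights on the Fourier side. Recall that by the definition used here, $f\in H^s(\R^n,\R)$ exactly when $w_s(\xi):=(1+|\xi|^2)^{s/2}\hat f(\xi)$ belongs to $L^2(\R^n,\R)$, and $\|f\|_s^2=\int_{\R^n}(1+|\xi|^2)^s|\hat f(\xi)|^2\,d\xi$. The distributional derivative $\dx{i}f$ has Fourier transform $i\xi_i\hat f(\xi)$; this is a tempered distribution, and $\dx{i}f\in H^{s-1}(\R^n,\R)$ precisely when
\[
\|\dx{i}f\|_{s-1}^2=\int_{\R^n}(1+|\xi|^2)^{s-1}\xi_i^2|\hat f(\xi)|^2\,d\xi<\infty.
\]
Summing over $i$ and adding $\|f\|^2$, the candidate squared norm $\|f\|^2+\sum_{i=1}^n\|\dx{i}f\|_{s-1}^2$ equals $\int_{\R^n}\omega_s(\xi)|\hat f(\xi)|^2\,d\xi$ where
\[
\omega_s(\xi):=1+(1+|\xi|^2)^{s-1}|\xi|^2.
\]

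The heart of the proof is thus the pointwise equivalence $\omega_s(\xi)\asymp(1+|\xi|^2)^s$. Writing $(1+|\xi|^2)^s=(1+|\xi|^2)^{s-1}+(1+|\xi|^2)^{s-1}|\xi|^2$ and using that $(1+|\xi|^2)^{s-1}\ge 1$ (because $s\ge 1$) one obtains immediately $\omega_s(\xi)\le(1+|\xi|^2)^s$. For the reverse inequality, I will split: if $|\xi|\le 1$ then $(1+|\xi|^2)^{s-1}\le 2^{s-1}$, so that term is bounded by $2^{s-1}\cdot 1\le 2^{s-1}\omega_s(\xi)$; if $|\xi|\ge 1$ then $(1+|\xi|^2)^{s-1}\le(1+|\xi|^2)^{s-1}|\xi|^2\le\omega_s(\xi)$. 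Hence $(1+|\xi|^2)^{s-1}\le 2^{s-1}\omega_s(\xi)$ on all of $\R^n$, and therefore $(1+|\xi|^2)^s\le(2^{s-1}+1)\omega_s(\xi)$.

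Plugging this two-sided inequality into the integrals against $|\hat f(\xi)|^2$ yields
\[
C^{-1}\|f\|_s^2\;\le\;\|f\|^2+\sum_{i=1}^n\|\dx{i}f\|_{s-1}^2\;\le\;\|f\|_s^2
\]
for some $C=C(s)\ge 1$, which simultaneously gives the iff statement (an $L^2$-function $f$ lies in $H^s$ iff each weighted integral on the right is finite, iff each $\dx{i}f$ lies in $H^{s-1}$) and the equivalence of norms, once one notes that the $\ell^1$-norm $\|f\|+\sum_{i=1}^n\|\dx{i}f\|_{s-1}$ and the corresponding $\ell^2$-norm $\bigl(\|f\|^2+\sum_{i=1}^n\|\dx{i}f\|_{s-1}^2\bigr)^{1/2}$ on $\R^{n+1}$ are equivalent with constants depending only on $n$.

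There is no real obstacle; the only delicate point is the weight comparison near $\xi=0$, which forces the case split $|\xi|\le 1$ versus $|\xi|\ge 1$ and is where the hypothesis $s\ge 1$ is used (it is precisely what guarantees $(1+|\xi|^2)^{s-1}\ge 1$). All other steps are formal manipulations with the Fourier definition.
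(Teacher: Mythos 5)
Your proof is correct: the Fourier-side reduction, the pointwise weight comparison $1+(1+|\xi|^2)^{s-1}|\xi|^2\asymp(1+|\xi|^2)^s$ (with the case split at $|\xi|=1$ using $s\ge 1$), and the final $\ell^1$/$\ell^2$ equivalence all check out. The paper omits the proof as ``straightforward,'' and your argument is exactly the intended one given that $H^s(\R^n,\R)$ is defined in the appendix via the Fourier transform.
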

\vspace{0.2cm}
For $s \in \R_{> 0} \setminus \mathbb N$, elements in $H^s(\R^n,\R)$ can be conveniently characterized as follows --
see e.g. \cite[Theorem 7.48]{adams}.

\begin{Lemma}\label{lem_charakterisierung} Let $s \in \R_{> 0} \setminus \N$ and $f \in L^2(\R^n,\R)$. Then
$f \in H^s(\R^n,\R)$ iff $f \in H^{\lfloor s \rfloor}(\R^n,\R)$ and $[\partial^\alpha f]_\lambda < \infty$ for any
multi-index $\alpha=(\alpha_1,\ldots,\alpha_n)$ with $|\alpha|=\lfloor s \rfloor$ where $\lambda=s-\lfloor s \rfloor$
and where $[\partial^\alpha f]_\lambda$ denotes the $L^2$-norm of the function 
\[
\R^n \times \R^n \to \R, \quad (x,y) \mapsto \frac{|\partial^\alpha f(x)-\partial^\alpha f(y)|}{|x-y|^{\lambda+n/2}}.
\]
Moreover $\sqrt{{\langle\!\langle f,f \rangle\!\rangle}_s}$ is a norm on $H^s(\R^n,\R)$, equivalent to $\|\cdot\|_s$,
where ${\langle\!\langle\cdot,\cdot\rangle\!\rangle}_s$ is the inner product
\[
{\langle\!\langle f, g \rangle\!\rangle}_s = \langle f, g\rangle_{\lfloor s \rfloor} + 
\!\!\sum_{\substack{\alpha \in \mathbb Z_{\geq 0}^n\\|\alpha|=\lfloor s \rfloor}}\!\!\!
\int_{\R^n}\!\!\!\int_{\R^n}\!\!\!\frac{\big(\partial^\alpha f(x)- \partial^\alpha f(y)\big) 
\big(\partial^\alpha g(x)- \partial^\alpha g(y)\big)}{|x-y|^{n+2\lambda}}\,dxdy.
\]
\end{Lemma}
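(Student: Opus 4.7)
The plan is to reduce the characterization to the classical Gagliardo--Plancherel identity. Set $k=\lfloor s\rfloor$ and $\lambda=s-k\in(0,1)$. The central identity I would establish first is
\begin{equation}\label{plan:gagliardo}
[g]_\lambda^2 \;=\; \iint_{\R^n\times\R^n}\frac{|g(x)-g(y)|^2}{|x-y|^{n+2\lambda}}\,dx\,dy \;=\; c_{n,\lambda}\int_{\R^n}|\xi|^{2\lambda}|\hat g(\xi)|^2\,d\xi
\end{equation}
for any $g\in L^2(\R^n,\R)$, both sides interpreted as possibly $+\infty$. To prove \eqref{plan:gagliardo} I substitute $h=y-x$, apply Plancherel in the $x$-variable using $\widehat{g(\cdot+h)}(\xi)=e^{ih\cdot\xi}\hat g(\xi)$, and Fubini, reducing the left-hand side to $\int|\hat g(\xi)|^2 I(\xi)\,d\xi$ with
\[
I(\xi)\;:=\;\int_{\R^n}\frac{|e^{ih\cdot\xi}-1|^2}{|h|^{n+2\lambda}}\,dh .
\]
A rescaling $h\mapsto h/|\xi|$ together with rotational invariance yields $I(\xi)=c_{n,\lambda}|\xi|^{2\lambda}$, and the constant $c_{n,\lambda}$ is finite exactly because $0<\lambda<1$: near the origin the integrand is $O(|h|^{2-n-2\lambda})$, and at infinity it is $O(|h|^{-n-2\lambda})$.

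Once \eqref{plan:gagliardo} is in hand I apply it to $g=\partial^\alpha f$ for each multi-index with $|\alpha|=k$ and sum. Since $\widehat{\partial^\alpha f}(\xi)=(i\xi)^\alpha\hat f(\xi)$, this gives
\[
\sum_{|\alpha|=k}[\partial^\alpha f]_\lambda^2 \;=\; c_{n,\lambda}\int_{\R^n}|\xi|^{2\lambda}\Bigl(\sum_{|\alpha|=k}|\xi^\alpha|^2\Bigr)|\hat f(\xi)|^2\,d\xi .
\]
The polynomial $P(\xi):=\sum_{|\alpha|=k}|\xi^\alpha|^2$ is homogeneous of degree $2k$ and strictly positive on the unit sphere, so by compactness $c_1|\xi|^{2k}\le P(\xi)\le c_2|\xi|^{2k}$. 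Combined with the $H^k$ part of the norm via \eqref{equivalent_norm}, this yields
\[
\langle\!\langle f,f\rangle\!\rangle_s \;\asymp\; \int_{\R^n}\bigl((1+|\xi|^2)^k+|\xi|^{2s}\bigr)|\hat f(\xi)|^2\,d\xi \;\asymp\; \int_{\R^n}(1+|\xi|^2)^s|\hat f(\xi)|^2\,d\xi ,
\]
where the last comparison follows by splitting the integral over $\{|\xi|\le 1\}$ (both integrands bounded and equivalent) and $\{|\xi|\ge 1\}$ (on which $(1+|\xi|^2)^s\asymp|\xi|^{2s}$ dominates $(1+|\xi|^2)^k$). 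The characterization of membership in $H^s(\R^n,\R)$ follows immediately from this two-sided bound applied to a fixed $f\in L^2(\R^n,\R)$.

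The main obstacle will be justifying the Fubini--Plancherel interchange in the derivation of \eqref{plan:gagliardo}, since the iterated integral need not be absolutely convergent a priori. I would handle this by first proving the identity on Schwartz class, where all manipulations are permitted by Tonelli (the integrand is non-negative), and then extending to general $g\in L^2(\R^n,\R)$ by approximation: both sides are lower semi-continuous in $g$ with respect to $L^2$-convergence (the right-hand side by Fatou applied in Fourier variables, the left-hand side by Fatou after selecting an a.e.\ convergent subsequence), and one takes limits in the Schwartz approximation to conclude equality in the general case.
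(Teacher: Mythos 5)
Your proposal is correct and rests on the same central identity as the paper's own proof, namely $[g]_\lambda^2=c_{n,\lambda}\int_{\R^n}|\xi|^{2\lambda}|\hat g(\xi)|^2\,d\xi$ for $0<\lambda<1$, established by exactly the same substitution--Plancherel--rescaling computation; the paper merely assembles the general case by induction on $s$ via Lemma \ref{lem_inductive_norm}, reducing to $0<s<1$, rather than by your direct Fourier-multiplier comparison using $\sum_{|\alpha|=\lfloor s\rfloor}|\xi^\alpha|^2\asymp|\xi|^{2\lfloor s\rfloor}$. One small remark: the Schwartz-approximation and lower-semicontinuity step you anticipate is not actually needed, since the interchange of integrals is licensed by Tonelli (the integrand is non-negative) and the Plancherel step is applied for each fixed $z$ to the difference $f(\cdot+z)-f\in L^2(\R^n,\R)$, which is valid for arbitrary $f\in L^2(\R^n,\R)$ with both sides of the identity allowed to take the value $+\infty$.
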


\begin{proof} We argue by induction with respect to $s$. In view of Lemma \ref{lem_inductive_norm}, 
it suffices to prove the claimed statement in the case $0 < s < 1$. Then $\lambda=s$ and we have
\begin{eqnarray*}
\int_{\R^n}\int_{\R^n} \frac{|f(x)-f(y)|^2}{|x-y|^{n+2s}}\,\,dxdy &=& \int_{\R^n}\int_{\R^n}
\frac{|f(x+z)-f(x)|^2}{|z|^{n+2s}}\,\,dxdz\\
&=& \int_{\R^n} \frac{1}{|z|^{n+2s}} \left(\int_{\R^n} |f(x+z)-f(x)|^2 dx\right) dz.
\end{eqnarray*}
By Plancherel's theorem, 
\begin{eqnarray*}
\int_{\R^n} |f(x+z)-f(x)|^2\,dx &=& \int_{\R^n} |\widehat{f(\cdot+z)}(\xi)-\hat f(\xi)|^2\,d\xi\\
&=& \int_{\R^n} |e^{iz \cdot \xi}-1|^2 |\hat f(\xi)|^2\,d\xi.
\end{eqnarray*}
Therefore 
\begin{eqnarray*}
\int_{\R^n}\int_{\R^n} \frac{|f(x)-f(y)|^2}{|x-y|^{n+2s}}\,\,dxdy &=& \int_{\R^n} |\hat f(\xi)|^2 
\left( \int_{\R^n} \frac{|e^{iz\cdot \xi}-1|^2}{|z|^{n+2s}}\,dz \right)\,d\xi\\
&=& \int_{\R^n} |\xi|^{2s} |\hat f(\xi)|^2 \left( \int_{\R^n} \frac{|e^{iz\cdot \xi}-1|^2}{|\xi|^{2s}|z|^{n+2s}}\,dz
\right)\,d\xi.
\end{eqnarray*}
Let $U \in SO(n)$ such that $U(\xi)=|\xi| e_1$ where $e_1=(1,0,\ldots,0) \in \R^n$. For $\xi \neq 0$ introduce
the new variable $y$ defined by $z=\frac{1}{|\xi|} U^{-1}(y)$. With this change of variable, the inner integral becomes,
\[
\int_{\R^n} \frac{|e^{iz\cdot \xi}-1|^2}{|\xi|^{2s}|z|^{n+2s}}\,dz = \int_{\R^n} 
\frac{|e^{iy_1}-1|^2}{|y|^{n+2s}}\,dy < \infty.
\]
Note that the latter integral converges and equals a positive constant that is independent of $\xi$.
Hence we conclude that for any $f \in L^2(\R^n,\R)$ one has $\|f\|_s^2 < \infty$ iff
\[
\int_{\R^n}\int_{\R^n} \frac{|f(x)-f(y)|^2}{|x-y|^{n+2s}}\,\,dxdy < \infty.
\]
The statement on the norms is easily verified.
\end{proof}

The following result extends part $(ii)$ of Lemma \ref{lemma_chain_rule}.

\begin{Lemma}\label{uniform_fractional_bound}
Let $\varphi \in \emph{Diff}_+^1(\R^n)$ with $d\varphi$ and $d \varphi^{-1}$ bounded on all of $\R^n$. Then for any $0 < s' < 1$, the right translation by $\varphi$, $f \mapsto R_\varphi(f)=f \circ \varphi$ is a bounded linear operator on $H^{s'}(\R^n,\R)$.
\end{Lemma}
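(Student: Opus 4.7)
The plan is to use the characterization of the $H^{s'}$-norm via the Gagliardo–Slobodeckij seminorm provided by Lemma \ref{lem_charakterisierung}. Since $0 < s' < 1$, we have $\lfloor s'\rfloor = 0$ and $\lambda = s'$, so the equivalent norm reads
\[
\| f \|^2 + \int_{\R^n}\int_{\R^n} \frac{|f(x)-f(y)|^2}{|x-y|^{n+2s'}}\,dx\,dy.
\]
Thus it suffices to show that both summands are controlled by the same expression evaluated on $f$ instead of $f\circ\varphi$, with constants depending only on $\|d\varphi\|_{C^0}$ and $\|d\varphi^{-1}\|_{C^0}$.

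First I would dispose of the $L^2$ part: by Lemma \ref{lemma_chain_rule}(i), $R_\varphi : L^2(\R^n,\R) \to L^2(\R^n,\R)$ is bounded, with operator norm $\le M_1^{-1/2}$ where $M_1 := \inf_{x \in \R^n} \det(d_x\varphi) > 0$. Note that $M_1$ is indeed positive: as $(d_x\varphi)^{-1} = d_{\varphi(x)}\varphi^{-1}$, the bound $|d\varphi^{-1}| \le C$ forces $|\det(d_x\varphi)| \ge C^{-n}$ uniformly in $x$.

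Next I would estimate the Gagliardo seminorm of $f \circ \varphi$ by a change of variables $u = \varphi(x)$, $v = \varphi(y)$, whose Jacobian factors obey $\det(d_x\varphi)\,\det(d_y\varphi) \ge M_1^2$. The crucial mean-value estimate $|\varphi(x) - \varphi(y)| \le \|d\varphi\|_{C^0}\,|x-y|$, valid since $\varphi$ is $C^1$ with bounded differential on the convex space $\R^n$, yields
\[
|x - y| \;\ge\; \|d\varphi\|_{C^0}^{-1}\,|u - v|,
\]
hence $|x-y|^{-(n+2s')} \le \|d\varphi\|_{C^0}^{\,n+2s'}\,|u-v|^{-(n+2s')}$. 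Combining these ingredients one obtains
\[
\int_{\R^n}\!\!\int_{\R^n}\frac{|f(\varphi(x))-f(\varphi(y))|^2}{|x-y|^{n+2s'}}\,dx\,dy
\;\le\; \frac{\|d\varphi\|_{C^0}^{\,n+2s'}}{M_1^2}
\int_{\R^n}\!\!\int_{\R^n}\frac{|f(u)-f(v)|^2}{|u-v|^{n+2s'}}\,du\,dv,
\]
which is the desired estimate for the seminorm. Adding the $L^2$ estimate and invoking the equivalence of norms from Lemma \ref{lem_charakterisierung}, one concludes that $R_\varphi : H^{s'}(\R^n,\R) \to H^{s'}(\R^n,\R)$ is a bounded linear operator, with operator norm bounded in terms of $\|d\varphi\|_{C^0}$ and $\|d\varphi^{-1}\|_{C^0}$ only. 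There is no serious obstacle here; the only point requiring care is the uniform lower bound on $\det(d_x\varphi)$, which, as noted, follows from the boundedness of $d\varphi^{-1}$.
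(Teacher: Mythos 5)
Your proposal is correct and follows essentially the same route as the paper: the $L^2$ part is handled by Lemma \ref{lemma_chain_rule}(i), and the Gagliardo seminorm $[f\circ\varphi]_{s'}$ is bounded by the change of variables $u=\varphi(x)$, $v=\varphi(y)$ with the Jacobian bounded below by $\inf_x\det(d_x\varphi)$ together with the Lipschitz estimate $|x-y|\le \sup_x|d_x\varphi|\,|\varphi^{-1}(x)-\varphi^{-1}(y)|$, yielding the same constant $M^{-1}L^{n/2+s'}$. Your added remark that $\inf_x\det(d_x\varphi)>0$ follows from the boundedness of $d\varphi^{-1}$ is a correct and harmless supplement to what the paper leaves implicit.
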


\begin{proof}
In view of statement $(i)$ of Lemma \ref{lemma_chain_rule}, it remains to show that $[R_\varphi f]_{s'} < \infty$. By a change of variables one gets
\begin{eqnarray*}
[f \circ \varphi]_{s'}^2 &=& \int_{\R^n} \int_{\R^n} \frac{| f\big(\varphi(x)\big) - f\big(\varphi(y)\big)|^2}{|x-y|^{n+2s'}}\,\,dxdy\\
&\leq& \frac{1}{M^2} \int_{\R^n} \int_{\R^n} \frac{|f(x)-f(y)|^2}{|\varphi^{-1}(x)-\varphi^{-1}(y)|^{n+2s'}}\,\,dxdy
\end{eqnarray*}
where $M:=\inf_{x \in \R^n}(\det d_x \varphi)$. As $d\varphi$ is bounded on $\R^n$, one has for any $x,y \in \R^n$
\[
 |x-y| = |\varphi\big(\varphi^{-1}(x)\big)-\varphi\big(\varphi^{-1}(y)\big)| \leq L |\varphi^{-1}(x)-\varphi^{-1}(y)|
\]
where $L:=\sup_{x \in \R^n} |d_x\varphi| < \infty$. Hence
\begin{equation}\label{bounded_fractional_map}
 [f \circ \varphi]_{s'} \leq M^{-1} L^{n/2+s'} [f]_{s'} \quad \forall f \in H^{s'}(\R^n,\R). 
\end{equation}
Hence $f \circ \varphi \in H^{s'}(\R^n,\R)$ and it follows that $R_\varphi$ is a bounded linear operator on
$H^{s'}(\R^n,\R)$.
\end{proof}

Next we extend Lemma \ref{lemma_division} to the case where $s$ and $s'$ are real. Using the notation introduced in
Section \ref{Section 2}, one has

\begin{Lemma}\label{lemma_fractional_division}
Let $s,s'$ be real with $s > n/2$ and $0 \leq s' \leq s$. Then for any $\varepsilon > 0$ and $K > 0$ there exists a
constant $C \equiv C(\varepsilon,K;s,s') > 0$ so that for any $f \in H^{s'}(\R^n,\R)$ and $g \in U_\varepsilon^s$ with
$\|g\|_s < K$ one has $f/(1+g) \in H^{s'}(\R^n,\R)$ and 
\begin{equation}\label{eq:inequality}
\|f/(1+g)\|_{s'} \leq C \|f\|_{s'}.
\end{equation}
Moreover, the map
\begin{equation}\label{eq:map}
H^{s'}(\R^n,\R) \times U^s \to H^{s'}(\R^n,\R),\quad (f,g) \mapsto f/(1+g)
\end{equation}
is continuous.
\end{Lemma}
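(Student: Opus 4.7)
The plan is to induct on $k:=\lfloor s'\rfloor$, reducing at each step to Lemma \ref{lemma_division} (whose proof, as noted in Remark \ref{rem_lemma_division}, extends verbatim to real $s$) and to a single new estimate for the fractional part $\lambda=s'-k\in[0,1)$. I would use Lemma \ref{lem_inductive_norm} to convert an $H^{s'}$-bound into an $L^2$-bound plus $H^{s'-1}$-bounds on first-order partials, and the product-rule identity \eqref{eq:product_rule'} to express the partials of $f/(1+g)$ as quotients of lower-order quantities. Once the base case $0<s'<1$ is in hand, the rest is formal: the computations in the proof of Lemma \ref{lemma_division} carry over, using Lemma \ref{lem_imbedding} in its real-index version (Remark \ref{rem_lem_imbedding}).

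The new content lies in the base case $s'\in(0,1)$. Using the Slobodeckij characterization (Lemma \ref{lem_charakterisierung}) together with the pointwise identity
\[
\frac{f(x)}{1+g(x)}-\frac{f(y)}{1+g(y)}=\frac{f(x)-f(y)}{1+g(x)}-\frac{f(y)\bigl(g(x)-g(y)\bigr)}{(1+g(x))(1+g(y))},
\]
I would obtain
\[
\left[\tfrac{f}{1+g}\right]_{s'}^{2}\le\tfrac{2}{\varepsilon^{2}}[f]_{s'}^{2}+\tfrac{2}{\varepsilon^{4}}\,I,\qquad I:=\int_{\R^n}\!\int_{\R^n}\!\frac{|f(y)|^{2}|g(x)-g(y)|^{2}}{|x-y|^{n+2s'}}\,dxdy.
\]
The main obstacle is bounding $I$. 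For this I would add and subtract $f(x)g(x)$ inside $f(y)\bigl(g(x)-g(y)\bigr)$, rewriting it as $\bigl((fg)(x)-(fg)(y)\bigr)-g(x)\bigl(f(x)-f(y)\bigr)$, which yields
\[
I\le 2\,[fg]_{s'}^{2}+2\|g\|_{L^\infty}^{2}[f]_{s'}^{2}\le C\,\|g\|_{s}^{2}\|f\|_{s'}^{2},
\]
the first term being controlled by the product estimate of Lemma \ref{lem_imbedding} in its real-index form and the second by Proposition \ref{prop_sobolev_imbedding}. Combined with the trivial bound $\|f/(1+g)\|\le\|f\|/\varepsilon$, this yields \eqref{eq:inequality} for $s'\in[0,1)$.

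For the inductive step, assuming the claim for all exponents in $[0,k)$, I would first verify \eqref{eq:product_rule'} in $H^{s'-1}$ for real $s'$ by approximating $f,g$ in $C_c^\infty$ via Lemma \ref{lemma_density} and passing to the limit using the induction hypothesis with exponent $s'-1\in[k-1,k)$ to identify the distributional derivative. Then, using Lemma \ref{lem_inductive_norm} to reduce $\|f/(1+g)\|_{s'}$ to $\|f/(1+g)\|+\sum_i\|\partial_{x_i}(f/(1+g))\|_{s'-1}$, I would apply the induction hypothesis to each of the quotients $\partial_{x_i}f/(1+g)$, $\partial_{x_i}(fg)/(1+g)$, $(g\cdot\partial_{x_i}f)/(1+g)$ appearing on the right-hand side of \eqref{eq:product_rule'}; the required $H^{s'-1}$-regularity of the numerators, together with the estimates $\|\partial_{x_i}(fg)\|_{s'-1}\le K\|g\|_s\|f\|_{s'}$ and $\|g\cdot\partial_{x_i}f\|_{s'-1}\le K\|g\|_s\|f\|_{s'}$, comes from Lemma \ref{lem_imbedding} in the real-index form. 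The continuity of \eqref{eq:map} follows along the same lines: in the base case from a telescoping in $(f,g)$ using the Slobodeckij estimate just proved, and in the inductive step from the product-rule formula combined with the bilinear continuity of multiplication (Lemma \ref{lem_imbedding}) and the induction hypothesis. The only genuinely new ingredient beyond the integer-case proof is the bound on $I$ in the base case.
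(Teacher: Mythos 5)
Your proposal is correct and follows essentially the same route as the paper: the base case $s'\in(0,1)$ is handled via the Slobodeckij seminorm by reducing the cross term to $[fg]_{s'}$ (controlled by the real-index product estimate of Remark \ref{rem_lem_imbedding}) plus a bounded multiple of $[f]_{s'}$, and the inductive step uses the product-rule identity \eqref{eq:product_rule'} together with the hypothesis at exponent $s'-1$, exactly as in the paper. The only minor divergence is the continuity of \eqref{eq:map}, where you telescope in $(f,g)$ and invoke the uniform bound \eqref{eq:inequality}, whereas the paper expands $1/(1+g+\delta g)$ as a Neumann series in $\delta g/(1+g)$; both arguments work once \eqref{eq:inequality} is established.
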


\begin{proof}
In view of Lemma \ref{lemma_division} and Remark \ref{rem_lemma_division},
the claimed statement holds for real $s$ with $s > n/2$ and integers $s'$ satisfying $0 \leq s' \leq s$. 
Arguing by induction we will prove the first statement of the Lemma. 
Let us first show that \eqref{eq:inequality} holds for any $0<s'<1$, $s'\le s$.
Take an arbitrary $g\in U^s_\varepsilon$, $\varepsilon>0$. Then $\forall f\in H^{s'}(\R^n,\R)$,
$f/(1+g)\in L^2(\R^n,\R)$ and 
\begin{equation}\label{eq:L2}
\|f/(1+g)\|\le\frac{1}{\varepsilon}\,\|f\|\,.
\end{equation}
According to Lemma \ref{lem_charakterisierung} it remains to show that $[f/(1+g)]_{s'} < \infty$. Write
\begin{eqnarray*}
\frac{f(x)}{1+g(x)} - \frac{f(y)}{1+g(y)} &=& \big( f(x)-f(y) \big)
\frac{1+g(x)+g(y)}{\big(1+g(x)\big)\big(1+g(y)\big)}\\
&-& \frac{f(x)g(x)-f(y)g(y)}{\big(1+g(x)\big)\big(1+g(y)\big)}
\end{eqnarray*}
and note that by Remark \ref{rem_lem_imbedding}, $f \cdot g \in H^{s'}(\R^n,\R)$ and
\[
\sup_{x,y \in \R^n} \frac{1+|g(x)|+|g(y)|}{\big(1+g(x)\big)\big(1+g(y)\big)} \leq C_1
\]
for some constant $C_1 > 0$. This together with Lemma \ref{lem_imbedding} and 
Remark \ref{rem_lem_imbedding} implies
\begin{eqnarray}
\left[\frac{f}{1+g} \right]_{s'}^2 &\le& 2C_1^2 [f]_{s'}^2 + 2C_1^2 [fg]_{s'}^2\nonumber\\
&\le& 2 C_1^2(\|f\|_{s'}^2+\|g f\|_{s'}^2)\le C_2\|f\|_{s'}^2<\infty\label{eq:fractional_part}
\end{eqnarray}
where $C_2>0$.\footnote{The positive constants $C_1$ and $C_2$ depend on the $s$-norm of $g$.}
Combining \eqref{eq:L2} with \eqref{eq:fractional_part} we see that \eqref{eq:inequality} holds
for any $0<s'<1$, $s'\le s$. This completes the proof of the Lemma when $s<0$. If $s>1$ we 
assume that \eqref{eq:inequality} holds for any $0\le s'\le k$, with $1\le k< s$, $k\in\Z$.
We will show that then \eqref{eq:inequality} holds for $k<s'<k+1$, $s'\le s$.
Take an arbitrary $f\in H^{s'}(\R^n,\R)$. As $H^{s'}(\R^n,\R)\subseteq H^k(\R^n,\R)$ we get from
the proof of Lemma \ref{lemma_division},
\begin{equation}\label{eq:product_rule''}
\dx{i}\left(\frac{f}{1+g}\right) = \frac{\dx{i}f}{1+g}- 
\frac{\frac{\dx{i}(fg)}{1+g}-\frac{g\cdot\dx{i}f}{1+g}}{1+g}\,.
\end{equation}
By Remark \ref{rem_lem_imbedding}, $g\cdot\dx{i}f$ and $\dx{i}(fg)$ are in $H^{s'-1}(\R^n,\R)$.
This together with the induction hypothesis and \eqref{eq:product_rule''} implies that $f/(1+g)\in H^{s'}(\R^n,\R)$.
Inequality \eqref{eq:inequality} follows immediately from the induction hypothesis and \eqref{eq:product_rule''}.

In order to prove that \eqref{eq:map} is continuous we argue as follows.
Take an arbitrary $g\in U^s_\varepsilon$, $\varepsilon>0$. In view of Proposition \ref{prop_sobolev_imbedding},
Remark \ref{rem_prop_sobolev_imbedding}, and \eqref{eq:inequality},
there exists $\kappa>0$ such that for any $\delta g\in B^s_\kappa$\footnote{$B^s_\kappa$ is the
open ball of radius $\kappa$ centered at zero in $H^s(\R^n,\R)$.},
\begin{equation}\label{eq:preparation}
\|\delta g/(1+g)\|_s<1 \quad\mbox{and}\quad\|\delta g\|_{C^0}<\varepsilon/2\,.
\end{equation}
Consider the map, $H^{s'}(\R^n,\R)\times B^s_\kappa\to H^{s'}(\R^n,\R)$,
\begin{equation}\label{eq:delta_map}
(\delta f, \delta g)\mapsto\frac{\delta f}{1+(g+\delta g)}\,.
\end{equation}
In view of \eqref{eq:preparation} and the first statement of the Lemma, the map \eqref{eq:delta_map} is
well-defined. We have
\begin{eqnarray}
\frac{\delta f}{1+g+\delta g}&=&\frac{\delta f}{1+g}\cdot\frac{1}{1+\frac{\delta g}{1+g}}=\frac{\delta f}{1+g}+
\frac{\delta f}{1+g}\cdot\sum_{j=0}^\infty(-1)^j\Big(\frac{\delta g}{1+g}\Big)^j\nonumber\\
&=&\frac{\delta f}{1+g}+\frac{\delta f}{1+g}\cdot{\cal S}(\delta g)\label{eq:good_form}
\end{eqnarray}
where ${\cal S}: B^s_\kappa\to H^s(\R^n,\R)$ is an analytic function. Finally, the continuity of 
\eqref{eq:delta_map} follows from \eqref{eq:good_form}, \eqref{eq:inequality},
Lemma \ref{lem_imbedding} and Remark \ref{rem_lem_imbedding}.
\end{proof}

The following lemma extends Lemma \ref{continuous_composition} to the case where $s$ and $s'$ are real numbers
instead of integers. For any real number $s > n/2+1$ introduce 
\[
\Ds^s(\R^n):=\big\{ \varphi \in \mbox{Diff}_+^1(\R^n) \, \big| \, \varphi - \id \in H^s(\R^n) \big\}.
\]

\begin{Lemma}\label{fractional_continuous_composition}
Let $s,s'$ be real numbers with $s>n/2+1$ and $0 \leq s' \leq s$. Then the composition 
\[
\mu^{s'}:H^{s'}(\R^n,\R) \times \Ds^s(\R^n) \to H^{s'}(\R^n,\R),\quad (f,\varphi) \mapsto f \circ \varphi
\]
is continuous.
\end{Lemma}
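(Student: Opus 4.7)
The plan is to adapt the inductive proof of Lemma \ref{continuous_composition} to real exponents, using the Gagliardo seminorm characterization (Lemma \ref{lem_charakterisierung}) in the fractional base case and the inductive norm characterization (Lemma \ref{lem_inductive_norm}) to reduce the case $s' \geq 1$ to the range $0 \leq s' < 1$. The induction is carried out on $\lfloor s' \rfloor$.

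For the base case $s' = 0$, the argument of Lemma \ref{continuous_composition} applies verbatim, since it only uses the change of variable formula and density of $C_c^\infty(\R^n,\R)$ in $L^2$. For the second base case $0 < s' < 1$, I first invoke Lemma \ref{uniform_fractional_bound}, which gives
\[
[f \circ \varphi]_{s'} \leq M^{-1} L^{n/2+s'} [f]_{s'}
\]
where $M = \inf_x \det(d_x\varphi)$ and $L = \sup_x |d_x\varphi|$. By Corollary \ref{cor_openness} $(iii)$ these constants can be chosen uniformly on a neighborhood of any $\varphi_\bullet \in \Ds^s$, so $R_\varphi$ is uniformly bounded $H^{s'} \to H^{s'}$ on such a neighborhood. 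To get continuity at $(f_\bullet, \varphi_\bullet)$, I approximate $f_\bullet$ by $\tilde f \in C_c^\infty(\R^n,\R)$ with $\|f_\bullet - \tilde f\|_{s'} < \varepsilon$ (Lemma \ref{lemma_density}, valid for real $s \geq 0$), and split
\[
f \circ \varphi - f_\bullet \circ \varphi_\bullet = (f - f_\bullet)\circ\varphi + (f_\bullet - \tilde f)\circ\varphi + (\tilde f - f_\bullet)\circ\varphi_\bullet + (\tilde f \circ \varphi - \tilde f \circ \varphi_\bullet).
\]
The first three terms are controlled by the uniform boundedness estimate. For the last term I use the integral representation
\[
\tilde f \circ \varphi(x) - \tilde f \circ \varphi_\bullet(x) = \int_0^1 d_{\varphi_\bullet(x) + t \delta\varphi(x)} \tilde f \cdot \delta\varphi(x)\, dt,
\]
and estimate its $H^{s'}$-norm via the Gagliardo characterization and the multiplication estimate of Lemma \ref{lem_imbedding} (extended to real indices via Remark \ref{rem_lem_imbedding}), using that $\tilde f \in C_c^\infty$ to make the integrand depend continuously on $t$ as an $H^{s'}$-valued curve.

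For $s' \geq 1$, I apply Lemma \ref{lem_inductive_norm}, which gives an equivalent norm on $H^{s'}$ of the form $\|g\| + \sum_i \|\dx{i} g\|_{s'-1}$. The chain rule of Lemma \ref{lemma_chain_rule} yields
\[
\dx{i}(f \circ \varphi) = \sum_{k=1}^n (\dx{k} f) \circ \varphi \cdot \dx{i}\varphi_k,
\]
where $\dx{k} f \in H^{s'-1}$. By the induction hypothesis at exponent $s' - 1$ (whose integer part is strictly smaller), $(\dx{k}f) \circ \varphi$ lies in $H^{s'-1}$ and depends continuously on $(f,\varphi)$. Multiplication with $\dx{i}\varphi_k \in H^{s-1}$ then lands in $H^{s'-1}$ by Lemma \ref{lem_imbedding} in its real-indexed form (Remark \ref{rem_lem_imbedding}), since $s - 1 > n/2$ and $0 \leq s' - 1 \leq s - 1$. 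Combining with the $L^2$-estimate from the case $s'=0$ gives both the inclusion $f \circ \varphi \in H^{s'}$ and continuity of $\mu^{s'}$.

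The main obstacle will be the fractional base case $0 < s' < 1$. Uniform boundedness of the right translation in $H^{s'}$ does not by itself imply continuity of $\mu^{s'}$, so one must bound the Gagliardo seminorm $[\tilde f \circ \varphi - \tilde f \circ \varphi_\bullet]_{s'}$ for smooth, compactly supported $\tilde f$. The smoothness of $\tilde f$ is essential here: the integrand $d\tilde f_{\varphi_\bullet + t\delta\varphi} \cdot \delta\varphi$ must be shown to converge to $0$ in $H^{s'}$ as $\varphi \to \varphi_\bullet$ in $\Ds^s$, which requires combining the real-indexed multiplication estimate with the uniform regularity of the smooth function $\tilde f$ and its derivatives on the compact support.
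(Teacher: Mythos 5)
Your proposal is correct in outline, and the induction step for $s'\ge 1$ (chain rule, Lemma \ref{lem_inductive_norm}, Lemma \ref{uniform_fractional_bound} and the real-indexed Lemma \ref{lem_imbedding}) is exactly what the paper does. Where you genuinely diverge is the fractional base case $0<s'<1$. The paper does \emph{not} smooth $f_\bullet$: it bounds $[f\circ\varphi-f_\bullet\circ\varphi_\bullet]_{s'}$ by two double integrals, controls the first by $C[f-f_\bullet]_{s'}$ via \eqref{bounded_fractional_map}, and handles $[f_\bullet\circ\varphi-f_\bullet\circ\varphi_\bullet]_{s'}$ by viewing the difference-quotient kernel $F(x,y)=\big(f_\bullet(x)-f_\bullet(y)\big)/|x-y|^{n/2+s'}$ as an $L^2$ function on $\R^n\times\R^n$, applying the already-established $L^2$-continuity of composition on the doubled space to get $F\big(\varphi(x),\varphi(y)\big)\to F\big(\varphi_\bullet(x),\varphi_\bullet(y)\big)$ in $L^2(\R^{2n})$, and multiplying by the difference quotient $|\varphi(x)-\varphi(y)|^{n/2+s'}/|x-y|^{n/2+s'}$, which converges in $L^\infty(\R^{2n})$. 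That argument buys a proof with no density step and no mean-value representation for this term. Your route instead inserts a smooth compactly supported $\tilde f$ and reduces the hard term to $\tilde f\circ\varphi-\tilde f\circ\varphi_\bullet$ via the Taylor integral; this is the natural fractional analogue of Lemma \ref{lem_poincare} and of the $s'=0$ argument in Lemma \ref{continuous_composition}, at the cost of three extra error terms (harmless, since they are controlled by the locally uniform operator bound). One point you should make explicit: passing from the pointwise identity $\tilde f\circ\varphi-\tilde f\circ\varphi_\bullet=\int_0^1 g_t\,dt$ to $\|\cdot\|_{s'}\le\sup_t\|g_t\|_{s'}$ by invoking continuity of $t\mapsto g_t$ in $H^{s'}$ risks circularity, since that continuity is itself a composition-continuity statement; it is cleaner to apply Minkowski's integral inequality directly to the Gagliardo kernel of $\int_0^1 g_t\,dt$, which only needs the uniform bound $\sup_t\|d\tilde f\circ\varphi_t\cdot\delta\varphi\|_{s'}\le KC\|d\tilde f\|_{s'}\|\delta\varphi\|_s$ coming from Lemma \ref{uniform_fractional_bound}, Corollary \ref{cor_openness} $(iii)$ and Remark \ref{rem_lem_imbedding}. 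With that adjustment your proof goes through.
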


\begin{proof}
We argue by induction on intervals of values of $s'$, $k \leq s' < k+1$.
Let us begin with the case where $0 \leq s' < 1$. 
Note that the case where $s$ is real and $s'$ integer is already dealt with in Lemma \ref{continuous_composition}
-- see Remark \ref{rem_continuous_composition}. In particular,
\[
L^2(\R^n,\R) \times \Ds^s(\R^n) \to L^2(\R^n,\R), \, (f,\varphi) \mapsto f \circ \varphi
\]
is continuous. Next assume that $0 < s' < 1$. Then for any $f, f_\bullet \in H^{s'}(\R^n,\R)$ and 
$\varphi, \varphi_\bullet \in \Ds^s(\R^n)$, the expression 
$[f \circ \varphi - f_\bullet \circ \varphi_\bullet]_{s'}^2$ is bounded by
\begin{eqnarray}
\int_{\R^n} \int_{\R^n} 
\frac{|f\big(\varphi(x)\big)-f_\bullet\big(\varphi(x)\big)-f\big(\varphi(y)\big)+
f_\bullet\big(\varphi(y)\big)|^2}{|x-y|^{n+2s'}}\,\,dxdy &&\nonumber\\
+ \int_{\R^n} \int_{\R^n} 
\frac{|f_\bullet\big(\varphi(x)\big)-f_\bullet\big(\varphi_\bullet(x)\big)-
f_\bullet\big(\varphi(y)\big)+f_\bullet\big(\varphi_\bullet(y)\big)|^2}{|x-y|^{n+2s'}}\,\,dxdy.&&
\label{eqn_fractional_estimate}
\end{eqnarray}
By (\ref{bounded_fractional_map}), the first integral in (\ref{eqn_fractional_estimate}) can be estimated  by
$C [f-f_\bullet]_{s'}^2$ where $C > 0$ can be chosen locally uniformly for $\varphi$ in $\Ds^s(\R^n)$. 
The second integral in (\ref{eqn_fractional_estimate}) we write as
\[
\int_{\R^n} \int_{\R^n} \frac{\Big|\Big(f_\bullet\big(\varphi(x)\big) - f_\bullet\big(\varphi(y)\big)\Big) - 
\Big(f_\bullet\big(\varphi_\bullet(x)\big) - f_\bullet\big(\varphi_\bullet(y)\big)\Big)\Big|^2}{|x-y|^{n+2s'}}\,\,dxdy.
\]
By Lemma \ref{lem_charakterisierung},
\[
F(x,y):=\frac{f_\bullet(x)-f_\bullet(y)}{|x-y|^{n/2+s'}}
\]
is in $L^2(\R^n \times \R^n,\R)$. Hence again by Remark \ref{rem_continuous_composition},
\[
F\big(\varphi(x),\varphi(y)\big) \to F\big(\varphi_\bullet(x),\varphi_\bullet(y)\big) 
\quad \mbox{in} \quad L^2(\R^n \times \R^n,\R).
\]
In view of the estimate
\[
\Big|\frac{\varphi(y)-\varphi(x)}{|y-x|}\Big|\le\Big|\int_0^1 (d_{x+(y-x)t}\varphi)\Big(\frac{y-x}{|y-x|}\Big)\Big|
\le \|d\varphi\|_{C^0}
\]
and the continuity of $\Ds^s(\R^n) \to C_0^1(\R^n), \varphi \mapsto \varphi - \id$ 
(Remark \ref{rem_prop_sobolev_imbedding}) one sees that
\[
\frac{\varphi(x)-\varphi(y)}{|x-y|} \to \frac{\varphi_\bullet(x)-\varphi_\bullet(y)}{|x-y|} 
\quad \mbox{in} \quad L^\infty(\R^n \times \R^n,\R).
\]
Writing
\[
\frac{f_\bullet\big(\varphi(x)\big)-f_\bullet\big(\varphi(y)\big)}{|x-y|^{n/2+s'}} = 
F\big(\varphi(x),\varphi(y)\big) \frac{|\varphi(x)-\varphi(y)|^{n/2+s'}}{|x-y|^{n/2+s'}}
\]
it then follows that as $\varphi \to \varphi_\bullet$ in $\Ds^s(\R^n)$
\[
\frac{f_\bullet\big(\varphi(x)\big)-f_\bullet\big(\varphi(y)\big)}{|x-y|^{n/2+s'}} \to 
\frac{f_\bullet\big(\varphi_\bullet(x)\big)-f_\bullet\big(\varphi_\bullet(y)\big)}{|x-y|^{n/2+s'}} 
\quad \mbox{in} \quad L^2(\R^n \times \R^n,\R).
\]
Now let us prove the induction step. Assume that the continuity of the composition $\mu^{s'}$ has been established for
any $s'$ with $0 \leq s' \le k$ where $k \in \mathbb Z_{\geq 1}$ satisfies $k < s$. Consider $s' \in \R$ with
$k \leq s' \leq s$ (if $s < k+1$) resp. $k \leq s' < k+1$ (if $s \geq k+1$). By Lemma \ref{lemma_chain_rule}$(ii)$,
\[
d(f \circ \varphi) = df \circ \varphi \cdot d\varphi.
\]
In view of Lemma \ref{lem_inductive_norm}, $df \in H^{s'-1}(\R^n,\R^n)$, hence by the induction hypothesis, 
if $f \to f_\bullet$ in $H^{s'}(\R^n,\R)$ and $\varphi \to \varphi_\bullet$ in $\Ds^s(\R^n)$, one has
\[
 df \circ \varphi \to df_\bullet \circ \varphi_\bullet \quad \mbox{in} \quad H^{s'-1}(\R^n,\R^n)\,.
\]
As $d\varphi \in H^{s'-1}(\R^n,\R^{n \times n})$ and $s-1>n/2$ one then concludes from Remark \ref{rem_lem_imbedding},
\[
df \circ \varphi \cdot d\varphi \to df_\bullet \circ \varphi_\bullet \cdot d\varphi_\bullet \quad \mbox{in} 
\quad H^{s'-1}(\R^n,\R^n)
\]
and Lemma \ref{lem_inductive_norm} implies that $f \circ \varphi \to f_\bullet \circ \varphi_\bullet$ in
$H^{s'}(\R^n,\R)$. This establishes the continuity of $\mu^{s'}$ and proves the induction step. 
\end{proof}

Next we extend Lemma \ref{continuous_inverse} to the case where $s$ is fractional.

\begin{Lemma}\label{fractional_continuous_inverse}
Let $s$ be real with $s > n/2+1$. Then for any $\varphi \in \Ds^s(\R^n)$, its inverse $\varphi^{-1}$ is again in
 $\Ds^s(\R^n)$ and
\[
 {\tt inv}:\Ds^s(\R^n) \to \Ds^s(\R^n),\quad \varphi \mapsto \varphi^{-1}
\]
is continuous.
\end{Lemma}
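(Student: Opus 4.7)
The plan is to run the argument of Lemma~\ref{continuous_inverse} line by line, each integer-order Sobolev tool being replaced by its fractional analogue: division by Lemma~\ref{lemma_fractional_division}, multiplication by Remark~\ref{rem_lem_imbedding}, composition by Lemma~\ref{fractional_continuous_composition}, and crucially the Gagliardo characterisation of Lemma~\ref{lem_charakterisierung}. Write $s=k+\lambda$ with $k=\lfloor s\rfloor\ge 1$ and $\lambda\in[0,1)$. If $\lambda=0$ the statement is exactly Lemma~\ref{continuous_inverse}, so I will assume $0<\lambda<1$ throughout.

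First I would establish $\varphi^{-1}\in\Ds^s(\R^n)$. As in Lemma~\ref{continuous_inverse}, a change of variables gives $\varphi^{-1}-\id\in L^2(\R^n)$. By induction on $|\alpha|=1,\ldots,k$, I then prove
\[
\partial^\alpha(\varphi^{-1}-\id) = F^{(\alpha)}\circ\varphi^{-1}, \qquad F^{(\alpha)}\in H^{s-|\alpha|}(\R^n,\R^n).
\]
The base case rests on $(d\varphi)^{-1}-\id_n = \tfrac{1}{\det d\varphi}\bigl(\Phi-\det(d\varphi)\,\id_n\bigr)\in H^{s-1}(\R^n,\R^{n\times n})$ via Lemma~\ref{lemma_fractional_division}. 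In the inductive step, $F^{(\alpha)}\in H^{s-|\alpha|}\subseteq H^1$ legitimises the chain rule (Lemma~\ref{lemma_chain_rule}) and $F^{(\alpha+e_i)}$ appears as a component of $dF^{(\alpha)}\cdot(d\varphi)^{-1}$, whose $H^{s-|\alpha|-1}$-membership follows from Remark~\ref{rem_lem_imbedding}; the iteration runs as long as $|\alpha|+1\le s$, i.e.\ up to $|\alpha|=k$. For $|\alpha|\le k-1$, $F^{(\alpha)}\circ\varphi^{-1}\in L^2(\R^n)$ by change of variables. For $|\alpha|=k$ one has $F^{(\alpha)}\in H^\lambda$ and the same change of variables $x=\varphi(u),\,y=\varphi(v)$, applied inside the Gagliardo integral together with the lower bound $|\varphi(u)-\varphi(v)|\ge c\,|u-v|$ (from boundedness of $d\varphi^{-1}$), yields
\[
[F^{(\alpha)}\circ\varphi^{-1}]_\lambda^2 \;\le\; c^{-(n+2\lambda)}\,\|\det d\varphi\|_{C^0}^2\,[F^{(\alpha)}]_\lambda^2 \;<\;\infty.
\]
By Lemma~\ref{lem_charakterisierung} this gives $\varphi^{-1}-\id\in H^s(\R^n)$.

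For continuity of the inverse map I would decompose
\[
F^{(\alpha)}\circ\varphi^{-1} - F_\bullet^{(\alpha)}\circ\varphi_\bullet^{-1} \;=\; (F^{(\alpha)}-F_\bullet^{(\alpha)})\circ\varphi^{-1} \;+\; \bigl(F_\bullet^{(\alpha)}\circ\varphi^{-1}-F_\bullet^{(\alpha)}\circ\varphi_\bullet^{-1}\bigr).
\]
For $|\alpha|\le k-1$ the integer argument of Lemma~\ref{continuous_inverse} transfers verbatim, using continuity of $\varphi\mapsto F^{(\alpha)}$ into $H^{s-|\alpha|}$ (from Remark~\ref{rem_lem_imbedding} and Lemma~\ref{lemma_fractional_division}) and the Lipschitz approximation of $F_\bullet^{(\alpha)}$ by $\tilde F\in C_c^\infty(\R^n,\R^n)$. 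For the Gagliardo seminorm at $|\alpha|=k$ the first summand is controlled by $C\,[F^{(\alpha)}-F_\bullet^{(\alpha)}]_\lambda$ via the change-of-variables bound from the existence step and vanishes by continuity of $\varphi\mapsto F^{(\alpha)}$ in $H^\lambda$. The hard part is the second summand: the Lipschitz trick of the integer proof does not control a fractional seminorm. I would approximate $F_\bullet^{(\alpha)}$ in $H^\lambda$ by $\tilde F\in C_c^\infty(\R^n,\R^n)$ (Lemma~\ref{lemma_density}, valid for real $s\ge 0$), split into three pieces in the usual way so that the outer two are small by the approximation, and treat the remaining middle piece $\tilde F\circ\varphi^{-1}-\tilde F\circ\varphi_\bullet^{-1}$ by the Fourier-side interpolation inequality
\[
\|g\|_\lambda \;\le\; C\,\|g\|_0^{1-\lambda}\,\|g\|_1^{\lambda}.
\]
Here $\|g\|_0\to 0$ by the $L^2$-continuity of $\varphi\mapsto\varphi^{-1}$ (see \eqref{eq:infty_close} and \eqref{L2_continuity}, whose derivation is independent of $s$ being integer) combined with the Lipschitz bound on $\tilde F$, while $\|g\|_1$ stays bounded uniformly for $\varphi$ near $\varphi_\bullet$ because $\tilde F\in C_c^\infty$ and, by Corollary~\ref{cor_openness}\,$(iii)$, $|d\varphi^{-1}|$ is locally uniformly bounded.
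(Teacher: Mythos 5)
Your proposal is correct, and its overall architecture coincides with the paper's: the same inductive representation $\partial^\alpha(\varphi^{-1}-\id)=F^{(\alpha)}\circ\varphi^{-1}$ with $F^{(\alpha)}\in H^{s-|\alpha|}$ built from Lemma \ref{lemma_fractional_division} and Remark \ref{rem_lem_imbedding}, the same change-of-variables bound $[F^{(\alpha)}\circ\varphi^{-1}]_\lambda\leq M L^{\lambda+n/2}[F^{(\alpha)}]_\lambda$ for $|\alpha|=\lfloor s\rfloor$, and the same two-term decomposition followed by a $C_c^\infty$-approximation of $F_\bullet^{(\alpha)}$ for continuity. The one place where you genuinely diverge is the middle piece $g=\tilde F^{(\alpha)}\circ\varphi^{-1}-\tilde F^{(\alpha)}\circ\varphi_\bullet^{-1}$: the paper bounds $[g]_\lambda$ by $\|g\|_1$ and then proves full $H^1$-convergence, which requires differentiating $g$ via the chain rule and establishing $\|d\varphi^{-1}-d\varphi_\bullet^{-1}\|_{L^\infty}\to 0$ in addition to a second Lipschitz argument for $d\tilde F^{(\alpha)}\circ\varphi^{-1}$; you instead invoke the interpolation inequality $\|g\|_\lambda\leq C\,\|g\|_0^{1-\lambda}\|g\|_1^{\lambda}$, so that only $\|g\|_0\to 0$ (already available from the $L^2$-continuity of ${\tt inv}$ and the Lipschitz bound on $\tilde F^{(\alpha)}$) and local boundedness of $\|g\|_1$ (immediate from Corollary \ref{cor_openness}\,$(iii)$ and the change-of-variables estimate) are needed. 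Both routes are valid; yours is slightly more economical at this step because it avoids proving convergence of the derivatives altogether, while the paper's is more self-contained in that it uses no tool beyond the embedding $H^1\hookrightarrow H^\lambda$.
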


\begin{proof}
Let $\varphi \in \Ds^s(\R^n)$. Then $\varphi$ is in $\mbox{Diff}_+^1(\R^n)$ and so is its inverse $\varphi^{-1}$.
We claim that $\varphi^{-1}$ is in $\Ds^s(\R^n)$. It follows from the proof of Lemma \ref{continuous_inverse},
together with Remark \ref{rem_lem_imbedding} and Lemma \ref{lemma_fractional_division} that
for any $\alpha \in \mathbb Z_{\geq 0}^n$ with $0 \leq |\alpha| \leq s$,
$\partial^\alpha(\varphi^{-1}-\id)$ is of the form
\[
 \partial^\alpha(\varphi^{-1}-\id) = F^{(\alpha)} \circ \varphi^{-1}
\]
where $F^{(\alpha)} \in H^{s-|\alpha|}(\R^n)$. In addition, by Remark \ref{rem_lem_imbedding} and 
Lemma \ref{lemma_fractional_division}, the map $\Ds^s(\R^n) \to H^{s-|\alpha|}(\R^n)$, 
$\varphi \mapsto F^{(\alpha)}$ is continuous. It then follows that
\[
 \int_{\R^n} |\partial^\alpha (\varphi^{-1}-\id)|^2 dx = \int_{\R^n} |F^{(\alpha)}|^2 \det(d_y \varphi) dy < \infty.
\]
Moreover, in case $|\alpha|=\lfloor s \rfloor$ and $s \notin \N$ one has for $0 < \lambda:=s-\lfloor s \rfloor < 1$,
\begin{multline*}
\left[ F^{(\alpha)} \circ \varphi^{-1} \right]_\lambda^2 = 
\int_{\R^n} \int_{\R^n} \frac{|F^{(\alpha)}\big( \varphi^{-1}(x)\big) - 
F^{(\alpha)}\big(\varphi^{-1}(y)\big)|^2}{|x-y|^{n+2\lambda}}\,\,dxdy \\
\leq M^2 \int_{\R^n} \int_{\R^n} \frac{|F^{(\alpha)}(x')-F^{(\alpha)}(y')|^2}{|x'-y'|^{n+2\lambda}} 
\frac{|x'-y'|^{n+2\lambda}}{|\varphi(x')-\varphi(y')|^{n+2\lambda}}\,\,dx'dy'
\end{multline*}
where $M:=\sup_{x \in \R^n}(\det d_x \varphi)$. As $|\varphi^{-1}(x)-\varphi^{-1}(y)| \leq L |x-y|$ for any 
$x,y \in \R^n$ with
\[
L:=\sup_{z \in \R^n} |d_z \varphi^{-1}| < \infty 
\]
it follows that
\[
\frac{|x'-y'|}{|\varphi(x')-\varphi(y')|} \leq L \quad \forall x',y' \in \R^n, \, x' \neq y'.
\]
Altogether one has, for any $\alpha \in \mathbb Z_{\geq 0}^n$ with $|\alpha|=\lfloor s \rfloor$,
\begin{equation}\label{fractional_L2_continuity}
\left[ F^{(\alpha)} \circ \varphi^{-1} \right]_\lambda^2 \leq M^2 L^{n+2\lambda} \left[ F^{(\alpha)} \right]_\lambda^2.
\end{equation}
By Lemma \ref{lem_charakterisierung} it then follows that $\varphi^{-1} - \id \in H^s(\R^n)$. 
In addition, the estimates obtained show that the map $\Ds^s(\R^n) \to H^s(\R^n)$, 
$\varphi \mapsto \varphi^{-1} - \id$ is locally bounded. It remains to show that this map is continuous. 
By the proof of Lemma \ref{continuous_inverse}, the map $\Ds^s(\R^n) \to L^2(\R^n)$, 
$\varphi \mapsto \varphi^{-1} -\id$ is continuous. Using that $F^{(\alpha)}:\Ds^s(\R^n) \to H^{s-|\alpha|}(\R^n)$ is
continuous for any $\alpha \in \mathbb Z_{\geq 0}^n$ with $|\alpha| \leq s$ one shows in a similar way as in 
Lemma \ref{continuous_inverse} that $\Ds^s(\R^n) \to L^2(\R^n)$, 
$\varphi \mapsto \partial^\alpha(\varphi^{-1} - \id) = F^{(\alpha)} \circ \varphi^{-1}$ is continuous. 
Now consider the case where $\alpha \in \mathbb Z_{\geq 0}^n$ satisfies $|\alpha|=\lfloor s \rfloor$ and 
$\lambda:=s - \lfloor s \rfloor > 0$. For any $\varphi_\bullet \in \Ds^s(\R^n)$ consider
\begin{multline*}
\left[ \partial^\alpha(\varphi^{-1} - \varphi_\bullet^{-1})\right]_\lambda = \left[ F^{(\alpha)} \circ 
\varphi^{-1} - F_\bullet^{(\alpha)} \circ \varphi_\bullet^{-1} \right]_\lambda \\
\leq  \left[ F^{(\alpha)} \circ \varphi^{-1} - F_\bullet^{(\alpha)} \circ \varphi^{-1} \right]_\lambda + 
\left[ F_\bullet^{(\alpha)} \circ \varphi^{-1} - F_\bullet^{(\alpha)} \circ \varphi_\bullet^{-1} \right]_\lambda\,.
\end{multline*}
It follows from (\ref{fractional_L2_continuity}) that
\[
\left[ F^{(\alpha)} \circ \varphi^{-1} - F_\bullet^{(\alpha)} \circ \varphi^{-1} \right]_\lambda 
\leq M L^{\lambda+n/2} \left[ F^{(\alpha)}-F_\bullet^{(\alpha)} \right]_\lambda.
\]
As $F^{(\alpha)}:\Ds^s(\R^n) \to H^\lambda(\R^n)$ is continuous, 
$\left[ F^{(\alpha)} \circ \varphi^{-1} - F_\bullet^{(\alpha)} \circ \varphi^{-1} \right]_\lambda \to 0$ as 
$\varphi \to \varphi_\bullet$ in $\D^s(\R^n)$. Finally consider the term 
$\left[ F_\bullet^{(\alpha)} \circ \varphi^{-1} - F_\bullet^{(\alpha)} \circ \varphi_\bullet^{-1} \right]_\lambda$.
Arguing as in the proof of Lemma \ref{continuous_inverse}, we approximate $\varphi_\bullet$ by
$\tilde \varphi \in \Ds^s(\R^n)$ with $\tilde \varphi - \id \in C_c^\infty(\R^n,\R^n)$. Then
\begin{multline*}
  \left[ F_\bullet^{(\alpha)} \circ \varphi^{-1} - F_\bullet^{(\alpha)} \circ \varphi_\bullet^{-1} 
\right]_\lambda \leq \left[F_\bullet^{(\alpha)} \circ \varphi^{-1} - \tilde F^{(\alpha)} \circ \varphi^{-1}
\right]_\lambda +\\
  + \left[ \tilde F^{(\alpha)} \circ \varphi^{-1} - \tilde F^{(\alpha)} \circ \varphi_\bullet^{-1}\right]_
\lambda + \left[\tilde F^{(\alpha)} \circ \varphi_\bullet^{-1} - F_\bullet^{(\alpha)} \circ \varphi_\bullet^{-1}
 \right]_\lambda
\end{multline*}
where $\tilde F^{(\alpha)} = \left. F^{(\alpha)} \right|_{\tilde \varphi}$. 
For $\varphi$ near $\varphi_\bullet$ one has as above,
\[
\left[ F_\bullet^{(\alpha)} \circ \varphi^{-1} - \tilde F^{(\alpha)} \circ \varphi^{-1}\right]_\lambda 
\leq M L^{\lambda + n/2} \left[ F_\bullet^{(\alpha)}- \tilde F^{(\alpha)} \right]_\lambda.
\]
Similarly, the expression $\left[\tilde F^{(\alpha)} \circ \varphi_\bullet^{-1} - F_\bullet^{(\alpha)} 
\circ \varphi_\bullet^{-1}\right]_\lambda$ can be bounded in terms of 
$\left[\tilde F^{(\alpha)}-F_\bullet^{(\alpha)}\right]_\lambda$. 
To estimate the remaining term it suffices to show that, as $\varphi \to \varphi_\bullet$ in $\D^s(\R^n)$, 
\[
 \|\tilde F^{(\alpha)} \circ \varphi^{-1}- \tilde F^{(\alpha)} \circ \varphi_\bullet^{-1} \|_{1} \to 0.
\]
First we show that 
$\|\tilde F^{(\alpha)} \circ \varphi^{-1}- \tilde F^{(\alpha)} \circ \varphi_\bullet^{-1} \| \to 0$
as $\varphi \to \varphi_\bullet$ in $\D^s(\R^n)$. Indeed, arguing as in the proof of
Lemma \ref{continuous_inverse}, we note that $\tilde F^{(\alpha)}$ is Lipschitz continuous, i.e.
\[
 |\tilde F^{(\alpha)}(x)-\tilde F^{(\alpha)}(y)| \leq L_1 |x-y| \quad \forall x,y \in \R^n
\]
for some constant $L_1 > 0$. Then
\[
 \int_{\R^n} |\tilde F^{(\alpha)} \circ \varphi^{-1} - \tilde F^{(\alpha)} \circ \varphi_\bullet^{-1}|^2 dx \leq L_1^2
 \int_{\R^n} |\varphi^{-1} - \varphi_\bullet^{-1}|^2 dx
\]
and therefore
\[
 \|\tilde F^{(\alpha)} \circ \varphi^{-1} - \tilde F^{(\alpha)} \circ \varphi_\bullet^{-1}\| \to 0 \quad \mbox{as}
 \quad \varphi \to \varphi_\bullet\quad\mbox{in}\quad \D^s(\R^n)\,.
\]
It remains to show that
\[
 \|d(\tilde F^{(\alpha)} \circ \varphi^{-1})- d(\tilde F^{(\alpha)} \circ \varphi_\bullet^{-1}) \| \to 0 \quad 
\mbox{as} \quad \varphi \to \varphi_\bullet\quad\mbox{in}\quad \D^s(\R^n)\,.
\]
By the chain rule we have
\[
d(F^{(\alpha)} \circ \varphi^{-1}) = dF^{(\alpha)} \circ \varphi^{-1} \cdot d\varphi^{-1}. 
\]
Hence 
\begin{eqnarray*}
\|d(\tilde F^{(\alpha)} \circ \varphi^{-1})- d(\tilde F^{(\alpha)} \circ \varphi_\bullet^{-1})\| &
\leq& \|d \tilde F^{(\alpha)} \circ \varphi^{-1} - d \tilde F^{(\alpha)} \varphi_\bullet^{-1}\| \,\, 
\|d\varphi^{-1}\|_{L^\infty} \\
&+& \|d\tilde F^{(\alpha)} \circ \varphi_\bullet^{-1}\| \,\, \|d\varphi^{-1} - d\varphi_\bullet^{-1}\|_{L^\infty}.
\end{eqnarray*}
Arguing as above one has, as $\varphi \to \varphi_\bullet$ in $\D^s(\R^n)$,
\[
 \|d \tilde F^{(\alpha)} \circ \varphi^{-1} - d\tilde F^{(\alpha)} \circ \varphi_\bullet^{-1}\| \to 0
\]
and, by Remark \ref{rem_prop_sobolev_imbedding} and inequality \eqref{eq:infty_close},
\[
\|d\varphi^{-1} - d\varphi_\bullet^{-1}\|_{L^\infty} \to 0.
\]
Altogether we thus have shown that
\[
\|\tilde F^{(\alpha)} \circ \varphi^{-1} - \tilde F^{(\alpha)} \circ \varphi_\bullet^{-1}\|_1 \to 0 \quad \mbox{as}
\quad \varphi \to \varphi_\bullet\quad\mbox{in}\quad \D^s(\R^n)\,.
\]
This finishes the proof of the claimed statement that $\varphi \to \varphi^{-1}$ is continuous on $\Ds^s(\R^n)$.
\end{proof}

\begin{Prop}\label{fractional_tg}
For any real number $s > n/2+1$, $(\Ds^s,\circ)$ is a topological group.
\end{Prop}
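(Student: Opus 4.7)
The plan is to follow exactly the same structure as the proof of Proposition \ref{tg}, but this time rely on the fractional analogues Lemma \ref{fractional_continuous_composition} and Lemma \ref{fractional_continuous_inverse} that have just been established. That is, to show that $(\Ds^s(\R^n),\circ)$ is a topological group for real $s>n/2+1$, I need to verify that (a) composition is a continuous map $\Ds^s(\R^n)\times\Ds^s(\R^n)\to\Ds^s(\R^n)$, and (b) the inverse is a continuous map $\Ds^s(\R^n)\to\Ds^s(\R^n)$.

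For (a), given $\varphi,\psi\in\Ds^s(\R^n)$, I would first argue that $\varphi\circ\psi\in\Ds^s(\R^n)$ via the decomposition
\[
\varphi\circ\psi-\id=(\varphi-\id)\circ\psi+(\psi-\id)\,.
\]
Since $\varphi-\id\in H^s(\R^n)$ and $\psi\in\Ds^s(\R^n)$, Lemma \ref{fractional_continuous_composition} (applied with $s'=s$) yields $(\varphi-\id)\circ\psi\in H^s(\R^n)$; trivially $\psi-\id\in H^s(\R^n)$, so the sum is in $H^s(\R^n)$. The fact that $\varphi\circ\psi$ is an orientation-preserving $C^1$-diffeomorphism of $\R^n$ follows from the chain rule since $\varphi,\psi\in\mbox{Diff}_+^1(\R^n)$. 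The continuity of $(\varphi,\psi)\mapsto\varphi\circ\psi$ in the $H^s$-topology then follows from the same decomposition together with the continuity assertion of Lemma \ref{fractional_continuous_composition}.

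For (b), Lemma \ref{fractional_continuous_inverse} directly provides that ${\tt inv}:\Ds^s(\R^n)\to\Ds^s(\R^n)$ is well-defined and continuous. Combining (a) and (b) yields the claimed statement.

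There is essentially no obstacle at this stage, because the genuinely nontrivial work (extending the composition and inverse continuity results from integer $s$ to real $s>n/2+1$) has been carried out in Lemma \ref{fractional_continuous_composition} and Lemma \ref{fractional_continuous_inverse}. The proof of Proposition \ref{fractional_tg} reduces to a one-line assembly of these two results, mirroring the proof of Proposition \ref{tg} in the integer case.
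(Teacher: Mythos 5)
Your proof is correct and follows exactly the route of the paper, which likewise deduces Proposition \ref{fractional_tg} directly from Lemma \ref{fractional_continuous_composition} and Lemma \ref{fractional_continuous_inverse}. The decomposition $\varphi\circ\psi-\id=(\varphi-\id)\circ\psi+(\psi-\id)$ you spell out is exactly the (implicit) step the paper relies on, so there is nothing to add.
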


\begin{proof}
The claimed statement follows from Lemma \ref{fractional_continuous_composition} and
Lemma \ref{fractional_continuous_inverse}.
\end{proof}

Now we have established all ingredients to show the following extension of Theorem \ref{thm1}.

\begin{Th}\label{fractional_thm1}
For any $r \in \mathbb Z_{\geq 0}$ and any real number $s$ with $s > n/2+1$
\[
 \mu: H^{s+r}(\R^n,\R^d) \times \Ds^s(\R^n) \to H^s(\R^n,\R^d), \quad (u,\varphi) \mapsto u \circ \varphi
\]
and
\[
 {\tt inv}:\Ds^{s+r}(\R^n) \to \Ds^s(\R^n), \quad \varphi \mapsto \varphi^{-1}
\]
are $C^r$-maps.
\end{Th}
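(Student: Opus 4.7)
The plan is to follow the architecture of the integer-$s$ proof of Theorem \ref{thm1} in Subsection \ref{subsection_proof_of_thm1}, substituting each auxiliary result by its real-$s$ counterpart. Two pillars are already in place: Lemma \ref{fractional_continuous_composition} (continuity of $\mu^{s'}$ for real $0 \le s' \le s$) and Lemma \ref{fractional_continuous_inverse} (continuity of ${\tt inv}$), which together dispose of the case $r=0$. Moreover, the Sobolev embedding (Remark \ref{rem_prop_sobolev_imbedding}), the multiplication estimate (Remark \ref{rem_lem_imbedding}), the openness and boundedness properties of $\Ds^s(\R^n)$ (Corollary \ref{cor_openness} together with its remark), and the division lemma (Lemma \ref{lemma_fractional_division}) are all known to extend to real $s$. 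So the only work is to propagate these through the arguments of Subsection \ref{subsection_proof_of_thm1} and verify that no step relied on the integrality of $s$.

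For $r \ge 1$, I would first establish the real-$s$ analogues of Lemma \ref{rho}, Lemma \ref{lem_poincare}, and Lemma \ref{nu}. The analogue of Lemma \ref{rho} is immediate from Remark \ref{rem_lem_imbedding} and Lemma \ref{fractional_continuous_composition}. For Lemma \ref{lem_poincare}, given $\varphi_\bullet\in\Ds^s(\R^n)$ one uses Corollary \ref{cor_openness} (valid for real $s$) to construct a convex neighborhood $U$; then for $f\in C_c^\infty(\R^n,\R)$ (which is dense in $H^{s+1}(\R^n,\R)$ by the remark following Lemma \ref{lemma_density}) one writes
\[
f\circ\varphi - f\circ\varphi_\bullet = \int_0^1 \bigl(df\circ(\varphi_\bullet + t\,\delta\varphi)\bigr)\cdot\delta\varphi\,dt,
\]
interprets the integrand as a continuous $H^s$-valued curve (using Lemma \ref{fractional_continuous_composition} and the real-$s$ multiplication bound), and bounds its $H^s$-norm by $C_s\|f\|_{s+1}\|\varphi-\varphi_\bullet\|_s$; density in $H^{s+1}$ then gives the general estimate. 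The analogue of Lemma \ref{nu} follows by combining these.

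With these in hand, the proof of the composition being $C^r$ proceeds exactly as in Proposition \ref{prop1}: since $u,\delta u\in C^r(\R^n,\R^n)$ by the real-$s$ Sobolev embedding, the pointwise Taylor expansion of order $r$ produces multilinear forms $\eta_k(u,\varphi)\in L^k_{sym}(H^{s+r}\times H^s;H^s)$ and a remainder $R(u,\varphi,\delta u,\delta\varphi)$. The continuity $(u,\varphi)\mapsto\eta_k(u,\varphi)$ follows from the real-$s$ versions of Lemmas \ref{rho} and \ref{nu} together with Remark \ref{canonical_identification}, while the estimate
\[
\frac{\|R\|_s}{(\|\delta u\|_{s+r}+\|\delta\varphi\|_s)^r}\longrightarrow 0
\]
follows from Lemma \ref{fractional_continuous_composition} and the real-$s$ multiplication bound exactly as in the integer case. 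The converse to Taylor's theorem (Theorem \ref{thm_converse_taylor}) then yields $C^r$-regularity of $\mu$.

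Finally, for the inverse map, the implicit function argument of Proposition \ref{prop2} carries over verbatim. For $\varphi\in\Ds^{s+r}(\R^n)$, the differential
\[
d_\psi\mu(\varphi,\cdot)\big|_{\psi=\varphi^{-1}}:\delta\psi\mapsto d\varphi\circ\varphi^{-1}\cdot\delta\psi
\]
is a bounded linear isomorphism of $H^s(\R^n,\R^n)$ whose inverse is multiplication by $(d\varphi)^{-1}\circ\varphi^{-1}$; both factors lie in $H^s(\R^n,\R^{n\times n})$ by the real-$s$ versions of Lemma \ref{fractional_continuous_composition}, Lemma \ref{lemma_fractional_division}, and Remark \ref{rem_lem_imbedding}. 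Since $\mu$ is $C^r$ by what was just proved, the implicit function theorem (\cite{Lang}) gives that ${\tt inv}$ is $C^r$. The main technical obstacle is the verification of the Poincaré-type estimate (Lemma \ref{lem_poincare}) in the real-$s$ setting, specifically the Riemann integrability in $H^s$ of the path $t\mapsto df\circ(\varphi_\bullet+t\delta\varphi)\cdot\delta\varphi$ along the convex segment; but this reduces, via the real-$s$ continuity of composition and multiplication, to a routine compactness-and-density argument and involves no essentially new idea.
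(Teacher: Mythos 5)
Your proposal is correct and takes essentially the same approach as the paper: the paper's own proof of Theorem \ref{fractional_thm1} is the single remark that the argument of Subsection \ref{subsection_proof_of_thm1} extends straightforwardly once the real-$s$ versions of the auxiliary results (Lemmas \ref{fractional_continuous_composition}, \ref{fractional_continuous_inverse}, \ref{lemma_fractional_division}, and the remarks extending Proposition \ref{prop_sobolev_imbedding}, Lemma \ref{lem_imbedding}, and Corollary \ref{cor_openness}) are in place. Your write-up simply makes that extension explicit, correctly identifying the real-$s$ analogues of Lemmas \ref{rho}, \ref{lem_poincare}, and \ref{nu} and the unchanged roles of the converse Taylor theorem and the implicit function theorem.
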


\begin{proof}
Using the results established above in this appendix, the proof of Theorem \ref{thm1}, given in Subsection 
\ref{subsection_proof_of_thm1}, extends in a straightforward way to the case where $s$ is real.
\end{proof}

%-----------------------------------The Fractiona Case---------------------------------------------------

Finally we want to extend the results of Subsection \ref{sobolev_spaces_on_open_sets}, Section \ref{Section 3},
and Section \ref{sec:diff_structure} to Sobolev spaces of fractional exponents.
\begin{Def}
Let $U$ be a bounded open set in $\R^n$ with Lipschitz boundary.
Then $f\in H^s(U,\R)$ if there exists ${\tilde f}\in H^s(\R^n,\R)$ such that ${\tilde f}\big|_U=f$.
\end{Def} 
\noindent Note that for our purposes it is enough to consider only the case when the boundary of $U$ is
a finite (possibly empty) union of transversally intersecting $C^\infty$-embedded hypersurfaces in $\R^n$
(cf. Definition \ref{def_fine_cover}).

As in the case where $s$ is an integer, the spaces $H^s(U,\R)$ and $H^s(\R^n,\R)$ are closely related.
In view of \cite{Rych}, item (ii) of Proposition \ref{prop_extension} holds. Note that 
$H^s(\R^n,\R)=F^s_{22}(\R^n,\R)$ where $F^s_{22}$ is the corresponding Triebel-Lizorkin space.
This allows us to define maps of class $H^s$ between manifolds and extend the results in
Subsection \ref{subsec3:prelim} to Sobolev spaces of fractional exponents.

The corresponding space of maps is denoted by $H^s(M,N)$. Similarly, one extends the definition of 
$\Ds^s(M)$ for $s$ fractional. Following the line of arguments of Section \ref{Section 3} and Section 
\ref{sec:diff_structure} one then concludes that Theorem \ref{thm2} can be extended as follows

\begin{Th}\label{fractional_thm2}
Let $M$ be a closed oriented manifold of dimension $n$, $N$ a $C^\infty$-manifold and $s$ any real number satisfying
$s > n/2+1$. Then for any $r \in \mathbb Z_{\geq 0}$,
\begin{itemize}
\item[(i)]  $\mu:H^{s+r}(M,N) \times \Ds^s(M) \to H^s(M,N),\quad (f,\varphi) \mapsto f \circ \varphi$
\item[(ii)] ${\tt inv}:\Ds^{s+r}(M) \to \Ds^s(M),\quad \varphi \mapsto \varphi^{-1}$
\end{itemize}
are both $C^r$-maps.
\end{Th}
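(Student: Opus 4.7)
The plan is to replay the argument of Section \ref{Section 3} and Section \ref{sec:diff_structure} verbatim, with every integer-exponent auxiliary result replaced by its real-exponent counterpart. The groundwork is already in place: Theorem \ref{fractional_thm1} supplies the analogue of Theorem \ref{thm1} on $\R^n$, and the extension operator of Proposition \ref{prop_extension}(ii) is valid for real $s$ because $H^s(\R^n,\R) = F^s_{22}(\R^n,\R)$ admits Rychkov-type extension from Lipschitz domains. Since the proofs of Proposition \ref{Prop:composition_regularity}, Proposition \ref{Prop:inverse_regularity}, and Lemma \ref{Lemma:inverse_continuous} use only chart manipulations and the auxiliary machinery of Subsection \ref{sobolev_spaces_on_open_sets}, it suffices to carry that machinery through to real $s$.

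Concretely, I would first upgrade, in order, Lemma \ref{lem_imbedding_general} (multiplication in $H^{s'}$ for $s > n/2$, $0 \le s' \le s$), Lemma \ref{continuous_multilinear} (which only uses the Sobolev embeddings of Proposition \ref{imbedding_theorems}, both of which are stated for integer $s$ but hold for real $s$ with the same proof), Corollary \ref{th_transformation}, Lemma \ref{lem:openness}, and Lemma \ref{lem:cut-off}. Using these and Theorem \ref{fractional_thm1}, I would extend Proposition \ref{prop_composition_general} and Corollary \ref{coro_right_translation} to real $s$ by running the converse-Taylor-theorem argument of Subsection \ref{subsection_proof_of_thm1} inside a bounded Lipschitz domain, via the extension operator. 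With these ingredients, the entire construction of the $C^\infty$-manifold structure on $H^s(M,N)$ in Subsection \ref{subsec3:prelim}Section \ref{sec:diff_structure} goes through: Lemma \ref{lem:fine_cover_existence} only uses continuity and compactness; Lemma \ref{lemma_local_pt}, Proposition \ref{Th:submanifold}, Lemma \ref{lemma_opensubset}, Proposition \ref{Th:diff_structure}, and Proposition \ref{Th:same_diff_structure} use only the fractional composition, multiplication, and left-translation results together with the ODE-in-Banach-space existence theorem which is exponent-agnostic. Openness of $\Ds^s(M)$ in $H^s(M,M)$ follows from the fractional $C^1$-embedding and Lemma \ref{lem:openness}. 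The continuity of ${\tt inv}$ on $\Ds^s(M)$ reduces to Lemma \ref{continuous_inverse_charts}, whose proof in turn runs as in Lemma \ref{continuous_inverse} but now invokes Lemma \ref{fractional_continuous_inverse}. Once these are in hand, the proof of Theorem \ref{fractional_thm2}(i) is identical to the proof of Proposition \ref{Prop:composition_regularity} via the commutative diagram that reduces $\mu$ locally to the $C^r$-map $\tilde\mu_I$ of Lemma \ref{Lemma:local_regularity}, and (ii) follows from (i) plus the implicit function theorem exactly as in Proposition \ref{Prop:inverse_regularity}.

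The main obstacle is the fractional version of Proposition \ref{prop_left_translation}, that is, the $C^\infty$-smoothness of left translation $f \mapsto g \circ f$ on $H^s(U,\R^m)$ when $g \in C_b^\infty(\R^m,\R^d)$ and $s > n/2$ is real. In the integer case the proof expands $\partial^\alpha(g \circ f)$ by Faà di Bruno into sums of products $(\partial^\beta g \circ f)\cdot \partial^{\gamma_1}f_{j_1}\cdots\partial^{\gamma_r}f_{j_r}$ and invokes Lemma \ref{continuous_multilinear}. For non-integer $s$ there is no literal Faà di Bruno, so I would write $s=\lfloor s\rfloor+\lambda$ with $0<\lambda<1$ and handle the integer part exactly as before; the fractional seminorm $[\partial^\alpha(g \circ f)]_\lambda$ with $|\alpha|=\lfloor s\rfloor$ would then be controlled by expanding each difference of a product $(\partial^\beta g \circ f)\cdot\prod\partial^{\gamma_i}f_{j_i}$ via the telescoping product-rule for differences, estimating the term involving the increment of $\partial^\beta g \circ f$ by $\|d(\partial^\beta g)\|_{C^0}\cdot|f(x)-f(y)|$ and the remaining increments of $\partial^{\gamma_i}f_{j_i}$ by their own fractional seminorms, and finally applying the fractional multiplication Lemma \ref{lem_imbedding_general} and Proposition \ref{imbedding_theorems} to close the estimates. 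Continuity upgrades to $C^r$ by the same converse-Taylor argument as in the integer case, so once this seminorm estimate is established the rest of the proof is mechanical.
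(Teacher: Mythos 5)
Your proposal follows the same route as the paper: define $H^s(U,\R)$ for real $s$ by restriction from $H^s(\R^n,\R)$, invoke the Rychkov extension operator so that Proposition \ref{prop_extension}(ii) survives, and then rerun the chart-based constructions of Section \ref{Section 3} and Section \ref{sec:diff_structure} with the fractional auxiliary results of Appendix \ref{appendix B} in place of their integer counterparts. The paper in fact disposes of this step in a few lines (``following the line of arguments \dots\ one then concludes''), so your explicit discussion of the one lemma that does not transfer verbatim --- the Fa\`a di Bruno expansion behind Proposition \ref{prop_left_translation} and its replacement by a difference-quotient estimate on the fractional seminorm --- supplies detail the paper omits rather than diverging from its approach.
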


\begin{Rem}
Note that our construction can be used to prove analogous results for maps between manifolds
in Besov or Triebel-Lizorkin spaces.
\end{Rem}

\bibliographystyle{plain}

\author{
 \begin{tabular}{llll}
         Institut f\"ur Mathematik        &&Department of Mathematics \\
         Universit\"at Z\"urich           &&Northeastern University \\
         Winterthurerstrasse 190          &&Boston, MA 02115 \\
         CH-8057 Z\"urich     &&USA \\
Schwitzerland & \\
        {\it email: } {hasan.inci@math.uzh.ch}
        &&{\it email: } {p.topalov@neu.edu}\\
{\it email: } {tk@math.uzh.ch}
 \end{tabular}

\end{document}